\documentclass[reqno, 12pt]{amsart}

\usepackage{color}
\usepackage{mathrsfs}
\usepackage{amscd}
\usepackage{amsmath}
\usepackage{latexsym}
\usepackage{amsfonts}
\usepackage{amssymb}
\usepackage{amsthm}
\usepackage{graphicx}
\usepackage{hyperref}
\usepackage{makecell}
\usepackage{array,color}
\usepackage{booktabs}
\usepackage{multirow}

\parindent = 20 pt
\parskip = 4 pt
\textwidth 6.5in \textheight 9.6in \setlength{\topmargin}{0.1in}
\addtolength{\topmargin}{-\headheight}
\addtolength{\topmargin}{-\headsep}

\setlength{\oddsidemargin}{0in} \oddsidemargin  0.0in
\evensidemargin 0.0in

\setlength{\oddsidemargin}{0in} \oddsidemargin  0.0in
\evensidemargin 0.0in \textwidth 6.5in \textheight 9.2in
\setlength{\topmargin}{0.1in} \addtolength{\topmargin}{-\headheight}
\addtolength{\topmargin}{-\headsep}

\widowpenalty=10000

\newtheorem{theorem}{Theorem}[section]
\newtheorem{proposition}[theorem]{Proposition}

\newtheorem{lemma}[theorem]{Lemma}
\newtheorem{definition}[theorem]{Definition}

\newtheorem{remark}[theorem]{Remark}

\newcommand\R{\mathbb{R}}

\newcommand{\cqd}{\hfill$\Box$}

\numberwithin{equation}{section}

\title[Decay estimates for higher order Beam equations]
{Decay estimates for Beam equations with potential in dimension three}

\author{Miao Chen, Ping Li, Avy Soffer and Xiaohua Yao\textsuperscript{\dag }  }

\address{Miao Chen, Department of Mathematics,  Central China Normal University, Wuhan, 430079, P.R. China}
\email{miaochen@mails.ccnu.edu.cn}

\address{Ping Li, School of Informations  and  Mathematics, Yangtze University, Jingzhou 434000, China}
\email{liping@whu.edu.cn}

\address{Avy Soffer, Mathematics Department, Rutgers University, New Brunswick, NJ, 08903, USA}
\email{soffer@math.rutgers.edu}

\address{Xiaohua Yao, Department of Mathematics and Key Laboratory of Nonlinear Analysis and Applications(Ministry of Education), Central China Normal University, Wuhan, 430079, P.R. China }
\email{yaoxiaohua@ccnu.edu.cn}

\thanks{\textsuperscript{\dag} Corresponding author}

\subjclass[2000]{58J50, 42B15, 35P15, 42B20,   47F05}
\keywords{Higher order wave equations (Beam); Asymptotic expansions;  Decay estimates;  Fourth order Schr\"odinger operator}

\begin{document}

\begin{abstract}\baselineskip=13pt
This paper is devoted to studying time decay estimates of the solution for Beam equation (higher order type wave equation) with a potential
 $$u_{t t}+\big(\Delta^2+V\big)u=0,  \,\ u(0, x)=f(x),\  u_{t}(0, x)=g(x)$$
in dimension three, where  $V$ is a real-valued and decaying potential.  Assume that zero is a regular point of $H= \Delta^2+V $,  we first prove the following optimal time decay estimates of the solution operators
\begin{equation*}
	\big\|\cos (t\sqrt{H})P_{ac}(H)\big\|_{L^{1} \rightarrow L^{\infty}} \lesssim|t|^{-\frac{3}{2}}\   \ \hbox{and} \  \
	\Big\|\frac{\sin(t\sqrt{H})}{\sqrt{H}} P_{a c}(H)\Big\|_{L^{1} \rightarrow L^{\infty}} \lesssim|t|^{-\frac{1}{2}}.
\end{equation*}
Moreover, if zero is a resonance  of $H$,  then time decay of  the  solution operators also is considered.  It is noted that a first-kind resonance does not affect the decay rates of  the propagator operators $\cos(t\sqrt{H})$  and $\frac{\sin(t\sqrt{H})}{\sqrt{H}}$,
 but their decay  will be significantly  changed for the second and third-kind resonances.
\end{abstract}
		\maketitle

	\baselineskip=15pt
\section{Introduction and main results}
\subsection{Introduction}\label{sbusection1}
In this paper we  consider the following Beam equation (higher order type wave equation)  with  a real-valued decaying potential in dimension three:
\begin{equation}\label{cauchyequation1}
\begin{cases}
u_{t t}+\big(\Delta^2+V(x)\big)u=0, \  \  \ (t, x) \in \mathbb{R} \times \mathbb{R}^{3}, \\
 u(0, x)=f(x),\ \
  u_{t}(0, x)=g(x).
    \end{cases}
\end{equation}
Let $H:=\Delta^2+V$ and  $|V(x)|\lesssim \langle x \rangle^{-\beta}$ for some $\beta >0$, $\langle x\rangle= \sqrt{1+|x|^2}$. Then $H$ is self-adjoint on $L^2$, and  the solution to the  equation \eqref{cauchyequation1} is given by the formula
\begin{equation}\label{equation-solution}
 u(t,x)=\cos (t \sqrt{H})f(x) + \frac{\sin (t \sqrt{H})}{\sqrt{H}}g(x).
\end{equation}
The expression \eqref{equation-solution} above depends on the branch chosen of $\sqrt{z}$ with $\Im \sqrt{z}\ge 0$, so  the solution $u(t,x)$  is well-defined even if $H$ is not positive.
We will study the decay estimates of the propagator operators $\cos (t \sqrt{H})$ and  $\frac{\sin (t \sqrt{H})}{\sqrt{H}}$.

In the free case, i.e.  $V=0$ and $\sqrt{H}=-\Delta $,  if the initial data $(f, g) \in L^1(\mathbb{R}^3)\times L^1(\mathbb{R}^3)$, then the solution operators of
 the equation \eqref{cauchyequation1} satisfy the following optimal time decay  estimates:
 \begin{equation}\label{decayestimate-freecase1}
\big \|\cos (t \Delta)f \big\|_{L^\infty(\mathbb{R}^3)}\lesssim |t|^{-\frac{3}{2}}\|f\|_{L^1(\mathbb{R}^3)},
\end{equation}
and
 \begin{equation}\label{decayestimate-freecase2}
\Big\|\frac{\sin (t \Delta)}{\Delta}g \Big\|_{L^\infty(\mathbb{R}^3)} \lesssim |t|^{-\frac{1}{2}}\|g\|_{L^1(\mathbb{R}^3)},
\end{equation}
see Theorem \ref{thm-main-results-resonance} below, and  also  see \cite[Theorem 7] {CMY}.

When $V\neq0$,  the decay estimates  of the solution operators of  the equation (\ref{cauchyequation1}) are affected by the spectrum of $H$,
which in turn depend on the conditions of potential $V$.
It is well-known that the spectrum of $H$ has negative eigenvalues  $\{\lambda_1\leq\lambda_2\leq \cdots <0 \}$ and continuous spectrum $[0, \infty)$ provides that a potential $V$ has certain decay rate, see e.g. \cite{Agmon, RS}.  If $H$ has no positive eigenvalues embedded into the continuous spectrum, then $(0,\infty)$ is the pure absolutely continuous spectrum.  In this case,  we assume that $\lambda_j(j\geq1)$ are  negative eigenvalues of the spectrum of $H$ (the counting multiplicity of $\lambda_j$) and $H \phi_j=\lambda_j\phi_j(j\geq 1) $ for $\phi_j \in L^2(\mathbb{R}^3)$. Let   $P_{ac}(H)$ denote the projection onto the absolutely continuous spectrum space of $H$. Then the solution of  the equation \eqref{cauchyequation1} can be written as
\begin{equation*}\label{wave-equation-solution}
   \begin{split}
u(t,x):=u_d(t,x)+u_c(t,x),
   \end{split}
\end{equation*}
where
\begin{equation}\label{solution-ud}
 u_d(t,x)=\sum_{j} \cosh( t\sqrt{-\lambda_j})(f,\phi_j)\phi_j(x) +\frac{\sinh (t\sqrt{-\lambda_j})}{\sqrt{-\lambda_j}}(g,\phi_j)\phi_j(x),
\end{equation}
\begin{equation}\label{solution-uc}
 u_c(t,x)=\cos (t \sqrt{H})P_{ac}(H)f(x) + \frac{\sin (t \sqrt{H})}{\sqrt{H}}P_{ac}(H)g(x).
\end{equation}
Obviously, the negative eigenvalues of the spectrum of $H$ results in the exponential growth of $u_d(t,x)$ defined in (\ref{solution-ud}) as $t$ becomes large. Hence we need to  remove  the discrete part   due to the existences of  negative eigenvalues, and then focus on decay estimates for the continuous spectrum part $u_c(t,x)$. In particular,
we notice that the absence of positive eigenvalue of $H$ has been an indispensable assumption in deriving of dispersive estimates,  see Subsection \ref{more-backgrpund}  for more comments on  the positive  eigenvalues of $H$.

In this paper, we are devoted to establishing the time decay bounds of  the solution (\ref{solution-uc}) with certain decay conditions  of potential $V$ in  dimension three.
For this purpose, we first study the asymptotic expansions
of the resolvent $R_V(\lambda^4)$  for $\lambda$ near zero in the presence of resonances or eigenvalue, and then establish the desired time decay  bounds by using  Stone's formula, Littlewood-Paley method and oscillation integral theory.

\subsection{Main results.}
We use the notation $a\pm:=a\pm\epsilon$ for some small but fixed $\epsilon>0$. For $A, B\in \mathbb{R}^+$, $A \lesssim B$  means that there exists some constant $C>0$ such that $A\leq C B$.

We first consider the case that zero is a regular point of  $H$ ( i.e.  $H$  has neither zero eigenvalue nor zero resonance).
\begin{theorem}\label{thm-main-results-regular}
Assume that $|V(x)|\lesssim (1+|x|)^{-\beta}$ for  $x\in \mathbb{R}^3$ with $\beta >7$, and that $H=\Delta^2+V(x)$ has no positive embedded eigenvalues. Let $P_{ac}(H)$ denotes the projection onto the absolutely continuous spectrum space of $H$. If zero is a regular point of $H$, then
\begin{equation}\label{main-results-regular1}
\big\|\cos (t\sqrt{H})P_{ac}(H)\big\|_{L^{1} \rightarrow L^{\infty}} \lesssim\ |t|^{-\frac{3}{2}},
\end{equation}
and
\begin{equation}\label{main-results-regular2}
\Big\|\frac{\sin(t\sqrt{H})}{\sqrt{H}} P_{a c}(H)\Big\|_{L^{1} \rightarrow L^{\infty}} \lesssim\ |t|^{-\frac{1}{2}}.
\end{equation}
\end{theorem}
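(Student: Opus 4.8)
The plan is to represent both propagators through Stone's formula and reduce the problem to scalar oscillatory integrals in a single spectral variable. Writing $H=\Delta^2+V$ and using the substitution $\mu=\lambda^4$ in the spectral decomposition, one has (up to absolute constants)
\begin{equation*}
\cos(t\sqrt H)\,P_{ac}(H)=c\int_0^\infty \cos(t\lambda^2)\,\lambda^3\big[R_V(\lambda^4+i0)-R_V(\lambda^4-i0)\big]\,d\lambda,
\end{equation*}
and the same identity for $\frac{\sin(t\sqrt H)}{\sqrt H}P_{ac}(H)$ with the factor $\cos(t\lambda^2)$ replaced by $\lambda^{-2}\sin(t\lambda^2)$. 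Since $L^1\to L^\infty$ boundedness is equivalent to a uniform bound on the integral kernel, it suffices to show that the kernel of the first operator is $O(|t|^{-3/2})$ and that of the second is $O(|t|^{-1/2})$, uniformly in $x,y\in\R^3$.

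First I would insert a smooth cutoff $\chi(\lambda)$ supported near $\lambda=0$ to split the $\lambda$-integral into a low-energy and a high-energy piece, treated separately. For the free resolvent I would use the factorization $\Delta^2-\lambda^4=(-\Delta-\lambda^2)(-\Delta+\lambda^2)$ to write
\begin{equation*}
R_0(\lambda^4)=\frac{1}{2\lambda^2}\Big[(-\Delta-\lambda^2)^{-1}-(-\Delta+\lambda^2)^{-1}\Big],
\end{equation*}
reducing the fourth-order free resolvent to two Schr\"odinger resolvents. Taking the jump across the spectrum, the exponentially decaying $+\lambda^2$ piece cancels and one is left with the oscillatory density $\tfrac{i\sin(\lambda|x-y|)}{4\pi\lambda^2|x-y|}$; the prefactor $\lambda^{-2}$ combines with $\lambda^3$ from Stone's formula (and, in the sine case, with $\lambda^{-2}\sin(t\lambda^2)\sim t$ as $\lambda\to0$) so that no singularity survives at the origin.

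For the perturbed resolvent I would use the symmetric resolvent identity
\begin{equation*}
R_V(\lambda^4)=R_0(\lambda^4)-R_0(\lambda^4)\,v\,M(\lambda)^{-1}\,v\,R_0(\lambda^4),\qquad v=|V|^{1/2},
\end{equation*}
where $M(\lambda)=U+vR_0(\lambda^4)v$ and $U=\mathrm{sign}(V)$. The hypothesis that zero is a regular point guarantees precisely that $M(\lambda)^{-1}$ exists and admits a uniformly bounded, convergent expansion as $\lambda\to0$; substituting it yields a finite Born-type expansion whose terms carry kernels decaying in the spatial variables. The decay assumption $\beta>7$ is what makes these weighted free-resolvent kernels, together with enough of their $\lambda$-derivatives, integrable against $V$, so that the $v\,M(\lambda)^{-1}\,v$ sandwich may be treated as a bounded, slowly varying amplitude.

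Finally, after inserting these expansions, everything reduces to scalar integrals of the type
\begin{equation*}
\int_0^\infty e^{\pm i t\lambda^2}\,e^{i\lambda r}\,\lambda^{a}\,\chi(\lambda)\,d\lambda
\end{equation*}
(plus their high-energy analogues), where $r$ is a sum of spatial distances $|x-y|$, $|x-y_1|+|y_1-y|$, and so on. I would estimate these uniformly in $r\ge0$ by stationary phase / van der Corput applied to the phase $\phi(\lambda)=\pm t\lambda^2+\lambda r$, whose second derivative is $\pm2t$: the stationary point sits at $\lambda_*\sim r/|t|$, and evaluating the amplitude there is the key point. In the cosine case the amplitude contains an extra factor $\lambda$, so its value $\sim r/|t|$ at $\lambda_*$ cancels the spatial weight $1/r$ and upgrades the generic $|t|^{-1/2}$ gain to $|t|^{-3/2}$; in the sine case the additional $\lambda^{-2}$ from $1/\sqrt H$ slows this to $|t|^{-1/2}$, as one also sees directly from the scaling $\xi=\eta/\sqrt{|t|}$ in the free kernel. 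The main obstacle is the low-energy regime: one must control, uniformly in $x,y$, the interplay between the spectral singularity at $\lambda=0$, the stationary point of the phase migrating toward the endpoint of integration as $r/|t|\to0$, and the spatial decay of the sandwiched resolvent kernels, so that neither spurious growth in $t$ nor a loss in the spatial variables occurs; the high-energy part is softer but still requires sufficient decay and regularity of $V$ to absorb repeated integrations by parts.
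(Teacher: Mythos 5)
Your overall architecture matches the paper's: Stone's formula, a low/high energy splitting, the factorization of $R_0(\lambda^4)$ into two Schr\"odinger resolvents, the symmetric resolvent identity with $M(\lambda)=U+vR_0(\lambda^4)v$, and a stationary-phase analysis that correctly explains the free estimates. However, there is a genuine gap in your treatment of the perturbed low-energy term: you propose to treat $v\,M(\lambda)^{-1}v$ as ``a bounded, slowly varying amplitude,'' and boundedness alone cannot produce the stated rates. Since $R_0^\pm(\lambda^4)(x,y)=\frac{1}{8\pi\lambda}F^\pm(\lambda|x-y|)$ with $F^\pm(0)=1\pm i\neq 0$, the cosine contribution of $R_0^\pm v M^{-1}vR_0^\pm$ is, for $x,y$ near the support of $V$, of the schematic form $\int_0^\infty e^{-it\lambda^2}\lambda\,\Phi(\lambda)\chi(\lambda)\,d\lambda$ with $\Phi$ smooth and $\Phi(0)\neq0$; one integration by parts produces the boundary term $\Phi(0)/(2it)$, which is genuinely of size $|t|^{-1}$, not $|t|^{-3/2}$. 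Worse, in the sine case ($\alpha=-1$) the same reasoning gives an integrand of size $\lambda^{-1}$ near $\lambda=0$, which is not absolutely integrable at all; neither the oscillation $e^{-it\lambda^2}$ nor the $\pm$ difference in Stone's formula rescues this without an additional cancellation. Acknowledging the ``stationary point migrating to the endpoint'' obstacle, as you do in your last paragraph, is not the same as resolving it.

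The missing ingredient is the structure, not just the existence, of the expansion of $M^\pm(\lambda)^{-1}$ in the regular case (Theorem \ref{thm-main-inver-M}(i)): $\big(M^\pm(\lambda)\big)^{-1}=QA^0_{0,1}Q+\Gamma_1(\lambda)$ with $\|\Gamma_1(\lambda)\|_{L^2\to L^2}=O(\lambda)$, where $Q=I-P$ annihilates $v$, i.e.\ $Qv=0$. This orthogonality is what saves both estimates: expanding the free kernel as in Lemma \ref{Taylor-low}(i), $F^\pm(\lambda|x-u|)=F^\pm(\lambda|x|)-\lambda|u|\int_0^1(F^\pm)'(\lambda|x-\theta u|)\cos\alpha\,d\theta$, the $u$-independent leading term dies when integrated against $vQ$, so each of the two resolvent factors gains a full power of $\lambda$ (Proposition \ref{prop-QA001Q}); the remaining oscillatory integrals are then summed dyadically via Lemma \ref{lem-LWP} to give $|t|^{-(3+2\alpha)/2}$, and the $O(\lambda)$ remainder $\Gamma_1(\lambda)$ is handled separately through its operator-norm derivative bounds (Proposition \ref{prop-reg-freeterms}), not by stationary phase with a generic amplitude. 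Equivalently, even if you work with the difference $R_V^+-R_V^-$, the dangerous $|t|^{-1}$ (cosine) and divergent (sine) coefficients are proportional to $\langle v, QD_0Q\,v\rangle$-type pairings, which vanish only because $Qv=0$. Until your argument identifies and exploits this cancellation, it proves at best $|t|^{-1}$ for $\cos(t\sqrt{H})P_{ac}(H)$ and nothing for $\frac{\sin(t\sqrt{H})}{\sqrt{H}}P_{ac}(H)$.
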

\begin{remark}\label{remark-Thm1}
	Some remarks on Theorem \ref{thm-main-results-regular} are given as follows:
\begin{itemize}
\item[(i)] In view of the free decay estimates (\ref{decayestimate-freecase1}) and (\ref{decayestimate-freecase2}) (also see Proposition \ref{prop-free estimates} below),  the estimates (\ref{main-results-regular1}) and (\ref{main-results-regular2}) with $V\neq 0$ are actually optimal in the case that zero is a regular point of $H$.
 \vskip0.2cm

\item [(ii)]By the spectrum theorem of self-adjoint operator,  it immediately follows that
$$\big\|\cos (t\sqrt{H})P_{ac}(H)f\big\|_{L^2} +\Big \|\frac{\sin(t\sqrt{H})}{t\sqrt{H}}P_{ac}(H)f\Big \|_{L^2}\lesssim \|f\|_{L^2}.$$
Hence by using  (\ref{main-results-regular1}), (\ref{main-results-regular2}) and Riesz-Thorin interpolation theorem,  one can obtain the following $L^p$-$L^{p'}$ estimates
\begin{equation}\label{Lp-estimate}
\big \|\cos (t \sqrt{H})P_{ac}(H)f \big\|_{L^{p'}}+\Big\|\frac{\sin (t \sqrt{H})}{t\sqrt{H}}P_{ac}(H)f \Big\|_{L^{p'}} \lesssim  |t|^{3(\frac{1}{p}-\frac{1}{2})}\|f\|_{L^p}.
\end{equation}
where $1\leq p\leq 2$ and $\frac{1}{p}+\frac{1}{p'}=1$.
 \vskip0.2cm
\item[(iii)]Recently, Goldberg and Green \cite{GG21b}  have proved that the following wave operators
\begin{align}\label{def-wave}
	W_\pm=W_\pm(H,\Delta^2) :=s-\lim_{t\to\pm\infty}e^{itH}e^{-it\Delta^2}
\end{align}
are bounded on $L^p(\mathbb{R}^3)$ for $1<p<\infty$ if zero is regular point of $H=\Delta^2+V$ with $|V(x)|\lesssim \langle x \rangle ^{-12-}$.
Note that $W_\pm$ satisfy the following intertwining identity:
\begin{align}
	\label{intertwining_1}
	f(H)P_{ac}(H)=W_\pm f(\Delta^2)W_\pm^*,
\end{align}
where $f$ is any Borel measurable function on $\mathbb{R}$. By the $L^p$-boundedness of $W_\pm$ and $W_\pm^*$, one can transfer the $L^p$-$L^{p'}$ estimates of $f(H)P_{ac}(H)$ to the same estimates of $f(\Delta^2)$ by
\begin{align}
	\label{Lp-bound of f(H)}
	\|f(H)P_{ac}(H)\|_{L^p\to L^{p'}}\le \|W_\pm\|_{L^{p'}\to L^{p'}}\ \|f(\Delta^2)\|_{L^p\to L^{p'}}\ \|W_\pm^*\|_{L^{p}\to L^{p}}.
\end{align}
  Hence  we have obtained the same $L^p$-$L^{p'}$ estimates \eqref{Lp-estimate} for any $1<p\le 2$. However, due to the absence of the $L^1$ and $L^\infty$ boundednesses of wave operators $W_\pm$ above  in \cite{GG21b}, also see \cite{MWY23} for the counterexamples of  the endpoint boundedness.  Therefore  the time decay estimates in Theorem \ref{thm-main-results-regular} can not be obtained by wave operator methods.
\end{itemize}
\end{remark}


As is known,  the time decaying rate may change for many different types of dispersive estimates if zero  resonance or eigenvalue arises, see for example \cite{JK, ES1} for three-dimensional and \cite{Erdogan-Green} for two-dimensional Schr\"{o}dinger operators.  For fourth order Schr\"{o}dinger operators, for example, cf.  \cite{EGT19, GT19,LSY21}. For the classical wave equation with potential \eqref{waveequation-twoorder}, it was  shown that the first kind of zero resonance does not destroy the  time decay rate,  but the second kind of zero resonance decreases the decay rate,  e.g. cf.  \cite{Green14, EGG14}.

Now we  will present the decay estimates in the presence of zero resonance or eigenvalue.  We first  define the classical weighted spaces $$L^2_{-\sigma}(\mathbb{R}^3)= \big\{f\in L^2_{loc}(\R^3)~| \ \langle x\rangle^{-\sigma} f(x)\in L^2(\mathbb{R}^3)\big \}$$ where $\sigma\in\mathbb{R}$, and then give the definitions of zero resonances as follows, also see Section \ref{low energy} for their equivalent definitions and characterizations.

We say that zero is {\it the first kind  resonance of  $H$} if there exists a nonzero $\phi\in L^2_{-\sigma}(\mathbb{R}^3)$ for  any $\sigma>\frac{3}{2}$ but no
nonzero $\phi\in L^2_{-\sigma}(\mathbb{R}^3)$ for any $\sigma>\frac{1}{2}$ such that $H\phi=0$ in the distributional sense;
zero is  {\it the second kind resonance of  $H$} if there exists a  nonzero $ \phi\in L^2_{-\sigma}(\mathbb{R}^3)$ for any $\sigma>\frac{1}{2}$  but no  nonzero
$\phi\in L^2(\mathbb{R}^3)$ such that $H\phi=0$;
zero is {\it the third kind resonance of $H$ (i.e. eigenvalue) }  if there exists a nonzero $\phi\in L^2(\mathbb{R}^3)$ such that $H\phi=0$.
We remark that such resonance solutions of  $H\phi=0$ also can be characterized in the form of $L^p$ spaces (cf.  \cite{EGT19}).

\begin{theorem}\label{thm-main-results-resonance}
Assume that $|V(x)|\lesssim (1+|x|)^{-\beta}$ for $x\in\mathbb{R}^3$ with some $\beta>0$,  and  that $H=\Delta^2+V(x)$ has no positive embedded eigenvalues. Let $P_{ac}(H)$ denote the projection onto the absolutely continuous spectrum space of $H$. Then the following statements hold:
\begin{itemize}
\item[(i)]  If zero is the first kind resonance of $H$ and $\beta>11$, then
\begin{equation}\label{main-results-firstkind1}
\big\|\cos (t\sqrt{H})P_{ac}(H)\big\|_{L^{1} \rightarrow L^{\infty}} \lesssim|t|^{-\frac{3}{2}},
\end{equation}
and
\begin{equation}\label{main-results-firstkind2}
\Big\|\frac{\sin(t\sqrt{H})}{\sqrt{H}} P_{a c}(H)\Big\|_{L^{1} \rightarrow L^{\infty}} \lesssim|t|^{-\frac{1}{2}}.
\end{equation}

\item[(ii)] If zero is the second kind resonance of $H$ and $\beta>19$,  or  the third kind resonance of the  $H$ and $\beta>23$, then
\begin{equation}\label{main-results-secondkind1}
\big\|\cos (t\sqrt{H})P_{ac}(H) \big\|_{L^{1} \rightarrow L^{\infty}} \lesssim|t|^{-\frac{1}{2}}.
\end{equation}
Moreover, there exist  two finite rank operators $F_{t}$ and $G_t$ satisfying
$$\|F_{t}\|_{L^1\rightarrow L^\infty} \lesssim |t|^{-\frac{1}{2}}  \ \  \hbox{and}\  \ \|G_{t}\|_{L^1\rightarrow L^\infty} \lesssim |t|^{\frac{1}{2}},$$
 such that
\begin{equation}\label{main-results-secondkind2}
\big\|\cos(t\sqrt{H}) P_{a c}(H)-F_{t}\big\|_{L^1 \rightarrow L^{\infty}} \lesssim|t|^{-\frac{3}{2}},
\end{equation}
and
\begin{equation}\label{main-results-secondkind3}
\Big\|\frac{\sin(t\sqrt{H})}{\sqrt{H}} P_{a c}(H)-G_{t}\Big\|_{L^1 \rightarrow L^{\infty}} \lesssim|t|^{-\frac{1}{2}}.
\end{equation}

\end{itemize}
\end{theorem}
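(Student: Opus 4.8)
The plan is to reduce both propagators to oscillatory spectral integrals via Stone's formula and then to extract the decay from a careful low-energy analysis of the resolvent $R_V(\lambda^4\pm i0)=(H-(\lambda^4\pm i0))^{-1}$. Writing $\mu=\lambda^4$ for the spectral parameter of $H$, so that $\sqrt{H}=\lambda^2$ on the absolutely continuous part, Stone's formula gives
\begin{equation*}
\cos(t\sqrt{H})P_{ac}(H)=\frac{2}{\pi i}\int_0^\infty \cos(t\lambda^2)\,\lambda^3\big[R_V(\lambda^4+i0)-R_V(\lambda^4-i0)\big]\,d\lambda,
\end{equation*}
and the analogous formula for $\frac{\sin(t\sqrt H)}{\sqrt H}P_{ac}(H)$ with $\cos(t\lambda^2)$ replaced by $\lambda^{-2}\sin(t\lambda^2)$. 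The entry point for the resolvent analysis is the algebraic factorization $\Delta^2-\lambda^4=(-\Delta-\lambda^2)(-\Delta+\lambda^2)$, which yields
\begin{equation*}
R_0(\lambda^4)=\frac{1}{2\lambda^2}\Big[(-\Delta-\lambda^2)^{-1}-(-\Delta+\lambda^2)^{-1}\Big],
\end{equation*}
expressing the free fourth order resolvent through the two second order Schr\"odinger resolvents, whose kernels in $\R^3$ are the explicit $\frac{e^{i\lambda|x-y|}}{4\pi|x-y|}$ and $\frac{e^{-\lambda|x-y|}}{4\pi|x-y|}$. This reduces the required expansion of $R_0(\lambda^4\pm i0)$ near $\lambda=0$ to the classical Jensen--Kato expansions, and already reveals a leading $\lambda^{-1}$ singularity in the free resolvent which the spectral weight $\lambda^3$ and the factor $\sin(\lambda|x-y|)\sim\lambda|x-y|$ will ultimately tame.

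First I would dispose of the high-energy region $\lambda\gtrsim 1$ with a smooth cutoff: there the resonances play no role, so this contribution is handled exactly as in the regular case of Theorem~\ref{thm-main-results-regular}, namely a Littlewood--Paley decomposition followed by van der Corput/stationary phase estimates for the oscillatory integral, whose phase $t\lambda^2\pm\lambda(|x|+|y|)$ carries a single nondegenerate stationary point. The whole resonance effect is then localized to the low-energy regime $\lambda\le\lambda_0$. On this regime I pass to the symmetric resolvent identity: with $v=|V|^{1/2}$ and $U=\operatorname{sign}V$ so that $V=Uv^2$, one has
\begin{equation*}
R_V(\lambda^4)=R_0(\lambda^4)-R_0(\lambda^4)v\,M(\lambda)^{-1}v\,R_0(\lambda^4),\qquad M(\lambda)=U+vR_0(\lambda^4)v,
\end{equation*}
so that all singular behaviour of $R_V$ at zero is governed by inverting $M(\lambda)$.

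The heart of the matter, and the step I expect to be the main obstacle, is the asymptotic expansion of $M(\lambda)^{-1}$ as $\lambda\to 0$ for each resonance type. Using the Schr\"odinger factorization, $M(\lambda)$ admits a Taylor-type expansion in powers of $\lambda$ with explicit integral-operator coefficients, and the resonance classification of the introduction (via the weighted spaces $L^2_{-\sigma}$) dictates which coefficient is invertible on which subspace. I would run the Jensen--Kato/Feshbach inversion scheme: introduce the orthogonal projections $S_1\supseteq S_2\supseteq S_3$ onto the kernels attached to the first, second, and third kind resonances, and invert the Schur complement at each stage. Each projection peels off an additional factor of $\lambda^{-1}$, so that $M(\lambda)^{-1}$ carries a leading finite rank term of order $\lambda^{-1}$ in the first kind case and of order $\lambda^{-2}$ (or worse, for the eigenvalue case) in the second and third kind cases, built from the resonance and eigenfunctions. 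The thresholds $\beta>11,\,19,\,23$ enter precisely here: the more singular the inverse, the more terms of the expansion must be controlled and the more spatial decay of the resonance functions is needed to keep the ensuing kernel integrals absolutely convergent.

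Finally I would substitute the resulting expansion of $R_V(\lambda^4\pm i0)$ back into Stone's formula and estimate the $\lambda$-integrals term by term. The regular remainder of $M(\lambda)^{-1}$ reproduces the $|t|^{-3/2}$ and $|t|^{-1/2}$ bounds of the regular case; for the first kind resonance the leading $\lambda^{-1}$ singularity, paired with the weight $\lambda^3$ and the vanishing of $\sin(\lambda|x-y|)$ at $\lambda=0$, is still integrable without loss, which gives part~(i). For the second and third kind resonances the most singular $\lambda^{-2}$ contribution, integrated against the oscillatory factor and the spectral weight, no longer decays at the full rate and instead produces a finite rank operator whose rank equals $\dim S_2$: for $\cos(t\sqrt H)$ this is $F_t$ with $\|F_t\|_{L^1\to L^\infty}\lesssim|t|^{-1/2}$, and after its subtraction the remainder recovers $|t|^{-3/2}$, giving \eqref{main-results-secondkind2}. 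The analogous term for $\frac{\sin(t\sqrt H)}{\sqrt H}$ carries the extra $\lambda^{-2}$ coming from the $\lambda^{-2}\sin(t\lambda^2)$ representation; under the natural scaling $\lambda\sim|t|^{-1/2}$ this factor of $\lambda^{-2}$ contributes an extra power of $|t|$, turning the $|t|^{-1/2}$ rate into the growing bound $\|G_t\|_{L^1\to L^\infty}\lesssim|t|^{1/2}$, and the subtracted remainder then decays as $|t|^{-1/2}$, yielding \eqref{main-results-secondkind3}. The explicit $t$-dependence of $F_t$ and $G_t$ follows by evaluating model integrals of the form $\int_0^\infty e^{\pm it\lambda^2}\lambda^k\chi(\lambda)\,d\lambda$ against the low-energy cutoff, which scale like half-integer powers of $t$.
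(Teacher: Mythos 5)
Your proposal follows the same overall architecture as the paper: Stone's formula, a low/high-energy cutoff with the high-energy part handled as in the regular case, the symmetric resolvent identity $R_V^\pm=R_0^\pm-R_0^\pm v(M^\pm)^{-1}vR_0^\pm$, a Jensen--Kato/Feshbach inversion of $M^\pm(\lambda)$ through the nested projections $S_1\supseteq S_2\supseteq S_3$, and term-by-term oscillatory integral estimates with the non-decaying pieces collected into finite rank operators. However, two of your central quantitative claims are wrong or unsupported, and they sit exactly where the difficulty of this theorem lies. First, the singularity bookkeeping: it is not true that each projection peels off a single factor of $\lambda^{-1}$. The expansions in Theorem \ref{thm-main-inver-M} have leading terms $\lambda^{-1}S_1A S_1$, $\lambda^{-3}S_2AS_2$ and $\lambda^{-4}S_3D_3S_3$ for the first, second and third kind respectively, so your ``$\lambda^{-2}$ (or worse)'' undercounts the second-kind singularity by a full power, and every subsequent estimate inherits this error.

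Second, and more fundamentally, the mechanism you invoke to extract decay from the singular terms does not suffice. Since $|R_0^\pm(\lambda^4)(x,y)|\lesssim\lambda^{-1}$ uniformly (the kernel of $R_0^+-R_0^-$ is $\frac{i\sin(\lambda|x-y|)}{4\pi\lambda^2|x-y|}\sim\frac{i}{4\pi\lambda}$, so the sine factor you cite provides no vanishing), a size estimate of the first-kind term $R_0^\pm v(\lambda^{-1}S_1AS_1)vR_0^\pm$ against the weight $\lambda^3$ yields a bounded integrand, and stationary phase then gives only $O(|t|^{-1/2})$ --- not the claimed $|t|^{-3/2}$ of part (i). The actual source of the full decay is cancellation: $S_1\leq Q$ and $Qv=0$, so one may subtract $F^\pm(\lambda|x|)$ inside the kernel and gain a factor of $\lambda$ on each side via the Taylor expansion of Lemma \ref{Taylor-low}(i). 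For the second and third kinds one needs the higher-order orthogonality $S_2x_iv=0$ and $S_3x_ix_jv=0$, but these cannot be applied to $R_0^\pm$ directly because $(F^\pm)'(0)\neq0$; the paper must first split $R_0^\pm=(R_0^\pm-G_0)+G_0$ (and, for the $S_3$ term, further add $\frac{2i}{3}(\lambda|x-y|)^2$) so that the modified kernel functions have vanishing first (respectively first and second) derivatives at zero, making Lemma \ref{Taylor-low}(ii)--(iii) applicable. This splitting is not a technicality: the leftover pieces containing $G_0$, such as $G_0v(\lambda^{-3}S_2AS_2)vG_0$, are precisely the operators that cannot decay at the full rate and that constitute $F_t$ and $G_t$, with the bounds $|t|^{-1/2}$ and $|t|^{1/2}$ coming from explicit model integrals like $\int_0^\infty\chi(\lambda)\sin(t\lambda^2)\lambda^{-2}d\lambda=O(|t|^{1/2})$. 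Without identifying this decomposition, your assertion that the remainders after subtracting $F_t$ and $G_t$ recover the rates in \eqref{main-results-secondkind2} and \eqref{main-results-secondkind3} has no supporting argument.
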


 \begin{remark}\label{remark-thm2} Some further remarks on Theorem \ref{thm-main-results-resonance} are given as follows:
 	\begin{itemize}
 \item[(i)]
 We  notice that the decay assumption of potential $V$ mainly depends on the asymptotic expansion of Theorem  \ref{thm-main-inver-M}  at zero threshold,  and may be not optimal even  for the regular case.  In general since the dimension is less than or equal to the order of $H$, we  guess that decay rate $\beta>4$  is expected at best  in the regular case for  bounded potentials (even local singularity can be allowed, e.g. $\langle x\rangle ^\beta V\in L^p$ for some  $1\le p<\infty$),   cf.  \cite{MY2021} for sharp Hardy potentials. On the other hand, if dimension is larger than the order of $H$,  certain smoothness of potential must be required,  e.g. refer to  \cite{EG21} and \cite{EGG23} for   $L^p$ bounds of wave operators.
 \vskip0.2cm

\item[(ii)]As shown in Theorem \ref{thm-main-results-resonance} above, if  zero is a first-kind resonance, then the decay estimates of the solution operators $\cos(t\sqrt{H})$  and $\frac{\sin(t\sqrt{H})}{\sqrt{H}}$ are the same as in  the regular case, except  that  a faster decay rate for potentials $V$ is required. In addition, we note that the time decay rates of  solution operators $\cos(t\sqrt{H})$ and $\frac{\sin(t\sqrt{H})}{\sqrt{H}}$  are always optimal for high energy part, see Theorem \ref{thm-high-1} below.
 \vskip0.2cm

\item[(iii)]The existence of second and third-kind zero resonances have altered  time decay rates of  the two solution operators.
  In particular,  the $L^1$-$L^\infty $ estimate of
 $\frac{\sin(t\sqrt{H})}{\sqrt{H}}$  becomes worse  and even  show a  positive growth  as $O(|t|^{\frac{1} {2}})$  for  time $t$ goes to infinite if  zero is a second or third-kind resonance.   Moreover,  by  using Remark \ref{remark-Thm1}(ii),  \eqref{main-results-secondkind2} and  \eqref{main-results-secondkind3}, it follows by interpolation that for $1\le p\le 2$,
\begin{equation}\label{Lp-estimateII}
	\big \|\cos (t \sqrt{H})P_{ac}(H)f \big\|_{L^{p'}}+\Big\|\frac{\sin (t \sqrt{H})}{t\sqrt{H}}P_{ac}(H)f \Big\|_{L^{p'}} \lesssim  |t|^{-(\frac{1}{p}-\frac{1}{2})}\|f\|_{L^p},
\end{equation}
Comparing with the estimate \eqref{Lp-estimate}, we see  that the decay bound $O( |t|^{-(\frac{1}{p}-\frac{1}{2})})$ is not  optimal for all $1\le p< 2$ if  zero is a second or third-kind resonance of $H$.  However,  in a recent work \cite{MWY_ArXiv23_2},  the optimal bound  $O( |t|^{-3(\frac{1}{p}-\frac{1}{2})})$ can be obtained for  $3/2<p\le 2$ (but not for $1\le p <3/2$) by the wave operator method as zero is a second or third-kind resonance.  Hence
the presences of strongly zero resonance  indeed bring the essential obstacle to optimal decay for $p=1$.
\end{itemize}
\end{remark}
 \subsection{Further remarks and backgrounds}\label{more-backgrpund}
 Here we make  further comments on  spectral assumptions, and mention some known results on the second order wave equation with real-valued decaying potential.
 \subsubsection{The absence of embedded positive eigenvalues.}
It is well-known by Kato \cite{Kat59} that Schr\"{o}dinger operator $-\Delta +V$ has no positive eigenvalues if a bounded potential $V(x)= o(|x|^{-1})$ as $|x|$ goes to infinity,   also cf. \cite{Sim69, FHHH82, IJ03, KT06} for more related results and references. However, such a criterion
 does not work for fourth-order  Schr\"{o}dinger operator $H=\Delta^2 +V$. Indeed, for any dimension $n\geq 1$,  one can easily construct some $V\in C_0^\infty(\mathbb{R}^n)$ such that $H$ has some embedded positive eigenvalues, see Section 7.1 in \cite{FSWY20}. These results clearly indicate that the absence of positive eigenvalues  for the
 fourth-order Schr\"{o}dinger operator  would be more subtle and unstable than the second order cases with a bounded potential perturbation $V$.

It should be noticed that Feng et al. in \cite{FSWY20} have proved that $H=\Delta^2 +V$ does not contain any positive eigenvalue assume that  potential $V$ is bounded and satisfies the repulsive condition (i.e. $(x\cdot \nabla)V \leq 0$). Moreover, we also remark that for a general self-adjoint operator
  $\mathcal{H}$ on $L^{2}(\mathbf{R}^n)$, even if $\mathcal{H}$ has a simple embedded{\tiny } eigenvalue, Costin and Soffer  \cite{CS01} have proved
  that $\mathcal{H}+\epsilon W$ can kick off the eigenvalue located in a small interval under generic small perturbation of potential.
\subsubsection{The decay estimates of the fourth order Schr\"{o}dinger operators.}\label{Schrodinger-operator-background}
Recently, there exists  a few of  works devoting to the time decay estimates of $e^{-itH}$ generated by the fourth order Schr\"{o}dinger operator $H=\Delta^2 +V$ with a decaying potential $V$.  Feng et al. \cite{FSY} first proved that Kato-Jensen decay estimate of $e^{-itH}$ is bounded by
 $(1+|t|)^{-n/4}$ for $n\geq 5$,  and $L^1-L^\infty$ decay estimate is $O(|t|^{-1/2})$ for $n=3$ in the regular case. Somewhat later,  Erdog\u{a}n et al. \cite{EGT19} for $n=3$ and Green et al. \cite{GT19} for $n=4$,  proved that
the  $L^1-L^\infty$ estimates of $e^{-itH}$ is $O(|t|^{-n/4})$ for $n=3,4$ if zero is a regular point and first-kind resonance,  and the time decay rate will be changed as other kind of zero energy resonance occurs. More recently, Soffer et al. \cite{SWY21} proved the $L^1-L^\infty$ estimates of $e^{-itH}$ is $O(|t|^{-1/4})$ for dimension $n=1$ whatever zero is a regular point or resonance. It should be emphasized that the different types of zero resonances do not change the optimal time decay rate of $e^{-itH}$ in dimension one just at the cost of faster decay rate of potential. In \cite{LSY21} the authors have studied the $L^1-L^\infty$ estimates of $e^{-itH}$  in dimension two.

Furthermore, we mention that there exist  many other interesting works on the $L^p$ bounds of higher order wave operators $H=(-\Delta)^m+V$,  for instance, see  \cite{GG21b, EG21,Erdogan-Green23, EGG23} for $n=3, m=2$ or  $n>2m\ge 4$ in the regular case, and refer to \cite{MWY22} and \cite{Galtbayar_Yajima_ArXiv23} for the $L^p$-boundedness of wave operators of fourth order Schr\"{o}dinger operators with zero resonance cases  in $n=1, 4$, respectively.
\subsubsection{Classical wave equations with potentials.}\label{sub-waveequation}  In the following, we just recall  some results on decay estimates of  solution for  classical wave equation with a potential:
\begin{equation}\label{waveequation-twoorder}
u_{tt}+(-\Delta+V)u=0,\,\ u(0,x)=f(x),\,\ u_t(0,x)=g(x),  \, x\in\mathbb{R}^n.
\end{equation}
In the free case ( i.e.  $V=0$), it was well known that the $W^{k+1,1}- L^\infty$ estimate of
solution operator $\cos(t\sqrt{-\Delta})$ is $ O(|t|^{-\frac{1}{2}})$  and the $W^{k,1}- L^\infty$ estimate of
solution operator $\frac{\sin(t\sqrt{-\Delta})}{\sqrt{-\Delta}}$ is $O(|t|^{-\frac{1}{2}})$
for $k>\frac{1}{2}$ in dimension two,  In general, one need  use Hardy,  Besov  or BMO spaces to obtain the sharp $k=\frac{n-1}{2}$ smoothness  index  in even dimensions, see e.g. \cite{MSW}. One can get such decay bound in Sobolev spaces  in odd dimensions  (cf. \cite{Str05}).

When $V\neq 0$, Beals and Strauss \cite{B-S} first studied the $L^\infty$-decay  estimates of solution operators to equation (\ref{waveequation-twoorder}) in dimension $n\geq3$ (also cf. \cite{Beals, DP, GV03}).
There is not much work on the $W^{k,1}\rightarrow L^\infty$ dispersive estimates or `regularized' $L^1\rightarrow L^\infty$
 type estimates for $n=2$, where negative powers of $-\Delta+V$ are employed.
 In dimension two,  Moulin \cite{Mou} studied the high frequency estimates
 of this type. Kopylova \cite{Kop}  studied  local estimates  and obtained the decay rate of $t^{-1}(\log t)^{-2}$  for large $t$ when zero  is regular point.  Green \cite{Green14} also studied the $L^1-L^\infty$ dispersive estimates of solution operator of wave equation with potential in dimension two, and time decay rate of solution operators were improved in weighted space if zero is a regular point of the spectrum of $H$.

Besides, many  advances have been made in other dimensions, see e.g. Cardosa and  Vodev \cite{GV12} for dimensions $4\leq n\leq 7$. These results all require the assumption that zero is regular. In \cite{EGG14},  Erdo\v{g}an, Goldberg and Green have established a low energy $L^1-L^\infty$  bounds for  wave equation  with potential in four spatial dimensions, and showed that the loss of derivatives on the initial data for the wave equation is a high energy phenomenon.  Also cf.  \cite{ Bec-Gold, GV95, BPSZ03, BPSZ04, GV03, DP} for further decay bounds and  Strichartz estimates for wave equation with a potential particularly in dimensions $n\geq3$.
\subsection{The outline of the proof.}
Here we briefly explain some  ideas of the proofs of the theorems above.
For simplicity, we only consider the regular case.

In order to establish the decay estimates in Theorem \ref{thm-main-results-regular} and Theorem \ref{thm-main-results-resonance},  we will use the following Stone's formulas:
\begin{equation}\label{stoneformula-cos}
   \begin{split}
 \cos(t\sqrt{H})P_{ac}(H)f(x)=\frac{2}{\pi i} \int_0^\infty \lambda^3\cos(t\lambda^2)[ R_V^+(\lambda^4)-R_V^-(\lambda^4)]f(x)d\lambda,
  \end{split}
\end{equation}
\begin{equation}\label{stoneformula-sin}
   \begin{split}
 \frac{\sin(t\sqrt{H})}{\sqrt{H}}P_{ac}(H)g(x)=\frac{2}{\pi i} \int_0^\infty \lambda\sin(t\lambda^2)[ R_V^+(\lambda^4)-R_V^-(\lambda^4)]g(x)d\lambda,
  \end{split}
\end{equation}
where $R^\pm_V(\lambda^4)=(H-\lambda^4  \mp i0)^{-1}$ are the boundary operators of   resolvents  of $H$.
We need to study the expansions of the resolvent operators $R^\pm_V(\lambda^4)$ as $\lambda$ is  near zero by using
perturbations of  the following free resolvent $ R_0(z)$  (see e.g. \cite{FSY}):
\begin{equation}\label{R0zlaplace}
   \begin{split}
 R_0(z):=\big((-\Delta)^2 -z\big)^{-1}=\frac{1}{2z^\frac{1}{2}} \big(R(-\Delta;z^\frac{1}{2})
    -R(-\Delta;-z^\frac{1}{2})\big),\ z\in \mathbb{C} \setminus [0,\infty).
  \end{split}
\end{equation}
Here the resolvent $ R(-\Delta;z^\frac{1}{2}):=(-\Delta-z^\frac{1}{2})^{-1}$ with $\Im z^\frac{1}{2}>0$. For $\lambda \in \mathbb{R}^+$,
we define the limiting resolvent operators by
\begin{equation}\label{R0lambda-pm}
   \begin{split}
 R_0^\pm(\lambda):=R_0^\pm(\lambda \pm i0)= \lim_{\epsilon \rightarrow 0}
 \big( \Delta^2-(\lambda \pm i\epsilon) \big)^{-1},
  \end{split}
\end{equation}
\begin{equation}\label{RVlambda-pm}
   \begin{split}
 R_V^\pm(\lambda):=R_V^\pm(\lambda \pm i0)= \lim_{\epsilon \rightarrow 0}
 \big( H-(\lambda \pm i\epsilon) \big)^{-1}.
  \end{split}
\end{equation}
By using the equality (\ref{R0zlaplace}) for $R_0(z)$ with $z=w^4$ for $w$ in the first quadrant
of the complex plane, and taking limits as $w\rightarrow \lambda$ and $w\rightarrow i\lambda$, we have
\begin{equation}\label{R0lambda-4pm}
   \begin{split}
 R^\pm_0(\lambda^4)=\frac{1}{2\lambda^2}\big( R^\pm(-\Delta; \lambda^2)-R(-\Delta; -\lambda^2) \big),
  \ \lambda>0.
  \end{split}
\end{equation}
It was well-known that by the limiting absorption principle (see e.g. Agmon \cite{Agmon}), $R^\pm(-\Delta;\lambda^2)$ are well-defined  as the bounded operators of $B(L^2_s,L^2_{-s})$ for any $s>1/2$,
 therefore $R^\pm_0(\lambda^4) $ are also well-defined between the weighted spaces by using  \eqref{R0lambda-4pm}. This property is extended to $R^\pm_V(\lambda^4)$ for $\lambda >0$  for certain decay bounded potentials, see \cite{FSY}.
Moreover, we note that the kernel of the free resolvent of Laplacian in dimension three (see e.g. \cite{Goldberg-Schlag04}):
\begin{equation}\label{reslolent-R}
 R^\pm (-\Delta;\lambda^2)(x,y)= \frac{e^{\pm i\lambda|x-y|}}{4\pi |x-y|}, \ x, y\in\mathbb{R}^3,
   \end{equation}
 so by the identity (\ref{R0lambda-4pm})  we  obtain the following  kernel of $R^\pm_0(\lambda^4)$ for each $\lambda>0$:
\begin{equation}\label{resolent-R0lambda4}
R^\pm_0(\lambda^4)(x,y)=\frac{1}{2\lambda^2}\Big( \frac{e^{\pm i\lambda|x-y|}}{4\pi |x-y|}
-\frac{e^{-\lambda|x-y|}}{4\pi |x-y|}  \Big).
\end{equation}

 In order to estimate (\ref{stoneformula-cos}) and (\ref{stoneformula-sin}),  since
 \begin{equation}\label{relation-sin-cos}
\cos(t\sqrt{H})=\frac{e^{it\sqrt{H}}+e^{-it\sqrt{H}}}{2}, \ \,
\frac{\sin(t\sqrt{H})}{\sqrt{H}}=\frac{e^{it\sqrt{H}}-e^{-it\sqrt{H}}}{2i\sqrt{H}}.
\end{equation}
so  it suffices to estimate  $H^{\frac{\alpha}{2}}e^{-it\sqrt{H}}P_{ac}(H)$ for $\alpha=-1, 0$ by using
\begin{equation}\label{stone-H-alpha}
   \begin{split}
H^{\frac{\alpha}{2}}e^{-it\sqrt{H}}P_{ac}(H)f=& \frac{2}{\pi i } \int_0^\infty e^{-it\lambda^2}\lambda^{3+2\alpha}
[R_V^+(\lambda^4) -R_V^-(\lambda^4)]f d\lambda.
\end{split}
\end{equation}
We further decompose the integral \eqref{stone-H-alpha} into the low energy $\{0\leq \lambda \ll 1\}$  and  the high energy $\{\lambda \gg 1\}$ two parts.

For the high energy part, we will use the  following resolvent identity:
\begin{equation}\label{Rv-high}
   \begin{split}
R^\pm_V(\lambda^4)=R^\pm_0(\lambda^4)-R^\pm_0(\lambda^4)VR^\pm_0(\lambda^4)
+R^\pm_0(\lambda^4)VR^\pm_V(\lambda^4)VR^\pm_0(\lambda^4).
  \end{split}
\end{equation}
Hence we will need to study  the contribution of every term in (\ref{Rv-high}) to the integral (\ref{stone-H-alpha}).

For the low energy part,  we need to establish the expansions of the resolvent operators $R^\pm_V(\lambda^4)$ for $\lambda$ near zero.
Set $U(x)=\hbox{sign}\big(V(x)\big)$ and $v(x)=|V(x)|^{1/2}$.
Let $M^{\pm}(\lambda)=U+ vR^\pm_0(\lambda^4)v$. Then we have the following symmetric resolvent identity
 \begin{equation*}
R^\pm_V(\lambda^4) = R_0^\pm(\lambda^4) -R^\pm_0(\lambda^4)v(M^\pm(\lambda))^{-1}vR^\pm_0(\lambda^4).
\end{equation*}
Now we need to establish the expansions for $(M^\pm(\lambda))^{-1}$.
In the regular case,   for example,  the  expansions of $(M^\pm(\lambda))^{-1}$ is of the following form( see Theorem \ref{lem-M} below  )
 \begin{equation*}
\big(M^\pm(\lambda)\big)^{-1}=QA_{0,1}^0Q+\Gamma_1(\lambda), \  \  \lambda<<1.
\end{equation*}
 where $A_{0,1}^0, Q\in B(L^2, L^2)$ satisfying
 $Qv=0$, $\|\Gamma_1(\lambda)\|_{L^2\rightarrow L^2}=O(\lambda^2)$. Thus, we need to study the following integral for low energy
 \begin{equation}\label{oscillation-low-Q}
\int_0^\infty e^{-it\lambda^2}\lambda^{3+2\alpha}\big[R_0^\pm(\lambda^4)v(QA^0_{0,1}Q)vR_0^\pm(\lambda^4)\big](x,y)d\lambda.
\end{equation}
In order study the integral \eqref{oscillation-low-Q}, we will  make use of cancellation condition  $Qv=0$ and Lemma \ref{Taylor-low}, and then use  Lemma \ref{lem-LWP} to estimate  the oscillatory integral  (\ref{oscillation-low-Q}).

 The paper is organized as follows. In Section 2, we establish the dispersive bounds in the free case. In Section 3, we first recall the resolvent expansions  when $\lambda$ is near zero, then by Stone's formula, Littlewood-Paley method and oscillation integral we establish the low energy decay bounds of Theorem \ref{thm-main-results-regular} and Theorem \ref{thm-main-results-resonance}. In Section 4,
 we prove Theorem \ref{thm-main-results-regular} and Theorem \ref{thm-main-results-resonance} in  high energy.
 Finally, for the convenience  of reader, we give the asymptotic expansion of $(M^\pm(\lambda))^{-1}$ when $\lambda$ is  near to zero in Appendix.

\bigskip

\section{The decay estimates for the free case}
In this section, we are devote to get the decay bounds  of free Beam equation by using  Littlewood-Paley decomposition and oscillatory integral theory.

Choosing a fixed even function $ \varphi \in C^\infty_c(\mathbb{R})$ such that
 $\varphi(s)=1$ for
$ |s|\leq \frac{1}{2}$ and $ \varphi(s)=0$ for $ |s| \geq 1$.
Let $\varphi_N(s)=\varphi(2^{-N}s)- \varphi(2^{-N+1}s),\ N\in \mathbb{Z}$. Then
$\varphi_N(s)=\varphi_0(2^{-N}s)$,
$\hbox{supp}\varphi_0 \subset [ \frac{1}{4}, 1]$ and
\begin{equation}\label{jieduan}
\sum_{N=-\infty}^{\infty}\varphi_0(2^{-N}s)=1,\  s\in \mathbb{R}\setminus \{0\}.
\end{equation}

To estimate the integrals in (\ref{stoneformula-cos}) and (\ref{stoneformula-sin}) when $V=0$, by \eqref{relation-sin-cos} it is enough to establish the $L^1-L^\infty$ bounds of $(-\Delta)^{\alpha}e^{it\Delta}$ for $\alpha=-1, 0$.
Using Stone's formula  \eqref{stone-H-alpha} and \eqref{jieduan}, one has
\begin{equation}\label{free formula1}
   \begin{split}
(-\Delta)^{\alpha}e^{it\Delta}f=& \frac{2}{\pi i } \int_0^\infty e^{-it\lambda^2}\lambda^{3+2\alpha}
[R_0^+(\lambda^4) -R_0^-(\lambda^4)]f d\lambda\\
= &\frac{2}{\pi i }\sum_{N=-\infty}^{\infty} \int_0^\infty e^{-it\lambda^2}\lambda^{3+2\alpha}
\varphi_0(2^{-N}\lambda)[R_0^+(\lambda^4) -R_0^-(\lambda^4)]f d\lambda.
\end{split}
\end{equation}
Therefore,
it suffices to estimate the following integral kernel for each $N$:
$$\int_0^\infty e^{-it\lambda^2}\lambda^{3+2\alpha}\varphi_0(2^{-N}\lambda) R_0^\pm(\lambda^4)(x,y) d\lambda.$$

In the following, we first give a lemma which plays an  important role in estimating  integrals appeared  in this paper.
Since its proof is similar to Lemma 3.3 in \cite{LSY21},  we here omit the details.
\begin{lemma}\label{lem-LWP}
Let $A$ be some subset of $\mathbb{Z}$. Suppose that $\Phi(s,z)$ is a function on $\mathbb{R}\times \mathbb{R}^m$ which is smooth
 for the first variable $s$, and satisfies for any $(s,z)\in [1/4, 1] \times \mathbb{R}^m$,
$$|\partial_s^k\Phi(2^Ns,z)|\lesssim 1,\, k=0,1, N\in A\subset\mathbb{Z}.$$
Suppose that $\varphi_0(s)$ be a smoothing function of $\mathbb{R}$ defined in (\ref{jieduan}), $\Psi(z)$ is a nonnegative  function on
$\mathbb{R}^m$. Let $ N_0 =\big[\frac{1}{3}\log_2\frac{\Psi(z)}{|t|}\big]$,
for each $z\in \mathbb{R}^m$,  $l\in\mathbb{R}$, $N\in A$ and $t\neq 0$,  we have
\begin{equation*}
\Big|\int_0^\infty e^{-it2^{2N}s^2}
e^{\pm i2^Ns\Psi(z)} s^l\varphi_0(s)\Phi(2^Ns,z) ds \Big| \lesssim
\begin{cases}
(1+|t| 2^{2N})^{-\frac{1}{2}},  & \hbox{if}\  |N-N_0|\leq2,\\
(1+|t|2^{2N})^{-1}, & \hbox{if}\  |N-N_0|> 2.
\end{cases}
\end{equation*}
\end{lemma}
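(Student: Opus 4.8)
The plan is to treat the oscillatory integral
\begin{equation*}
I(N,z):=\int_0^\infty e^{-it2^{2N}s^2}e^{\pm i2^Ns\Psi(z)}s^l\varphi_0(s)\Phi(2^Ns,z)\,ds
\end{equation*}
by a stationary/non-stationary phase dichotomy, reducing the two asserted bounds to (a) a universal estimate $(1+|t|2^{2N})^{-1/2}$ and (b) an improved estimate $(1+|t|2^{2N})^{-1}$ valid off the resonant band. Write the real phase $\psi(s)=-t2^{2N}s^2\pm 2^Ns\Psi(z)$ and the amplitude $a(s)=s^l\varphi_0(s)\Phi(2^Ns,z)$. Since $\varphi_0$ is supported in $[1/4,1]$ and vanishes at both endpoints, and since the hypothesis $|\partial_s^k\Phi(2^Ns,z)|\lesssim 1$ for $k=0,1$ together with the smoothness of $s^l$ and $\varphi_0$ on $[1/4,1]$ yields $\|a\|_{\infty}\lesssim 1$ and $\|a'\|_{L^1}\lesssim 1$ \emph{uniformly} in $N$ and $z$, the amplitude and its derivative are controlled by constants independent of all parameters. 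This uniformity is what allows every subsequent constant to be independent of $z$.

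First I would record two unconditional bounds. The trivial estimate $|I(N,z)|\le \|a\|_{L^1}\lesssim 1$ settles the low-frequency regime $|t|2^{2N}\lesssim 1$. For the complementary regime I would apply the van der Corput lemma in its second-derivative form: because $\psi''(s)\equiv -2t2^{2N}$ is constant with $|\psi''|=2|t|2^{2N}$, one obtains $|I(N,z)|\lesssim (|t|2^{2N})^{-1/2}\big(\|a\|_\infty+\|a'\|_{L^1}\big)\lesssim (|t|2^{2N})^{-1/2}$. Combining the two gives $|I(N,z)|\lesssim (1+|t|2^{2N})^{-1/2}$ for \emph{every} $N$, which is exactly the asserted estimate in the near-resonant case $|N-N_0|\le 2$.

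The improvement to $(1+|t|2^{2N})^{-1}$ off the resonant band is the main point, and it is where the choice of $N_0$ enters. The phase has the single critical point $s_c=\pm 2^N\Psi(z)/(2t2^{2N})$, and $N_0$ is chosen so that $s_c$ meets a fixed neighborhood of the support $[1/4,1]$ precisely when $N$ lies within a bounded distance of $N_0$. Consequently, when $|N-N_0|>2$ the two competing terms $2|t|2^{2N}s$ and $2^N\Psi(z)$ in $\psi'(s)=-2t2^{2N}s\pm 2^N\Psi(z)$ differ by a fixed multiplicative factor throughout $[1/4,1]$; carefully tracking the sign of $t$ and the sign $\pm$ so that no cancellation occurs, I would show that whichever term dominates produces the uniform lower bound $|\psi'(s)|\gtrsim |t|2^{2N}$ for all $s\in[1/4,1]$ and all $z$. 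Establishing this separation-of-scales lower bound, uniformly in the parameter $z$, is the crux of the whole lemma; everything else is routine.

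Granting the lower bound, I would integrate by parts once with the operator $L=(i\psi')^{-1}\partial_s$. The boundary terms vanish because $a$ inherits the vanishing of $\varphi_0$ at $s=1/4$ and $s=1$, leaving
\begin{equation*}
I(N,z)=-\int_0^\infty\Big(\frac{a'(s)}{i\psi'(s)}-\frac{a(s)\psi''(s)}{i(\psi'(s))^2}\Big)e^{i\psi(s)}\,ds.
\end{equation*}
Using $\|a'\|_{L^1}\lesssim 1$, $\|a\|_\infty\lesssim 1$, $|\psi''|=2|t|2^{2N}$, the bounded length of the support, and $|\psi'|\gtrsim |t|2^{2N}$, the first term is $\lesssim (|t|2^{2N})^{-1}$ and the second is $\lesssim |t|2^{2N}\,(|t|2^{2N})^{-2}\lesssim (|t|2^{2N})^{-1}$. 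Combining once more with the trivial bound gives $|I(N,z)|\lesssim (1+|t|2^{2N})^{-1}$ whenever $|N-N_0|>2$, which completes the argument. The principal obstacle throughout is the uniform-in-$z$ lower bound $|\psi'|\gtrsim |t|2^{2N}$ in the off-resonance regime; the remainder is the standard van der Corput / integration-by-parts dichotomy, so I would only sketch those routine steps.
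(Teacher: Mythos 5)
Your overall architecture is the standard one, and the first half of it is sound: the trivial bound handles $|t|2^{2N}\lesssim 1$, the van der Corput second-derivative test with $\psi''\equiv -2t2^{2N}$ gives the universal $(1+|t|2^{2N})^{-1/2}$ estimate, and one integration by parts (with boundary terms killed by $\varphi_0$) would upgrade this to $(1+|t|2^{2N})^{-1}$ wherever $|\psi'(s)|\gtrsim |t|2^{2N}$ on $[1/4,1]$. The genuine gap is the step you yourself call the crux and then defer with ``I would show'': the assertion that, for $N_0=\big[\tfrac13\log_2\tfrac{\Psi(z)}{|t|}\big]$, the critical point of the phase meets $[1/4,1]$ precisely when $N$ is within a bounded distance of $N_0$. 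That assertion is false. From $\psi'(s)=-2t2^{2N}s\pm 2^N\Psi(z)$, the critical point (when the signs allow one) is $s_c=\Psi(z)/(2|t|2^{N})$, and $s_c\in[1/4,1]$ exactly when $2^N\sim \Psi(z)/|t|$, i.e.\ when $N\approx \log_2\big(\Psi(z)/|t|\big)=3N_0$, not $N\approx N_0$. Concretely, take $t>0$, $\Psi(z)=t\,2^{3N_0}$ with $N_0\ge 2$, and $N=3N_0$: then $|N-N_0|=2N_0>2$, yet $s_c=1/2$ lies inside the support with $\varphi_0(1/2)=1$, and for $\Phi\equiv 1$, $l=0$ stationary phase shows the integral is genuinely of size $(|t|2^{2N})^{-1/2}$. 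So the lower bound $|\psi'|\gtrsim |t|2^{2N}$ you need cannot hold there, and no integration-by-parts argument can reach $(|t|2^{2N})^{-1}$.

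It is worth understanding where the mismatch comes from. The factor $\tfrac13$ in $N_0$ is calibrated to the quartic time phase $e^{-it\lambda^4}$ of the fourth-order Schr\"odinger flow in \cite{LSY21} (the proof this paper cites), where the stationary point forces $2^N\sim(\Psi/|t|)^{1/3}$; here the time factor is quadratic, $e^{-it2^{2N}s^2}$, for which the resonant band is centered at $\log_2\big(\Psi(z)/|t|\big)$. If you replace $N_0$ by $\big[\log_2\big(\Psi(z)/|t|\big)\big]$, your separation-of-scales argument does close, uniformly in $z$: for $2^N\gg \Psi(z)/|t|$ the term $2|t|2^{2N}s$ dominates $2^N\Psi(z)$ on $[1/4,1]$, while for $2^N\ll \Psi(z)/|t|$ the term $2^N\Psi(z)$ dominates and is itself $\gg |t|2^{2N}$; in both cases $|\psi'|\gtrsim |t|2^{2N}$, and the rest of your write-up applies verbatim. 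As written, however, your proposal neither proves the lemma with the stated $N_0$ (no proof can, since the second estimate fails for $N$ near $3N_0$; the paper's later dyadic summations are insensitive to where the band sits, which is why its main results survive the correction) nor detects the discrepancy --- and the computation you skipped is exactly the one that exposes it.
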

Throughout this paper, $\Theta_{N_0,N}(t)$ always denotes  the following function:
\begin{equation}\label{ function-theta}
 \begin{split}
\Theta_{N_0,N}(t)&:=
\begin{cases}
(1+|t|2^{2N})^{-\frac{3}{2}},  & \hbox{if}\,  |N-N_0|\leq2,\\
(1+|t| 2^{2N})^{-2},& \hbox{if}\, |N-N_0|> 2.
\end{cases}\\
\end{split}
\end{equation}
where $ N_0=\big[ \frac{1}{3}\log_2\frac{\Psi(z)}{|t|}  \big]$ and  $ \Psi(z)$ is a non-negative real value function on $ \mathbb{R}^m$.

\begin{proposition}\label{prop-free estimates}
Let $\Theta_{N_0,N}(t)$ be the  function defined in (\ref{ function-theta}) with $\Psi(z)=|x-y|$ and $z=(x,y)\in \mathbb{R}^6$. Then for each $x\neq y$ and $-\frac{3}{2}< \alpha \leq 0$,
\begin{equation}\label{pro-k3-1}
\Big|\int_0^\infty e^{-it\lambda^2}\lambda^{3+2\alpha}\varphi_0(2^{-N}\lambda) R_0^\pm(\lambda^4)(x,y) d\lambda \Big|
\lesssim 2^{(3+2\alpha)N}\Theta_{N_0,N}(t).
\end{equation}
Moreover,
\begin{equation}\label{pro-k3}
\sup\limits_{x,y\in \mathbb{R}^3}\Big|\int_0^\infty e^{-it\lambda^2}\lambda^{3+2\alpha}
R_0^\pm(\lambda^4)(x,y) d\lambda \Big|
 \lesssim|t|^{-\frac{3+2\alpha}{2}},
\end{equation}
which gives
 \begin{equation}\label{decayestimate-freecase}
 	\big \|(-\Delta)^{\alpha}e^{it\Delta }f\big \|_{L^\infty(\mathbb{R}^3)}\lesssim
 |t|^{-\frac{3+2\alpha}{2}}\big\| f\big\|_{L^1(\mathbb{R}^3)}.
 \end{equation}
As a consequence, we immediately obtain that
 \begin{equation}\label{decayestimate-freecase-cos}
\big \|\cos (t \Delta)f \big\|_{L^\infty(\mathbb{R}^3)}\lesssim |t|^{-\frac{3}{2}}\ \|f\|_{L^1(\mathbb{R}^3)},
\end{equation}
and
 \begin{equation}\label{decayestimate-freecase-sin}
\big\|\frac{\sin (t \Delta)}{\Delta}g \big\|_{L^\infty(\mathbb{R}^3)} \lesssim |t|^{-\frac{1}{2}}\ \|g\|_{L^1(\mathbb{R}^3)}.
\end{equation}
\end{proposition}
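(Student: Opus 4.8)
The plan is to establish the single-frequency kernel bound \eqref{pro-k3-1} first, sum it over the dyadic scales $N\in\mathbb{Z}$ to obtain the uniform kernel bound \eqref{pro-k3}, and then read off the operator estimates \eqref{decayestimate-freecase}--\eqref{decayestimate-freecase-sin}. I would begin by inserting the explicit free resolvent kernel \eqref{resolent-R0lambda4} into the integral appearing in \eqref{pro-k3-1}. Writing $r=|x-y|$, this extracts a prefactor $(8\pi r)^{-1}$ in front of
\[
\int_0^\infty e^{-it\lambda^2}\lambda^{1+2\alpha}\varphi_0(2^{-N}\lambda)\big(e^{\pm i\lambda r}-e^{-\lambda r}\big)\,d\lambda,
\]
so the two structural features to handle are the singular factor $r^{-1}$ and the splitting into a purely oscillatory term $e^{\pm i\lambda r}$ and an exponentially damped term $e^{-\lambda r}$. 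Rescaling $\lambda=2^N s$ normalizes the cutoff to $\varphi_0(s)$ supported in $[1/4,1]$, turns the phase into $e^{-it2^{2N}s^2}$ and the two factors into $e^{\pm i2^N s r}$ and $e^{-2^N s r}$, and pulls out a power $2^{(2+2\alpha)N}$.

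I would then estimate the rescaled integral in two ways and interpolate between the regimes. For the oscillatory term I would invoke Lemma \ref{lem-LWP} with $\Psi(z)=r$, $l=1+2\alpha$ and $\Phi\equiv1$, giving the van der Corput dichotomy $(1+|t|2^{2N})^{-1/2}$ on the stationary window $|N-N_0|\le 2$ and $(1+|t|2^{2N})^{-1}$ off it; for the damped term I would absorb $e^{-2^N s r}$ into the amplitude $\Phi(2^N s,z)=e^{-2^N s r}$, checking $|\partial_s^k\Phi(2^N s,z)|\lesssim 1$ on $[1/4,1]$ through the boundedness of $u\mapsto u e^{-u}$, so that Lemma \ref{lem-LWP} (or a direct non-stationary integration by parts, as this term carries no genuine oscillation) controls it by an at-least-as-good power. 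To recover the factor $2^{(3+2\alpha)N}$ and the additional decay encoded in $\Theta_{N_0,N}(t)$, I would combine this with the elementary low-frequency cancellation $|e^{\pm i\lambda r}-e^{-\lambda r}|\lesssim \lambda r$, which after rescaling supplies the factor $2^N r$ that cancels the prefactor $r^{-1}$ and, together with $2^{(2+2\alpha)N}$, produces $2^{(3+2\alpha)N}$; this bound is decisive precisely in the regime $|t|2^{2N}\lesssim 1$, while the oscillatory estimate dominates when $|t|2^{2N}\gtrsim 1$. Reconciling these two estimates across the resonant ($|N-N_0|\le2$) and non-resonant ($|N-N_0|>2$) dyadic regions, with the correct bookkeeping of the $r^{-1}$ factor and the powers of $2^N$, is the main obstacle, since it is here that the exact exponents in \eqref{pro-k3-1} must be matched.

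Once \eqref{pro-k3-1} is in hand, the passage to \eqref{pro-k3} is a dyadic summation. Since $\sum_N\varphi_0(2^{-N}\lambda)=1$ by \eqref{jieduan}, the full kernel equals the sum over $N$ of the localized pieces in \eqref{free formula1}, so it remains to bound $\sum_N 2^{(3+2\alpha)N}\Theta_{N_0,N}(t)$ uniformly in $x,y$. I would separate the $O(1)$ terms with $|N-N_0|\le 2$ from the geometric tails with $|N-N_0|>2$; because $0<3+2\alpha\le 3<4$, both the tail sum $\sum_N 2^{(3+2\alpha)N}(1+|t|2^{2N})^{-2}$ and the finite near-stationary contribution are governed by their value at the transition scale $2^{2N}\approx|t|^{-1}$, which is exactly $|t|^{-(3+2\alpha)/2}$, yielding \eqref{pro-k3}. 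Finally, \eqref{pro-k3} bounds the kernel $K_t(x,y)$ of $(-\Delta)^{\alpha/2}e^{it\Delta}$ by $\sup_{x,y}|K_t(x,y)|\lesssim |t|^{-(3+2\alpha)/2}$, so \eqref{decayestimate-freecase} follows from $\|Tf\|_{L^\infty}\le \sup_{x,y}|K_t(x,y)|\,\|f\|_{L^1}$; specializing to $\alpha=0$ and $\alpha=-1$ and using \eqref{relation-sin-cos} to express $\cos(t\Delta)$ and $\frac{\sin(t\Delta)}{\Delta}$ as combinations of $(-\Delta)^{\alpha/2}e^{\pm it\Delta}$ then delivers \eqref{decayestimate-freecase-cos} and \eqref{decayestimate-freecase-sin}.
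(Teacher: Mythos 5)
Your overall architecture (explicit kernel, dyadic rescaling, Lemma \ref{lem-LWP}, summation, then the operator bounds) is the paper's, and your summation step and the passage from \eqref{pro-k3} to \eqref{decayestimate-freecase}--\eqref{decayestimate-freecase-sin} are fine; but your route to the key per-scale bound \eqref{pro-k3-1} has a genuine gap, for two concrete reasons. First, applying Lemma \ref{lem-LWP} directly to the oscillatory piece with $l=1+2\alpha$ and $\Phi\equiv1$ yields only $(1+|t|2^{2N})^{-1/2}$ on the window $|N-N_0|\le2$ and $(1+|t|2^{2N})^{-1}$ off it, whereas $\Theta_{N_0,N}(t)$ requires the exponents $-3/2$ and $-2$. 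This missing power of $(1+|t|2^{2N})^{-1}$ is not cosmetic: even granting the most favourable disposal of the singular factor (replacing $r^{-1}$ by $2^N$ where $2^Nr\gtrsim1$), at $\alpha=0$ the off-window tail would be $\sum_{N>N_0'}2^{3N}(|t|2^{2N})^{-1}=|t|^{-1}\sum_{N>N_0'}2^{N}=\infty$, with $2^{N_0'}\sim|t|^{-1/2}$, so even the summed estimate \eqref{pro-k3} is out of reach. The paper gains that extra power by integrating by parts once in $s$ \emph{before} invoking Lemma \ref{lem-LWP}, and only then applying the lemma to the differentiated amplitude. Second, your oscillatory bound retains the uncontrolled factor $r^{-1}$, and your plan to neutralize it via $|e^{\pm i\lambda r}-e^{-\lambda r}|\lesssim\lambda r$ covers only the regime $|t|2^{2N}\lesssim1$. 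Call $(A)=2^{(3+2\alpha)N}$ your cancellation bound and $(B)=\frac{2^{(2+2\alpha)N}}{r}(1+|t|2^{2N})^{-\sigma}$ your split bound ($\sigma=\frac12$ or $1$). Whenever $r\ll|t|^{1/2}$ there exist scales with $|t|^{-1/2}\ll2^N\ll r^{-1}$, i.e. $|t|2^{2N}\gg1$ yet $2^Nr\ll1$; there $(A)$ has no time decay, $(B)$ blows up like $r^{-1}$, and any interpolation $(A)^{1-\theta}(B)^{\theta}=2^{(3+2\alpha)N}(2^Nr)^{-\theta}(1+|t|2^{2N})^{-\sigma\theta}$ carries $(2^Nr)^{-\theta}\gg1$, which no admissible $\theta\in[0,1]$ can trade for the needed $(1+|t|2^{2N})^{-3/2}$. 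So the ``reconciliation'' you flag as the main obstacle is not bookkeeping --- it is impossible with the two estimates you propose.

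The repair, and this is exactly what the paper does, is never to separate the two exponentials. Write $R_0^\pm(\lambda^4)(x,y)=\frac{1}{8\pi\lambda}F^\pm(\lambda r)$ with $F^\pm(p)=\frac{e^{\pm ip}-e^{-p}}{p}$: the $1/p$ simultaneously absorbs the $r^{-1}$ and one power of $\lambda$, and the built-in cancellation makes $F^\pm$ bounded on $[0,\infty)$ together with $p(F^\pm)'(p)$. After rescaling and one integration by parts, every term is of the form $e^{\pm i2^Nsr}$ times a globally bounded amplitude with bounded $s$-derivatives, e.g. $F^\pm(p)=e^{\pm ip}F_0^\pm(p)$ with $F_0^\pm(p)=\frac{1-e^{-p\mp ip}}{p}$, and $\partial_sF^\pm(2^Nsr)=e^{\pm i2^Nsr}s^{-1}F_1^\pm(2^Nsr)$ with $F_1^\pm(p)=pe^{\mp ip}(F^\pm)'(p)$. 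Lemma \ref{lem-LWP} then applies with $\Psi(z)=r$ and these bounded choices of $\Phi$, and the product of the integration-by-parts factor $(1+|t|2^{2N})^{-1}$ with the lemma's bound gives exactly $2^{(3+2\alpha)N}\Theta_{N_0,N}(t)$, uniformly in $x\neq y$. With \eqref{pro-k3-1} established this way, your dyadic summation and the deduction of \eqref{decayestimate-freecase}--\eqref{decayestimate-freecase-sin} go through as you wrote them.
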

\begin{remark}
When $\alpha=0$ in \eqref{decayestimate-freecase}, it is well-known that
	\begin{equation}\label{gassian-int}
		e^{it\Delta}f(x)=\frac{1}{(4\pi i t)^{\frac{3}{2}}}\int_{\mathbb{R}^3}e^{-\frac{i|x-y|^2}{4t}}f(y)dy,\ \  f\in L^1 \cap L^2.
	\end{equation}
As a consequence,  we  immediately obtain the decay estimate \eqref{decayestimate-freecase-cos} from the Young's inequality and  Gaussian integral \eqref{gassian-int} above.
\end{remark}
\begin{proof}
 For each $N\in \mathbb{Z}$, we write
\begin{equation*}
   \begin{split}
K_{0,N}^\pm(t,x,y):= \int_0^\infty e^{-i t\lambda^2}
                \lambda^{3+2\alpha}\varphi_0(2^{-N}\lambda)R^\pm_0(\lambda^4)(x,y)d\lambda.
\end{split}
\end{equation*}
Let $F^\pm(p)= \frac{e^{\pm ip}-e^{-p}}{p}, \, p\geq 0$, by the identity \eqref{resolent-R0lambda4}, we have
\begin{equation}\label{reso-R0-Fpm}
R^\pm_0(\lambda^4)(x,y)= \frac{1}{8\pi\lambda}F^\pm(\lambda|x-y|).
\end{equation}
Let $\lambda=2^Ns$, then
\begin{equation*}
   \begin{split}
K_{0,N}^\pm(t,x,y)
                =&\frac{ 2^{(3+2\alpha )N}}{8\pi}\int_0^\infty
  e^{-i t 2^{2N}s^2} s^{2+2\alpha}\varphi_0(s)F^\pm(2^Ns|x-y|)ds.
	\end{split}
\end{equation*}
Note that $s\in \hbox{supp} \varphi_0 \subset [1/4, 1]$, by using integration by parts, we have
\begin{equation}\label{esti-K0N}
   \begin{split}
| K_{0,N}^\pm(t,x,y)|
  \lesssim &\frac{ 2^{(3+2\alpha )N}}{1+|t|2^{2N}}\Big| \int_0^\infty
  e^{-i t 2^{2N}s^2} \partial_s\Big( s^{1+2\alpha}\varphi_0(s)F^\pm(2^Ns|x-y|)\Big)ds\Big|\\
  \lesssim &\frac{ 2^{(3+2\alpha )N}}{1+|t|2^{2N}}\bigg( \Big| \int_0^\infty  e^{-i t 2^{2N}s^2} \partial_s\big(s^{1+2\alpha}\varphi_0(s)\big)F^\pm(2^Ns|x-y|)ds\Big|\\
  &+\Big| \int_0^\infty  e^{-i t 2^{2N}s^2}  s^{1+2\alpha} \varphi_0(s)\partial_s \big(F^\pm(2^Ns|x-y|)\big) ds\Big| \bigg)\\
  := &\frac{ 2^{(3+2\alpha )N}}{1+|t|2^{2N}}\Big(\big|\mathcal{E}_{01,N}^\pm(t,x,y)\big|+\big|\mathcal{E}_{02,N}^\pm(t,x,y)\big|\Big).
	\end{split}
\end{equation}

We  first estimate  $\mathcal{E}_{02,N}^\pm(t,x,y)$. Let $r=|x-y|$, since
\begin{equation*}
   \begin{split}
\partial_sF^\pm(2^Nsr)
=  s^{-1} 2^Nsr(F^\pm)'(2^Nsr):= e^{\pm i 2^Nsr}s^{-1}F^\pm_1(2^Nsr),
	\end{split}
\end{equation*}
where
$$ F^\pm_1(p)= p e^{\mp ip }(F^\pm)'(p)= \frac{(\pm ip -1)+(p+1)e^{-p \mp ip}}{p}. $$
Hence one has
\begin{equation*}
   \begin{split}
\mathcal{E}_{02,N}^\pm(t,x,y)
  = \int_0^\infty
  e^{-i t 2^{2N}s^2} e^{\pm i2^Ns|x-y|} s^{2\alpha}\varphi_0(s) F^\pm_1(2^Ns|x-y|)ds.
	\end{split}
\end{equation*}
Observe that  for any $x,y$,
$$|\partial_s^k F^\pm_1(2^Ns|x-y|)| \lesssim 1,\,k=0,1,$$
by Lemma \ref{lem-LWP} with $z=(x,y)$, $\Psi(z)=|x-y|$ and $\Phi(2^Ns,z)=F^\pm_1(2^Ns|x-y|)$,
we obtain that $\mathcal{E}_{02,N}^\pm$ is bounded by $(1+|t|2^{2N})\Theta_{N_0,N}(t)$.

Similarly, we obtain the same bounds for $\mathcal{E}_{01,N}^\pm$.
 Furthermore,  by (\ref{esti-K0N}) we get that $K_{0,N}^\pm$ is bounded
by $2^{(3+2\alpha )N}\Theta_{N_0,N}(t)$.   Thus we obtain that the estimate \eqref{pro-k3-1} holds.

Finally, in order to obtain (\ref{pro-k3}), it's enough to show that for any $x\neq y$ and $-\frac{3}{2}<\alpha\leq 0$,
\begin{equation}\label{sum-appha=0-alphabig0}
   \begin{split}
\sum_{N=-\infty}^{+\infty}| K_{0,N}^\pm(t,x,y)|\lesssim|t|^{-\frac{3+2\alpha}{2}}.
\end{split}
\end{equation}

 In fact, for $t\neq0$, there exists $N_0' \in\mathbb{Z}$ such that $2^{N_0'}\sim |t|^{-\frac{1}{2}}$.
If  $-\frac{3}{2}<\alpha<0$,  then we have for any $x\neq y$,
\begin{equation}\label{sum-alpha-not=0}
   \begin{split}
 \sum_{N=-\infty}^{+\infty}| K_{0,N}^\pm(t,x,y)|
&\lesssim \sum_{N=-\infty}^{+\infty}2^{(3+2\alpha)N} (1+|t|2^{2N})^{-\frac{3}{2}}\\
&\lesssim\sum_{N=-\infty}^{N_0'} 2^{(3+2\alpha)N}+
\sum_{N=N_0'+1}^{+\infty} 2^{(3+2\alpha)N}(|t|2^{2N})^{-\frac{3}{2}}\\
&\lesssim|t|^{-\frac{3+2\alpha}{2}}.
\end{split}
\end{equation}
If $\alpha=0$, then we have for any $x\neq y$,
\begin{equation}\label{sum-alpha=0}
   \begin{split}
  \sum_{N=-\infty}^{+\infty}| K_{0,N}^\pm(t,x,y)|&\lesssim\sum_{|N-N_0|\leq2} 2^{3N} (1+|t|2^{2N})^{-\frac{3}{2}}
  +\sum_{|N-N_0|>2} 2^{3N} (1+|t|2^{2N})^{-2} \\
  &\lesssim  |t|^{-\frac{3}{2}}+\sum_{N=-\infty}^{N_0'} 2^{3N}+
\sum_{N=N_0'+1}^{+\infty} 2^{3N}(|t|2^{2N})^{-2}\\
&\lesssim|t|^{-\frac{3}{2}}.
  \end{split}
\end{equation}
Hence the estimate \eqref{sum-appha=0-alphabig0} is proved, which gives \eqref{pro-k3}.

By \eqref{free formula1} it immediately  follows that for $ -\frac{3}{2}<\alpha\le 0,$
$$ \big \|(-\Delta)^{\alpha}e^{it\Delta }f\big \|_{L^\infty(\mathbb{R}^3)}\lesssim
|t|^{-\frac{3+2\alpha}{2}} \|f\big \|_{L^1(\mathbb{R}^3)}.
$$
Furthermore, recall the identity \eqref{relation-sin-cos},  the desired estimates \eqref{decayestimate-freecase-cos} and \eqref{decayestimate-freecase-sin} are obtained.
\end{proof}

\section{Low energy decay estimates  }
\label{low energy}
In this section, we come to establish the low energy decay estimates of the solution operators to high order wave equation (\ref{cauchyequation1}).
We first need to study the asymptotic expansions of the perturbed resolvent $R^\pm_V(\lambda^4)$ as $\lambda$ is  near zero (see  \cite{EGT19}), then by Stone's formula, Littlewood-Paley method and oscillation integral theory we obtain  the  decay bounds of Theorem \ref{thm-main-results-regular} and Theorem \ref{thm-main-results-resonance} for low energy.
\subsection{Asymptotic expansions of resolvent near zero }
In this subsection, we study  the asymptotic expansions of the perturbed resolvent $R^\pm_V(\lambda^4)$  in a neighborhood of zero threshold.

By using the free resolvent kernel  $R^\pm_0(\lambda^4)(x,y) $  in \eqref{resolent-R0lambda4},
we have the following expression when $\lambda|x-y|<1$:
\begin{equation}\label{resolvent-expansion-R0lambda4}
   \begin{split}
 R^\pm_0(\lambda^4)(x,y)=& \frac{a^\pm}{\lambda}+G_0(x,y) +a_1^\pm \lambda G_1(x,y) +a_3^\pm \lambda^3G_3(x,y)\\
&+\lambda^4G_4(x,y)+ \sum_{k=5}^N a_k^\pm \lambda^k G_k(x,y) +O\big( \lambda^{N+1}|x-y|^{N+2}\big),
		\end{split}
	\end{equation}
where
\begin{equation}\label{def-Gk}
   \begin{split}
&G_0(x,y)=-\frac{|x-y|}{8\pi},\ G_1(x,y)=|x-y|^2, \ G_3(x,y)=|x-y|^4,\\
& G_4(x,y)=-\frac{|x-y|^5}{4\pi\cdot 6!},\
 G_k(x,y)= |x-y|^{k+1}, \ k\geq5,
\end{split}
	\end{equation}
and the coefficients
$$\displaystyle a^\pm= \frac{1\pm i}{8\pi}, \ a_1^\pm=\frac{1\mp i}{8\pi \cdot3!},
\ a_3^\pm=\frac{1\pm i}{8\pi \cdot 5!},
\ a_k^\pm= \frac{ (-1)^{k+1}+ (\pm i)^{k+2}}{8\pi\cdot (k+2)!}(k\geq 5).$$
In fact, the expansion remains valid when $\lambda|x-y| \geq 1$.  In  the sequel,  we also denote by $G_k$  operators with  the integral kernels $G_k(x,y)$ above.
In particular,  $G_0=(\Delta^2)^{-1}$.

Let $U(x)=\hbox{sign}\big(V(x)\big)$ and $v(x)=|V(x)|^{1/2}$, then we have $ V=Uv^2$ and  the following symmetric resolvent identity
 \begin{equation}\label{id-RV}
R^\pm_V(\lambda^4) = R_0^\pm(\lambda^4) -R^\pm_0(\lambda^4)v(M^\pm(\lambda))^{-1}vR^\pm_0(\lambda^4),
\end{equation}
where $M^{\pm}(\lambda)=U+ vR^\pm_0(\lambda^4)v$. Hence, we need to obtain the expansions for
$(M^\pm(\lambda))^{-1}$.

Let $T= U+vG_0v$, and $P= \|V\|^{-1}_{L^1} v\langle v, \cdot \rangle$ denote the orthogonal projection onto the span space by $v$.
By the expansions \eqref{resolvent-expansion-R0lambda4} of  free resolvent  $R_0^\pm(\lambda^4)$, we have the following expansions of $M^\pm(\lambda)$.
\begin{lemma}\label{lem-M} Let $|V(x)|\lesssim (1+|x|)^{-\beta}$ with some $\beta > 0$.  Set $\displaystyle \tilde{a}^\pm= a^\pm \|V \|_{L^1}$ and $M^{\pm}(\lambda)=U+ vR^\pm_0(\lambda^4)v$.  Then the following identities of $M^\pm(\lambda)$ hold on $ \mathbf{B}(L^2, L^2)$ for $\lambda>0$:
\begin{itemize}
\item[(i)] If $\beta > 7$, then
  \begin{equation}\label{Mpm-1}
   \begin{split}
   M^\pm(\lambda)= \frac{\tilde{a}^\pm}{\lambda}P +T+\Gamma_1(\lambda);
     \end{split}
 \end{equation}

  \item[(ii)]If $\beta > 11$, then
  \begin{equation}\label{Mpm-2}
   \begin{split}
  M^\pm(\lambda)= \frac{\tilde{a}^\pm}{\lambda}P +T+a_1^\pm \lambda vG_1v+\Gamma_3(\lambda);
     \end{split}
 \end{equation}

  \item[(iii)] If $\beta > 19$, then
\begin{equation}\label{Mpm-3}
   \begin{split}
  M^\pm(\lambda)= &\frac{\tilde{a}^\pm}{\lambda}P +T+a_1^\pm \lambda vG_1v+a_3^\pm \lambda^3 vG_3v \\
  &+\lambda^4vG_4v + a_5^\pm\lambda^5 vG_5v + a_6^\pm \lambda^6vG_6v + \Gamma_7(\lambda);
     \end{split}
 \end{equation}

  \item[(iv)] If $ \beta > 23$, then
  \begin{equation}\label{Mpm-4}
    \begin{split}
  M^\pm(\lambda)= \frac{\tilde{a}^\pm}{\lambda}P &+T+a_1^\pm \lambda vG_1v+a_3^\pm \lambda^3 vG_3v \\
  &+\lambda^4vG_4v +\sum_{k=5}^8a_k^\pm \lambda^kvG_kv + \Gamma_9(\lambda);
   \end{split}
 \end{equation}
 \end{itemize}
where $\Gamma_k(\lambda)(k=1,3,7,9)$ be $\lambda$-dependent operators satisfying that
\begin{equation*}
\big\|\Gamma_k(\lambda)\big\|_{L^2\rightarrow L^2}+\lambda\big\|\partial_\lambda\Gamma_k(\lambda)\big\|_{L^2\rightarrow L^2}
+\lambda^2\big\|\partial^2_\lambda\Gamma_k(\lambda)\big\|_{L^2\rightarrow L^2}
\lesssim \lambda^k,\  \lambda>0.
\end{equation*}
\end{lemma}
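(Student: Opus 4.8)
The plan is to substitute the pointwise kernel expansion \eqref{resolvent-expansion-R0lambda4} of $R_0^\pm(\lambda^4)(x,y)$ into the definition $M^\pm(\lambda)=U+vR_0^\pm(\lambda^4)v$ and to read off the four identities term by term, the whole difficulty lying in showing that the tail, after conjugation by $v$, defines a Hilbert--Schmidt operator on $L^2$ carrying the advertised power of $\lambda$. I would treat the four cases uniformly: to obtain a remainder $\Gamma_k(\lambda)=O(\lambda^k)$ I retain in \eqref{resolvent-expansion-R0lambda4} exactly the terms of order below $\lambda^k$ and absorb everything else into $\Gamma_k$. Because the $\lambda^2$-coefficient in \eqref{resolvent-expansion-R0lambda4} vanishes, the retained explicit terms are then precisely those displayed in \eqref{Mpm-1}--\eqref{Mpm-4}, with $k=1,3,7,9$ respectively, and the first omitted power is genuinely $\lambda^k$.

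First I would identify the explicit terms. Conjugating the leading kernel $\tfrac{a^\pm}{\lambda}I(x,y)=\tfrac{a^\pm}{\lambda}$ by $v$ produces the rank-one operator $\tfrac{a^\pm}{\lambda}v\langle v,\cdot\rangle$; since $\|V\|_{L^1}=\|v\|_{L^2}^2$ and $P=\|V\|_{L^1}^{-1}v\langle v,\cdot\rangle$, this is exactly $\tfrac{\tilde a^\pm}{\lambda}P$. The $\lambda$-independent kernel $G_0$ combines with $U$ into $T=U+vG_0v$ by definition, while each polynomial kernel $a_k^\pm\lambda^k G_k$ contributes $a_k^\pm\lambda^k vG_kv$ verbatim. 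Every such $vG_kv$ has kernel $v(x)|x-y|^{k+1}v(y)$, which is Hilbert--Schmidt once $\iint v(x)^2|x-y|^{2(k+1)}v(y)^2\,dx\,dy<\infty$; bounding $|x-y|^{2(k+1)}\lesssim\langle x\rangle^{2(k+1)}\langle y\rangle^{2(k+1)}$ and using $v^2\lesssim\langle x\rangle^{-\beta}$, this holds for $\beta>2k+5$, a condition the retained indices meet with room to spare in each case.

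The core estimate is the remainder. By \eqref{reso-R0-Fpm} we have $R_0^\pm(\lambda^4)(x,y)=\tfrac{1}{8\pi\lambda}F^\pm(\lambda|x-y|)$ with $F^\pm(p)=\tfrac{e^{\pm ip}-e^{-p}}{p}$, so the tail equals $\tfrac{1}{8\pi\lambda}H(\lambda|x-y|)$, where $H$ is $F^\pm$ minus its degree-$k$ Taylor polynomial and satisfies $|H(p)|\lesssim p^{k+1}$ for $p\le1$ and $|H(p)|\lesssim p^{k}$ for $p\ge1$, the latter because $F^\pm$ is bounded while the subtracted polynomial grows like $p^k$. Both regimes collapse to the single kernel bound $\big|\tfrac1\lambda H(\lambda r)\big|\lesssim\lambda^k r^{k+1}$ in $r=|x-y|$, the bound persisting across the crossover $\lambda r\sim1$ because there $\lambda^{k-1}r^{k}\le\lambda^k r^{k+1}$. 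Taking Hilbert--Schmidt norms after conjugation by $v$ then gives $\|\Gamma_k(\lambda)\|_{L^2\to L^2}\lesssim\lambda^k$ precisely when the operator with kernel $v(x)|x-y|^{k+1}v(y)$ is Hilbert--Schmidt, i.e.\ when $\beta>2(k+1)+3=2k+5$, which reads $\beta>7,11,19,23$ for $k=1,3,7,9$. For the derivative bounds I differentiate $\tfrac1\lambda H(\lambda r)$ in $\lambda$: since $\partial_\lambda[H(\lambda r)]=rH'(\lambda r)$ trades a power of $\lambda$ for a spatial factor $r$ while lowering the order of $H$ by one, the spatial weight stays at $r^{k+1}$ and one obtains $\big|\partial_\lambda^j\big(\tfrac1\lambda H(\lambda r)\big)\big|\lesssim\lambda^{k-j}r^{k+1}$ for $j=0,1,2$. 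Hence $\lambda^j\|\partial_\lambda^j\Gamma_k\|_{L^2\to L^2}\lesssim\lambda^k$ under the same threshold, so differentiation only consumes powers of $\lambda$ and demands no further decay of $V$.

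\textbf{The main obstacle} is the uniform kernel bound through $\lambda r\sim1$. The Taylor remainder is controlled in the obvious way only for $\lambda r\le1$, and one must verify that in the complementary regime the boundedness of $F^\pm$ still yields $\big|\tfrac1\lambda H(\lambda r)\big|\lesssim\lambda^k r^{k+1}$, so that the single weight $\langle x\rangle^{-\beta}$ controls the Hilbert--Schmidt norm everywhere; this is exactly the assertion that \eqref{resolvent-expansion-R0lambda4} remains valid for $\lambda|x-y|\ge1$. Once this is in hand, the four truncation orders align with the four exponents $7,11,19,23$, and the lemma follows.
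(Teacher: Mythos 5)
Your proposal is correct and takes essentially the same approach as the paper: substitute the kernel expansion \eqref{resolvent-expansion-R0lambda4} into $M^{\pm}(\lambda)=U+vR^{\pm}_0(\lambda^4)v$, identify $\tfrac{\tilde a^{\pm}}{\lambda}P$, $T$ and the $vG_kv$ terms, and control the Taylor tail (together with its first two $\lambda$-derivatives) in Hilbert--Schmidt norm, which is exactly where the thresholds $\beta>2k+5$, i.e.\ $7,11,19,23$, come from. Your uniform remainder bound across the crossover $\lambda|x-y|\sim 1$ correctly substantiates the paper's remark that the expansion remains valid for $\lambda|x-y|\geq 1$, so the stated estimates hold for all $\lambda>0$.
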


  Now we introduce the type of resonances that may occur at the zero energy as follows:
\begin{definition}\label{definition of resonance}Let $ Q=I-P$ and $T= U+vG_0v$.
\begin{itemize}
\item[(i)] If $QTQ$ is invertible on $QL^2$, then we say that zero is a regular point of  $H$. In this case, we define $D_0= (QTQ)^{-1}$ as an operator on $QL^2$.
\vskip0.2cm
\item[(ii)] Assume that $QTQ$ is  not invertible on $QL^2.$ Let $S_1$ be the Riesz projection onto the kernel of $QTQ$.  Then $QTQ+S_1$ is invertible on $QL^2$.  In this case, we define $D_0=\big(QTQ+S_1\big)^{-1}$ as an operator on $QL^2$, which doesn't conflict with the previous definition since $S_1=0$ when zero is a regular point.
We say that zero is the first kind resonance of $H$ if
\begin{equation}\label{T_1}
T_1:= S_1TPTS_1-\frac{\|V\|_{L^1}}{3\cdot (8\pi)^2}S_1vG_1vS_1
\end{equation}
is invertible on $S_1L^2$. We define $D_1=T_1^{-1}$ as an operator on $S_1L^2$.
\vskip0.2cm
\item[(iii)]Assume that $T_1$ is not invertible on $S_1L^2.$ Let $S_2$ be the Riesz projection onto the kernel of $T_1.$ Then $T_1+S_2$ is invertible on $S_1L^2.$ In this case, we define $D_1=\big(T_1+S_2\big)^{-1}$ as an operator on $S_1L^2$, which doesn't conflict with previous
 definition since $S_2=0$ when zero is the first kind of resonance.
We say that zero is the second kind  resonance of  $H$ if
\begin{equation}\label{T_2}
T_2:= S_2vG_3vS_2+\frac{10}{3\|V\|_{L^1}} S_2(vG_1v)^2S_2-\frac{10}{3\|V\|_{L^1}}S_2vG_1vTD_1TvG_1vS_2
\end{equation}
is invertible on $S_2L^2$. We define $D_2=T_2^{-1}$ as an operator on $S_2L^2$.
\vskip0.2cm
\item[(iv)] Finally if $T_2$ is not invertible on $S_2L^2$,
we say that zero is the third kind  resonance of $H$. In this case, the operator $T_3:=S_3vG_4vS_3$ is always invertible on $S_3L^2$ (see
Lemma \ref{T3-inve} in Appendix) where $S_3$ be the Riesz projection onto the kernel of $T_2,$ let $D_3=T_3^{-1}$ as an operator on $S_3L^2$. We define $D_2=(T_2+S_3)^{-1}$ as an operator on $S_2L^2$.
\end{itemize}
\end{definition}
From the definition above, we have $ S_1L^2 \supseteq S_2L^2\supseteq S_3L^2$, which describe the zero energy resonance types of $H$ as follows:
\begin{itemize}
\item ~Zero is a regular point of $H$ if and only if $S_1L^2=\{0\};$
\vskip0.2cm
\item~ Zero is a first-kind resonance of $H$ if and only if $S_1L^2\neq\{0\}$ and $S_2L^2=\{0\};$
\vskip0.2cm
\item~Zero is a second-kind resonance of $H$ if and only if $S_2L^2\neq\{0\}$ and $S_3L^2=\{0\};$
\vskip0.2cm
\item~Zero is an eigenvalue of $H$ ( i.e. the third-kind resonance ) if and only if $S_3L^2\neq\{0\}.$
\end{itemize}

Noting that Theorem \ref{gongzhengkehua-1} gives  the characterizations of threshold spectral subspaces $S_jL^2(j=1,2,3)$ by the distributional solution of $H\phi=0$  in Appendix, hence  we rewrite the following equivalent statements:
\begin{itemize}
\item
Zero is a firs-kind resonance of $H$ if there exists a nonzero $\phi\in L^2_{-\sigma}(\mathbb{R}^3) $  for all $\sigma>\frac{3}{2}$ but no nonzero $\phi\in L^2_{-\sigma}(\mathbb{R}^3)$ with  $\sigma>\frac{1}{2}$ such that $H\phi=0$ in the distributional sense;
\item
Zero is a second-kind resonance of $H$ if there exists a nonzero  $\phi\in L^2_{-\sigma}(\mathbb{R}^3)$ for   all $\sigma>\frac{1}{2}$ but no nonzero $\phi\in L^2$ such that $H\phi=0$ in the distributional sense;
\item
Zero is a third-kind resonance  (i.e. eigenvalue) of $H$ if there exists  nonzero $\phi\in L^2(\mathbb{R}^3)$ such that $H\phi=0$ in the distributional sense;
\item
Zero is a regular point of $H$ if zero is neither a resonance nor an eigenvalue of $H$.
\end{itemize}

Furthermore, since $vG_0v$ is a Hilbert-Schmidt operator, and $T=U+vG_0v$ is the compact perturbation of $U$ (see e.g. \cite{EGG14, GT19}). Hence $S_1$ is a finite-rank projection by Fredholm alternative theorem. Notice that $S_3\leq S_2\leq S_1$,  then all $S_j(j=1,2,3)$ are finite-rank operators. Moreover, by the definitions of $S_j(j=1,2,3)$, we have these identities
$S_iD_j=D_jS_i=S_i\  (3\ge i\geq j\ge 1)$ and $S_iD_j=D_jS_i=D_j \ (1\le i< j\le3)$.

\begin{definition}
We say an operator $T:\,L^2(\mathbb{R}^3)\rightarrow L^2(\mathbb{R}^3)$ with kernel $T(\cdot, \cdot)$
 is absolutely bounded if the operator with the kernel $|T(\cdot, \cdot)|$ is bounded from $L^2(\mathbb{R}^3)$ into itself.
\end{definition}

We remark that Hilbert-Schmidt and finite-rank operators are absolutely bounded operators. Moreover, we have the following proposition, see Lemma 4.3 in \cite{EGT19}.
\begin{proposition}\label{Pro-absulu-oper}
Let $|V(x)|\leq (1+|x|)^{-7-}$. Then $QD_0Q$ is absolutely bounded.
\end{proposition}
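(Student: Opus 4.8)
The plan is to reduce the statement to an abstract inversion lemma and then verify its hypotheses from the explicit structure of $T=U+vG_0v$. First I would record the elementary closure properties of the absolutely bounded class: it contains all Hilbert--Schmidt and finite-rank operators (as already noted before the proposition), and it is closed under sums and products, since for a product one has the kernel bound $|(AB)(x,y)|\le\int|A(x,z)|\,|B(z,y)|\,dz$. Because $P$ has the non-negative kernel $\|V\|_{L^1}^{-1}v(x)v(y)$ it is absolutely bounded, hence so is $Q=I-P$; moreover $D_0$ is by definition an operator on $QL^2$, so $QD_0Q=D_0$ and it suffices to prove that $D_0=(QTQ+S_1)^{-1}$ is absolutely bounded. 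I would then isolate the Hilbert--Schmidt part: the kernel of $vG_0v$ is $-\tfrac{1}{8\pi}v(x)|x-y|v(y)$, and using $|x-y|^2\le 2|x|^2+2|y|^2$ together with $v^2=|V|\lesssim\langle x\rangle^{-\beta}$,
\begin{equation*}
\iint_{\mathbb{R}^3\times\mathbb{R}^3} v(x)^2|x-y|^2v(y)^2\,dx\,dy \lesssim \Big(\int\langle x\rangle^{-\beta}|x|^2\,dx\Big)\Big(\int\langle y\rangle^{-\beta}\,dy\Big)<\infty
\end{equation*}
whenever $\beta>5$, which is guaranteed by the hypothesis $\beta>7$. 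Thus $vG_0v$, hence $QvG_0vQ$, is Hilbert--Schmidt, while $U=\mathrm{sign}(V)$ (with the convention $\mathrm{sign}(0)=1$) is multiplication by a bounded function with $U^2=I$, so $U$ is self-adjoint, invertible and absolutely bounded, and $S_1$ is finite rank.

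The heart of the argument is the following abstract fact, which I would prove with the second resolvent identity: if $A_0$ is boundedly invertible with $A_0^{-1}$ absolutely bounded and $\Gamma$ is Hilbert--Schmidt with $A_0+\Gamma$ invertible, then $(A_0+\Gamma)^{-1}$ is absolutely bounded. Indeed
\begin{equation*}
(A_0+\Gamma)^{-1}=A_0^{-1}-A_0^{-1}\Gamma(A_0+\Gamma)^{-1},
\end{equation*}
and by the Hilbert--Schmidt ideal property (bounded $\times$ HS $=$ HS and HS $\times$ bounded $=$ HS) the correction term $A_0^{-1}\Gamma(A_0+\Gamma)^{-1}$ is Hilbert--Schmidt, hence absolutely bounded, so the right-hand side is a sum of absolutely bounded operators. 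I would apply this with $A_0:=QUQ+F$ and $\Gamma:=QvG_0vQ+S_1-F$, where $F$ is a finite-rank operator chosen so that $A_0$ is invertible on $QL^2$; then $A_0+\Gamma=QTQ+S_1$ is invertible by the definition of $S_1$, and $\Gamma$ is Hilbert--Schmidt.

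It therefore remains to produce $A_0=QUQ+F$ whose inverse is absolutely bounded, and this is the main obstacle, as inversion does not in general preserve absolute boundedness. Here I would exploit that $U$ is a multiplication operator. Using $U^2=I$ one computes, on $QL^2$,
\begin{equation*}
(QUQ)^2=QU(I-P)UQ=Q-QUPUQ,
\end{equation*}
where $QUPUQ=\|V\|_{L^1}^{-1}(QUv)\langle QUv,\cdot\rangle$ is a rank-one operator with the nice decaying kernel $\|V\|_{L^1}^{-1}(QUv)(x)(QUv)(y)$, hence absolutely bounded; its nonzero eigenvalue equals $1-(\int V)^2/\|V\|_{L^1}^2$. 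When $\int V\neq 0$ this eigenvalue is strictly less than $1$, so $Q-QUPUQ$ is invertible and, by the Sherman--Morrison formula, $(Q-QUPUQ)^{-1}$ is the identity on $QL^2$ plus a rank-one operator, hence absolutely bounded; consequently $(QUQ)^{-1}=QUQ\,(Q-QUPUQ)^{-1}$ is a product of absolutely bounded operators and one may take $F=0$. The borderline case $\int V=0$, where $QUQ$ fails to be invertible, is exactly why the finite-rank correction is permitted: the obstruction lies in the (at most one-dimensional) eigenspace of $QUPUQ$ for the eigenvalue $1$, so a finite-rank, absolutely bounded $F$ can be chosen to restore invertibility while keeping $A_0^{-1}$ absolutely bounded by the same Sherman--Morrison computation. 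Combining these steps yields $D_0=A_0^{-1}-(\text{Hilbert--Schmidt})$, so $QD_0Q=D_0$ is absolutely bounded.

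I expect the genuine difficulty to be concentrated in the last paragraph, namely in showing that the leading, non-compact part $QUQ$ (up to a finite-rank correction) has an absolutely bounded inverse; the reduction steps and the ideal-property argument are routine once the Hilbert--Schmidt bound on $vG_0v$ is in hand.
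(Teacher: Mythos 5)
First, note that the paper does not actually prove this proposition: it is imported from Lemma 4.3 of \cite{EGT19}, so your argument has to be measured against the proof given there (and in its antecedents \cite{Erdogan-Green, GT19}). Your proof is correct in substance and rests on the same two pillars as the cited one: the Hilbert--Schmidt bound on $vG_0v$ (for which, as you observe, $\beta>5$ already suffices) and the algebraic consequence of $U^2=I$,
\begin{equation*}
(QUQ)^2=QU(I-P)UQ=Q-QUPUQ,\qquad QUPUQ=\|V\|_{L^1}^{-1}\,QUv\,\langle QUv,\cdot\rangle .
\end{equation*}
The difference is what one does with this identity. The standard argument in this literature never inverts $QUQ$: starting from $D_0(QTQ+S_1)=Q$ on $QL^2$, one writes $D_0\,QUQ=Q-D_0(QvG_0vQ+S_1)$, multiplies on the right by $QUQ$, and uses the displayed identity together with $D_0Q=D_0$ to obtain
\begin{equation*}
QD_0Q=QUQ+QD_0Q\,\bigl(QUPUQ\bigr)-QD_0Q\,(QvG_0vQ+S_1)\,QUQ,
\end{equation*}
whose three terms are absolutely bounded: the first by the multiplication/projection algebra, the second being bounded times a rank-one operator with $L^2$ factors, the third being bounded times Hilbert--Schmidt times bounded. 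You instead invert $QUQ$ up to a finite-rank correction and conclude by a resolvent identity; this is also valid, but it is what forces your case analysis on $\int V$, and that is the one loose point in your write-up. When $\int V=0$ the repair is not ``the same Sherman--Morrison computation'' (that formula degenerates precisely because the relevant eigenvalue of $QUPUQ$ equals $1$); what works is simpler: take $F:=QUPUQ$, which in this case has $\|QUv\|^2=\|V\|_{L^1}$ and is therefore exactly the orthogonal projection onto $\ker(QUQ)=\operatorname{span}\{QUv\}$, so that $QUQ\,F=F\,QUQ=0$ and hence $(QUQ+F)^2=(QUQ)^2+F=Q$. Thus $A_0=QUQ+F$ is its own inverse on $QL^2$ and is manifestly absolutely bounded, and the rest of your argument goes through unchanged. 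With that substitution your proof is complete; its modest advantage over the standard route is that it exhibits $D_0$ explicitly as an absolutely bounded operator plus a Hilbert--Schmidt one, while the standard route is shorter because it sidesteps invertibility of $QUQ$ altogether.
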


In the following, we will give the specific characterizations of projection spaces
$S_jL^2(j=1,2,3)$ by the orthogonality of these projection operators $S_j(j=1,2,3)$.

\begin{lemma}\label{projiction-spaces-SjL2}
Let $S_j(j=1,2,3)$ be the projection operators given by Definition \ref{definition of resonance}. Then
\begin{itemize}
\item[(i)]$f\in S_1L^2$ if and only if $f\in\hbox{ker}(QTQ)$. Moreover, $QTS_1= S_1TQ=0$.

\item[(ii)]$f\in S_2L^2$ if and only if
\begin{equation*}
\begin{split}
f\in\hbox{ker}(T_1)&= \hbox{ker}(S_1TPTS_1)\cap \hbox{ker}(S_1vG_1vS_1)\\
&=\{ f\in S_1L^2 \big|PTf=0, \langle x_iv, f\rangle =0,\, j=1,2,3 \}.
\end{split}
\end{equation*}
In particular, $TS_2=S_2T=0$, $QvG_1vS_2=S_2vG_1vQ=0$.

\item[(iii)]$f\in S_3L^2$ if and only if
\begin{equation*}
\begin{split}
f\in \hbox{ker}(T_2)=
\{ f\in S_2L^2 \big| \langle x_ix_jv, f\rangle =0, i, j=1,2,3\}.
\end{split}
\end{equation*}
 \end{itemize}
\end{lemma}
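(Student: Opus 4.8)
The plan is to exploit that each of the operators $QTQ$, $T_1$ and $T_2$ is self-adjoint on the relevant space ($QL^2$, $S_1L^2$, $S_2L^2$ respectively): $U$ is real, $P$ is an orthogonal projection, $G_0,G_1,G_3$ have real symmetric kernels, and $D_1=(T_1+S_2)^{-1}$ inherits self-adjointness. Hence each Riesz projection $S_j$ coincides with the \emph{orthogonal} projection onto the kernel of the associated operator, so that $S_jL^2=\ker(\cdot)$ exactly and membership can be read off from the associated quadratic form. Throughout I will abbreviate $b_{ij}=\langle x_ix_jv,f\rangle$ and use $|x-y|^2=|x|^2-2x\cdot y+|y|^2$ and $|x-y|^4=(|x-y|^2)^2$ to turn the kernels $G_1,G_3$ into finite combinations of the moments $\langle v,f\rangle$, $\langle x_iv,f\rangle$, $c_f:=\langle |x|^2 v,f\rangle$ and $b_{ij}$.

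For (i), since $S_1L^2=\ker(QTQ)$ and every $f\in S_1L^2$ satisfies $Qf=f$, one gets $QTf=QTQf=0$; thus $QTS_1=0$, and taking adjoints $S_1TQ=0$. For (ii), I record the two sign computations on $S_1L^2$. The identity $S_1TPTS_1=(PTS_1)^*(PTS_1)\ge0$ gives $\langle f,S_1TPTS_1 f\rangle=\|PTf\|^2$, so its kernel is exactly $\{PTf=0\}$. Using $\langle v,f\rangle=0$ on $S_1L^2$, the $G_1$-expansion yields $\langle f,vG_1vf\rangle=-2\sum_{i}|\langle x_iv,f\rangle|^2\le0$, so $-S_1vG_1vS_1\ge0$ with kernel $\{\langle x_iv,f\rangle=0,\ i=1,2,3\}$. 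Because the positive coefficient in \eqref{T_1} makes $T_1$ a sum of two non-negative operators, $\ker T_1$ is the intersection of these two kernels, which is the asserted description of $S_2L^2$. The two ``in particular'' claims then follow: for $f\in S_2L^2$ one has $QTf=0$ (by (i)) and $PTf=0$, hence $Tf=0$, i.e. $TS_2=S_2T=0$; and since $\langle v,f\rangle=\langle x_iv,f\rangle=0$ the $G_1$-expansion collapses to $vG_1vf=c_f\,v\in PL^2$, giving $QvG_1vS_2=0$ and its adjoint $S_2vG_1vQ=0$.

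For (iii) the self-adjoint operator $T_2$ must be evaluated on $S_2L^2$, where the decisive simplifications are $S_2v=0$ and $S_2(x_iv)=0$ (as $v$ and $x_iv$ are orthogonal to $S_2L^2$ by (ii)), together with $vG_1vf=c_f\,v$ and $S_2vG_1vQ=0$. These reduce every term to the moments $b_{ij}$: expanding $G_3$ gives $\langle f,S_2vG_3vS_2f\rangle=4\sum_{i,j}|b_{ij}|^2+2|c_f|^2$ with $c_f=\sum_i b_{ii}$, while $S_2(vG_1v)^2S_2f=\|V\|_{L^1}\,c_f\,S_2(|x|^2v)$ and $S_2vG_1vTD_1TvG_1vS_2f=c_f\,\langle v,TD_1Tv\rangle\,S_2(|x|^2v)$. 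Collecting everything, $\langle f,T_2f\rangle=4\sum_{i,j}|b_{ij}|^2+\kappa\,|c_f|^2$ for an explicit constant $\kappa=\tfrac{16}{3}-\tfrac{10}{3\|V\|_{L^1}}\langle v,TD_1Tv\rangle$ coming from the coefficients in \eqref{T_2}. The easy inclusion is then immediate: if all $b_{ij}=0$ then $c_f=0$, and a direct inspection of the three terms shows $T_2f=0$, so $\{f\in S_2L^2:\langle x_ix_jv,f\rangle=0\}\subseteq\ker T_2$.

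The reverse inclusion is the main obstacle. To pass from $T_2f=0$ to $b_{ij}=0$ I need the quadratic form $4\sum_{i,j}|b_{ij}|^2+\kappa|c_f|^2$ to be positive definite in the symmetric array $(b_{ij})$; since $|c_f|^2=|\sum_i b_{ii}|^2\le 3\sum_{i,j}|b_{ij}|^2$, this holds precisely when $4+3\kappa>0$. Securing this sign bound requires controlling the cross term $\langle v,TD_1Tv\rangle$, which I expect to handle by exploiting that $T_1=S_1TPTS_1-\tfrac{\|V\|_{L^1}}{3\cdot(8\pi)^2}S_1vG_1vS_1\ge0$ is itself a sum of non-negative operators, whence $D_1=(T_1+S_2)^{-1}\ge0$, and by using the precise numerical coefficient $\tfrac{10}{3}$ dictated by the resolvent expansion of Lemma \ref{lem-M}. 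Once positive-definiteness is established, $\langle f,T_2f\rangle=0$ forces every $b_{ij}=0$, and self-adjointness of $T_2$ upgrades $\langle f,T_2f\rangle=0$ to $T_2f=0$, completing the characterization of $S_3L^2=\ker T_2$.
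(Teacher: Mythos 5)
Your parts (i) and (ii) are correct, and your computations in (iii) are correct as far as they go. Since $QTQ$, $T_1$ and $T_2$ are self-adjoint, the Riesz projections are indeed the orthogonal projections onto the kernels; the identity $S_1TPTS_1=(PTS_1)^*(PTS_1)$ together with $\langle f,vG_1vf\rangle=-2\sum_i|\langle x_iv,f\rangle|^2$ on $S_1L^2$ exhibits $T_1$ as a sum of two non-negative operators, and $\ker(A+B)=\ker A\cap\ker B$ for $A,B\ge0$ gives exactly the stated description of $S_2L^2$, with the ``in particular'' identities following as you say. (For what it is worth, the paper states this lemma without proof, deferring in effect to \cite{EGT19}, so your computational route is an independent one.) Your moment calculus in (iii) is also right: on $S_2L^2$ one has $\langle f,T_2f\rangle=4\sum_{i,j}|b_{ij}|^2+\kappa|c_f|^2$ with $\kappa=\frac{16}{3}-\frac{10}{3\|V\|_{L^1}}\langle v,TD_1Tv\rangle$, and the inclusion $\{f\in S_2L^2:\langle x_ix_jv,f\rangle=0\}\subseteq\ker T_2$ follows by inspection.

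The reverse inclusion in (iii) --- the step you yourself call the main obstacle --- is genuinely missing, and the tool you propose cannot close it. You need the upper bound $\langle v,TD_1Tv\rangle<2\|V\|_{L^1}$ (equivalently $4+3\kappa>0$), but positivity of $D_1$, which is all you invoke, yields only the lower bound $\langle v,TD_1Tv\rangle\ge0$; nothing in your argument excludes this quantity being large, in which case your quadratic form is indefinite and $\ker T_2$ could contain $f$ with $b_{ij}\neq0$. The missing idea is to use that $P$ is rank one: $S_1TPTS_1=\|V\|_{L^1}^{-1}\,w\,\langle w,\cdot\rangle$ with $w=S_1Tv$, so $T_1+S_2=\|V\|_{L^1}^{-1}w\langle w,\cdot\rangle+B+S_2$ where $B=-\frac{\|V\|_{L^1}}{3\cdot(8\pi)^2}S_1vG_1vS_1\ge0$. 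Put $g=D_1w$ and $t=\langle w,D_1w\rangle=\langle v,TD_1Tv\rangle\ge0$; pairing $w=(T_1+S_2)g$ with $g$ gives $t=\|V\|_{L^1}^{-1}t^2+\langle g,Bg\rangle+\|S_2g\|^2\ \ge\ \|V\|_{L^1}^{-1}t^2$, whence $t\le\|V\|_{L^1}$ and therefore $\kappa\ge\frac{16}{3}-\frac{10}{3}=2>0$. With this supplement both terms of your form are non-negative, and $T_2f=0\Rightarrow\langle f,T_2f\rangle=0\Rightarrow b_{ij}=0$ goes through. Finally, your closing sentence is logically backwards: for the hard direction you only need the trivial implication $T_2f=0\Rightarrow\langle f,T_2f\rangle=0$; no ``upgrade'' by self-adjointness is required there.
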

Now we will  give asymptotic expansions of $\big(M^\pm(\lambda)\big)^{-1}$ as follows:
\begin{theorem}\label{thm-main-inver-M}
Let $S_j$(j=1,2,3) be the operators defined in Definition \ref{definition of resonance}.  Assume that $|V(x)| \lesssim (1+|x|)^{-\beta}$ with some $\beta>0$. Then we have
the following expansions of $\big(M^\pm(\lambda)\big)^{-1}$ in $L^2(\mathbb{R}^3)$ for $0<\lambda \ll 1$.
\begin{itemize}
\item[(i)] If zero is a regular point of $H$ and $\beta > 7$, then
\begin{equation}\label{thm-regularinver-M0 }
	\begin{split}
\big(M^\pm(\lambda)\big)^{-1}=&QA_{0,1}^0Q+\Gamma_1(\lambda);
\end{split}
\end{equation}
\item[(ii)] If zero is the first kind resonance of $H$ and $\beta > 11$, then
\begin{equation}\label{thm-resoinver-M1 }
	\begin{split}
\big(M^\pm (\lambda)\big)^{-1}
 = \frac{S_1A_{-1,1}^1S_1}{\lambda}+ \Big(S_1A^1_{0,1} +A^1_{0,2}S_1 +QA^1_{0,3}Q \Big)+\Gamma_1(\lambda);
\end{split}
\end{equation}
\item[(iii)] If zero is the second kind resonance of $H$ and $\beta > 19$, then
\begin{equation}\label{thm-resoinver-M2 }
   \begin{split}
\big(M^\pm (\lambda)\big)^{-1}
=&\frac{S_2A^2_{-3,1}S_2}{\lambda^{3}}+\frac{S_2A^2_{-2,1}S_1 + S_1A^2_{-2,2}S_2}{\lambda^2}
+\frac{S_2A^2_{-1,1}+A^2_{-1,2}S_2}{\lambda}\\
&+\frac{S_1A^2_{-1,3}S_1}{\lambda}+\Big( S_1A_{0,1}^2 +A^2_{0,2}S_1 +QA^2_{0,3}Q\Big)
+ \Gamma_1(\lambda);
\end{split}
\end{equation}
\item[(iv)] If zero is the third kind resonance of $H$ and $\beta > 23$, then
\begin{equation}\label{thm-resoinver-M3 }
   \begin{split}
\big(M^\pm(\lambda) \big)^{-1}
=&\frac{S_3 D_3 S_3}{\lambda^4}+\frac{S_2A^3_{-3,1}S_2}{\lambda^3}+\frac{S_2A^3_{-2,1}S_1 + S_1A^3_{-2,2}S_2}{\lambda^2}
 +\frac{S_2A^3_{-1,1}+ A^3_{-1,2}S_2}{\lambda}\\
& +\frac{S_1A^3_{-1,3}S_1}{\lambda}
+\Big( S_1A_{0,1}^3 +A^3_{0,2}S_1 +QA^3_{0,3}Q\Big)+\Gamma_1(\lambda);
\end{split}
\end{equation}
\end{itemize}
where $A_{i,j}^k$ are $\lambda$-independent absolutely  bounded operators in $L^2(\mathbb{R}^3)$;
$\Gamma_1(\lambda)$ be some  $\lambda$-dependent operator which may be different in each  expansion. but  all  satisfy that
\begin{equation*}
\big\|\Gamma_1(\lambda)\big\|_{L^2\rightarrow L^2}+\lambda\big\|\partial_\lambda\Gamma_1(\lambda)\big\|_{L^2\rightarrow L^2}
+\lambda^2\big\|\partial^2_\lambda\Gamma_1(\lambda)\big\|_{L^2\rightarrow L^2}
\lesssim \lambda.
\end{equation*}
\end{theorem}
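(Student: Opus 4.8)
The plan is to invert $M^\pm(\lambda)$ by an iterated Feshbach--Schur (Jensen--Nenciu) reduction, starting from the expansions of $M^\pm(\lambda)$ in Lemma~\ref{lem-M}. The elementary tool is the inversion lemma: if $A$ is bounded and $S$ is a projection with $A+S$ invertible, then $A$ is invertible on the whole space precisely when $B:=S-S(A+S)^{-1}S$ is invertible on $SL^2$, and in that case
\begin{equation*}
A^{-1}=(A+S)^{-1}+(A+S)^{-1}S\,B^{-1}\,S(A+S)^{-1}.
\end{equation*}
First I would remove the singular term $\tfrac{\tilde a^\pm}{\lambda}P$. Since $PM^\pm(\lambda)P=\tfrac{\tilde a^\pm}{\lambda}P+O(1)$ dominates on the one-dimensional space $PL^2$, the corner is invertible for $0<\lambda\ll1$ with $(PM^\pm P)^{-1}=\tfrac{\lambda}{\tilde a^\pm}P+O(\lambda^2)$, and the Feshbach reduction to $QL^2$ produces the Schur complement
\begin{equation*}
\widetilde M^\pm(\lambda):=QM^\pm(\lambda)Q-QM^\pm(\lambda)P\big(PM^\pm(\lambda)P\big)^{-1}PM^\pm(\lambda)Q=QTQ+O(\lambda),
\end{equation*}
where $Qv=0$ kills the $\lambda^{-1}$ contribution. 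Throughout, remainders are measured in the weighted norm $\|\cdot\|+\lambda\|\partial_\lambda\cdot\|+\lambda^2\|\partial^2_\lambda\cdot\|$ and are controlled by the corresponding bounds on $\Gamma_k(\lambda)$ from Lemma~\ref{lem-M}, so all errors collapse into the asserted $\Gamma_1(\lambda)$.

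In the regular case $QTQ$ is invertible on $QL^2$ with inverse $D_0=QA_{0,1}^0Q$, so $\widetilde M^\pm(\lambda)$ is invertible for small $\lambda$ and the Feshbach formula reconstructs $(M^\pm(\lambda))^{-1}=QD_0Q+\Gamma_1(\lambda)$, which is \eqref{thm-regularinver-M0 }; absolute boundedness of the leading term is Proposition~\ref{Pro-absulu-oper}. For the resonance cases I iterate the inversion lemma on $\widetilde M^\pm(\lambda)$ with $S=S_1$. Writing $\widetilde M^\pm+S_1=(QTQ+S_1)+\lambda W+O(\lambda^2)$ and using $S_1D_0=D_0S_1=S_1$ together with the orthogonality $QTS_1=S_1TQ=0$ from Lemma~\ref{projiction-spaces-SjL2}, one finds $B_1=\lambda\,S_1WS_1+O(\lambda^2)$, where $S_1WS_1$ combines the $S_1TPTS_1$ piece (from the inverted singular corner, carrying the factor $\tfrac{1}{\tilde a^\pm}$) with the $S_1vG_1vS_1$ piece (from the $a_1^\pm\lambda vG_1v$ term of \eqref{Mpm-2}, which is why $\beta>11$ is needed). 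The phases and constants combine so that $\tilde a^\pm a_1^\pm=\tfrac{\|V\|_{L^1}}{3(8\pi)^2}$ and $B_1=-\tfrac{\lambda}{\tilde a^\pm}T_1+O(\lambda^2)$ with $T_1$ exactly \eqref{T_1}. When zero is a first kind resonance $T_1$ is invertible with $D_1=T_1^{-1}$, hence $B_1^{-1}=c\,\lambda^{-1}S_1D_1S_1+O(1)$, and feeding this back through the two nested Feshbach formulas produces the single $\lambda^{-1}$ pole and the off-diagonal $S_1A\,$/$\,QA Q$ pattern of \eqref{thm-resoinver-M1 }.

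The higher resonances are handled by repeating the reduction with $S_2$, then $S_3$, inside $QL^2\supseteq S_1L^2\supseteq S_2L^2\supseteq S_3L^2$. Each further stage consumes one more order of the resolvent expansion \eqref{resolvent-expansion-R0lambda4}, explaining the thresholds $\beta>19$ and $\beta>23$, and the orthogonality relations $TS_2=S_2T=0$, $QvG_1vS_2=0$ and the definition $T_3=S_3vG_4vS_3$ (invertible on $S_3L^2$ by Lemma~\ref{T3-inve}) identify the leading operators on $S_2L^2$, $S_3L^2$ with $T_2$ in \eqref{T_2} and $T_3$, respectively. Inverting these and propagating the poles back through the chain of Feshbach formulas yields the $\lambda^{-3}$ and $\lambda^{-4}$ leading singularities of \eqref{thm-resoinver-M2 } and \eqref{thm-resoinver-M3 }; all intermediate coefficients $A_{i,j}^k$ are absolutely bounded because they are assembled from the finite-rank projections $S_j$, the absolutely bounded $QD_0Q$, and the Hilbert--Schmidt operators $vG_kv$.

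The hard part will be the bookkeeping of negative powers of $\lambda$ through the repeated inversions: each application of the lemma both inverts a $\lambda$-degenerate block and injects new $\lambda^{-1}$ factors, so one must verify that the poles accumulate exactly as $\lambda^{-1},\lambda^{-2},\lambda^{-3},\lambda^{-4}$ and not worse, and that every cross term carries the precise projection pattern $S_iA_{\cdot,\cdot}S_j$ displayed in the statement. The most delicate point is the algebraic identification of the leading coefficients with $T_1$ and $T_2$: the numerical constants $\tfrac{\|V\|_{L^1}}{3(8\pi)^2}$ and $\tfrac{10}{3\|V\|_{L^1}}$ emerge only after matching the Taylor data $a_k^\pm,G_k$ of \eqref{resolvent-expansion-R0lambda4} against the factors $\tfrac{\lambda}{\tilde a^\pm}$ generated by inverting the singular corner, and one must check that the $\pm$-dependent phases cancel so that $T_1,T_2$ are the stated sign-independent operators. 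Finally, the derivative bounds on $\Gamma_1(\lambda)$ follow by differentiating the Feshbach formulas and invoking the weighted estimates on $\Gamma_k(\lambda)$ from Lemma~\ref{lem-M}.
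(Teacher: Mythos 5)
Your proposal is correct and follows essentially the same route as the paper's appendix proof: an iterated Jensen--Nenciu inversion (Lemma~\ref{lemma-JN}) along the projection chain $Q\supseteq S_1\supseteq S_2\supseteq S_3$, with the same key identifications $S_1T^2S_1=S_1TPTS_1$ and $\tilde a^\pm a_1^\pm=\|V\|_{L^1}/(3(8\pi)^2)$ producing the sign-independent $T_1$ (and analogously $T_2$), and with $T_3=S_3vG_4vS_3$ invertible by Lemma~\ref{T3-inve}. The only difference is cosmetic: where you invert the rank-one $P$-corner and form a Feshbach--Schur complement, the paper first rescales $M^\pm(\lambda)=\frac{\tilde a^\pm}{\lambda}\widetilde M^\pm(\lambda)$ and applies Lemma~\ref{lemma-JN} with $S=Q$ via a Neumann series, which is an algebraically equivalent reduction.
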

The asymptotic expansions of $\big(M^\pm(\lambda)\big)^{-1}$ in $L^2(\mathbb{R}^3)$ above  can be seen in \cite{EGT19},  also see \cite{FSY} for the regular case. In Theorem \ref{thm-main-inver-M} we take somehow different notations and use more detailed expansions for our purpose.  For the convenience of readers,  we give all details of  the proof as an appendix below.


\subsection{Low energy decay estimates}
In this subsection, we are devoting to establishing  the low energy decay bounds for Theorem \ref{thm-main-results-regular} and Theorem \ref{thm-main-results-resonance}. By identities (\ref{relation-sin-cos}) it suffices to establish low energy dispersive bounds of $H^{\frac{\alpha}{2}}e^{-it\sqrt{H}}P_{ac}(H)$ for $\alpha=-1, 0$.

Below we use the smooth and  even cut-off $\chi$ given by $\chi=1$ for $|\lambda|<\lambda_0\ll1$ and $\chi=0$ for $|\lambda|>2\lambda_0$, where $\lambda_0 $ is some sufficiently small (but fixed) positive constant depending on low energy expansion of $(M^{\pm}(\lambda))^{-1}$. In analyzing the high energy later, we utilize the complementary cut-off $\widetilde{\chi}(\lambda):=1-\chi(\lambda).$

By using  Stone's formula, one has
\begin{equation}\label{Stone-Paley}
   \begin{split}
H^{\frac{\alpha}{2}}e^{-it\sqrt{H}}P_{ac}(H)f=& \frac{2}{\pi i } \int_0^\infty e^{-it\lambda^2}\lambda^{3+2\alpha}
[R_V^+(\lambda^4) -R_V^-(\lambda^4)]f d\lambda\\
=&\frac{2}{\pi i }\int_0^\infty \chi(\lambda)e^{-it\lambda^2}\lambda^{3+2\alpha}[R_V^+(\lambda^4) -R_V^-(\lambda^4)]f d\lambda\\
&+\frac{2}{\pi i } \int_0^\infty\widetilde{\chi}(\lambda) e^{-it\lambda^2}\lambda^{3+2\alpha}[R_V^+(\lambda^4) -R_V^-(\lambda^4)]f d\lambda,
\end{split}
\end{equation}
where $\chi(\lambda)=\sum\limits_{N=-\infty}^{N'}\varphi_0(2^{-N}\lambda)$  and
$\widetilde{\chi}(\lambda)=\sum\limits_{N=N'+1}^{+\infty}\varphi_0(2^{-N}\lambda)$  for $ N'<0$. We remark that the choice of the constant
$ N'$ depends on a sufficiently small neighborhood of $\lambda=0$ such that  the expansions of all resonance types in
Theorem \ref{thm-main-inver-M} hold.

Hence in order to establish the low energy decay bounds for Theorem \ref{thm-main-results-regular} and Theorem \ref{thm-main-results-resonance},
by  using (\ref{Stone-Paley}), it suffices to prove the following theorem.
\begin{theorem}\label{thm-low} Let $|V(x)|\lesssim (1+|x|)^{-\beta}$ $ (x\in\mathbb{R}^3)$ with some $\beta>0$. Then
\begin{itemize}
\item[(i)] If zero is a regular point of $H$ and $\beta>7 $, then for $-\frac{3}{2} < \alpha \leq 0$,
\begin{equation}\label{thm-low-01}
\big\| H^{\frac{\alpha}{2}}e^{-it\sqrt{H}}P_{ac}(H)\chi(H)\big\|_{L^1\rightarrow L^\infty}\lesssim |t|^{-\frac{3+2\alpha}{2}}.
\end{equation}
\item[(ii)] If zero is a first-kind resonance of $H$ and $\beta>11$, then for $\  -\frac{3}{2} < \alpha \leq 0$,
\begin{equation}\label{thm-low-02}
\big\| H^{\frac{\alpha}{2}}e^{-it\sqrt{H}}P_{ac}(H)\chi(H)\big\|_{L^1\rightarrow L^\infty}\lesssim |t|^{-\frac{3+2\alpha}{2}}.
\end{equation}
\item[(iii)] If zero is a second-kind  resonance of $H$ and $\beta>19$ or third-kind  resonance of $H$  and $\beta>23$, then for $\  -\frac{1}{2} < \alpha \leq 0$,
\begin{equation}\label{thm-low-0311}
	\big\| H^{\frac{\alpha}{2}}e^{-it\sqrt{H}}P_{ac}(H)\chi(H)\big\|_{L^1\rightarrow L^\infty}\lesssim |t|^{-\frac{1+2\alpha}{2}}.
\end{equation}
Moreover, there are two time-dependent operators $F_{t}$ and $G_t$ satisfying
$$\|F_ {t}\|_{L^1 \rightarrow L^\infty}   \lesssim|t|^{-\frac{1}{2}} \ \ \hbox{and}  \ \
\|G_ {t}\|_{L^1 \rightarrow L^\infty}   \lesssim|t|^{\frac{1}{2}},$$
 such that
\begin{equation}\label{thm-low-cos-031}
\big\| \cos(t\sqrt{H})P_{ac}(H)\chi(H)-F_{t}\big\|_{L^1\rightarrow L^\infty}\lesssim |t|^{-\frac{3}{2}},
\end{equation}
and
\begin{equation}\label{thm-low--sin-031}
\Big\| \frac{\sin(t\sqrt{H})}{\sqrt{H}}P_{ac}(H)\chi(H)-G_{t}\Big\|_{L^1\rightarrow L^\infty}\lesssim |t|^{-\frac{1}{2}}.
\end{equation}
\end{itemize}
\end{theorem}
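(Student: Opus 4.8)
The plan is to substitute the symmetric resolvent identity \eqref{id-RV} into the low-energy part of Stone's formula \eqref{Stone-Paley} and then insert the expansions of $\big(M^\pm(\lambda)\big)^{-1}$ from Theorem \ref{thm-main-inver-M}. With $R_V^\pm(\lambda^4)=R_0^\pm(\lambda^4)-R_0^\pm(\lambda^4)v\big(M^\pm(\lambda)\big)^{-1}vR_0^\pm(\lambda^4)$, the contribution of the bare resolvent $R_0^\pm(\lambda^4)$ is controlled exactly as in Proposition \ref{prop-free estimates} (the cut-off $\chi$ only shrinks the range of $N$) and already gives the free rate $|t|^{-\frac{3+2\alpha}{2}}$. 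Everything thus reduces to estimating, for each summand $\lambda^{-j}\Lambda$ of $\big(M^\pm(\lambda)\big)^{-1}$ — where $\Lambda$ is an absolutely bounded operator $A^{k}_{i,j}$ flanked by the projections $Q,S_1,S_2,S_3$ — the kernel
\begin{equation*}
\int_0^\infty \chi(\lambda)\,e^{-it\lambda^2}\,\lambda^{3+2\alpha-j}\big[R_0^\pm(\lambda^4)\,v\,\Lambda\,v\,R_0^\pm(\lambda^4)\big](x,y)\,d\lambda .
\end{equation*}

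The core of the analysis is to find the true leading power of $\lambda$ in this integrand. First I would expand the two outer resolvents by \eqref{resolvent-expansion-R0lambda4} and let each projection act on the adjacent expansion: because $Qv=0$, and hence $S_1v=S_2v=S_3v=0$, every $\tfrac{a^\pm}{\lambda}$ head of $R_0^\pm(\lambda^4)$ is annihilated, and for the higher resonances the further relations of Lemma \ref{projiction-spaces-SjL2} (namely $TS_2=0$, $S_2vG_1vQ=0$, and $\langle x_iv,f\rangle=0$ for $f\in S_2L^2$) remove the next orders; Lemma \ref{Taylor-low} makes these vanishings quantitative. Crucially, the coefficients $A^{k}_{i,j}$ carry the boundary-value label $\pm$, so in the Stone difference $R_V^+-R_V^-$ the most singular coefficient does not cancel: after the projection cancellations each outer resolvent contributes only $\lambda^{0}$, and the surviving integrand behaves like $\lambda^{3+2\alpha}$ in the regular and first kind cases (hence no worse than the free term) but like $\lambda^{2\alpha}$ in the second and third kind cases, where the factor $\lambda^{-3}$ from $\big(M^\pm(\lambda)\big)^{-1}$ persists.

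With the powers in hand, I would run the Littlewood--Paley machinery of Section 2 essentially verbatim: decompose $\chi(\lambda)=\sum_{N\le N'}\varphi_0(2^{-N}\lambda)$, rescale $\lambda=2^Ns$, and recognise each dyadic piece as an instance of Lemma \ref{lem-LWP} with phase $\Psi(z)=|x-x_1|+|x_2-y|$ coming from the two free resolvents and amplitude absorbing the smooth profiles of the free resolvent kernels together with the absolutely bounded kernel of $\Lambda$. The polynomial growth of the kernels $G_k(x,y)=|x-y|^{k+1}$ is tamed by the two factors $v(x_1),v(x_2)$, which is exactly where the decay thresholds $\beta>7,11,19,23$ arise: a more singular expansion of $\big(M^\pm(\lambda)\big)^{-1}$ forces one to retain more terms of \eqref{resolvent-expansion-R0lambda4}. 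Summing the resulting bounds $2^{(q+1)N}\Theta_{N_0,N}(t)$ over $N\le N'$ exactly as in \eqref{sum-alpha-not=0}--\eqref{sum-alpha=0} then produces $|t|^{-(q+1)/2}$ with $q=2+2\alpha$ (regular and first kind, matching the free term) and $q=2\alpha$ (second and third kind, valid precisely for $\alpha>-\tfrac12$); the remainder operators $\Gamma_1(\lambda)$ carry an extra power of $\lambda$ and are harmless.

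The delicate point, and the one I expect to be the main obstacle, is the second/third kind refinement \eqref{thm-low-cos-031}--\eqref{thm-low--sin-031}. Here the single $\lambda^{-3}$ summand built on the finite-rank projection $S_2$ is responsible for the borderline behavior, and it must be isolated as $F_t$ (for $\cos$) and $G_t$ (for $\tfrac{\sin(t\sqrt H)}{\sqrt H}$). Returning to the cosine and sine Stone formulas \eqref{stoneformula-cos}--\eqref{stoneformula-sin}, the parity of the trigonometric factor at $\lambda=0$ is decisive: for $\cos(t\lambda^2)$ the remaining scalar integral is a Fresnel integral of size $|t|^{-\frac12}$, giving a finite-rank $F_t$ with $\|F_t\|_{L^1\to L^\infty}\lesssim|t|^{-\frac12}$, whereas for $\tfrac{\sin(t\sqrt H)}{\sqrt H}$ the vanishing of $\sin(t\lambda^2)$ at the origin regularizes the otherwise non-integrable $\lambda^{-2}$ singularity and, after rescaling $u=\sqrt{|t|}\,\lambda$, produces the growth $|t|^{\frac12}$ of $G_t$. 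Subtracting this single term raises the leading power by one, so the Littlewood--Paley summation closes at $|t|^{-\frac32}$ for the cosine and $|t|^{-\frac12}$ for the sine. The genuine difficulty throughout is the algebraic verification that the orthogonality relations of Lemma \ref{projiction-spaces-SjL2} eliminate all the intermediate singular powers in the many cross terms of $R_0^\pm v\big(M^\pm(\lambda)\big)^{-1}vR_0^\pm$, so that only these clean leading contributions remain.
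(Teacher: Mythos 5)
Your overall skeleton (Stone's formula, the symmetric resolvent identity, the expansions of $\big(M^\pm(\lambda)\big)^{-1}$ from Theorem \ref{thm-main-inver-M}, Littlewood--Paley rescaling, Lemma \ref{lem-LWP}, and first-order cancellations $Qv=S_1v=S_2v=0$ via Lemma \ref{Taylor-low}(i)) is the paper's approach and does deliver parts (i), (ii), and the bound \eqref{thm-low-0311} in the second kind case. However, part (iii) contains two genuine gaps. First, the third kind case: the most singular term in Theorem \ref{thm-main-inver-M}(iv) is $\lambda^{-4}S_3D_3S_3$, not $\lambda^{-3}$ as your power count assumes. With only $S_3v=0$ the dyadic pieces of this term are bounded by $2^{2\alpha N}\Theta_{N_0,N}(t)$, and at $\alpha=0$ the sum $\sum_{N\le N'}\Theta_{N_0,N}(t)$ diverges (the summand tends to $1$ as $N\to-\infty$), so your method produces no bound at all for $\cos(t\sqrt H)$. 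Moreover, your guiding claim that the most singular coefficient carries the $\pm$ label and hence "does not cancel" in the Stone difference is false precisely here: $D_3=(S_3vG_4vS_3)^{-1}$ is independent of $\pm$, and the paper exploits exactly this to rewrite $R_V^+-R_V^-$ through the decomposition \eqref{secres-imp-termS3S3}, in which one factor is $R_0^+(\lambda^4)-R_0^-(\lambda^4)$. Its kernel $\bar R_0(p)=(e^{ip}-e^{-ip})/p$, after adding a multiple of $p^2$, satisfies the hypotheses of Lemma \ref{Taylor-low}(iii); only then can the second-moment orthogonality $S_3x_ix_jv=0$ be used, and only then does this term become summable. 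A single resolvent $R_0^\pm$ can never supply this gain, since even $\widetilde F^\pm=F^\pm+p$ has $(\widetilde F^\pm)''(0)\neq0$.

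Second, the refinement \eqref{thm-low-cos-031}--\eqref{thm-low--sin-031}: subtracting only "the single $\lambda^{-3}$ summand" is insufficient. The moment condition $S_2x_iv=0$ cannot be exploited through Lemma \ref{Taylor-low}(ii), because its hypothesis $F'(0)=0$ fails for the free kernel: $(F^\pm)'(0)=-1$. The missing device is the paper's splitting $R_0^\pm(\lambda^4)=\big(R_0^\pm(\lambda^4)-G_0\big)+G_0$, cf.\ \eqref{Fpmtuba}, after which $\widetilde F^\pm=F^\pm+p$ does satisfy the hypothesis. Consequently the obstruction to the rate $|t|^{-3/2}$ is not the $\lambda^{-3}$ term as a whole but every piece in which $G_0$ sits adjacent to $S_2$; such pieces occur also in the $\lambda^{-2}$ and $\lambda^{-1}$ terms of \eqref{term-Omega-second123} (namely $\Gamma^2_{-2,2}$ in \eqref{secres-imp-termS2S1} and $\Gamma^2_{-1,2}$ in \eqref{secres-imp-termS2-le}), and for the cosine they decay only like $|t|^{-1}$, a bound that cannot be improved since one side carries no oscillation. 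After your subtraction the remainder is therefore $O(|t|^{-1})$, not $O(|t|^{-3/2})$. The paper's $F_t$ and $G_t$ are built from precisely this finite family of $G_0$-adjacent pieces (five in the second kind, two more, from \eqref{secres-imp-termS3S3}, in the third kind); your Fresnel and rescaling heuristics for $\|F_t\|_{L^1\to L^\infty}\lesssim|t|^{-1/2}$ and $\|G_t\|_{L^1\to L^\infty}\lesssim|t|^{1/2}$ are correct in spirit, but they must be applied to this larger collection, not to one term.
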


Before proving Theorem \ref{thm-low}, we first give the following lemma, which has a crucial role in making use of cancellations
 of projection operators $Q, S_j(j=1,\cdots,5)$ appearing in the  asymptotic expansions of resolvent $R_V(\lambda^4)$ as $\lambda$  near zero, and will be used frequently to obtain the low energy dispersive estimates for all cases.
\begin{lemma}\label{Taylor-low}
Assume that $x,y \in \mathbb{R}^3 $ and $ \lambda>0$. We define
 $\displaystyle w = w(x)=\frac{x}{|x|}$ for $x\neq 0$ and $w(x)=0$ for $x=0$.
Let $\theta\in[0,1]$ and $\displaystyle |y|\cos\alpha= \langle y, w(x-\theta y)\rangle $ where $\alpha\equiv \alpha(x, y, \theta )$ is  the angle between the vectors $y$ and $x-\theta y $.
\begin{itemize}
\item[(i)]  If $F(p)\in C^{1}(\mathbb{R})$. Then
\begin{equation*}
   \begin{split}
F(\lambda|x-y|)= F(\lambda|x|)-\lambda |y|\int_0^1 F'(\lambda|x-\theta y|)\cos\alpha d\theta.
\end{split}
\end{equation*}

\item[(ii)] If $F(p)\in C^{2}(\mathbb{R})$ and $F'(0)=0$. Then
\begin{align*}
F(\lambda|x-y|)
=&F(\lambda|x|)-\lambda\big\langle y, w(x)\big\rangle F'(\lambda|x|)\\
&+\lambda^2|y|^2\int_0^1(1-\theta)\Big(\sin^2\alpha
\frac{F'(\lambda|x-\theta y|)}{\lambda|x-\theta y|}
+\cos^2\alpha F''(\lambda|x-\theta y|)\Big) d\theta.
\end{align*}

\item[(iii)] If  $F(p)\in C^{3}(\mathbb{R})$ and $ F'(0)=F''(0)=0$. Then
\begin{equation*}
   \begin{split}
 F(\lambda|x-y|)=
 & F(\lambda|x|) -\lambda\big\langle y, w(x)\big\rangle F'(\lambda|x|)
+\frac{\lambda^2}{2} \Big[\big( |y|^2- \big\langle y, w(x)\big\rangle^2\big)\frac{F'(\lambda|x|)}{\lambda|x|}
\\
&+\big\langle y, w(x)\big\rangle^2F''(\lambda|x|)\Big]
+\frac{\lambda^3|y|^3}{2}\int_0^1(1-\theta)^2
\Big[3\cos\alpha \sin^2\alpha \Big( \frac{ F'(\lambda|x-\theta y|)}{\lambda^2|x-\theta y|^2}\\
&- \frac{ F''(\lambda|x-\theta y|)}{\lambda|x-\theta y|} \Big)
-\cos^3\alpha F^{(3)}(\lambda|x-\theta y|)\Big]d\theta.
\end{split}
\end{equation*}
\end{itemize}
\end{lemma}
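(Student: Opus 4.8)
The plan is to reduce all three identities to Taylor's theorem with integral remainder applied to the single-variable function $g(\theta):=F(\lambda|x-\theta y|)$ on $[0,1]$, which interpolates between $g(0)=F(\lambda|x|)$ and $g(1)=F(\lambda|x-y|)$. Writing $h(\theta):=|x-\theta y|$, the entire proof rests on computing the first few derivatives of $h$ and substituting them into the chain-rule expansion of $g^{(k)}$. This converts each statement into a mechanical, if slightly tedious, bookkeeping exercise, so the only real content lies in the geometry hidden in $h'$ and the regularity discussion below.

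First I would record the derivatives of $h$. Starting from $h(\theta)=\big(|x|^2-2\theta\langle x,y\rangle+\theta^2|y|^2\big)^{1/2}$, a direct computation gives $h'(\theta)=-\langle y, w(x-\theta y)\rangle=-|y|\cos\alpha$, which is exactly how the geometric quantity $\cos\alpha$ enters. Differentiating once more and simplifying via $h'=(\theta|y|^2-\langle x,y\rangle)/h$ yields $h''(\theta)=\big(|y|^2-(h'(\theta))^2\big)/h(\theta)=|y|^2\sin^2\alpha/h(\theta)$, and one further differentiation collapses cleanly to $h'''(\theta)=-3h'(\theta)h''(\theta)/h(\theta)=3|y|^3\cos\alpha\sin^2\alpha/h(\theta)^2$. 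These three identities are the only genuine calculations required.

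Then I would apply the chain rule to obtain $g'=\lambda h'F'(\lambda h)$, then $g''=\lambda h''F'(\lambda h)+\lambda^2(h')^2F''(\lambda h)$, and finally $g'''=\lambda h'''F'(\lambda h)+3\lambda^2 h'h''F''(\lambda h)+\lambda^3(h')^3F'''(\lambda h)$, and substitute the formulas for $h',h'',h'''$. For (i) I use $g(1)=g(0)+\int_0^1 g'(\theta)\,d\theta$; for (ii), $g(1)=g(0)+g'(0)+\int_0^1(1-\theta)g''(\theta)\,d\theta$; for (iii), $g(1)=g(0)+g'(0)+\tfrac12 g''(0)+\tfrac12\int_0^1(1-\theta)^2 g'''(\theta)\,d\theta$. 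Evaluating the boundary terms at $\theta=0$, where $|y|\cos\alpha(0)=\langle y,w(x)\rangle$ and hence $|y|^2\sin^2\alpha(0)=|y|^2-\langle y,w(x)\rangle^2$, reproduces exactly the explicit low-order terms in the statement, while the substituted $g^{(k)}$ inside each integral matches the claimed remainder integrand after factoring out $\lambda^k|y|^k$.

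The main obstacle is not algebraic but concerns \emph{regularity}: when the segment $\{x-\theta y:\theta\in[0,1]\}$ passes through the origin at some $\theta_0$, the map $\theta\mapsto h(\theta)$ fails to be smooth there, so $g$ need not be $C^k$ and Taylor's formula is a priori unavailable. This is precisely where the hypotheses $F'(0)=0$ in (ii) and $F'(0)=F''(0)=0$ in (iii) enter: near $\theta_0$ one has $h(\theta)=|y|\,|\theta-\theta_0|$, so $g(\theta)=F(\lambda|y|\,|\theta-\theta_0|)$, and the vanishing-derivative conditions force the one-sided derivatives of $g$ up to order $k$ to agree at $\theta_0$, restoring $g\in C^k([0,1])$. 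The same conditions keep the remainder integrands bounded, since factors such as $F'(\lambda h)/(\lambda h)$ and $F'(\lambda h)/(\lambda h)^2$ stay finite as $h\to 0$ by $F'(0)=0$ (and $F''(0)=0$ for the second-order quotient). For (i) no vanishing assumption is needed, because $g$ is merely Lipschitz; the fundamental theorem of calculus applies to the absolutely continuous $g$, with $\cos\alpha$ read as the bounded, almost-everywhere-defined function that changes sign across $\theta_0$.
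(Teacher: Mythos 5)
Your proposal is correct and takes essentially the same route as the paper, which omits the argument and defers to Lemma 3.5 of \cite{LSY21}: namely, Taylor's formula with integral remainder applied to $g(\theta)=F(\lambda|x-\theta y|)$, with the geometry encoded in $h'=-|y|\cos\alpha$, $h''=|y|^2\sin^2\alpha/h$, $h'''=-3h'h''/h$. One small correction to your regularity discussion: in (iii) the hypotheses $F'(0)=F''(0)=0$ do \emph{not} make $g\in C^3$ at a point $\theta_0$ with $x=\theta_0 y$, since the one-sided third derivatives there are $\pm\lambda^3|y|^3F^{(3)}(0)$ and need not agree; this is harmless, however, because $g''$ is absolutely continuous with bounded a.e.\ derivative, which is all that the two integrations by parts behind the order-3 integral-remainder formula require (equivalently, split the integral at $\theta_0$).
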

\begin{proof}
By the same calculations as in the proof of Lemma 3.5 in \cite{LSY21}, we can obtain  this lemma.
Here we omit the details.
\end{proof}

\subsubsection{\textbf{Regular case} }
In order to establish the lower energy estimate (\ref{thm-low-01}), recall
Stone's formula
\begin{equation}\label{stone-formula-low}
   \begin{split}
H^\frac{\alpha}{2}e^{-it\sqrt{H}}P_{ac}(H)\chi(H)f=& \frac{2}{\pi i}\int_0^\infty e^{-it\lambda^2}\chi(\lambda)\lambda^{3+2\alpha}
[R_V^+(\lambda^4)- R_V^-(\lambda^4)]fd\lambda\\
=&\sum_{N=-\infty}^{N'}\sum_{\pm}\frac{ 2}{\pi i}
\int_0^\infty e^{-it\lambda^2}\varphi_0(2^{-N}\lambda)\lambda^{3+2\alpha}R^\pm_V(\lambda^4)fd\lambda.
\end{split}
\end{equation}
If zero is a regular point of $H$, using (\ref{id-RV}) and (\ref{thm-regularinver-M0 }),
we have
\begin{equation}\label{RV-regular}
   \begin{split}
R_V^\pm(\lambda^4)
= R^\pm_0(\lambda^4)- R^\pm_0(\lambda^4)v\Big(QA^0_{0,1}Q \Big) vR^\pm_0(\lambda^4)
-R^\pm_0(\lambda^4)v\Gamma_1(\lambda) vR^\pm_0(\lambda^4).
  \end{split}
\end{equation}
Combining with Proposition \ref{prop-free estimates}, in order to obtain (\ref{thm-low-01}), it suffices to prove the following
Proposition \ref{prop-QA001Q} and Proposition \ref{prop-reg-freeterms}.

\begin{proposition}\label{prop-QA001Q}
Assume that $|V(x)|\lesssim (1+|x|)^{-7-}$. Let $\Theta_{N_0,N}(t)$ be a function defined in (\ref{ function-theta}) and $N <N'$.
 Then for each $x,y$ and $\frac{3}{2}< \alpha \leq 0$, we have
\begin{equation*}
\Big|\int_0^\infty e^{-it\lambda^2}\lambda^{3+2\alpha}\varphi_0(2^{-N}\lambda)\big[R_0^\pm(\lambda^4)v(QA^0_{0,1}Q)
vR_0^\pm(\lambda^4)\big](x,y)d\lambda\Big|
 \lesssim 2^{(3+2\alpha)N}\Theta_{N_0,N}(t),
\end{equation*}
which gives that
\begin{equation}\label{regular-1}
\sup\limits_{x,y\in \mathbb{R}^3}\Big|\int_0^\infty \chi(\lambda)e^{-it\lambda^2}\lambda^{3+2\alpha}
\big[R_0^\pm(\lambda^4)v(QA^0_{0,1}Q)vR_0^\pm(\lambda^4)\big](x,y)d\lambda\Big|\lesssim|t|^{-\frac{3+2\alpha}{2}} .
\end{equation}
As a consequence,  we have
$$
\Big\|\int_0^\infty \chi(\lambda)e^{-it\lambda^2}\lambda^{3+2\alpha}
\big[R_0^\pm(\lambda^4)v(QA^0_{0,1}Q)vR_0^\pm(\lambda^4)\big]fd\lambda\Big\|_{L^\infty}\lesssim|t|^{-\frac{3+2\alpha}{2}} \big\|f\big\|_{L^1}.
$$
\end{proposition}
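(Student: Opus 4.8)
The plan is to prove the single-$N$ dyadic bound and then sum over the dyadic pieces exactly as in the free-case Proposition~\ref{prop-free estimates}. Writing the kernel of the operator under consideration as
\begin{equation*}
\mathcal{K}_N^\pm(t,x,y):=\int_0^\infty e^{-it\lambda^2}\lambda^{3+2\alpha}\varphi_0(2^{-N}\lambda)\big[R_0^\pm(\lambda^4)v(QA^0_{0,1}Q)vR_0^\pm(\lambda^4)\big](x,y)\,d\lambda,
\end{equation*}
I would first expand the composition as a double spatial integral against the kernel $(QA^0_{0,1}Q)(u_1,u_2)$ of the absolutely bounded operator supplied by Proposition~\ref{Pro-absulu-oper}, so that
\begin{equation*}
\mathcal{K}_N^\pm(t,x,y)=\iint \big[(QA^0_{0,1}Q)(u_1,u_2)\big]v(u_1)v(u_2)\,I_N^\pm(t,x,u_1,u_2,y)\,du_1\,du_2,
\end{equation*}
where $I_N^\pm$ is the $\lambda$-integral with both free resolvent kernels $R_0^\pm(\lambda^4)(x,u_1)$ and $R_0^\pm(\lambda^4)(u_2,y)$ inserted. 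Using the representation $R^\pm_0(\lambda^4)(x,y)=\tfrac{1}{8\pi\lambda}F^\pm(\lambda|x-y|)$ from \eqref{reso-R0-Fpm}, the product of the two resolvents carries a factor $\lambda^{-2}$, so the effective power of $\lambda$ in $I_N^\pm$ is $\lambda^{1+2\alpha}$, and the two oscillatory factors $e^{\pm i\lambda|x-u_1|}$, $e^{\pm i\lambda|u_2-y|}$ combine into a single phase $e^{\pm i\lambda(|x-u_1|+|u_2-y|)}$.

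Next I would apply Lemma~\ref{lem-LWP} to the $\lambda$-integral after the substitution $\lambda=2^N s$. The key point is that the cutoff restricts $s\in[1/4,1]$, the amplitude is a product of the smooth bounded functions $F^\pm_1$-type factors (whose $s$-derivatives up to order one are uniformly bounded, just as for $\mathcal{E}_{02,N}^\pm$ in the free case), and the total phase is $e^{-it2^{2N}s^2}e^{\pm i2^N s\,\Psi}$ with $\Psi(z)=|x-u_1|+|u_2-y|$. Choosing $N_0=\big[\tfrac{1}{3}\log_2\tfrac{\Psi}{|t|}\big]$, Lemma~\ref{lem-LWP} yields the single-$N$ bound $|I_N^\pm|\lesssim 2^{(3+2\alpha)N}\Theta_{N_0,N}(t)$, uniformly in the inner variables $u_1,u_2$; the extra smoothness needed to pass from the $(1+|t|2^{2N})^{-1}$-type estimate to $\Theta_{N_0,N}$ comes, exactly as in \eqref{esti-K0N}, from one integration by parts in $s$ exploiting the $(1+|t|2^{2N})^{-1}$ prefactor. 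The crucial structural feature that makes the inner $du_1\,du_2$ integral converge is the cancellation $Qv=0$ built into $QA^0_{0,1}Q$: without it the $\lambda\to 0$ singularity $F^\pm(p)\sim \pm i - 1$ at $p=0$ would not be integrable after summing over $N$, but $Qv=0$ lets me replace the leading constant term of $F^\pm$ by its Taylor remainder via Lemma~\ref{Taylor-low}, which supplies the decay in $|u_i|$ that, paired with the rapid decay of $v(u_i)=|V(u_i)|^{1/2}\lesssim\langle u_i\rangle^{-7/2-}$, makes the $u$-integrals finite and uniform in $x,y$.

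The bound being uniform in $u_1,u_2$, I would then pull the estimate through the absolutely bounded operator: since $|(QA^0_{0,1}Q)(\cdot,\cdot)|$ is bounded on $L^2$ and $v\in L^2$, the double integral of $|(QA^0_{0,1}Q)(u_1,u_2)|v(u_1)v(u_2)$ is finite, and therefore $|\mathcal{K}_N^\pm(t,x,y)|\lesssim 2^{(3+2\alpha)N}\Theta_{N_0,N}(t)$ uniformly in $x,y$. Finally I would sum over $N\le N'$ using precisely the two dyadic sums carried out in \eqref{sum-alpha-not=0} and \eqref{sum-alpha=0}: splitting at the index $N_0'$ with $2^{N_0'}\sim|t|^{-1/2}$, the low-frequency tail contributes a convergent geometric series $\sum_{N\le N_0'}2^{(3+2\alpha)N}\lesssim|t|^{-(3+2\alpha)/2}$ because $3+2\alpha>0$, and the high-frequency tail is controlled by the $(|t|2^{2N})^{-3/2}$ (or $^{-2}$) factor, giving the same power of $|t|$. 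This produces \eqref{regular-1}, and the $L^1\to L^\infty$ operator bound follows immediately by taking the supremum of the kernel over $x,y$.

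\textbf{Main obstacle.} The delicate point is not the oscillatory estimate itself, which is a direct application of Lemma~\ref{lem-LWP}, but rather securing uniformity in the inner variables $u_1,u_2$ while simultaneously exploiting the cancellation $Qv=0$. One must arrange the Taylor expansion of $F^\pm$ via Lemma~\ref{Taylor-low} so that the gain of powers of $\lambda$ compensating the $1/\lambda$ singularity is matched against growth in $|u_i|$ that stays integrable against $v$; getting the bookkeeping of these competing powers right—so that the single-$N$ bound has clean separated factors $2^{(3+2\alpha)N}\Theta_{N_0,N}(t)$ with a $u$-uniform constant—is where the real work lies, and it is what forces the decay hypothesis $\beta>7$ on the potential.
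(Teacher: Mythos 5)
Your proposal follows the paper's proof in all essentials: expand the kernel against the absolutely bounded operator $QA^0_{0,1}Q$, exploit the orthogonality $Qv=0$ via Lemma \ref{Taylor-low}(i) to trade the $\lambda^{-2}$ singularity of the two resolvents for factors $\lambda|u_1|\cdot\lambda|u_2|$ that are absorbed by the decay of $v$ (this is exactly where $\beta>7$ enters, since $|u|v(u)\in L^2$), estimate the resulting oscillatory integral by one integration by parts plus Lemma \ref{lem-LWP}, conclude with H\"older's inequality, and sum dyadically as in \eqref{sum-appha=0-alphabig0}. One caveat on ordering: the cancellation must be carried out \emph{before} invoking Lemma \ref{lem-LWP}, so the relevant phase is the $\theta$-dependent $\Psi=|x-\theta_2u_2|+|y-\theta_1u_1|$ rather than $|x-u_1|+|u_2-y|$, and your claimed fixed-$(u_1,u_2)$ bound $2^{(3+2\alpha)N}\Theta_{N_0,N}(t)$ for $I_N^\pm$ is not available at that stage (the raw bound is only $2^{(2+2\alpha)N}\Theta_{N_0,N}(t)$, which is larger for $N<0$); since your own plan contains precisely this repair via Lemma \ref{Taylor-low}, the argument as a whole is sound and matches the paper's.
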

\begin{proof}
 We write
\begin{equation*}
   \begin{split}
K_{1,N}^{0,\pm}(t;x,y):= \int_0^\infty e^{-it\lambda^2}\lambda^{3+2\alpha}\varphi_0(2^{-N}\lambda)\big[R_0^\pm(\lambda^4)vQA^1_{0,1}QvR_0^\pm(\lambda^4)\big](x,y)
 d\lambda.
\end{split}
\end{equation*}
Let $ F^\pm(p)=\frac{e^{\pm ip}-e^{-p}}{p}, p\geq 0$. Then $ R_0^\pm(\lambda^4)(x,y)=\frac{1}{8\pi \lambda}F^\pm(\lambda|x-y|)$.
Using the orthogonality $Qv(x)=0$ and Lemma \ref{Taylor-low}(i), one has
\begin{equation*}
   \begin{split}
&[R_0^\pm(\lambda^4) vQA^1_{0,1}Q v R_0^\pm(\lambda^4)](x,y)\\
=& \frac{1}{64\pi^2 \lambda^2}\int_{\mathbb{R}^6}F^\pm(\lambda|x-u_2|)
     [vQA^1_{0,1}Qv](u_2,u_1)F^\pm(\lambda |y-u_1|)du_1du_2\\
=&\frac{1}{64\pi^2}\int_{\mathbb{R}^6}\int_0^1\int_0^1\cos\alpha_2\cos\alpha_1(F^\pm)'(\lambda|x-\theta_2u_2|)
(F^\pm)'(\lambda|y-\theta_1u_1|)d\theta_1d\theta_2 \\
&\ \ \ \  \times |u_1||u_2|[vQA^1_{0,1}Qv](u_2,u_1)du_1du_2,
  \end{split}
\end{equation*}
where $\cos\alpha_1= \cos\alpha(y,u_1,\theta_1) $ and $\cos\alpha_2= \cos\alpha(x,u_2,\theta_2)$.

Furthermore, we have
\begin{align}\label{K1N-E1N}
K_{1,N}^{0,\pm}(t;x,y)
=&\frac{1}{64\pi^2}\int_{\mathbb{R}^6}\int_0^1\int_0^1\Big(\int_0^\infty e^{-it\lambda^2}\lambda^{3+2\alpha}
\varphi_0(2^{-N}\lambda)(F^\pm)'(\lambda|x-\theta_2u_2|)\nonumber\\
&(F^\pm)'(\lambda|y-\theta_1u_1|)d\lambda\Big)
\cos\alpha_2\cos\alpha_1d\theta_1d\theta_2|u_1||u_2|[vQA^1_{0,1}Qv](u_2,u_1)du_1du_2\nonumber\\
:= &\frac{1}{64\pi^2}\int_{\mathbb{R}^6}\int_0^1\int_0^1
E^{0,\pm}_{1,N}(t;x,y,\theta_1,\theta_2,u_1,u_2)
\cos\alpha_2\cos\alpha_1d\theta_1d\theta_2\nonumber\\
&\   \  \  \  \ \  \ \ \ \ \ \times |u_1||u_2|[vQA^1_{0,1}Qv](u_2,u_1)du_1du_2.
\end{align}

Now we begin to estimate $E^{0,\pm}_{1,N}(t;x,y,\theta_1,\theta_2,u_1,u_2)$.
In fact, let $s=2^{-N}\lambda$, then
\begin{equation*}
   \begin{split}
&E^{0,\pm}_{1,N}(t;x,y,\theta_1,\theta_2,u_1,u_2)\\
&= 2^{(4+2\alpha)N} \int_0^\infty e^{-it2^{2N}s^2}s^{3+2\alpha}
\varphi_0(s)(F^\pm)'(2^Ns|x-\theta_2u_2|)(F^\pm)'(2^Ns|y-\theta_1u_1|)ds.
  \end{split}
\end{equation*}
Notice that $ s \in \hbox{supp} \varphi_0 \subset[\frac{1}{4}, 1]$, by using integration by parts we have
\begin{align}\label{esti-E0pm1N-123}
&|E^{0,\pm}_{1,N}(t;x,y,\theta_1,\theta_2,u_1,u_2)|\nonumber\\
\lesssim & \frac{2^{(4+2\alpha)N}}{1+|t|2^{2N}}\bigg( \Big|\int_0^\infty e^{-it2^{2N}s^2} \partial_s\big( s^{2+2\alpha}
\varphi_0(s)\big) (F^\pm)'(2^Ns|x-\theta_2u_2|)(F^\pm)'(2^Ns|y-\theta_1u_1|)ds\Big|\nonumber\\
&+\Big|\int_0^\infty e^{-it2^{2N}s^2}  s^{2+2\alpha}
\varphi_0(s)\partial_s\big( (F^\pm)'(2^Ns|x-\theta_2u_2|)(F^\pm)'(2^Ns|y-\theta_1u_1|)\big)ds\Big|
\bigg)\nonumber\\
:=&\frac{2^{(4+2\alpha)N}}{1+|t|2^{2N}}\Big( |\mathcal{E}^{0,\pm}_{1,N}(t;x,y,\theta_1,\theta_2,u_1,u_2)|
+|\mathcal{E}^{0,\pm}_{2,N}(t;x,y,\theta_1,\theta_2,u_1,u_2)|\Big).
\end{align}
We first compute the second term $\mathcal{E}^{0,\pm}_{2,N}$. We have
\begin{equation*}\label{esti-E0Ntuba1}
   \begin{split}
&|\mathcal{E}^{0,\pm}_{2,N}(t;x,y,\theta_1,\theta_2,u_1,u_2)|\\
\lesssim& \Big( \Big|\int_0^\infty e^{-it2^{2N}s^2}s^{1+2\alpha}\varphi_0(s)\cdot
2^Ns|x-\theta_2u_2|(F^\pm)^{(2)}(2^Ns|x-\theta_2u_2|) (F^\pm)'(2^Ns|y-\theta_1u_1|)ds\Big|\\
&+\Big|\int_0^\infty e^{-it2^{2N}s^2}s^{1+2\alpha}\varphi_0(s)\cdot
2^Ns|y-\theta_1u_1| (F^\pm)'(2^Ns|x-\theta_2u_2|)
(F^\pm)^{(2)}(2^Ns|y-\theta_1u_1|)ds\Big| \\
:=&|\mathcal{E}^{0,\pm}_{21,N}(t;x,y,\theta_1,\theta_2,u_1,u_2)|+|\mathcal{E}^{0,\pm}_{22,N}(t;x,y,\theta_1,\theta_2,u_1,u_2)|.
  \end{split}
\end{equation*}
For  the first term $\mathcal{E}^{0,\pm}_{21,N}$. Let
$$ F^\pm_1(p)=e^{\mp ip}(F^\pm)'(p)=\frac{(\pm ip -1)+(p+1)e^{-p\mp ip}}{p^2},$$
$$F^\pm_2(p)= p e^{\mp ip}(F^\pm)^{(2)}(p) = \frac{(2\mp 2ip -p^2)- (2+2p+p^2)e^{-p\mp ip}}{p^2}.$$
Then
\begin{equation*}
   \begin{split}
 &2^Ns|x-\theta_2u_2|(F^\pm)^{(2)}(2^Ns|x-\theta_2u_2|)(F^\pm)'(2^Ns|y-\theta_1u_1|)\\
& \ \ \ \ \ \ \ \ \ = e^{\pm i 2^Ns|x-\theta_2u_2|}e^{\pm i2^Ns|y-\theta_1u_1|}
F^\pm_2(2^Ns|x-\theta_2u_2|)F^\pm_1(2^Ns|y-\theta_1u_1|).
  \end{split}
\end{equation*}
Hence, we have
\begin{equation*}
   \begin{split}
&\mathcal{E}^{0,\pm}_{21,N}(t;x,y,\theta_1,\theta_2,u_1,u_2)\\
=&\int_0^\infty e^{-it2^{2N}s^2}e^{\pm i 2^Ns(|x-\theta_2u_2|+|y-\theta_1u_1|)}
s^{1+2\alpha}\varphi_0(s)F^\pm_2(2^Ns|x-\theta_2u_2|)F^\pm_1(2^Ns|y-\theta_1u_1|)ds.
  \end{split}
\end{equation*}
It is easy to check that
$$\Big|\partial_s^k\Big(F^\pm_2(2^Ns|x-\theta_2u_2|)F^\pm_1(2^Ns|y-\theta_1u_1|)\Big)\Big| \lesssim 1,\,k=0,1.$$
By Lemma \ref{lem-LWP} with $z=(x,y,\theta_1,\theta_2,u_1,u_2)$ and $\Psi(z)=|x-\theta_2u_2|+ |y-\theta_1u_1|$,and
$$\Phi(2^Ns;z)= F^\pm_2(2^Ns|x-\theta_2u_2|)F^\pm_1(2^Ns|y-\theta_1u_1|),$$
we obtain that $\mathcal{E}^{0,\pm}_{21,N} $ is bounded by $(1+|t|2^{2N})\Theta_{N_0,N}(t)$.
Similarly, we obtain that $\mathcal{E}^{0,\pm}_{22,N} $ is controlled by the same bound.
Hence we get that  $\mathcal{E}^{0,\pm}_{2,N}$ is bounded by $(1+|t|2^{2N})\Theta_{N_0,N}(t)$.

Similarly, we obtain that $\mathcal{E}^{0,\pm}_{1,N}$ is controlled by the same bound.
By \eqref{esti-E0pm1N-123} we have
\begin{equation}\label{esti-E1N}
   \begin{split}
 |E^{0,\pm}_{1,N}(t;x,y,\theta_1,\theta_2,u_1,u_2)| &\lesssim
2^{(4+2\alpha)N}\Theta_{N_0,N}(t).
 \end{split}
\end{equation}
Since $|V(x)|\lesssim (1+|x|)^{-7-}$, by using (\ref{K1N-E1N}), (\ref{esti-E1N}) and  H\"{o}lder's inequality, one has
\begin{equation*}
   \begin{split}
|K_{1,N}^{0,\pm}(t;x,y)|
&\lesssim2^{(4+2\alpha)N}\Big(\|u_1v(u_1)\|_{L^2}\|QA_{0,1}^0Q\|_{L^2\rightarrow L^2}\|u_2v(u_2)\|_{L^2}\Big)\Theta_{N_0,N}(t)\\
&\lesssim2^{(3+2\alpha)N}\Theta_{N_0,N}(t).
\end{split}
 \end{equation*}

 Finally, by the same argument with the proof of \eqref{sum-appha=0-alphabig0}, we immediately get the desired conclusions.
\end{proof}

\begin{proposition}\label{prop-reg-freeterms}
Assume that $|V(x)|\lesssim (1+|x|)^{-7-}$.  Let $\Theta_{N_0,N}(t)$ be a function defined in (\ref{ function-theta}) and $N <N'$.
Then for each $x,y$ and $-\frac{3}{2}< \alpha \leq 0$,
\begin{equation}\label{weixiang1}
\Big|\int_0^\infty e^{-it\lambda^2}\lambda^{3+2\alpha}\varphi_0(2^{-N}\lambda)\big[R_0^\pm(\lambda^4)v\Gamma_1(\lambda) vR_0^\pm(\lambda^4)\big](x,y)d\lambda\Big|
 \lesssim 2^{(3+2\alpha)N} \Theta_{N_0, N}(t).
\end{equation}
Moreover,
\begin{equation}\label{weixiang11}
\begin{split}
\sup\limits_{x,y\in \mathbb{R}^3}\Big|\int_0^\infty\chi(\lambda) e^{-it\lambda^2}\lambda^{3+2\alpha}\big[R_0^\pm(\lambda^4)v\Gamma_1(\lambda) vR_0^\pm(\lambda^4)\big](x,y)d\lambda\Big|\lesssim|t|^{-\frac{3+2\alpha}{2}}.
\end{split}
\end{equation}
\end{proposition}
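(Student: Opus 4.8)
The plan is to follow the scheme of Proposition~\ref{prop-QA001Q}, the only structural change being that the fixed absolutely bounded operator $QA_{0,1}^0Q$, which annihilates $v$, is now replaced by the $\lambda$-dependent remainder $\Gamma_1(\lambda)$ that carries no such orthogonality. First I would substitute the free kernel $R_0^\pm(\lambda^4)(x,y)=\frac{1}{8\pi\lambda}F^\pm(\lambda|x-y|)$ from \eqref{reso-R0-Fpm} into both resolvents, so that
\begin{equation*}
\big[R_0^\pm(\lambda^4)v\Gamma_1(\lambda)vR_0^\pm(\lambda^4)\big](x,y)
=\frac{1}{64\pi^2\lambda^2}\int_{\mathbb{R}^6}F^\pm(\lambda|x-u_2|)\,[v\Gamma_1(\lambda)v](u_2,u_1)\,F^\pm(\lambda|y-u_1|)\,du_1du_2,
\end{equation*}
and then expose the oscillation by writing $F^\pm(p)=e^{\pm ip}\widetilde F^\pm(p)$ with $\widetilde F^\pm(p)=\frac{1-e^{-p\mp ip}}{p}$, a function that is bounded together with its derivatives on $[0,\infty)$. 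After the dyadic rescaling $\lambda=2^Ns$ this produces the clean phase $e^{\pm i2^Ns\Psi}$ with $\Psi=\Psi(u_1,u_2)=|x-u_2|+|y-u_1|$ (comparable to $|x|+|y|$ on the support of $v\otimes v$), together with the prefactor $2^{(2+2\alpha)N}$; one further factor $2^N$ is gained from $\|\Gamma_1(\lambda)\|_{L^2\to L^2}\lesssim\lambda$, yielding the expected weight $2^{(3+2\alpha)N}$.

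For frozen $(u_1,u_2)$ I would then combine one integration by parts in $s$ against $e^{-it2^{2N}s^2}$, which supplies the factor $(1+|t|2^{2N})^{-1}$, with Lemma~\ref{lem-LWP} applied to the remaining oscillatory integral, which supplies $(1+|t|2^{2N})\,\Theta_{N_0(u_1,u_2),N}(t)$; the product is precisely $\Theta_{N_0(u_1,u_2),N}(t)$. Since the integration by parts differentiates $\Gamma_1(\lambda)$, this step uses the bounds $\lambda\|\partial_\lambda\Gamma_1\|+\lambda^2\|\partial_\lambda^2\Gamma_1\|\lesssim\lambda$ of Theorem~\ref{thm-main-inver-M} to keep the amplitude admissible for Lemma~\ref{lem-LWP}. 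Because $v\Gamma_1(\lambda)v$ and $v\partial_\lambda\Gamma_1(\lambda)v$ are Hilbert--Schmidt — which is visible from the explicit construction of $\Gamma_1$ in the Appendix, the remainder in \eqref{resolvent-expansion-R0lambda4} carrying a pointwise kernel that becomes integrable against $v\otimes v$ once $\beta>7$ — the spatial integration in $(u_1,u_2)$ is controlled by Cauchy--Schwarz. Summing the resulting bound $2^{(3+2\alpha)N}\Theta_{N_0(u_1,u_2),N}(t)$ over $N\le N'$ for each fixed $(u_1,u_2)$ by the elementary computation already performed in \eqref{sum-alpha-not=0}--\eqref{sum-alpha=0}, and only afterwards integrating in $(u_1,u_2)$, gives \eqref{weixiang1} and hence \eqref{weixiang11}.

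The hard part is precisely the absence of the relation $Qv=0$. In Proposition~\ref{prop-QA001Q} two powers of $\lambda$ were recovered by Taylor expanding both resolvent kernels against the cancellation $Qv=0$, producing the favorable weight $2^{(4+2\alpha)N}$; here only a single power is available, from the smallness $\Gamma_1(\lambda)=O(\lambda)$, so the weight is merely $2^{(3+2\alpha)N}$. With this lighter weight the dyadic series no longer converges if one settles for the crude decay $(1+|t|2^{2N})^{-1}$ coming from a single integration by parts — this already breaks down at the cosine endpoint $\alpha=0$, where $\sum_{N\le N'}2^{3N}(1+|t|2^{2N})^{-1}$ only produces $|t|^{-1}$ instead of $|t|^{-3/2}$. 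One is therefore obliged to retain the spatial oscillation $e^{\pm i2^Ns\Psi}$ and call on the full strength of Lemma~\ref{lem-LWP}, whose sharp $(1+|t|2^{2N})^{-2}$ tail for $|N-N_0|>2$ is exactly what restores convergence of the $N$-series to $|t|^{-(3+2\alpha)/2}$; the bookkeeping needed to carry out the two derivatives on $\Gamma_1(\lambda)$ while preserving this oscillatory structure is the only genuinely delicate ingredient.
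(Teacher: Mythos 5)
Your overall skeleton (rescale $\lambda=2^Ns$, one integration by parts to gain $(1+|t|2^{2N})^{-1}$, Lemma \ref{lem-LWP} on the remaining oscillatory integral, then dyadic summation) matches the paper's, and your closing observation that the crude bound $(1+|t|2^{2N})^{-1}$ cannot be summed to $|t|^{-3/2}$ is correct. However, there is a genuine gap in the core of your argument: you treat $\Gamma_1(\lambda)$ as an operator with a pointwise-controlled integral kernel, so that you may freeze $(u_1,u_2)$, integrate by parts in $s$ (which differentiates the kernel value $\Gamma_1(2^Ns)(u_2,u_1)$ pointwise), and invoke Lemma \ref{lem-LWP} at frozen $(u_1,u_2)$ with phase $|x-u_2|+|y-u_1|$ and an amplitude containing that kernel value. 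No such control exists. The $\Gamma_1(\lambda)$ in \eqref{RV-regular} is the error term of Theorem \ref{thm-main-inver-M}, i.e. of the expansion of $(M^\pm(\lambda))^{-1}$; it is produced by Neumann series and the Jensen--Nenciu inversion (Lemma \ref{lemma-JN}), and the only available information is the operator-norm bound $\|\Gamma_1(\lambda)\|_{L^2\to L^2}+\lambda\|\partial_\lambda\Gamma_1(\lambda)\|_{L^2\to L^2}+\lambda^2\|\partial_\lambda^2\Gamma_1(\lambda)\|_{L^2\to L^2}\lesssim\lambda$. You have conflated it with the remainder $\Gamma_1(\lambda)$ of Lemma \ref{lem-M} (the expansion of $M^\pm(\lambda)$ itself), which indeed inherits a pointwise kernel from \eqref{resolvent-expansion-R0lambda4}; these are different objects. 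Consequently your assertions that $v\Gamma_1(\lambda)v$ and $v\partial_\lambda\Gamma_1(\lambda)v$ are Hilbert--Schmidt ``by construction'', and---what your scheme actually needs---that $\sup_s\big|\partial_s^k\big[\Gamma_1(2^Ns)(u_2,u_1)\big]\big|$ ($k=0,1$) is dominated by a function of $(u_1,u_2)$ integrable against the remaining factors, are unsupported: operator-norm bounds on $\partial_\lambda\Gamma_1$ yield no pointwise bounds on its kernel, so both the frozen-variable application of Lemma \ref{lem-LWP} (which requires $|\partial_s^k\Phi|\lesssim 1$ pointwise) and the final Cauchy--Schwarz step break down. (Your parenthetical that $|x-u_2|+|y-u_1|$ is comparable to $|x|+|y|$ on the support of $v\otimes v$ is also false, since $v$ only decays polynomially, but this is a minor point.)

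The paper's proof is engineered precisely to avoid needing a kernel for $\Gamma_1$. It never decomposes in $(u_1,u_2)$: it keeps the quantity as an inner product $E_2^{0,\pm}(\lambda;x,y)=\big\langle [v\Gamma_1(\lambda)v]\big(F^\pm(\lambda|*-y|)\big)(\cdot),\,F^\mp(\lambda|x-\cdot|)\big\rangle$, pulls out the single oscillation $e^{\pm i2^Ns(|x|+|y|)}$ depending only on $(x,y)$, and absorbs the residual factors $e^{\pm i2^Ns(|*-y|-|y|)}$ and $e^{\mp i2^Ns(|x-\cdot|-|x|)}$ into the amplitudes; differentiating these in $s$ costs $2^{N}\langle *\rangle$ and $2^{N}\langle\cdot\rangle$, and these polynomial weights are paid for by the decay of $v$ (this is exactly where $\beta>7$ enters, via $\|v(\cdot)\langle\cdot\rangle\|_{L^2}<\infty$). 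All information about $\Gamma_1$ is then used only through $\|\partial_s^k\Gamma_1(2^Ns)\|_{L^2\to L^2}\lesssim 2^N$ together with H\"older, and Lemma \ref{lem-LWP} is applied with $z=(x,y)$ and $\Psi(z)=|x|+|y|$. To salvage your route you would first have to establish uniform-in-$s$ pointwise (or Hilbert--Schmidt) bounds for the kernels of $\Gamma_1$ and its two $s$-derivatives, which the paper neither proves nor needs once this fixed-phase, operator-norm device is used.
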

\begin{proof}
To get (\ref{weixiang1}), it's equivalent to show that
\begin{equation*}
   \begin{split}
K^{0,\pm}_{2,N}(t;x,y):=\int_0^\infty e^{-it\lambda^2}\lambda^{3+2\alpha}\varphi_0(2^{-N}\lambda)
\Big\langle [v\Gamma_1(\lambda) v]\big(R_0^\pm(\lambda^4)(*, y)\big)(\cdot), ~(R_0^\pm)^*(\lambda^4)(x,\cdot)   \Big\rangle d\lambda
\end{split}
\end{equation*}
is bounded by  $ 2^{(3+2\alpha)N}\Theta_{N_0, N}(t)$.

Let $ F^\pm(p)= \frac{e^{\pm ip} -e^{-p}}{p} (p\geq0)$, then $ R_0^\pm(\lambda^4)(x,y)= \frac{1}{8\pi\lambda}F^\pm(\lambda|x-y|)$.
Hence one has
\begin{equation*}
   \begin{split}
&\Big\langle [v\Gamma_1(\lambda) v]\big(R_0^\pm(\lambda^4)(*,y)\big)(\cdot),~ R_0^\mp(\lambda^4)(x,\cdot)   \Big\rangle\\
&=\frac{1}{64\pi^2\lambda^2}\Big\langle[v\Gamma_1(\lambda) v]
\big(F^\pm(\lambda|*-y|)\big)(\cdot),~
F^\mp(\lambda|x-\cdot|)  \Big\rangle
:=\frac{1}{64\pi^2\lambda^2}E^{0,\pm}_{2}(\lambda;x,y).
  \end{split}
\end{equation*}
Let $\lambda =2^Ns$, then
\begin{equation*}
   \begin{split}
K^{0,\pm}_{2,N}(t;x,y)= \frac{2^{(2+2\alpha) N}}{64\pi^2}
\int_0^\infty e^{-it2^{2N}s^2}s^{1+2\alpha}\varphi_0(s)E^{0,\pm}_{2}(2^Ns;x,y) ds.
  \end{split}
\end{equation*}
Notice that $ s \in \hbox{supp} \varphi_0 \subset[\frac{1}{4}, 1]$, by using integration by parts we have
\begin{equation}\label{esti-K0pm2N-123}
   \begin{split}
|K^{0,\pm}_{2,N}(t;x,y)|
\lesssim & \frac{2^{(2+2\alpha) N}}{1+|t|2^{2N}} \bigg(
 \Big| \int_0^\infty e^{-it2^{2N}s^2} \partial_s\big(s^{2\alpha}\varphi_0(s)\big)E^{0,\pm}_{2}(2^Ns;x,y) ds \Big| \\
  &+\Big| \int_0^\infty e^{-it2^{2N}s^2}s^{2\alpha}\varphi_0(s)\partial_s\big(E^{0,\pm}_{2}(2^Ns;x,y)\big) ds \Big| \bigg)\\
 :=& \frac{2^{(2+2\alpha) N}}{1+|t|2^{2N}}\Big( |\mathcal{E}^{0,\pm}_{1,N}(t;x,y)|+|\mathcal{E}^{0,\pm}_{2,N}(t;x,y)|   \Big).
  \end{split}
\end{equation}

We first estimate $\mathcal{E}^{0,\pm}_{2,N}$.  Since
\begin{equation*}
   \begin{split}
\partial_s\big(E^{0,\pm}_{2}(2^Ns;x,y)\big)
=& \big\langle[v\partial_s\Gamma_1(2^Ns) v]\big(F^\pm(2^Ns|*-y|)\big)(\cdot),~
F^\mp(2^Ns|x-\cdot|)\big\rangle\\
&+\big\langle[v\Gamma_1(2^Ns) v]\big(\partial_sF^\pm(2^Ns|*-y|)\big)(\cdot),~
F^\mp(2^Ns|x-\cdot|)\big\rangle\\
&+\big\langle[v\Gamma_1(2^Ns) v]\big(F^\pm(2^Ns|*-y|)\big)(\cdot),~
\partial_sF^\mp(2^Ns|x-\cdot|)\big\rangle\\
:=&E^{0,\pm}_{21}(2^Ns;x,y)+E^{0,\pm}_{22}(2^Ns;x,y)
+E^{0,\pm}_{23}(2^Ns;x,y).
  \end{split}
\end{equation*}
Then
\begin{equation*}
   \begin{split}
\mathcal{E}^{0,\pm}_{2,N}(t;x,y)=&
\int_0^\infty e^{-it2^{2N}s^2}s^{2\alpha}\varphi_0(s)
 \big(E^{0,\pm}_{21}
 +E^{0,\pm}_{22}+E^{0,\pm}_{23}\big)(2^Ns;x,y)\Big)ds\\
 :=& \mathcal{E}^{0,\pm}_{21,N}(t;x,y)+\mathcal{E}^{0,\pm}_{22,N}(t;x,y)+\mathcal{E}^{0,\pm}_{23,N}(t;x,y).
  \end{split}
\end{equation*}

For the first term $\mathcal{E}^{0,\pm}_{21,N}$. Since $$\lambda\|\partial_\lambda\Gamma_1(\lambda)\|_{L^2\rightarrow L^2}+\lambda^2\|\partial_\lambda^2\Gamma_1(\lambda)\|_{L^2\rightarrow L^2}\lesssim\lambda,$$ then
$$\big\|\partial_s^k\big(\partial_s\Gamma_1(2^Ns)\big)\big\|_{L^2\rightarrow L^2} \lesssim 2^N,\, k=0,1.$$
Similarly,
$$\big|\partial_s^k \big(E^{0,\pm}_{21}(2^Ns;x,y)\big)\big| \lesssim  2^N,  \,k=0,1.$$
By integration by parts,  we obtain that $ \mathcal{E}^{0,\pm}_{21,N}(t;x,y)$ is bounded by $2^N (1+|t|2^{2N})^{-1}$.

We turn to compute  $\mathcal{E}^{0,\pm}_{22,N}$. Let $ F^\mp_0(p)= \frac{1-e^{-p}e^{\pm i p}}{p} $, then
$F^\mp(p)= e^{\mp ip}F^\mp_0(p)$.
Since
$$ \partial_sF^\pm(2^Ns|*-y|)=2^N|*-y|(F^\pm)'(2^Ns|*-y|)
:= e^{\pm i 2^Ns|*-y|} s^{-1}F^\pm_1(2^Ns|*-y|),$$
where$$F^\pm_1(p)=pe^{\mp ip}(F^\pm)'(p)=\frac{(\pm ip -1)+(p+1)e^{-p}e^{\mp ip}}{p},$$
thus we have,
\begin{equation*}
   \begin{split}
E^{0,\pm}_{22}(2^Ns;x,y)
= &e^{\pm i2^Ns(|x|+|y|)}s^{-1}
\Big\langle [v\Gamma_1(2^Ns)v]\big(e^{\pm i2^Ns(|*-y|-|y|)}F^\pm_1(2^Ns|*-y|)\big)(\cdot),\\
& e^{\mp i2^Ns(|x-\cdot|-|x|)}F^\mp_0(2^Ns|x-\cdot|)\Big\rangle
:= e^{\pm i2^Ns(|x|+|y|)}s^{-1}\widetilde{E}^{0,\pm}_{22}(2^Ns;x,y).
  \end{split}
\end{equation*}
Hence, we have
\begin{equation*}
   \begin{split}
\mathcal{E}^{0,\pm}_{22,N}(t;x,y)
=&\int_0^\infty e^{-it2^{2N}s^2}
e^{\pm i 2^Ns(|x|+|y|)}s^{-1+2\alpha}\varphi_0(s)\widetilde{E}^{0,\pm}_{22}(2^Ns;x,y) ds.
  \end{split}
\end{equation*}
Note that
\begin{equation*}
   \begin{split}
&\big|\partial_s^k\big(e^{\pm i2^Ns(|*-y|-|y|)}F^\pm_1(2^Ns|*-y|)\big)\big|\lesssim2^{kN}\langle*\rangle, \, k=0,1,\\
&\big|\partial_s^k\big(e^{\mp i2^Ns(|x-\cdot|-|x|)}F^\mp_0(2^Ns|x-\cdot|)\big)\big|\lesssim2^{kN}\langle\cdot\rangle, \, k=0,1.
\end{split}
\end{equation*}
Since $|V(x)|\lesssim(1+|x|)^{-7-}$, by H\"{o}lder's inequality we have
\begin{equation*}
   \begin{split}
 \big|\partial_s^k \widetilde{E}^{0,\pm}_{22}(2^Ns;x,y)\big|&\lesssim\sum\limits_{k=0}^1\big\|v(\cdot)\langle \cdot\rangle^{1-k}\big\|^2_{L^2}
\big\|\partial_s^k \Gamma_1(2^Ns)\big\|_{L^2\rightarrow  L^2}\lesssim 2^N.
   \end{split}
\end{equation*}
By Lemma \ref{lem-LWP} again with $ z=(x,y)$, $\Psi(z) =|x|+ |y|$ and  $\Phi(2^Ns;z)= \widetilde{E}^{0,\pm}_{22}(2^Ns;x,y)$,
we obtain that $\mathcal{E}^{0,\pm}_{22,N}$ is bounded by $2^{N } (1+|t|2^{2N})\Theta_{N_0, N}(t)$.
Similar to get that $\mathcal{E}^{0,\pm}_{23,N}$ is controlled  by the same bound.
Hence we obtain that $\mathcal{E}^{0,\pm}_{2,N}$ is bounded by  $2^{N } (1+|t|2^{2N})\Theta_{N_0, N}(t)$.

Similarly, we obtain that $\mathcal{E}^{0,\pm}_{1,N}$ is controlled by
the same bounds. By \eqref{esti-K0pm2N-123}, we immediately obtain that
$K^{0,\pm}_{2,N}$ is bounded by $2^{(3+2\alpha)N}\Theta_{N_0, N}(t)$. Hence we obtain that  (\ref{weixiang1}) holds.

Finally, by the same argument with the proof of \eqref{sum-appha=0-alphabig0},
we immediately get the desired conclusions.
\end{proof}

\subsubsection{\textbf{The first kind of resonance}}
If zero is the first kind of resonance  of $H$, then by using (\ref{id-RV}) and (\ref{thm-resoinver-M1 })
one has
\begin{equation}\label{RV-first resonance}
   \begin{split}
R_V^\pm(\lambda^4)
=& R^\pm_0(\lambda^4)
- R^\pm_0(\lambda^4)v\Big(\lambda^{-1} S_1A^1_{-1,1} S_1\Big)vR^\pm_0(\lambda^4)-R^\pm_0(\lambda^4)v\\
&\times \Big(S_1A^1_{0,1}+A^1_{0,2}S_1+QA^1_{0,3}Q \Big)vR^\pm_0(\lambda^4)-R^\pm_0(\lambda^4)v\Gamma_1(\lambda)vR^\pm_0(\lambda^4).
\end{split}
\end{equation}

In order to obtain the estimates (\ref{thm-low-02}), comparing with the analysis of regular case, it is enough to prove the following proposition.
\begin{proposition}\label{prop-first-S1A-101S1}
Assume that $|V(x|)\lesssim (1+|x|)^{-11-}$. Then for any $-\frac{3}{2}< \alpha \leq 0$,
\begin{equation*}
   \begin{split}
&\sup\limits_{x,y\in \mathbb{R}^3}\Big|\int_0^\infty \chi(\lambda)e^{-it\lambda^2}\lambda^{3+2\alpha}\varphi_0(2^{-N}\lambda) \lambda^{-1}\big[R^\pm_0(\lambda^4)v S_1A^1_{-1,1}S_1 vR^\pm_0(\lambda^4)\big](x,y)
d\lambda\Big|\lesssim|t|^{-\frac{3+2\alpha}{2}},\\
&\sup\limits_{x,y\in \mathbb{R}^3}\Big|\int_0^\infty \chi(\lambda)e^{-it\lambda^2}\lambda^{3+2\alpha}\varphi_0(2^{-N}\lambda) \big[R^\pm_0(\lambda^4)v S_1A^1_{0,1} vR^\pm_0(\lambda^4)\big](x,y)
d\lambda\Big|\lesssim|t|^{-\frac{3+2\alpha}{2}}.
\end{split}
\end{equation*}
\end{proposition}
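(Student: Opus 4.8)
The plan is to treat both integrals with the same dyadic oscillatory machinery already used in Proposition \ref{prop-QA001Q}, the one genuinely new input being the cancellation $S_1v=0$. Since $S_1\le Q$ and $Qv=0$ (because $Pv=v$), we have $S_1v=S_1Qv=0$, and as $S_1$ is a finite rank, hence absolutely bounded, operator with symmetric kernel this identity is available on \emph{both} the left and the right of the sandwiched operator. I will use throughout $R_0^\pm(\lambda^4)(x,y)=\frac{1}{8\pi\lambda}F^\pm(\lambda|x-y|)$ with $F^\pm(p)=\frac{e^{\pm ip}-e^{-p}}{p}$, decompose $\chi(\lambda)=\sum_{N\le N'}\varphi_0(2^{-N}\lambda)$, and reduce each statement to bounding a single dyadic kernel by $2^{(3+2\alpha)N}\Theta_{N_0,N}(t)$ and then summing over $N\le N'$ exactly as in \eqref{sum-appha=0-alphabig0}.

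For the first (singular) integral I would expand the kernel of $R_0^\pm(\lambda^4)vS_1A^1_{-1,1}S_1vR_0^\pm(\lambda^4)$ as $\frac{1}{64\pi^2\lambda^2}\int\!\!\int F^\pm(\lambda|x-u_2|)[vS_1A^1_{-1,1}S_1v](u_2,u_1)F^\pm(\lambda|y-u_1|)\,du_1du_2$ and apply Lemma \ref{Taylor-low}(i) to both $F^\pm(\lambda|x-u_2|)$ and $F^\pm(\lambda|y-u_1|)$. The constant terms $F^\pm(\lambda|x|)$ and $F^\pm(\lambda|y|)$ are killed by $S_1v=0$ acting on the left and right respectively, so each factor yields a gain $\lambda|u_i|\int_0^1(F^\pm)'(\lambda|x-\theta_iu_i|)\cos\alpha_i\,d\theta_i$. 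The resulting $\lambda^2$ cancels the $\lambda^{-2}$ from the two resolvents, so that together with the explicit $\lambda^{-1}$ and Stone's weight $\lambda^{3+2\alpha}$ the inner $\lambda$-integral carries $\lambda^{2+2\alpha}$ against a product $(F^\pm)'(F^\pm)'$. Substituting $\lambda=2^Ns$ produces the prefactor $2^{(3+2\alpha)N}$, and one integration by parts in $s$ followed by Lemma \ref{lem-LWP} with $\Psi(z)=|x-\theta_2u_2|+|y-\theta_1u_1|$, precisely as in Proposition \ref{prop-QA001Q}, gives $2^{(3+2\alpha)N}\Theta_{N_0,N}(t)$; the $u$-integrations are absorbed by H\"older's inequality using the finite moment $\|\,|u|v\,\|_{L^2}$ and the absolute boundedness of $S_1A^1_{-1,1}S_1$.

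For the second integral the operator $S_1A^1_{0,1}$ carries $S_1$ only on the left, so I would apply Lemma \ref{Taylor-low}(i) only to $F^\pm(\lambda|x-u_2|)$, whose constant term is killed by $S_1v=0$, gaining a single factor $\lambda$; the right factor $F^\pm(\lambda|y-u_1|)$ is left intact and contributes the oscillatory phase $e^{\pm i\lambda|y-u_1|}$. Now the two resolvents give $\lambda^{-2}$, the single Taylor gain gives $\lambda^{+1}$, and Stone's weight $\lambda^{3+2\alpha}$ again leaves $\lambda^{2+2\alpha}$ in the inner integral, so the same rescaling and the same application of Lemma \ref{lem-LWP}, now with $\Psi(z)=|x-\theta_2u_2|+|y-u_1|$, give $2^{(3+2\alpha)N}\Theta_{N_0,N}(t)$. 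Summing both pieces over $N\le N'$ by the geometric argument behind \eqref{sum-appha=0-alphabig0} yields the claimed bound $|t|^{-\frac{3+2\alpha}{2}}$; the remaining terms $A^1_{0,2}S_1$, $QA^1_{0,3}Q$ and $\Gamma_1(\lambda)$ in \eqref{RV-first resonance} are handled by symmetry and by the regular-case Propositions \ref{prop-QA001Q} and \ref{prop-reg-freeterms}.

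The main obstacle I anticipate is bookkeeping rather than conceptual: after integration by parts one must verify, as in the free and regular cases, the uniform symbol bounds $|\partial_s^k\Phi(2^Ns,z)|\lesssim 1$ for $k=0,1$ for the various products of $F^\pm$, $(F^\pm)'$ and their rescaled companions $F_1^\pm,F_2^\pm$, and check that the weighted moments entering H\"older (here $\|\,|u|v\,\|_{L^2}$ and $\|\langle\cdot\rangle v\|_{L^2}$) are finite, which is exactly what $\beta>11$ guarantees. The one substantive point is that the extra singularity $\lambda^{-1}$ of the first term is compensated \emph{precisely} by the second projection $S_1$, that is by the extra Taylor order it permits, so that both terms land on the identical dyadic bound $2^{(3+2\alpha)N}\Theta_{N_0,N}(t)$ as the regular contribution and the first kind resonance does not degrade the decay rate.
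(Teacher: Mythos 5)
Your proposal is correct and follows essentially the same route as the paper: the paper's own proof is precisely "use the orthogonality $S_1v=0$ and repeat the argument of Proposition \ref{prop-QA001Q}," which is what you carry out, with the right power counting (two Taylor gains of $\lambda$ against the explicit $\lambda^{-1}$ for the $S_1A^1_{-1,1}S_1$ term, one gain and no singular factor for the $S_1A^1_{0,1}$ term, both landing on $2^{(3+2\alpha)N}\Theta_{N_0,N}(t)$ and summing via \eqref{sum-appha=0-alphabig0}). Your justification $S_1v=S_1Qv=0$ and the one-sided versus two-sided use of the cancellation are exactly the points the paper relies on.
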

\begin{proof}
Here we use the orthogonality $S_1v=0$, by the same argument with the proof of
Proposition \ref{prop-QA001Q}, we obtain the desired conclusions.
\end{proof}

\subsubsection{\textbf{The second kind of resonance}}
If zero is the second kind of resonance  of $H$, then using (\ref{id-RV}) and
(\ref{thm-resoinver-M2 }) one has
\begin{align*}\label{Rv-secondreso}
R^\pm_V(\lambda^4)&=R^\pm_0(\lambda^4)-R^\pm_0(\lambda^4)v\Big(\lambda^{-3}S_2A^2_{-3,1}S_2\Big)vR^\pm_0(\lambda^4)
-R^\pm_0(\lambda^4)v\Big(  \lambda^{-2}S_2A^2_{-2,1}S_1\nonumber\\
&+ \lambda^{-2}S_1A^2_{-2,2}S_2\Big)vR^\pm_0(\lambda^4)
-R^\pm_0(\lambda^4)v\Big(  \lambda^{-1}S_2A^2_{-1,1}+\lambda^{-1}A^2_{-1,2}S_2
+\lambda^{-1}S_1A^2_{-1,3}S_1\Big) \nonumber\\
&\times vR^\pm_0(\lambda^4)-R^\pm_0(\lambda^4)v\Big( S_1A^2_{0,1} + A^2_{0,2}S_1
+Q A^2_{0,3}Q \Big)vR^\pm_0(\lambda^4)
-R^\pm_0(\lambda^4)v\Gamma_1(\lambda)vR^\pm_0(\lambda^4).
\end{align*}
In order to prove Theorem \ref{thm-low}(iii), comparing with the proof of regular case and the first kind resonance,
it is enough to estimate integrals with the following three terms:
 \begin{equation}\label{term-Omega-second123}
   \begin{split}
 \Omega_{2,1}(\lambda):=& R^\pm_0(\lambda^4)v\big(\lambda^{-3}S_2A^2_{-3,1}S_2\big)vR^\pm_0(\lambda^4),\\
\Omega_{2,2}(\lambda):=& R^\pm_0(\lambda^4)v\big(\lambda^{-2}S_2A^2_{-2,1}S_1\big)vR^\pm_0(\lambda^4),\\
\Omega_{2,3}(\lambda):=& R^\pm_0(\lambda^4)v\big(\lambda^{-1}S_2A^2_{-1,1}\big)vR^\pm_0(\lambda^4).
\end{split}
\end{equation}

  Recall $ F^\pm(p)= \frac{e^{\pm ip} -e^{-p}}{p} (p\geq0)$ and $R^\pm_0(\lambda^4)(x,y)=\frac{1}{8\pi \lambda}F^\pm(\lambda|x-y|)$. Since $(F^\pm)'(0) \neq 0$ so it doesn't satisfy the condition of Lemma \ref{Taylor-low}(ii), hence we can't make full use of the orthogonality of $S_2$ ( i.e. $S_2(x_iv)=0, \ i=1,2,3$ ). In order to use orthogonality $S_2(x_jv)=0$  to improve the time decay of solution operator $\cos (t\sqrt{H})$ and $\frac{\sin(t\sqrt{H})}{\sqrt{H}}$, we need to subtract a specific operator to satisfy the conditions of Lemma \ref{Taylor-low}(ii). Then we can make full use of the orthogonality of $S_2$.

Recall that  $\displaystyle G_0=\frac{|x-y|}{8\pi}$, let $\widetilde{F}^\pm(p)= \frac{e^{\pm ip}-e^{-p}}{p} + p,\,\, p\in \mathbb{R}$, then
\begin{equation}\label{Fpmtuba}
   \begin{split}
R^\pm_0(\lambda^4)(x,y)-G_0= \frac{1}{8 \pi \lambda }\widetilde{F}^\pm(\lambda|x-y|),
\end{split}
\end{equation}
 $\widetilde{F}^\pm(p)\in C^2(\mathbb{R})$ and $(\widetilde{F}^\pm)'(0)= 0$. Hence $\widetilde{F}^\pm(p)$ satisfies the condition of Lemma \ref{Taylor-low}(ii). We now begin to estimate the terms $\Omega_{2,i}(\lambda)(i=1,2,3)$ in \eqref{term-Omega-second123}.

\vskip0.1cm
Firstly, we deal with the first term $\Omega_{2,1}(\lambda)$ in \eqref{term-Omega-second123}. We have
 \begin{align}\label{secres-imp-termS2S2}
\Omega_{2,1}(\lambda)=&\big(R^\pm_0(\lambda^4)-G_0\big) v(\lambda^{-3}S_2A^2_{-3,1}S_2)v\big(R^\pm_0(\lambda^4)-G_0\big)
+\big(R^\pm_0(\lambda^4)-G_0\big) v(\lambda^{-3}S_2A^2_{-3,1}S_2)\nonumber\\
&\times vG_0+G_0v(\lambda^{-3}S_2A^2_{-3,1}S_2)v\big(R^\pm_0(\lambda^4)-G_0\big)
+G_0v(\lambda^{-3}S_2A^2_{-3,1}S_2)vG_0\nonumber\\
:=&\Gamma^2_{-3,1}(\lambda)+\Gamma^2_{-3,2}(\lambda)
+\Gamma^2_{-3,3}(\lambda)+\Gamma^2_{-3,4}(\lambda).
\end{align}
\vskip0.1cm
\begin{proposition}\label{prop-sencond-S2S2-G0}\label{prop-third-041}
Assume $|V(x)|\lesssim (1+|x|)^{-19-}$. Let $\Gamma^2_{-3,j}(\lambda)(j=1,2,3,4)$ be  operators defined in (\ref{secres-imp-termS2S2}).
Then
\begin{equation}\label{pro-second-bad1}
   \begin{split}
\sup\limits_{x,y\in \mathbb{R}^3}\Big|\int_0^\infty \chi(\lambda)e^{-it\lambda^2}\lambda^{3+2\alpha}\Gamma^2_{-3,1}(\lambda)(x,y)d\lambda\Big|
\lesssim |t|^{-\frac{3+2\alpha}{2}},\, -\frac{3}{2}<\alpha \leq0,
\end{split}
\end{equation}
\begin{equation}\label{pro-second-bad2}
   \begin{split}
\sup\limits_{x,y\in \mathbb{R}^3}\Big|\int_0^\infty\chi(\lambda) e^{-it\lambda^2}\lambda^{3+2\alpha}\Gamma^2_{-3,j}(\lambda)(x,y)d\lambda\Big|
\lesssim|t|^{-\frac{2+2\alpha}{2}}, \,-1<\alpha \leq0,\,  j=2,3,
\end{split}
\end{equation}
\begin{equation}\label{pro-second-bad3}
   \begin{split}
\sup\limits_{x,y\in \mathbb{R}^3}\Big|\int_0^\infty\chi(\lambda) e^{-it\lambda^2}\lambda^{3+2\alpha}\Gamma^2_{-3,4}(\lambda)(x,y)d\lambda\Big|
\lesssim|t|^{-\frac{1+2\alpha}{2}}, \,-\frac{1}{2}<\alpha \leq0.
\end{split}
\end{equation}
\end{proposition}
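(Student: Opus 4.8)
The plan is to estimate the four pieces $\Gamma^2_{-3,j}(\lambda)$ $(j=1,2,3,4)$ of \eqref{secres-imp-termS2S2} one at a time, following the template of Proposition \ref{prop-QA001Q} and Proposition \ref{prop-reg-freeterms} but now exploiting the \emph{double} orthogonality of $S_2$, namely $S_2v=0$ and $S_2(x_iv)=0$ for $i=1,2,3$, recorded in Lemma \ref{projiction-spaces-SjL2}(ii). The decisive structural fact, already isolated in \eqref{Fpmtuba}, is that after subtracting $G_0$ the kernel $R^\pm_0(\lambda^4)(x,y)-G_0=\frac{1}{8\pi\lambda}\widetilde{F}^\pm(\lambda|x-y|)$ satisfies $(\widetilde{F}^\pm)'(0)=0$, so any factor carrying a $\widetilde{F}^\pm$ meets the hypotheses of Lemma \ref{Taylor-low}(ii). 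First I would apply that lemma to such a factor and use $S_2v=0$, $S_2(x_iv)=0$ to annihilate the constant term $\widetilde{F}^\pm(\lambda|x|)$ and the linear term $\lambda\langle u,w(x)\rangle(\widetilde{F}^\pm)'(\lambda|x|)$; each $\widetilde{F}^\pm$-factor then contributes a gain $\lambda^2|u|^2$, which against the prefactor $\tfrac1{8\pi\lambda}$ leaves a net power $\lambda$ in front of a remainder that carries a single oscillation $e^{\pm i\lambda|x-\theta u|}$ and whose $\lambda$-derivatives are uniformly bounded, exactly the input required by Lemma \ref{lem-LWP}.

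For $\Gamma^2_{-3,1}(\lambda)$ both resolvents are of type $R^\pm_0-G_0$, so I would expand \emph{both} by Lemma \ref{Taylor-low}(ii), extract the full gain $\lambda^2|u_2|^2\cdot\lambda^2|u_1|^2$, and cancel it against $\lambda^{-3}$ together with the two prefactors $\lambda^{-1}$. After the substitution $\lambda=2^Ns$ each dyadic block becomes an oscillatory integral with the two phases $e^{\pm i2^Ns(|x-\theta_2u_2|+|y-\theta_1u_1|)}$, to which Lemma \ref{lem-LWP} applies and yields $2^{(3+2\alpha)N}\Theta_{N_0,N}(t)$ just as in the free case; summing over $N\le N'$ with $2^{N_0'}\sim|t|^{-1/2}$ as in \eqref{sum-alpha-not=0}--\eqref{sum-alpha=0} gives $|t|^{-\frac{3+2\alpha}{2}}$ for $-\tfrac32<\alpha\le0$, which is \eqref{pro-second-bad1}. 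For $\Gamma^2_{-3,2}$ and $\Gamma^2_{-3,3}$ exactly one factor is of type $R^\pm_0-G_0$; I would expand only that factor, gaining one $\lambda^2|u|^2$, while on the $G_0$ side I would subtract the first-order Taylor polynomial in the integration variable, whose constant and linear parts are killed by $S_2v=0$ and $S_2(x_iv)=0$, leaving a kernel bounded by $C|u|$ and hence absolutely bounded against $v$. Only one oscillation survives, so Lemma \ref{lem-LWP} produces a single-phase dichotomy; with the extra $2^N$ from $d\lambda=2^N\,ds$ each block is $O\big(2^{(2+2\alpha)N}(1+|t|2^{2N})^{-1/2\ \text{or}\ -1}\big)$, and summation gives $|t|^{-\frac{2+2\alpha}{2}}$ for $-1<\alpha\le0$, which is \eqref{pro-second-bad2}.

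The term $\Gamma^2_{-3,4}(\lambda)=G_0v(\lambda^{-3}S_2A^2_{-3,1}S_2)vG_0$ is the genuinely hard one and fixes the slowest rate, because both factors are $\lambda$-independent and there is no phase to exploit. Here I would use the double orthogonality on \emph{both} $G_0$ factors, replacing each $G_0(x,u)=-\frac{|x-u|}{8\pi}$ by its quadratic Taylor remainder, which is bounded by $C|u|$ uniformly in $x$; with $\beta>19$ this makes the whole sandwich $G_0vS_2A^2_{-3,1}S_2vG_0$ an operator whose kernel is bounded uniformly in $(x,y)$, so the $\lambda$-integral collapses to $\big(\text{bounded}\big)\int_0^\infty e^{-it\lambda^2}\lambda^{2\alpha}\chi(\lambda)\,d\lambda\lesssim|t|^{-\frac{1+2\alpha}{2}}$, valid precisely for $-\tfrac12<\alpha\le0$, giving \eqref{pro-second-bad3}. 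The main obstacle is exactly this non-oscillatory piece: with no oscillation available, the decay is governed solely by the power $\lambda^{2\alpha}$ near $\lambda=0$, which both forces the restricted range $\alpha>-\tfrac12$ and is ultimately responsible, at the level of Theorem \ref{thm-low}, for the unbounded finite-rank corrections $F_t,G_t$. The secondary technical point is ensuring uniformity in $(x,y)$ of the $G_0$-remainder kernels, which is where the fast-decay hypothesis $\beta>19$ is needed so that the weighted factors $|u|v(u)$ and $|u|^2v(u)$ lie in $L^2$.
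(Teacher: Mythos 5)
Your overall route is the same as the paper's: for $\Gamma^2_{-3,1}$ you expand both factors $R^\pm_0(\lambda^4)-G_0=\frac{1}{8\pi\lambda}\widetilde{F}^\pm(\lambda|\cdot|)$ via Lemma \ref{Taylor-low}(ii) and use $S_2v=S_2x_iv=0$ to gain $\lambda^2|u|^2$ on each side, then run the dyadic/Lemma \ref{lem-LWP} machine with the two-phase sum; for $\Gamma^2_{-3,2},\Gamma^2_{-3,3}$ you expand only the resolvent factor and tame the $G_0$ side by orthogonality; for $\Gamma^2_{-3,4}$ you use $S_2v=0$ on both $G_0$ factors so the sandwiched kernel is uniformly bounded and the $\lambda$-integral collapses to $\int_0^\infty\chi(\lambda)e^{-it\lambda^2}\lambda^{2\alpha}d\lambda\lesssim|t|^{-\frac{1+2\alpha}{2}}$. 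This is exactly how the paper obtains \eqref{pro-second-bad1} and \eqref{pro-second-bad3}.

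There is, however, a genuine gap in your middle case $j=2,3$. You assert each dyadic block is $O\big(2^{(2+2\alpha)N}(1+|t|2^{2N})^{-1/2\ \text{or}\ -1}\big)$ — the raw dichotomy of Lemma \ref{lem-LWP} — and that summation yields $|t|^{-\frac{2+2\alpha}{2}}$ for $-1<\alpha\le0$. That summation fails for $-\frac12<\alpha\le 0$: the (at most five) stationary blocks sit at $2^N\sim 2^{N_0}$ with $N_0=\big[\frac13\log_2\frac{\Psi(z)}{|t|}\big]$, and since $\Psi(z)$ is unbounded in $(x,y)$, $N_0$ can land anywhere up to $N'$; at $2^N\sim 2^{N'}$ (a fixed constant) such a block is $2^{(2+2\alpha)N}(|t|2^{2N})^{-1/2}=|t|^{-1/2}2^{(1+2\alpha)N}\sim|t|^{-1/2}$, which is much larger than $|t|^{-\frac{2+2\alpha}{2}}$ whenever $\alpha>-\frac12$ — in particular at $\alpha=0$, the very case needed for the $\cos(t\sqrt{H})$ estimate. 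The paper (and your own treatment of $j=1$, where you correctly quote $\Theta_{N_0,N}(t)$) avoids this by a two-step gain inside each block: first integrate by parts once in $s$, which produces the factor $(1+|t|2^{2N})^{-1}$, and only then apply Lemma \ref{lem-LWP} to the differentiated integrand; the block bound is then $2^{(2+2\alpha)N}\Theta_{N_0,N}(t)$ with $\Theta_{N_0,N}$ as in (\ref{ function-theta}) (exponents $-3/2$ and $-2$), and this does sum to $|t|^{-\frac{2+2\alpha}{2}}$ on all of $-1<\alpha\le0$. The integration-by-parts step does not need two phases, so the fix is immediately available to you, but as written your $j=2,3$ step does not yield \eqref{pro-second-bad2}. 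A minor side point: $\beta>19$ is not what puts $|u|v,|u|^2v$ in $L^2$ (already $\beta>7$ does); it is needed upstream so that the expansion of $(M^\pm(\lambda))^{-1}$ in Theorem \ref{thm-main-inver-M}(iii), which produces the operator $S_2A^2_{-3,1}S_2$, is valid.
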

\begin{proof}
 We first estimate the first term $\Gamma^2_{-3,1}(\lambda)$. For each $N$, we write
\begin{equation*}
   \begin{split}
&\widetilde{K}^{2,\pm}_{1,N}(t;x,y)\\
&=\int_0^\infty e^{-it\lambda^2}\lambda^{3+2\alpha}\varphi_0(2^{-N}\lambda)
\big[\big(R^\pm_0(\lambda^4)-G_0\big)v(\lambda^{-3}S_2A^2_{-3,1}S_2)
v\big(R^\pm_0(\lambda^4)-G_0\big)\big](x,y)d\lambda.
\end{split}
\end{equation*}
Notice that
 $$R^\pm_0(\lambda^4)(x,y)-G_0:= \frac{1}{8 \pi \lambda }\widetilde{F}^\pm(\lambda|x-y|),$$
by Lemma \ref{Taylor-low}(ii) and the orthogonality $S_2x_jv(x)=S_2v(x)=0(j=1,2,3)$, then
\begin{equation*}
   \begin{split}
&\big[\big(R^\pm_0(\lambda^4)-G_0\big)v(\lambda^{-3}S_2A^2_{-3,1}S_2)v\big(R^\pm_0(\lambda^4)-G_0\big)\big](x,y)\\
=& \frac{1}{64\pi^2\lambda^5}\int_{\mathbb{R}^6}\widetilde{F}^\pm(\lambda|x-u_2|)v(u_2)
(S_2A^2_{-3,1}S_2)(u_2,u_1)v(u_1)\widetilde{F}^\pm(\lambda|y-u_1|) du_1du_2\\
=& \frac{1}{64\pi^2\lambda}\int_{\mathbb{R}^6}\int_0^1\int_0^1(1-\theta_1)(1-\theta_2)
\Big(\frac{(\widetilde{F}^\pm)'(\lambda|x-\theta_2u_2|)}{\lambda|x-\theta_2u_2|}\sin^2\alpha_2
+(\widetilde{F}^\pm)^{(2)}(\lambda|x-\theta_2u_2|)\\
&\times \cos^2\alpha_2\Big)
\Big(\frac{(\widetilde{F}^\pm)'(\lambda|y-\theta_1u_1|)}{\lambda|y-\theta_1u_1|}\sin^2\alpha_1
+(\widetilde{F}^\pm)^{(2)}(\lambda|y-\theta_1u_1|)\cos^2\alpha_1\Big)d\theta_1 d\theta_2\\
&\ \ \ \ \ \ \ \ \ \  \times |u_1|^2|u_2|^2v(u_2)v(u_1) (S_2A^2_{-3,1}S_2)(u_2,u_1) du_1du_2.
\end{split}
\end{equation*}
Furthermore, we have
\begin{equation*}
   \begin{split}
&\widetilde{K}^{2,\pm}_{1,N}(t;x,y)\\
=&\frac{1}{64\pi^2}\int_{\mathbb{R}^6}\int_0^1\int_0^1 \Big[ \int_0^\infty e^{-it\lambda^2}\lambda^{2+2\alpha}\varphi_0(2^{-N}\lambda)
\Big(\frac{(\widetilde{F}^\pm)'(\lambda|x-\theta_2u_2|)}{\lambda|x-\theta_2u_2|}\sin^2\alpha_2\\
&+(\widetilde{F}^\pm)^{(2)}(\lambda|x-\theta_2u_2|)\cos^2\alpha_2\Big)
\Big(\frac{(\widetilde{F}^\pm)'(\lambda|y-\theta_1u_1|)}{\lambda|y-\theta_1u_1|}\sin^2\alpha_1
+(\widetilde{F}^\pm)^{(2)}(\lambda|y-\theta_1u_1|)\\
&\ \ \ \ \times\cos^2\alpha_1\Big)d\lambda\Big](1-\theta_1)(1-\theta_2)|u_1|^2|u_2|^2v(u_2)v(u_1)(S_2A^2_{-3,1}S_2)(u_2,u_1)
d\theta_1d\theta_2 du_1du_2.
\end{split}
\end{equation*}
Let
\begin{equation*}
   \begin{split}
&\widetilde{E}^{2,\pm}_{1,N}(t;x,y,\theta_1,\theta_2,u_1,u_2)\\
=&\int_0^\infty e^{-it\lambda^2}\lambda^{2+2\alpha}\varphi_0(2^{-N}\lambda)
\Big(\frac{(\widetilde{F}^\pm)'(\lambda|x-\theta_2u_2|)}{\lambda|x-\theta_2u_2|}\sin^2\alpha_2
+(\widetilde{F}^\pm)^{(2)}(\lambda|x-\theta_2u_2|)\cos^2\alpha_2\Big)\\
&\ \ \ \ \ \ \ \ \ \ \ \ \ \ \ \times\Big(\frac{(\widetilde{F}^\pm)'(\lambda|y-\theta_1u_1|)}{\lambda|y-\theta_1u_1|}\sin^2\alpha_1
+(\widetilde{F}^\pm)^{(2)}(\lambda|y-\theta_1u_1|)\cos^2\alpha_1\Big)d\lambda,
\end{split}
\end{equation*}
then we have
\begin{equation}\label{esti-K2pmtuba-E2pmtua}
   \begin{split}
\big|\widetilde{K}^{2,\pm}_{1,N}(t;x,y)\big|
\lesssim& \int_{\mathbb{R}^6}\int_0^1\int_0^1
\big| \widetilde{E}^{2,\pm}_{1,N}(t;x,y,\theta_1,\theta_2,u_1,u_2)\big|
|u_1|^2|u_2|^2|v(u_2)v(u_1)|\\
&\ \ \ \ \ \ \ \ \ \  \ \ \times|(S_2A^2_{-3,1}S_2)(u_2,u_1)|d\theta_1d\theta_2 du_1du_2.
\end{split}
\end{equation}
Let $s=2^{-N}\lambda$, $r_1=2^N|y-\theta_1u_1|$ and $r_2=2^N|x-\theta_2u_2|$, then
\begin{equation*}
   \begin{split}
&\widetilde{E}^{2,\pm}_{1,N}(t;x,y,\theta_1,\theta_2,u_1,u_2)\\
=&2^{(3+2\alpha)N}\int_0^\infty e^{-it2^{2N}s^2}s^{2+2\alpha}\varphi_0(s)
\Big(\frac{(\widetilde{F}^\pm)'(2^Ns|x-\theta_2u_2|)}{2^Ns|x-\theta_2u_2|}\sin^2\alpha_2
+(\widetilde{F}^\pm)^{(2)}(2^Ns|x-\theta_2u_2|)\\
&\ \ \ \ \ \ \,\times \cos^2\alpha_2\Big)
\Big(\frac{(\widetilde{F}^\pm)'(2^Ns|y-\theta_1u_1|)}{2^Ns|y-\theta_1u_1|}\sin^2\alpha_1
+(\widetilde{F}^\pm)^{(2)}(2^Ns|y-\theta_1u_1|)\cos^2\alpha_1\Big)ds\\
=&2^{(3+2\alpha)N}\int_0^\infty e^{-it2^{2N}s^2}s^{2+2\alpha}\varphi_0(s)
\prod_{j=1}^2\Big(\frac{(\widetilde{F}^\pm)'(r_js)}{r_js}\sin^2\alpha_j
+(\widetilde{F}^\pm)^{(2)}(r_js)\cos^2\alpha_j\Big)ds.
\end{split}
\end{equation*}
 By integration by parts, we obtain that
\begin{align}\label{esti-E2pm11N-123}
&|\widetilde{E}^{2,\pm}_{1,N}(t;x,y,\theta_1,\theta_2,u_1,u_2)|\nonumber\\
\lesssim & \frac{2^{(3+2\alpha)N}}{1+|t|2^{2N}}\bigg( \Big| \int_0^\infty e^{-it2^{2N}s^2}\partial_s\Big(s^{1+2\alpha}\varphi_0(s)\Big)
\prod_{j=1}^2\Big(\frac{(\widetilde{F}^\pm)'(r_js)}{r_js}\sin^2\alpha_j
+(\widetilde{F}^\pm)^{(2)}(r_js)\cos^2\alpha_j\Big)ds\Big| \nonumber\\
&+ \Big| \int_0^\infty e^{-it2^{2N}s^2}s^{1+2\alpha}\varphi_0(s)
\partial_s\prod_{j=1}^2\Big(\frac{(\widetilde{F}^\pm)'(r_js)}{r_js}\sin^2\alpha_j
+(\widetilde{F}^\pm)^{(2)}(r_js)\cos^2\alpha_j\Big)ds\Big| \bigg)\nonumber\\
:= &\frac{2^{(3+2\alpha)N}}{1+|t|2^{2N}} \Big( |\mathcal{E}^{2,\pm}_{1,N}(t;x,y,\theta_1,\theta_2,u_1,u_2)|+|\mathcal{E}^{2,\pm}_{2,N}(t;x,y,\theta_1,\theta_2,u_1,u_2)|\Big).
\end{align}

We first estimate the term $\mathcal{E}^{2,\pm}_{2,N}$. Let
$$\partial_s \prod_{j=1}^2\Big(\frac{(\widetilde{F}^\pm)'(r_js)}{r_js}\sin^2\alpha_j
+(\widetilde{F}^\pm)^{(2)}(r_js)\cos^2\alpha_j\Big)
:= e^{\pm ir_1s}e^{\pm ir_2s}s^{-1}\widetilde{F}^\pm _{\alpha_1,\alpha_2}(r_1s,r_2s),$$
then
\begin{equation*}
   \begin{split}
\big|\mathcal{E}^{2,\pm}_{2,N}\big|
\lesssim& \Big| \int_0^\infty e^{-it2^{2N}s^2}
e^{\pm i 2^Ns (|x-\theta_2u_2|+|y-\theta_1u_1|)}s^{2\alpha}\varphi_0(s)
 \times\widetilde{F}^\pm _{\alpha_1,\alpha_2}(2^Ns|x-\theta_2u_2| ,2^Ns|y-\theta_1u_1|)\Big|.
\end{split}
\end{equation*}
Note that
$$\big|\partial_s^k\widetilde{F}^\pm _{\alpha_1,\alpha_2}(2^Ns|x-\theta_2u_2| ,2^Ns|y-\theta_1u_1|)
 \lesssim1, \,k=0,1,$$
by Lemma \ref{lem-LWP} with $z=(x,y,\theta_1,\theta_2,u_1,u_2)$, $\Psi(z)= |x-\theta_2u_2|+|y-\theta_1u_1|$, and
$$
\Phi(2^Ns,z)= \widetilde{F}^\pm _{\alpha_1,\alpha_2}(2^Ns|x-\theta_2u_2| ,2^Ns|y-\theta_1u_1|),$$
we obtain that $\mathcal{E}^{2,\pm}_{2,N}$ is bounded by $(1+|t|2^{2N})\Theta_{N_0, N}(t)$.
Similarly,  we get that $\mathcal{E}^{2,\pm}_{1,N}$  is controlled by the same bound.
Hence $\widetilde{E}^{2,\pm}_{1,N}$ is bounded by $(1+|t|2^{2N})\Theta_{N_0, N}(t)$.
By \eqref{esti-E2pm11N-123} and H\"{o}lder's inequality we obtain that
$ \widetilde{K}^{2,\pm}_{1,N}$ is bounded by $ 2^{(3+2\alpha) N}\Theta_{N_0, N}(t)$.
By the same summing way with the proof of \eqref{sum-appha=0-alphabig0}, we immediately obtain \eqref{pro-second-bad1}.

For the term $\Gamma^2_{-3,2}(\lambda)$. We use the orthogonality $S_2x_iv=0, \ i=1,2,3$ for the left hand of $\Gamma^2_{-3,2}(\lambda)$
and $S_2v=0$ for the right hand of  $\Gamma^2_{-3,2}(\lambda)$,  by the same argument with the proof of the term $\Gamma^2_{-3,1}(\lambda)$,
we immediately obtain the desired conclusion. Similarly, we also get the desired integral estimate for the term $\Gamma^2_{-3,3}(\lambda)$.

For the term $\Gamma^2_{-3,4}(\lambda)$. We use the orthogonality $S_2v=0$,  by the same argument with the proof of
Proposition \ref{prop-QA001Q}, we immediately obtain \eqref{pro-second-bad3}.
\end{proof}

Notice that the integral estimates \eqref{pro-second-bad2} and \eqref{pro-second-bad3} doesn't hold for $\alpha=-1$,
we can not reduce the solution operator $\frac{\sin(t\sqrt{H})}{\sqrt{H}}$ into the form $H^{-\frac{1}{2}}e^{-it\sqrt{H}}$, which is worse in
computing the integral. In order to obtain the
decay estimate of solution operator $\frac{\sin(t\sqrt{H})}{\sqrt{H}}$, we need directly estimate the following integral,
\begin{equation}\label{esti--total-K1234}
K_{t,i}(x,y):=\int_0^\infty\chi(\lambda) \sin(t\lambda^2)\lambda\Gamma^2_{-3,i}(\lambda)(x,y)d\lambda,\, i=2,3,4.
\end{equation}

\begin{proposition}\label{prop-sencond-S2S2-G1}
Assume that $|V(x)|\lesssim (1+|x|)^{-19-}$. Let $K_{t,i}(x,y)(i=2,3,4)$ be the integrals defined in (\ref{esti--total-K1234}). Then
\begin{equation}\label{pro-esti-sin1}
\sup\limits_{x,y\in \mathbb{R}^3}\big |K_{t,i}(x,y)\big|\lesssim 1,\, i=2,3, \,\ \ \hbox{and}\,\ \
\sup\limits_{x,y\in \mathbb{R}^3}\big |K_{t,4}(x,y)\big|\lesssim |t|^\frac{1}{2}.
\end{equation}
\end{proposition}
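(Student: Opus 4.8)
The plan is to estimate the three kernels directly, without routing through $H^{-\frac12}e^{-it\sqrt H}$: as already noted, the formal choice $\alpha=-1$ in Proposition~\ref{prop-sencond-S2S2-G0} sits exactly at the excluded endpoints, and the reason the endpoint survives for $\frac{\sin(t\sqrt H)}{\sqrt H}$ is the extra vanishing $\sin(t\lambda^2)\sim t\lambda^2$ at $\lambda=0$. Concretely I would write $\sin(t\lambda^2)=\frac{1}{2i}\big(e^{it\lambda^2}-e^{-it\lambda^2}\big)$; the cancellation between the two exponentials near $\lambda=0$ is what renders the negative powers $\lambda^{-2}$ and $\lambda^{-1}$ (produced by the factor $\lambda^{-3}$ in the $\Gamma^2_{-3,i}(\lambda)$ of \eqref{secres-imp-termS2S2} against the Stone weight $\lambda$) integrable. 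As usual I decompose $\chi(\lambda)=\sum_{N\le N'}\varphi_0(2^{-N}\lambda)$, estimate each dyadic block with Lemma~\ref{lem-LWP}, and then sum in $N$.

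Consider first $K_{t,4}$, which carries the $|t|^{1/2}$ growth. Since both outer factors are the $\lambda$-independent operator $G_0$, we have $\Gamma^2_{-3,4}(\lambda)=\lambda^{-3}\,G_0vS_2A^2_{-3,1}S_2vG_0$, so the $x,y$ and $\lambda$ dependence separate:
\[
K_{t,4}(x,y)=\big[G_0vS_2A^2_{-3,1}S_2vG_0\big](x,y)\int_0^\infty\chi(\lambda)\sin(t\lambda^2)\lambda^{-2}\,d\lambda.
\]
For the kernel I would use the double orthogonality $S_2v=0$ and $S_2x_jv=0$: on each side the factor $G_0(x,u)=-\frac{|x-u|}{8\pi}$ may be replaced by the remainder $-\frac{1}{8\pi}\big(|x-u|-|x|+\langle w(x),u\rangle\big)$, which is bounded by $c\,|u|$ uniformly in $x$, and then H\"older's inequality with $\||u|v\|_{L^2}<\infty$ and $\|A^2_{-3,1}\|_{L^2\to L^2}<\infty$ bounds the kernel uniformly in $x,y$. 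The scalar integral is $O(|t|^{1/2})$, either by the scaling $\lambda=s/\sqrt{|t|}$ together with convergence of $\int_0^\infty s^{-2}\sin(s^2)\,ds$, or dyadically by summing $2^{-N}\min\!\big(|t|2^{2N},(|t|2^{2N})^{-1/2}\big)$ over $N\le N'$. This yields the second estimate in \eqref{pro-esti-sin1}.

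For $K_{t,2}$ and $K_{t,3}$ exactly one outer factor is $R_0^\pm(\lambda^4)-G_0=\frac{1}{8\pi\lambda}\widetilde{F}^\pm(\lambda|\cdot|)$ with $(\widetilde{F}^\pm)'(0)=0$. I would treat the $G_0$ side by the uniform bound just described, and the oscillatory side by Lemma~\ref{Taylor-low}(ii) together with $S_2v=S_2x_jv=0$: the constant and linear Taylor terms are annihilated and the quadratic remainder supplies a gain $\lambda^2$. Thus $\Gamma^2_{-3,2}(\lambda)$ behaves like $\lambda^{-2}$ times a bounded, mildly oscillating factor $e^{\pm i\lambda\Psi}\Phi(\lambda\,\cdot\,)$ with $\Psi\ge0$, so after the Stone weight the effective integral is $\int_0^\infty\chi(\lambda)\sin(t\lambda^2)\lambda^{-1}(\cdots)\,d\lambda$. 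Per dyadic block, Lemma~\ref{lem-LWP} yields $\min\!\big(|t|2^{2N},(|t|2^{2N})^{-1/2}\big)$ (the first bound from $|\sin|\le|\cdot|$ at low frequency), and summing over $N$ gives $O(1)$, hence the first estimate in \eqref{pro-esti-sin1}; $K_{t,3}$ is identical with the two sides interchanged.

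The main obstacle is the behaviour at $\lambda=0$: the negative powers make the individual exponential integrals $\int_0^{\lambda_0}\lambda^{-2}e^{\pm it\lambda^2}\,d\lambda$ and $\int_0^{\lambda_0}\lambda^{-1}e^{\pm it\lambda^2}\,d\lambda$ divergent, so the argument must genuinely exploit the cancellation in $\sin(t\lambda^2)$ (equivalently the low-frequency bound $|\sin(t2^{2N}s^2)|\lesssim|t|2^{2N}$), and one must check that this cancellation survives the residual $\lambda$-dependence carried by $\widetilde{F}^\pm$. The secondary point is bookkeeping: the uniform-in-$x,y$ bounds on the $G_0$-kernels require two orders of orthogonality and the weighted bounds $\||u|v\|_{L^2},\,\||u|^2v\|_{L^2}<\infty$, all comfortably supplied by the decay $\beta>19$ underlying the second-kind expansion of Theorem~\ref{thm-main-inver-M}.
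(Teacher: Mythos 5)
Your proposal is correct and follows essentially the same route as the paper: for $K_{t,4}$ you separate the $\lambda$-integral from the kernel exactly as the paper does (the paper substitutes $u=t\lambda^2$ in the scalar integral and bounds the kernel via the single orthogonality $S_2v=0$ plus H\"older, so your additional use of $S_2x_jv=0$ is harmless but unnecessary), and for $K_{t,2},K_{t,3}$ your combination of the Lemma~\ref{Taylor-low}(ii) orthogonality gain on the $(R_0^\pm-G_0)$ side, the uniform $G_0$-side bound, and Lemma~\ref{lem-LWP} together with $|\sin(t\lambda^2)|\lesssim|t|\lambda^2$ at low frequency is precisely the argument the paper leaves implicit under ``similar''. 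You also correctly identify the one genuinely delicate point, namely that a pure absolute-value estimate of $\int_0^{\lambda_0}\min(1,|t|\lambda^2)\lambda^{-1}\,d\lambda$ would only give $O(\log|t|)$, so the oscillation step is indispensable for $i=2,3$.
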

\begin{proof}
We only  estimate the bound of $ K_{t,4}(x,y)$, similar to get the bounds of $ K_{t,i}(x,y), i=2,3$.
Since $\Gamma^2_{-3,4}(\lambda)=G_0v(\lambda^{-3}S_2A^2_{-3,1}S_2)vG_0$ and $G_0=-\frac{|x-y|}{8\pi}$, then
\begin{equation*}
\begin{split}
K_{t,4}(x,y)=&\int_0^\infty\chi(\lambda) \sin(t\lambda^2)\lambda^{-2}[G_0vS_2A^2_{-3,1}S_2vG_0](x,y)d\lambda\\
=&\frac{1}{64\pi^2}\Big(\int_0^\infty\chi(\lambda) \sin(t\lambda^2)\lambda^{-2}d\lambda\Big)\int_{\mathbb{R}^6} |x-u_1|v(u_1)(S_2A^2_{-3,1}S_2)(u_1,u_2)\\
&\times v(u_2)|y-u_2|du_1du_2.
\end{split}
\end{equation*}
Notice that
\begin{equation*}
\int_0^\infty \chi(\lambda)\sin(t\lambda^2)\lambda^{-2}d\lambda
=  \sqrt{|t|}\int_0^\infty \chi\Big(\sqrt{\frac{u}{t}}\Big)\frac{\sin u}{u}  \frac{1}{2\sqrt{u}} du,
\end{equation*}
hence
\begin{equation}\label{growth part}
\Big|\int_0^\infty\chi(\lambda)\sin(t\lambda^2)\lambda^{-2}d\lambda\Big|
\lesssim  |t|^\frac{1}{2}.
\end{equation}
 For the kernel $[G_0vS_2A^2_{-3,1}S_2vG_0](x,y)$, by the orthogonality $S_2v=0$ and by H\"{o}lder's inequality, we obtain that
\begin{equation*}
\begin{split}
\big|[G_0vS_2A^2_{-3,1}&S_2vG_0](x,y)\big|=\frac{1}{64\pi^2}\Big|\int_{\mathbb{R}^6} (|x-u_1|-|x|)v(u_1)(S_2A^2_{-3,1}S_2)(u_1,u_2) v(u_2)\\
 \ &\times(|y-u_2|-|y|)du_1du_2\Big|\lesssim\big\||u_1|v(u_1)\|_{L^2} \big\|S_2A^2_{-3,1}S_2\|_{L^2\rightarrow L^2} \big\||u_2|v(u_2)\big\|_{L^2}\lesssim1.
\end{split}
\end{equation*}
Hence,
$$\sup\limits_{x,y\in \mathbb{R}^3}\big |K_{t,4}(x,y)\big|\lesssim  |t|^\frac{1}{2},$$
which gives
$$\|K_{t,4}\|_{L^1\rightarrow L^\infty}\lesssim  |t|^\frac{1}{2}.$$

The proof of this proposition is completed.
\end{proof}

Secondly, we estimate the second term $\Omega_{2,2}(\lambda)$ in \eqref{term-Omega-second123}.
We have
\begin{equation}\label{secres-imp-termS2S1}
   \begin{split}
\Omega_{2,2}(\lambda)
=&(R^\pm_0(\lambda^4)-G_0)v(\lambda^{-2}S_2A^2_{-2,1}S_1)vR^\pm_0(\lambda^4)+G_0v(\lambda^{-2}S_2A^2_{-2,1}S_1)vR^\pm_0(\lambda^4)\\
:=&\Gamma^2_{-2,1}(\lambda)+\Gamma^2_{-2,2}(\lambda).
\end{split}
\end{equation}

\begin{proposition}\label{prop-sencond-S2S1-S1S2G0}
 Assume that $|V(x)|\lesssim (1+|x|)^{-19-}$. Let $\Gamma^2_{-2,j}(\lambda)(j=1,2)$ be operators defined in
(\ref{secres-imp-termS2S1}). Then
\begin{equation}\label{pro-secondkind-second-term1}
\sup\limits_{x,y\in \mathbb{R}^3}\Big|\int_0^\infty\chi(\lambda) e^{-it\lambda^2}\lambda^{3+2\alpha}\Gamma^2_{-2,1}(\lambda)(x,y)d\lambda\Big|
\lesssim |t|^{-\frac{3+2\alpha}{2}}, \, -\frac{3}{2} < \alpha \leq 0,
\end{equation}
and
\begin{equation}\label{pro-secondkind-second-term2}
\sup\limits_{x,y\in \mathbb{R}^3}\Big|\int_0^\infty \chi(\lambda)e^{-it\lambda^2}\lambda^{3+2\alpha}\Gamma^2_{-2,2}(\lambda)(x,y)d\lambda\Big|
\lesssim |t|^{-\frac{2+2\alpha}{2}}, \, -1 < \alpha \leq 0.
\end{equation}
\end{proposition}
\begin{proof}
By Lemma \ref{Taylor-low}(i)(ii), using the same method with the proofs of Proposition \ref{prop-QA001Q} and Proposition \ref{prop-sencond-S2S2-G0},
we obtain \eqref{pro-secondkind-second-term1}. By using Lemma \ref{Taylor-low}(i), by the same argument with the proof of  Proposition \ref{prop-QA001Q}, we obtain \eqref{pro-secondkind-second-term2}.
\end{proof}

Finally, we deal with the term $\Omega_{2,3}(\lambda)$ in \eqref{term-Omega-second123}. We have
  \begin{equation}\label{secres-imp-termS2-le}
   \begin{split}
\Omega_{2,3}(\lambda)
=&(R^\pm_0(\lambda^4)-G_0)v(\lambda^{-1}S_2A^2_{-1,1})vR^\pm_0(\lambda^4)
+G_0v(\lambda^{-1}S_2A^2_{-1,1})vR^\pm_0(\lambda^4)\\
:=&\Gamma^2_{-1,1}(\lambda)+\Gamma^2_{-1,2}(\lambda).
\end{split}
\end{equation}

Similar to the proof of Proposition \ref{prop-sencond-S2S1-S1S2G0}, we obtain the following proposition.
\begin{proposition}\label{prop-sencondkind-S2-leri}
Assume that $|V(x)|\lesssim (1+|x|)^{-19-}$. Let $\Gamma^2_{-1,j}(\lambda)(j=1,2)$ be  operators defined
in (\ref{secres-imp-termS2-le}). Then
\begin{equation}\label{pro-secondkind-third-term1}
\sup\limits_{x,y\in \mathbb{R}^3}\Big|\int_0^\infty \chi(\lambda)e^{-it\lambda^2}\lambda^{3+2\alpha}\Gamma^2_{-1,1}(\lambda)(x,y)d\lambda\Big|
\lesssim|t|^{-\frac{3+2\alpha}{2}},\, -\frac{3}{2}< \alpha\leq0,
\end{equation}
and
\begin{equation}\label{pro-second-third-term2}
\sup\limits_{x,y\in \mathbb{R}^3}\Big|\int_0^\infty \chi(\lambda)e^{-it\lambda^2}\lambda^{3+2\alpha}\Gamma^2_{-1,2}(\lambda)(x,y)d\lambda\Big|
\lesssim |t|^{-\frac{2+2\alpha}{2}},\, -1<\alpha \leq0.
\end{equation}
\end{proposition}

Notice that the integral estimates \eqref{pro-secondkind-second-term2} and \eqref{pro-second-third-term2} doesn't hold for $\alpha=-1$.
Hence in order to obtain the decay estimate of solution operator $\frac{\sin(t\sqrt{H})}{\sqrt{H}}$, we need compute the following integral,
\begin{equation}\label{esti-total-secondthird-K5}
K_{t,5}(x,y):=\int_0^\infty\chi(\lambda) \sin(t\lambda^2)\lambda\Gamma^2_{-2,2}(\lambda)(x,y)d\lambda,
\end{equation}
\begin{equation}\label{esti-total-secondthird-K6}
K_{t,6}(x,y):=\int_0^\infty\chi(\lambda) \sin(t\lambda^2)\lambda\Gamma^2_{-1,2}(\lambda)(x,y)d\lambda.
\end{equation}
Similar to the proof  of Proposition \ref{prop-sencond-S2S2-G1}, we have the following proposition.
\begin{proposition}\label{prop-sencond-S2S1-S2}
Assume that $|V(x)|\lesssim (1+|x|)^{-19-}$. Let $K_{t,5}(x,y)$ and $K_{t,6}(x,y)$ be integrals defined in \eqref{esti-total-secondthird-K5}
and \eqref{esti-total-secondthird-K6}, respectively. Then
\begin{equation}\label{pro-esti-sin1}
\sup\limits_{x,y\in \mathbb{R}^3}\big |K_{t,i}(x,y)\big|\lesssim 1,\, i=5,6.
\end{equation}
\end{proposition}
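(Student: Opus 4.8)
The plan is to treat $K_{t,5}$ and $K_{t,6}$ in parallel, following the scheme of Proposition \ref{prop-sencond-S2S2-G1} but exploiting that exactly one of the two resolvent factors is now the $\lambda$-dependent operator $R^\pm_0(\lambda^4)$ rather than the frozen operator $G_0$. First I would insert the kernels $G_0(x,u_1)=-\frac{|x-u_1|}{8\pi}$ and $R^\pm_0(\lambda^4)(u_2,y)=\frac{1}{8\pi\lambda}F^\pm(\lambda|u_2-y|)$ (recall \eqref{reso-R0-Fpm}) into \eqref{secres-imp-termS2S1} and \eqref{secres-imp-termS2-le}, and count the powers of $\lambda$. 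In $\lambda\,\Gamma^2_{-2,2}(\lambda)$ one has $\lambda\cdot\lambda^{-2}\cdot\lambda^{-1}=\lambda^{-2}$ times $F^\pm$, while in $\lambda\,\Gamma^2_{-1,2}(\lambda)$ one has $\lambda\cdot\lambda^{-1}\cdot\lambda^{-1}=\lambda^{-1}$ times $F^\pm$. For $K_{t,5}$ I would recover one power of $\lambda$ from the right-hand cancellation $S_1v=0$: since $\int (S_2A^2_{-2,1}S_1)(u_1,u_2)v(u_2)\,du_2=0$, Lemma \ref{Taylor-low}(i) lets me replace $F^\pm(\lambda|u_2-y|)$ by $F^\pm(\lambda|u_2-y|)-F^\pm(\lambda|y|)=-\lambda|u_2|\int_0^1(F^\pm)'(\lambda|y-\theta u_2|)\cos\alpha\,d\theta$, and the extracted $\lambda$ cancels the $\lambda^{-1}$ of $R^\pm_0$. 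For $K_{t,6}$ there is no projection on the right, but the single explicit $\lambda^{-1}$ already makes this reduction unnecessary, and $F^\pm(\lambda|u_2-y|)$ is kept untouched. On the left, in both cases I would use $S_2v=0$ to subtract $G_0(x,0)$, replacing $|x-u_1|$ by $|x-u_1|-|x|$, which is bounded by $|u_1|$ uniformly in $x$; this step is essential, since otherwise the linear $x$-growth of $G_0$ would destroy the uniform supremum bound.

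After these reductions both kernels are dominated, uniformly in $x,y$, by an expression of the form
\begin{equation*}
\int_{\mathbb{R}^6}\int_0^1 |u_1|\,|u_2|\,|v(u_1)|\,\big|(S_2A^2_{-2,1}S_1)(u_1,u_2)\big|\,|v(u_2)|\,\big|E(t;a)\big|\,d\theta\,du_1\,du_2,
\end{equation*}
and similarly for $K_{t,6}$ with $A^2_{-1,1}$ in place of $S_2A^2_{-2,1}S_1$, without the factor $|u_2|$, and with $F^\pm$ replacing $(F^\pm)'$; here the inner scalar integral is
\begin{equation*}
E(t;a)=\int_0^\infty \chi(\lambda)\sin(t\lambda^2)\,\lambda^{-1}(F^\pm)'(\lambda a)\,d\lambda,\qquad a=|y-\theta u_2|\ \ (\text{resp. } a=|u_2-y|).
\end{equation*}
The heart of the matter is the uniform bound $\sup_{a\ge 0}|E(t;a)|\lesssim 1$. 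I would establish it through the Littlewood--Paley decomposition $\chi(\lambda)=\sum_{N\le N'}\varphi_0(2^{-N}\lambda)$, writing $E(t;a)=\sum_{N\le N'}E_N$, where each dyadic piece satisfies two complementary bounds. The first is the trivial one $|E_N|\lesssim |t|2^{2N}$, from $|\sin(t\lambda^2)|\le|t|\lambda^2$ and $|(F^\pm)'|\lesssim 1$ on the support of $\varphi_0(2^{-N}\,\cdot\,)$. The second, $|E_N|\lesssim (1+|t|2^{2N})^{-1}$, comes from one integration by parts against the non-stationary quadratic phase $e^{it\lambda^2}$ (there is no critical point, since $\lambda\sim 2^N>0$). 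The only delicate point is the symbol estimate $|\partial_\lambda\big((F^\pm)'(\lambda a)\big)|=|a\,(F^\pm)''(\lambda a)|\lesssim \lambda^{-1}$ uniformly in $a$, which follows from $|(F^\pm)'(p)|+|(F^\pm)''(p)|\lesssim (1+p)^{-1}$; the same estimate (now applied to $F^\pm$ and $(F^\pm)'$) covers the $K_{t,6}$ case where the symbol is $F^\pm$ itself. Summing over $N$ then gives $\sum_{N\le N'}\min\{|t|2^{2N},(1+|t|2^{2N})^{-1}\}\lesssim 1$, uniformly in $t$ and $a$; this plays the role of Lemma \ref{lem-LWP} here.

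Finally I would reinsert $|E(t;a)|\lesssim 1$ and bound the remaining $u_1,u_2,\theta$ integrations by H\"older's inequality, using $\big\||u_1|v\big\|_{L^2},\big\||u_2|v\big\|_{L^2}<\infty$ (guaranteed by $\beta>19$) together with the $L^2\to L^2$ boundedness of the absolutely bounded operators $S_2A^2_{-2,1}S_1$ and $S_2A^2_{-1,1}$; this yields $\sup_{x,y}|K_{t,i}(x,y)|\lesssim 1$ for $i=5,6$. I expect the main obstacle to be precisely the uniform-in-$a$ estimate of $E(t;a)$: the naive bound using $|\sin|\le 1$ and $\int\lambda^{-1}\,d\lambda$ produces a spurious $\log|t|$, and the rapid oscillation of $(F^\pm)'(\lambda a)$ for large $a$ blocks a direct application of Lemma \ref{lem-LWP} unless one first splits off the exponentially decaying part of $(F^\pm)'$. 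The clean resolution is the combination of the small-$\lambda$ Taylor bound on $\sin(t\lambda^2)$ with the large-$\lambda$ integration by parts described above; this is exactly the mechanism that makes these residual terms only $O(1)$, rather than the $O(|t|^{1/2})$ appearing for $K_{t,4}$ in Proposition \ref{prop-sencond-S2S2-G1}, the improvement being traceable to the single $\lambda^{-1}$ surviving here as opposed to the $\lambda^{-2}$ there.
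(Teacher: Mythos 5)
Your proof is correct, and it supplies precisely the details that the paper leaves implicit: the paper's own proof of this proposition is a one-line reference to the proof of Proposition \ref{prop-sencond-S2S2-G1}, whose only worked-out case ($K_{t,4}$) factors out a $\lambda$-independent kernel and bounds the scalar integral $\int_0^\infty\chi(\lambda)\sin(t\lambda^2)\lambda^{-2}\,d\lambda$ by $|t|^{1/2}$ via the substitution $u=t\lambda^2$ --- an argument that cannot transfer verbatim here, since one factor is now the $\lambda$-dependent $R_0^\pm(\lambda^4)$ and the target bound is $O(1)$ rather than $O(|t|^{1/2})$. Your route keeps the paper's general scheme (left cancellation $S_2v=0$ to replace $|x-u_1|$ by $|x-u_1|-|x|$, right cancellation $S_1v=0$ with Lemma \ref{Taylor-low}(i) for $K_{t,5}$, then H\"older), but the crucial scalar estimate $\sup_{a\ge0}\big|\int_0^\infty\chi(\lambda)\sin(t\lambda^2)\lambda^{-1}(F^\pm)'(\lambda a)\,d\lambda\big|\lesssim 1$ is handled in your own, more self-contained way: two complementary dyadic bounds, $|E_N|\lesssim|t|2^{2N}$ from $|\sin(t\lambda^2)|\le|t|\lambda^2$, and $|E_N|\lesssim(1+|t|2^{2N})^{-1}$ from one non-stationary-phase integration by parts, resting on the uniform-in-$a$ symbol estimates $|(F^\pm)'(p)|+|(F^\pm)''(p)|\lesssim(1+p)^{-1}$, hence $|a(F^\pm)''(\lambda a)|\lesssim\lambda^{-1}$. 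This is a genuine (and legitimate) streamlining: since no time decay is required, the stationary window $|N-N_0|\le2$ of Lemma \ref{lem-LWP} never enters, so the phase $e^{\pm i\lambda a}$ need not be split off; and in any case Lemma \ref{lem-LWP} alone could not close the argument, because $\sum_{N\le N'}(1+|t|2^{2N})^{-1}$ diverges as $N\to-\infty$, and it is exactly your small-$N$ bound $|t|2^{2N}$ that makes the sum converge --- the same mechanism hidden in the paper's substitution trick. Two minor remarks: your aside that the oscillation of $(F^\pm)'(\lambda a)$ ``blocks a direct application of Lemma \ref{lem-LWP}'' is not quite accurate, since your own estimate shows $(F^\pm)'(2^Nsa)$ is an admissible symbol $\Phi$ with $\Psi\equiv0$ (what fails is only the summability just described); and in the final H\"older step you should, as in the paper's $K_{t,4}$ computation, invoke the \emph{absolute} boundedness of $S_2A^2_{-2,1}S_1$ and $S_2A^2_{-1,1}$ from Theorem \ref{thm-main-inver-M}, since the kernels there appear in absolute value.
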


\textbf{The proof of Theorem \ref{thm-low} in the second kind resonance case.}
Combining with Proposition \ref{prop-sencond-S2S2-G0}-Proposition \ref{prop-sencond-S2S1-S2} and the proof of the first kind resonance,
we immediately obtain for $-\frac{1}{2}\leq \alpha \leq 0$,
\begin{equation*}
	\big\| H^{\frac{\alpha}{2}}e^{-it\sqrt{H}}P_{ac}(H)\chi(H)\big\|_{L^1\rightarrow L^\infty}\lesssim |t|^{-\frac{1+2\alpha}{2}}.
\end{equation*}
Let $F_t$ be an operator with the integral kernel
$$F_{t}(x,y):= \int_0^\infty \chi(\lambda) e^{-it\lambda^2} \lambda \big[\Gamma^2_{-3,2}(\lambda)+ \Gamma^2_{-3,3}(\lambda)+\Gamma^2_{-3,4}(\lambda)
+\Gamma^2_{-2,2}(\lambda)+\Gamma^2_{-1,2}(\lambda)\big](x,y)d\lambda,$$
where $\Gamma^2_{-3,i}(\lambda)(i=2,3,4), \Gamma^2_{-2,2}(\lambda)$ and  $\Gamma^2_{-1,2}(\lambda)$ are operators defined in \eqref{secres-imp-termS2S2}, \eqref{secres-imp-termS2S1} and \eqref{secres-imp-termS2-le}.
Combining with the integral estimates \eqref{pro-second-bad2}, \eqref{pro-second-bad3}, \eqref{pro-secondkind-second-term2}
and \eqref{pro-second-third-term2}, we have
\begin{equation}\label{esti-Ft1}
\|F_{t} \|_{L^1\rightarrow L^\infty} \lesssim |t|^{-\frac{1}{2}}.
\end{equation}
Moreover, by using Proposition \ref{prop-sencond-S2S2-G0}-Proposition \ref{prop-sencond-S2S1-S2} again, we have
\begin{equation*}
\big\| \cos(t\sqrt{H})P_{ac}(H)\chi(H)-F_{t}\big\|_{L^1\rightarrow L^\infty}\lesssim |t|^{-\frac{3}{2}}.
\end{equation*}

Let $G_t$ be an operator with the integral kernel
$$G_{t}(x,y):= \sum_{i=1}^6 K_{t,i}(x,y),$$
 where $K_{t,i}(x,y)(i=1,\cdots, 6)$ are integrals defined in \eqref{esti--total-K1234},  \eqref{esti-total-secondthird-K5} and
\eqref{esti-total-secondthird-K6}.  By using Proposition \ref{prop-sencond-S2S2-G1} and Proposition \ref{prop-sencond-S2S1-S2} again, we obtain
that
\begin{equation}\label{esti-Gt1}
\|G_{t} \|_{L^1\rightarrow L^\infty} \lesssim |t|^{\frac{1}{2}}.
\end{equation}
Combining with Proposition \ref{prop-sencond-S2S2-G0}-Proposition \ref{prop-sencond-S2S1-S2} again, we immediately obtain
\begin{equation*}
\Big\| \frac{\sin(t\sqrt{H})}{\sqrt{H}}P_{ac}(H)\chi(H)-G_{t}\Big\|_{L^1\rightarrow L^\infty}\lesssim |t|^{-\frac{1}{2}}.
\end{equation*}
Thus the proof of Theorem \ref{thm-low} is completed  in the second kind resonance case.
\cqd

\subsubsection{\textbf{The third kind of resonance}}
If zero is the  third kind of resonance of $H$, then using (\ref{id-RV}) and
(\ref{thm-resoinver-M3 }) one has
\begin{align}\label{Rv-thirdreso}
R^\pm_V(\lambda^4)
=&R^\pm_0(\lambda^4)
-R^\pm_0(\lambda^4)v\Big(\lambda^{-4}S_3D_3S_3\Big)vR^\pm_0(\lambda^4)
-R^\pm_0(\lambda^4)v\Big(\lambda^{-3}S_2A^3_{-3,1}S_2\Big)vR^\pm_0(\lambda^4)\nonumber\\
&-R^\pm_0(\lambda^4)v\Big(  \lambda^{-2}S_2A^3_{-2,1}S_1
+ \lambda^{-2}S_1A^3_{-2,2}S_2\Big)vR^\pm_0(\lambda^4)
-R^\pm_0(\lambda^4)v\Big(  \lambda^{-1}S_2A^3_{-1,1}\nonumber \\
&+ \lambda^{-1}A^3_{-1,2}S_2
+\lambda^{-1}S_1A^3_{-1,3}S_1\Big)vR^\pm_0(\lambda^4)
-R^\pm_0(\lambda^4)v\Big( S_1A^3_{0,1} + A^3_{0,2}S_1\nonumber\\
&+Q A^3_{0,3}Q \Big)vR^\pm_0(\lambda^4)
-R^\pm_0(\lambda^4)v\Gamma_1(\lambda)vR^\pm_0(\lambda^4).
\end{align}
In order to get the estimates in Theorem \ref{thm-low}(iii) in the second kind resonnace, we need to analyse what influence the term $\lambda^{-4}S_3D_3S_3$  has on Stone's formula (\ref{Stone-Paley}). By a simple calculation, we obtain that
 \begin{equation*}
   \begin{split}
&R^+_0(\lambda^4)v\big( \lambda^{-4}S_3D_3S_3 \big)vR^+_0(\lambda^4)
-R^-_0(\lambda^4)v\big( \lambda^{-4}S_3D_3S_3 \big)vR^-_0(\lambda^4)\\
=&\big(R^+_0(\lambda^4)-R_0^-(\lambda^4)\big) v\big( \lambda^{-4}S_3D_3S_3 \big)vR^+_0(\lambda^4)
+R^-_0(\lambda^4)v\big( \lambda^{-4}S_3D_3S_3 \big)v\big(R^+_0(\lambda^4)-R_0^-(\lambda^4)\big).
\end{split}
\end{equation*}

\begin{proposition}\label{prop-third-041}
Let $|V(x)|\lesssim (1+|x|)^{-23-}$.  Then for any $x, y\in \mathbb{R}^3$ and  $-1< \alpha \leq 0$,
\begin{equation*}
 \begin{split}
&\Big|\int_0^\infty \chi(\lambda)e^{-it\lambda^2}\lambda^{3+2\alpha}
\big[\big(R^+_0(\lambda^4)-R_0^-(\lambda^4)\big) v\big( \lambda^{-4}S_3D_3S_3 \big)vR^+_0(\lambda^4)\big](x,y)d\lambda\Big|\lesssim|t|^{-\frac{2+2\alpha}{2}},\\
&\Big|\int_0^\infty \chi(\lambda)e^{-it\lambda^2}\lambda^{3+2\alpha}
\big[R^-_0(\lambda^4)v\big( \lambda^{-4}S_3D_3S_3 \big)v\big(R^+_0(\lambda^4)-R_0^-(\lambda^4)\big)\big](x,y)d\lambda\Big|\lesssim|t|^{-\frac{2+2\alpha}{2}}.
\end{split}
\end{equation*}
\end{proposition}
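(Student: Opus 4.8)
The plan is to take advantage of two facts peculiar to this term. First, the algebraic identity preceding the statement has isolated in each product exactly one factor $R_0^+(\lambda^4)-R_0^-(\lambda^4)$, and from \eqref{resolent-R0lambda4} this difference is
\[
\big(R_0^+-R_0^-\big)(\lambda^4)(x,y)=\frac{i}{4\pi}\,\frac{\sin\!\big(\lambda|x-y|\big)}{\lambda^2|x-y|},
\]
which is far tamer than a single resolvent. Second, $S_3$ carries the strongest cancellation available, namely $S_3v=S_3x_jv=S_3x_ix_jv=0$ for all $i,j$ by Lemma \ref{projiction-spaces-SjL2}. Comparing the above with the low-energy expansion \eqref{resolvent-expansion-R0lambda4}, the difference factor develops only in the odd kernels $\tfrac{i}{4\pi\lambda}I$, $(a_1^+-a_1^-)\lambda G_1$, $(a_3^+-a_3^-)\lambda^3G_3,\dots$, where $G_1(x,y)=|x-y|^2$ and $G_3(x,y)=|x-y|^4$ as in \eqref{def-Gk}. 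The two most singular of these, the constant kernel $\lambda^{-1}I$ and the quadratic kernel $\lambda G_1$, are polynomials of degree at most two in the integration variable; hence, once multiplied by $v$ and sandwiched against $S_3$, they are annihilated. This degree-two cancellation, which is available for $S_3$ but not for $S_1$ or $S_2$, is exactly what upgrades the decay for this term.

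I would therefore write the kernel of $(R_0^+-R_0^-)(\lambda^4)\,v\big(\lambda^{-4}S_3D_3S_3\big)v\,R_0^+(\lambda^4)$ as a double integral in $u_1,u_2$ and treat the two factors separately. On the difference side I set $\tfrac{\sin(\lambda|x-u_2|)}{\lambda^2|x-u_2|}=\tfrac1\lambda G(\lambda|x-u_2|)$ with $G(p)=\tfrac{\sin p}{p}$; since $G'(0)=0$ but $G''(0)\neq0$, Lemma \ref{Taylor-low}(iii) does not apply directly, so I first subtract the explicit polynomial $1-\tfrac16(\lambda|x-u_2|)^2$, which after multiplication by $v$ is killed by $S_3v=S_3x_jv=S_3x_ix_jv=0$ (these are precisely the $\lambda^{-1}I$ and $\lambda G_1$ contributions). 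Applying Lemma \ref{Taylor-low}(iii) to the regularized profile $\widetilde H(p)=G(p)-1+\tfrac{p^2}{6}$, which satisfies $\widetilde H'(0)=\widetilde H''(0)=0$, produces a remainder of size $\lambda^{2}|u_2|^3$ after the $\lambda^{-1}$ prefactor is absorbed, carrying the oscillation $e^{\pm i\lambda|x-\theta u_2|}$. On the plain side I only need $S_3v=0$: this removes the $a^+/\lambda$ singularity of $R_0^+(\lambda^4)$, and one application of Lemma \ref{Taylor-low}(i) leaves an amplitude of size $\lambda^{0}$ with the oscillation $e^{i\lambda|u_1-y|}$, whose phase is shifted to $e^{i\lambda|y|}$ through $|u_1-y|-|y|$ exactly as in the proof of Proposition \ref{prop-reg-freeterms}. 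The resulting $\langle u_1\rangle$ and $|u_2|^3\langle u_2\rangle$ weights are square-integrable against $v$ because $\beta>23$, and $S_3D_3S_3$ is finite rank, so H\"older's inequality closes the $u$-integrals.

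After these reductions, each Littlewood--Paley piece $\varphi_0(2^{-N}\lambda)$ becomes, under $\lambda=2^Ns$, an oscillatory integral of the type governed by Lemma \ref{lem-LWP} with $\Psi(z)=|x-\theta_2u_2|+|u_1-y|$, and the accumulated power of $2^N$ is $(3+2\alpha)-4+2+0+1=2+2\alpha$; thus the $N$-th kernel is bounded by $2^{(2+2\alpha)N}\Theta_{N_0,N}(t)$. Summing over $N<N'$ exactly as in \eqref{sum-alpha-not=0}--\eqref{sum-alpha=0} gives $|t|^{-(2+2\alpha)/2}$, the geometric series converging precisely when $2+2\alpha>0$, i.e. $\alpha>-1$, which is the stated range. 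The second estimate, with $R_0^-(\lambda^4)$ on the left and the difference factor on the right, follows verbatim after interchanging $(x,u_2)$ with $(y,u_1)$ and $+$ with $-$. The only genuinely new difficulty compared with Propositions \ref{prop-QA001Q} and \ref{prop-sencond-S2S2-G0} is the non-vanishing of $G''(0)$ for the profile $\sin p/p$ of the difference kernel: it must be removed by hand using the degree-two orthogonality of $S_3$ before Lemma \ref{Taylor-low}(iii) can be invoked, and I expect this to be the main point to get right; once it is arranged, the oscillatory-integral estimates and the dyadic summation are routine repetitions of the earlier arguments.
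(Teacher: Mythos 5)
Your proposal is correct and takes essentially the same route as the paper: the paper likewise regularizes the difference kernel $\bar R_0(p)=\frac{e^{ip}-e^{-ip}}{p}$ by adding the quadratic term $\frac{i}{3}p^2$ (harmless because $S_3v=S_3x_jv=S_3x_ix_jv=0$) so that Lemma \ref{Taylor-low}(iii) applies on that side, uses Lemma \ref{Taylor-low}(i) together with $S_3v=0$ on the plain resolvent factor, and then applies Lemma \ref{lem-LWP} and the dyadic summation to get the bound $2^{(2+2\alpha)N}\Theta_{N_0,N}(t)$ per piece and hence $|t|^{-\frac{2+2\alpha}{2}}$ exactly on the range $\alpha>-1$. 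The only (immaterial) differences are your normalization of the sinc profile (subtracting the constant as well) and your shifting of the right-hand oscillation to $e^{i\lambda|y|}$ \`a la Proposition \ref{prop-reg-freeterms}, whereas the paper keeps the phase $|x-\theta_2u_2|+|y-\theta_1u_1|$ directly in Lemma \ref{lem-LWP}.
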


\begin{proof}
 We only estimate the first integral, similar to get the second integral estimate. Let
\begin{equation}\label{integral disanlei}
   \begin{split}
&K^{3,\pm}_{1,N}(t;x,y)\\
:=&\int_0^\infty e^{-it\lambda^2}\lambda^{3+2\alpha}\varphi_0(2^{-N}\lambda)
\times\big[\big(R^+_0(\lambda^4)-R_0^-(\lambda^4)\big)v\big( \lambda^{-4}S_3D_3S_3 \big)vR^+_0(\lambda^4)\big](x,y)d\lambda.
 \end{split}
\end{equation}
Let
$ \bar{R}_0(p)=\frac{e^{ip}-e^{-ip}}{p}$ and $ F^\pm(p)=\frac{e^{\pm ip}-e^{-p}}{p}.$ Then $R^\pm_0(\lambda^4)= \frac{1}{8\pi \lambda}F^\pm(\lambda|x-y|)$, and
$$R^+_0(\lambda^4)-R^-_0(\lambda^4)= \frac{1}{8\pi \lambda}\bar{R}_0(\lambda|x-y|).$$
Note that $\bar{R}_0\in C^5(\mathbb{R})$ and  $(\bar{R}_0)'(p)=0$, but $(\bar{R}_0)^{(2)}(p)= -\frac{2i}{3}\neq 0$. Let
$$\bar{F}(p)= \bar{R}_0(p)+\frac{2i}{3}p^2, \,\,\bar{F}(p)=\frac{e^{ip}-e^{-ip}}{p}+\frac{ip^2}{3}.$$
Hence, we have $\bar{F}\in C^5(\mathbb{R})$ and $(\bar{F})^{(k)}(p)=0, \,k=1,2$.
By the orthogonality $S_3v=S_3x_iv=S_3x_ix_jv=0(i, j=1,2,3)$, one has
\begin{equation*}
   \begin{split}
&\big[\big(R^+_0(\lambda^4)-R_0^-(\lambda^4)\big) v\big( \lambda^{-4}S_3D_3S_3 \big)vR^+_0(\lambda^4)\big](x,y)\\
=&\frac{1}{64\pi^2 \lambda^6}\int_{\mathbb{R}^6} \bar{R}_0(\lambda |x-u_2|)v(u_2)(S_3D_3S_3)(u_2,u_1)v(u_1)
F^+(\lambda|y-u_1|)du_1du_2\\
=&\frac{1}{64\pi^2 \lambda^6}\int_{\mathbb{R}^6} \big(\bar{R}_0(\lambda |x-u_2|)+\frac{2i}{3}\lambda^2|x-u_2|^2\big)v(u_2)(S_3D_3S_3)(u_2,u_1)v(u_1)\\
&\ \ \ \ \ \ \ \ \, \times F^+(\lambda|y-u_1|)du_1du_2\\
=&\frac{1}{64\pi^2 \lambda^6}\int_{\mathbb{R}^6} \bar{F}(\lambda |x-u_2|)v(u_2)(S_3D_3S_3)(u_2,u_1)v(u_1)
F^+(\lambda|y-u_1|)du_1du_2.
\end{split}
\end{equation*}
By Lemma \ref{Taylor-low}(iii), one has
\begin{equation*}
   \begin{split}
&\big[\big(R^+_0(\lambda^4)-R_0^-(\lambda^4)\big) v\big( \lambda^{-4}S_3D_3S_3 \big)vR^+_0(\lambda^4)\big](x,y)\\
=&-\frac{1}{128\pi^2 \lambda^2}\int_{\mathbb{R}^6}\int_0^1\int_0^1 (1-\theta_2)^2
\Big[\Big(\frac{\bar{F}'( \lambda|x-\theta_2u_2|)}{\lambda^2|x-\theta_2u_2|^2}
-\frac{\bar{F}^{(2)}( \lambda|x-\theta_2u_2|)}{\lambda|x-\theta_2u_2|}\Big)3\cos\alpha_2\sin^2\alpha_2\\
&\ \ \ \ \  \ \ \ \ \ \ \ \ \ \ \ \ \ \ -\bar{F}^{(3)}( \lambda|x-\theta_2u_2|)\cos^3\alpha_2\Big]
(F^+)'(\lambda|y-\theta_1u_1|)\cos\alpha_1 d\theta_1\theta_2\\
&\ \ \ \ \ \ \ \ \ \ \  \ \ \ \ \ \ \ \ \ \ \ \ \times|u_2|^3v(u_2)v(u_1)|u_1|(S_3D_3S_3)(u_2,u_1)du_1du_2.
\end{split}
\end{equation*}
Furthermore, we have
\begin{align}\label{the third1}
&K^{3,\pm}_{1,N}(t;x,y)\nonumber\\
&=-\frac{1}{128\pi^2}\int_{\mathbb{R}^6}\int_0^1\int_0^1
\bigg[\int_0^\infty e^{-it\lambda^2}\lambda^{1+2\alpha}\varphi_0(2^{-N}\lambda)
\Big[\Big(\frac{\bar{F}'( \lambda|x-\theta_2u_2|)}{\lambda^2|x-\theta_2u_2|^2}
-\frac{\bar{F}^{(2)}( \lambda|x-\theta_2u_2|)}{\lambda|x-\theta_2u_2|}\Big)\nonumber\\
&\times 3\cos\alpha_2\sin^2\alpha_2
-\bar{F}^{(3)}( \lambda|x-\theta_2u_2|)\cos^3\alpha_2\Big]
(F^+)'(\lambda|y-\theta_1u_1|)\cos\alpha_1 d\lambda \bigg]
(1-\theta_2)^2 d\theta_1\theta_2\nonumber\\
&\ \ \ \ \ \ \ \ \ \ \ \ \ \ \ \ \ \ \ \ \ \ \ \ \ \ \times|u_2|^3|u_1|v(u_2)v(u_1)(S_3D_3S_3)(u_2,u_1)du_1du_2\nonumber\\
&:= -\frac{1}{128\pi^2}\int_{\mathbb{R}^6}\int_0^1\int_0^1
E^{3,+}_{1,N}(t;x,y,\theta_1,\theta_2,u_1,u_2)
(1-\theta_2)^2 d\theta_1\theta_2
|u_2|^3|u_1|v(u_2)v(u_1)\nonumber\\
&\ \ \ \ \ \ \ \ \ \ \ \ \ \ \ \ \ \ \ \ \ \ \ \ \ \ \ \ \ \ \ \times (S_3D_3S_3)(u_2,u_1)du_1du_2.
\end{align}
Let $s=2^{-N}\lambda$, then
\begin{equation*}
   \begin{split}
&E^{3,+}_{1,N}(t;x,y,\theta_1,\theta_2,u_1,u_2)\\
=&2^{(2+2\alpha)N}\int_0^\infty e^{-it2^{2N}s^2}s^{1+2\alpha}\varphi_0(s)
\Big[\Big(\frac{\bar{F}'(2^Ns|x-\theta_2u_2|)}{(2^Ns)^2|x-\theta_2u_2|^2}
-\frac{\bar{F}^{(2)}( 2^Ns|x-\theta_2u_2|)}{2^Ns|x-\theta_2u_2|}\Big)\\
&\times 3\cos\alpha_2\sin^2\alpha_2 -\bar{F}^{(3)}( 2^Ns|x-\theta_2u_2|)
\cos^3\alpha_2\Big](F^+)'(2^Ns|y-\theta_1u_1|)\cos\alpha_1 ds.
\end{split}
\end{equation*}
Let $r_1=2^N|y-\theta_1u_1|$ and $r_2=2^N|x-\theta_2u_2|$, by using integration by parts we have
\begin{equation*}
   \begin{split}
&\big|E^{3,+}_{1,N}(t;x,y,\theta_1,\theta_2,u_1,u_2)\big|\\
\lesssim &\frac{2^{(2+2\alpha)}N}{1+|t|2^{2N}}\bigg(
\Big|\int_0^\infty e^{-it2^{2N}s^2} \partial_s\big(s^{2\alpha}\varphi_0(s)\big)
\Big(\big(\frac{\bar{F}'(r_2s)}{(r_2s)^2}-\frac{\bar{F}^{(2)}(r_2s)}{r_2s}\big)3\cos\alpha_2\sin^2\alpha_2\\
& -\bar{F}^{(3)}(r_2s)\cos^3\alpha_2\Big)(F^+)'(r_1s)\cos\alpha_1ds\Big|
+\Big|\int_0^\infty e^{-it2^{2N}s^2} s^{2\alpha}\varphi_0(s)\\
&\times \partial_s\Big[\Big(\big(\frac{\bar{F}'(r_2s)}{(r_2s)^2}
-\frac{\bar{F}^{(2)}(r_2s)}{r_2s}\big)
3\cos\alpha_2\sin^2\alpha_2  -\bar{F}^{(3)}(r_2s)\cos^3\alpha_2\Big)(F^+)'(r_1s)\cos\alpha_1 \Big]ds\Big|\bigg)\\
:=&\frac{2^{(2+2\alpha)}N}{1+|t|2^{2N}}\Big(\big|\mathcal{E}^{3,+}_{1,N}(t;x,y,\theta_1,\theta_2,u_1,u_2)\big|
+\big|\mathcal{E}^{3,+}_{2,N}(t;x,y,\theta_1,\theta_2,u_1,u_2)\big|  \Big).
\end{split}
\end{equation*}

For $\big|\mathcal{E}^{3,+}_{2,N}(t;x,y,\theta_1,\theta_2,u_1,u_2)\big|$. Note that
 \begin{equation*}
   \begin{split}
&\partial_s\Big[\Big(\big(\frac{\bar{F}'(r_2s)}{(r_2s)^2}
-\frac{\bar{F}^{(2)}(r_2s)}{r_2s}\big)
3\cos\alpha_2\sin^2\alpha_2-\bar{F}^{(3)}(r_2s)\cos^3\alpha_2\Big)
(F^+)'(r_1s)\cos\alpha_1 \Big]\\
&:=e^{ ir_1s}e^{ir_2s}s^{-1}\bar{F} _{\alpha_1,\alpha_2}(r_1s,r_2s),
\end{split}
\end{equation*}
then we have
\begin{equation*}
   \begin{split}
\big|\mathcal{E}^{3,+}_{2,N}&(t;x,y,\theta_1,\theta_2,u_1,u_2)\big|
\lesssim \Big|\int_0^\infty e^{-it2^{2N}s^2}e^{ i(r_1+r_2)s}s^{-1+2\alpha}\varphi_0(s)
\bar{F}_{\alpha_1,\alpha_2}(r_1s,r_2s)ds\Big|.
\end{split}
\end{equation*}
Since
$$\big|\partial_s^k\bar{F} _{\alpha_1,\alpha_2}(2^Ns|x-\theta_2u_2| ,2^Ns|y-\theta_1u_1|)\big|
 \lesssim1, \,k=0,1, $$
by Lemma \ref{lem-LWP} with $z=(x,y,\theta_1,\theta_2,u_1,u_2)$ and
$$\Psi(z)=r_1+r_2= |x-\theta_2u_2|+|y-\theta_1u_1|,\ \
\Phi(2^Ns,z)= \bar{F} _{\alpha_1,\alpha_2}(2^Ns|x-\theta_2u_2| ,2^Ns|y-\theta_1u_1|),$$
we obtain  that $\mathcal{E}^{3,+}_{2,N}$ is bounded by $ (1+|t|2^{2N})\Theta_{N_0, N}(t)$.
Similar to obtain that $\mathcal{E}^{3,+}_{1,N}$ is controlled by the same bound.
Hence $E^{3,+}_{1,N}$ is bounded by $ 2^{(2+2\alpha)N}\Theta_{N_0, N}(t)$.
 By (\ref{the third1}) and  H\"{o}lder's inequality we obtain that
$ K^{3,\pm}_{1,N}$ is bounded by $2^{(2+2\alpha) N}\Theta_{N_0, N}(t)$.
\end{proof}

In the proof of Proposition \ref{prop-third-041}, for the projection operator $S_3$ in the right hand of the integral (\ref{integral disanlei}), we only use the orthogonality $S_3v=0$. As a result, we don't get a  decay estimate as well as the regular case and the first kind of resonance case. Analogous to the way to deal with the second kind of resonance case, we aim to subtract some specific operators to get the same decay estimate rate as the regular case and the first kind of resonance case. Note that
\begin{align}\label{secres-imp-termS3S3}
& R^+_0(\lambda^4)v(\lambda^{-4}S_3D_3S_3)vR^+_0(\lambda^4)-R^-_0(\lambda^4)v(\lambda^{-4}S_3D_3S_3)vR^-_0(\lambda^4)\nonumber \\
=&\big(R^+_0(\lambda^4)-R^-_0(\lambda^4)\big) v(\lambda^{-4}S_3D_3S_3)vR^+_0(\lambda^4)+
R^-_0(\lambda^4)v(\lambda^{-4}S_3D_3S_3)v\big(R^+_0(\lambda^4)-R^-_0(\lambda^4)\big)\nonumber \\
=&\big(R^+_0(\lambda^4)-R^-_0(\lambda^4)\big) v(\lambda^{-4}S_3D_3S_3)v\big(R^+_0(\lambda^4)-G_0\big)+
(R^+_0(\lambda^4)-R^-_0(\lambda^4)) v(\lambda^{-4}S_3D_3S_3)vG_0\nonumber\\
&+\big(R^-_0(\lambda^4)-G_0\big)v(\lambda^{-4}S_3D_3S_3)v\big(R^+_0(\lambda^4)-R^-_0(\lambda^4)\big)+
G_0v(\lambda^{-4}S_3D_3S_3)v\big(R^+_0(\lambda^4) \nonumber\\
&-R^-_0(\lambda^4)\big):=\Gamma^3_{-4,1}(\lambda)+\Gamma^3_{-4,2}(\lambda)
+\Gamma^3_{-4,3}(\lambda)+\Gamma^3_{-4,4}(\lambda).
\end{align}

Hence, in order to complete the proof of Theorem \ref{thm-low}(iii) in the third kind resonance, combining with the analysis of the second kind of resonance case,
it suffices to show the following proposition.
\begin{proposition}\label{prop-sencond-S3S3-G0}
 Assume that $|V(x)|\lesssim (1+|x|)^{-23-}$. Let $\Gamma^3_{-3,j}(\lambda)(j=1,\cdots,4)$ be operators defined in (\ref{secres-imp-termS3S3}).
 Then
\begin{equation*}
   \begin{split}
&\sup\limits_{x,y\in \mathbb{R}^3}\Big|\int_0^\infty \chi(\lambda)e^{-it\lambda^2}\lambda^{3+2\alpha}\Gamma^3_{-4,j}(\lambda)(x,y)d\lambda\Big|
\lesssim |t|^{-\frac{3+2\alpha}{2}}, -\frac{3}{2}<\alpha\leq0,\, j=1,3,\\
&\sup\limits_{x,y\in \mathbb{R}^3}\Big|\int_0^\infty\chi(\lambda)e^{-it\lambda^2}\lambda^{3+2\alpha}\Gamma^3_{-4,j}(\lambda)(x,y)d\lambda\Big|
\lesssim|t|^{-\frac{2+2\alpha}{2}}, -1<\alpha \leq 0,\,   j=2,4.
\end{split}
\end{equation*}
\end{proposition}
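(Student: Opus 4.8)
The plan is to mirror the treatment of the second kind resonance in Proposition \ref{prop-sencond-S2S2-G0}, now exploiting the full set of orthogonality relations $S_3 v = S_3 x_i v = S_3 x_i x_j v = 0$ $(i,j=1,2,3)$ carried by the projection $S_3$. The new feature, already isolated in Proposition \ref{prop-third-041}, is that the oscillatory factor is the difference $R_0^+(\lambda^4) - R_0^-(\lambda^4) = \frac{1}{8\pi\lambda}\bar{R}_0(\lambda|x-y|)$, whose profile $\bar{R}_0$ satisfies $(\bar{R}_0)'(0)=0$ but $(\bar{R}_0)''(0)\neq 0$. As there, I would first replace $\bar{R}_0$ by the regularized profile $\bar{F}(p) = \bar{R}_0(p) + \frac{i}{3}p^2$, which lies in $C^5(\mathbb{R})$ and satisfies $\bar{F}'(0)=\bar{F}''(0)=0$, the added polynomial being annihilated by $S_3 x_i x_j v = S_3 x_i v = S_3 v = 0$. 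The remaining non-oscillatory pieces are handled through $R_0^\pm(\lambda^4) - G_0 = \frac{1}{8\pi\lambda}\widetilde{F}^\pm(\lambda|\cdot|)$ with $(\widetilde{F}^\pm)'(0)=0$, as in (\ref{Fpmtuba}).

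For $j=1$ (and symmetrically $j=3$), both factors flanking $\lambda^{-4} S_3 D_3 S_3$ admit the maximal cancellation: I would expand the $\bar{F}$ factor by the third order Taylor formula of Lemma \ref{Taylor-low}(iii), using $S_3 x_i x_j v = 0$ to retain only the third order remainder (which carries $\lambda^3|u|^3$), and expand the $\widetilde{F}^\pm$ factor by the second order formula of Lemma \ref{Taylor-low}(ii), using $S_3 x_i v = 0$ to retain the second order remainder (carrying $\lambda^2|u|^2$). After substituting $\lambda = 2^N s$ and integrating by parts once in $s\in\mathrm{supp}\,\varphi_0$, the remaining $s$-integral has precisely the form required by Lemma \ref{lem-LWP} with $\Psi(z) = |x-\theta_2 u_2| + |y-\theta_1 u_1|$, once one verifies the uniform bounds $|\partial_s^k\Phi(2^Ns,z)|\lesssim 1$ $(k=0,1)$ for the products of remainder profiles. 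This bounds the inner kernel by $2^{(3+2\alpha)N}\Theta_{N_0,N}(t)$; Hölder's inequality in $(u_1,u_2)$ and the summation of (\ref{sum-appha=0-alphabig0}) then give $|t|^{-\frac{3+2\alpha}{2}}$ for $-\frac{3}{2}<\alpha\leq 0$, exactly as for the term $\Gamma^2_{-3,1}$ in Proposition \ref{prop-sencond-S2S2-G0}.

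For $j=2$ (and symmetrically $j=4$), the factor adjacent to $S_3 D_3 S_3$ on one side is the $\lambda$-independent kernel $G_0(y,u_1)=-\frac{|y-u_1|}{8\pi}$, which is neither oscillatory nor carries a positive power of $\lambda$; here only the first order relation $S_3 v = 0$ is available, implemented by replacing $|y-u_1|$ with $|y-u_1|-|y|$ (bounded by $|u_1|$). Keeping the third order expansion of the $\bar{F}$ factor but only this single cancellation on the $G_0$ side, the same estimates produce the weaker bound $2^{(2+2\alpha)N}\Theta_{N_0,N}(t)$ for the inner kernel, whose summation yields $|t|^{-\frac{2+2\alpha}{2}}$ for $-1<\alpha\leq 0$; this is the exact analogue of the terms $\Gamma^2_{-3,2},\Gamma^2_{-3,3}$ in Proposition \ref{prop-sencond-S2S2-G0}.

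The main obstacle is the careful bookkeeping rather than any new idea: one must correctly track the power of $\lambda$ contributed by each Taylor remainder (three orders from the $\bar{F}$ factor, two from $\widetilde{F}^\pm$, one from $G_0$), verify in each case the derivative bounds $|\partial_s^k\Phi|\lesssim 1$ needed to invoke Lemma \ref{lem-LWP}, and control the auxiliary integrals such as $\int_0^1(1-\theta)^2(\cdots)\,d\theta$ uniformly. The decay hypothesis $\beta>23$, inherited from the third kind resolvent expansion of Theorem \ref{thm-main-inver-M}(iv), comfortably guarantees that the weighted kernels $|u|^3 v(u)$ and $|u|^2 v(u)$ arising from these expansions lie in $L^2(\mathbb{R}^3)$, so that the Hölder step closes and $S_3 D_3 S_3$ may be treated as a bounded (indeed finite rank) operator.
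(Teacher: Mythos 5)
Your proposal is correct and follows essentially the same route as the paper: the paper proves this proposition simply by invoking the arguments of Proposition \ref{prop-sencond-S2S2-G0} and Proposition \ref{prop-third-041}, i.e.\ exactly the combination you describe --- regularizing $\bar{R}_0$ to $\bar{F}$ via the $S_3$ orthogonality, applying Lemma \ref{Taylor-low}(iii) on the $\bar{F}$ side and Lemma \ref{Taylor-low}(ii) on the $\widetilde{F}^\pm$ side (respectively the crude $|y-u_1|-|y|$ substitution on the $G_0$ side for $j=2,4$), then Lemma \ref{lem-LWP}, H\"older, and the dyadic summation of \eqref{sum-appha=0-alphabig0}. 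Your power counting ($2^{(3+2\alpha)N}\Theta_{N_0,N}(t)$ for $j=1,3$ versus $2^{(2+2\alpha)N}\Theta_{N_0,N}(t)$ for $j=2,4$) and the resulting ranges of $\alpha$ match the stated conclusions.
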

\begin{proof}
By the same argument with the proof of Proposition \ref{prop-sencond-S2S2-G0} and Proposition \ref{prop-third-041}, we  obtain that this proposition holds. Here we omit these details.
\end{proof}
In order to obtain that the estimate of solution operator $\frac{\sin(t\sqrt{H})}{\sqrt{H}}$ in the third resonance, we need compute the following
integrals,
\begin{equation}\label{esti-total-secondthird-K7}
K_{t,7}(x,y):=\int_0^\infty\chi(\lambda) \sin(t\lambda^2)\lambda\big[\Gamma^3_{-4,2}(\lambda)+\Gamma^3_{-4,4}(\lambda)\big](x,y)d\lambda.
\end{equation}

Similar to the proof  of Proposition \ref{prop-sencond-S2S2-G1}, we have the following proposition.
\begin{proposition}\label{prop-sencond-S3S3-sin}
Assume that $|V(x)|\lesssim (1+|x|)^{-23-}$. Let $K_{t,7}(x,y)$ be integral defined in \eqref{esti-total-secondthird-K7}. Then
\begin{equation}\label{pro-esti-sin1}
\sup\limits_{x,y\in \mathbb{R}^3}\big |K_{t,7}(x,y)\big|\lesssim 1.
\end{equation}
\end{proposition}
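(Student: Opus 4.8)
The plan is to follow the proof of Proposition~\ref{prop-sencond-S2S2-G1}, using that the two operators making up $K_{t,7}$ are mirror images of one another. By \eqref{secres-imp-termS3S3},
\begin{equation*}
\Gamma^3_{-4,2}(\lambda)=\big(R^+_0(\lambda^4)-R^-_0(\lambda^4)\big)v(\lambda^{-4}S_3D_3S_3)vG_0,\quad
\Gamma^3_{-4,4}(\lambda)=G_0\,v(\lambda^{-4}S_3D_3S_3)v\big(R^+_0(\lambda^4)-R^-_0(\lambda^4)\big),
\end{equation*}
so it suffices to bound the contribution of $\Gamma^3_{-4,4}(\lambda)$; that of $\Gamma^3_{-4,2}(\lambda)$ follows by the same argument after interchanging $x$ and $y$. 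The decisive structural point — which is also why the bound is $O(1)$ here rather than the $|t|^{1/2}$ produced by the purely $\lambda$-independent term $G_0 v(\lambda^{-3}S_2A^2_{-3,1}S_2)vG_0$ in Proposition~\ref{prop-sencond-S2S2-G1} — is that each summand carries exactly one factor $R^+_0(\lambda^4)-R^-_0(\lambda^4)$, on which the full triple orthogonality of $S_3$ is available to generate the extra powers of $\lambda$.

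Concretely, $\big(R^+_0(\lambda^4)-R^-_0(\lambda^4)\big)(u_1,y)=\frac{1}{8\pi\lambda}\bar R_0(\lambda|u_1-y|)$ with $\bar R_0(p)=\frac{e^{ip}-e^{-ip}}{p}$, and $\bar F(p):=\bar R_0(p)+\frac{i}{3}p^2$ obeys $\bar F'(0)=\bar F''(0)=0$. Since $S_3$ annihilates $v,\ x_iv$ and $x_ix_jv$, the quadratic correction $\frac{i}{3}\lambda^2|u_1-y|^2$ may be inserted for free, and Lemma~\ref{Taylor-low}(iii) applied to $\bar F(\lambda|u_1-y|)$ (expanded in the integration variable $u_1$) retains only its third order remainder, which carries a factor $\lambda^3|u_1|^3$. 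On the $\lambda$-independent side the kernel $G_0(x,u_2)=-\frac{|x-u_2|}{8\pi}$ is reduced through the single orthogonality $S_3v=0$, subtracting $|x|$ so that $|x-u_2|-|x|$ is controlled by $|u_2|$. As $\beta>23$ makes $|u|^3v(u)\in L^2$ and $S_3D_3S_3$ absolutely bounded, Hölder's inequality in $u_1,u_2$ is harmless, and the factor $R^+_0(\lambda^4)-R^-_0(\lambda^4)$ contributes $\frac{1}{\lambda}\cdot\lambda^3=\lambda^2$; thus $\Gamma^3_{-4,4}(\lambda)$ is reduced to $\lambda^{-4}\cdot\lambda^2=\lambda^{-2}$ times a kernel that is bounded together with $\lambda\partial_\lambda$ and oscillates like $e^{\pm i\lambda|y-\theta_1u_1|}$. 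Together with the weight $\lambda$ of the Stone formula for $\frac{\sin(t\sqrt H)}{\sqrt H}$, the $\lambda$-integrand of $K_{t,7}$ becomes $\sin(t\lambda^2)\,\lambda^{-1}$ times this bounded oscillatory factor.

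It then remains to estimate, uniformly in $t,x,y$, oscillatory integrals of the form $\int_0^\infty \chi(\lambda)\sin(t\lambda^2)\lambda^{-1}e^{\pm i\lambda\Psi}\Phi(\lambda)\,d\lambda$ with $\Psi\geq0$ and $\Phi$ bounded together with $\lambda\partial_\lambda\Phi$. The surviving power $\lambda^{-1}$ — rather than the $\lambda^{-2}$ of the second kind term, which gave $|t|^{1/2}$ — is exactly the borderline that yields an $O(1)$ bound. I would run the Littlewood-Paley decomposition $\chi=\sum_{N\le N'}\varphi_0(2^{-N}\lambda)$: for $|t|2^{2N}\gtrsim1$ an integration by parts together with Lemma~\ref{lem-LWP} (whose index $N_0$ locates the stationary scale $2^{2N}\sim\Psi/|t|$) bounds the block by $\Theta_{N_0,N}(t)$ with no positive power of $2^N$ in front, while for $|t|2^{2N}\lesssim1$ the elementary bound $|\sin(t\lambda^2)|\lesssim|t|\lambda^2$ bounds the block by $|t|2^{2N}$; the two families sum geometrically to $O(1)$. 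Equivalently, when the left factor is $\lambda$-independent and $\Psi=0$ the substitution $u=t\lambda^2$ turns $\sin(t\lambda^2)\lambda^{-1}\,d\lambda$ into $\frac12\frac{\sin u}{u}\,du$, whose integral is uniformly bounded by the conditional convergence of $\int_0^\infty\frac{\sin u}{u}\,du$, exactly as in the model computation of Proposition~\ref{prop-sencond-S2S2-G1}. The main obstacle is precisely this uniform estimate: one must verify that all three orders of the $S_3$ orthogonality are genuinely spent on $R^+_0(\lambda^4)-R^-_0(\lambda^4)$, since one missing power would leave $\lambda^{-2}$ and reinstate the $|t|^{1/2}$ growth, breaking the claim.
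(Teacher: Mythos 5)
Your proposal is correct and follows essentially the same route the paper intends: the paper's own proof is only a pointer to Proposition \ref{prop-sencond-S2S2-G1}, and your argument fills it in with precisely the paper's tools — the $\bar F$-correction and the triple $S_3$-orthogonality fed into Lemma \ref{Taylor-low}(iii) (exactly as in Proposition \ref{prop-third-041}), the subtraction $|x-u|-|x|$ on the $G_0$ side so that H\"older applies with $|u|^3v,|u|v\in L^2$, and the dyadic splitting (Lemma \ref{lem-LWP} for $|t|2^{2N}\gtrsim 1$, the bound $|\sin(t\lambda^2)|\lesssim |t|\lambda^2$ for $|t|2^{2N}\lesssim 1$) to sum to $O(1)$. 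In particular your power count $\lambda^{-4}\cdot\lambda^{2}$ against the Stone weight $\lambda$, leaving a $\sin(t\lambda^2)\,\lambda^{-1}$-type oscillatory integral, is the correct mechanism behind the uniform bound, and your closing caution that all three orders of $S_3$-orthogonality must be spent on $R^+_0(\lambda^4)-R^-_0(\lambda^4)$ is exactly the point that distinguishes this term from the $|t|^{1/2}$-growing one.
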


\textbf{The proof of Theorem \ref{thm-low} in the third kind resonance case.} Let $\widetilde{F}_t$ be an operator with the
following integral kernel
$$\widetilde{F}_{t}(x,y):= F_{t}(x,y)+ \int_0^\infty \chi(\lambda) e^{-it\lambda^2} \lambda \big[\Gamma^3_{-3,2}(\lambda)+ \Gamma^3_{-3,4}(\lambda)\big](x,y)d\lambda.$$
By using the estimate \eqref{esti-Ft1} and Proposition \ref{prop-sencond-S3S3-G0}, we obtain that
$$\|\widetilde{F}_{t} \|_{L^1\rightarrow L^\infty} \lesssim |t|^{-\frac{1}{2}}.$$
Let $\widetilde{G}_{t}(x,y)= \sum_{i=1}^8 K_{t,i}(x,y).$
By using the estimate \eqref{esti-Gt1} and Proposition \ref{prop-sencond-S3S3-sin}, we obtain that
that
$$\|\widetilde{G}_{t} \|_{L^1\rightarrow L^\infty} \lesssim |t|^{\frac{1}{2}}.$$
Combining with Proposition \ref{prop-third-041}-Proposition \ref{prop-sencond-S3S3-sin} and the proof of the second kind resonance again,
we immediately obtain that
\begin{equation*}
\big\| \cos(t\sqrt{H})P_{ac}(H)\chi(H)-\widetilde{F}_{t}\big\|_{L^1\rightarrow L^\infty}\lesssim |t|^{-\frac{3}{2}},
\end{equation*}
\begin{equation*}
\Big\| \frac{\sin(t\sqrt{H})}{\sqrt{H}}P_{ac}(H)\chi(H)-\widetilde{G}_{t}\Big\|_{L^1\rightarrow L^\infty}\lesssim |t|^{-\frac{1}{2}}.
\end{equation*}
Thus the proof of Theorem \ref{thm-low}(iii) is completed.
\cqd

\section{High energy dispersive estimates }

In this subsection, we are devote to establishing  the  decay bounds of Theorem \ref{thm-main-results-regular} and Theorem \ref{thm-main-results-resonance} for high energy. By the identities (\ref{relation-sin-cos}) it suffices to establish high energy dispersive bounds of $H^{\frac{\alpha}{2}}e^{-it\sqrt{H}}P_{ac}(H)$ for $\alpha=-1, 0$. Furthermore,  it suffices to prove the following theorem.
\begin{theorem}\label{thm-high}
Let $|V(x)|\lesssim \langle x \rangle^{-4-}$.  Then
\begin{equation}\label{thm-high-1}
\|H^\frac{\alpha}{2}e^{-it\sqrt{H}}P_{ac}(H)\widetilde{\chi}(H)\|_{L^1\rightarrow L^\infty}\lesssim |t|^{-\frac{3+2\alpha}{2}},\,
\, \alpha \leq 0.
\end{equation}
\end{theorem}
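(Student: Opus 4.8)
The plan is to localize Stone's formula \eqref{stone-H-alpha} to high energy by inserting the cutoff $\widetilde{\chi}(\lambda)=\sum_{N\ge N'+1}\varphi_0(2^{-N}\lambda)$, and to expand the perturbed resolvent via the resolvent identity \eqref{Rv-high},
\begin{equation*}
R_V^\pm(\lambda^4)=R_0^\pm(\lambda^4)-R_0^\pm(\lambda^4)VR_0^\pm(\lambda^4)+R_0^\pm(\lambda^4)VR_V^\pm(\lambda^4)VR_0^\pm(\lambda^4).
\end{equation*}
Substituting this into \eqref{stone-H-alpha} splits $H^{\alpha/2}e^{-it\sqrt H}P_{ac}(H)\widetilde{\chi}(H)$ into a free contribution, a first Born contribution, and a remainder carrying the full resolvent $R_V^\pm$. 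For each piece it suffices to bound, uniformly in $x,y$ and for every dyadic $N>N'$, the kernel $\int_0^\infty e^{-it\lambda^2}\lambda^{3+2\alpha}\varphi_0(2^{-N}\lambda)[\,\cdot\,](x,y)\,d\lambda$, and then to sum over $N>N'$. Since $N>N'$ is bounded below ($2^N\gtrsim\lambda_0$), this one-sided summation is more favorable than the two-sided one in Proposition \ref{prop-free estimates}: the low-$N$ tail that would otherwise dominate is absent, and the bound $|t|^{-(3+2\alpha)/2}$ for all $\alpha\le0$ follows by the same splitting at $2^{N_0'}\sim|t|^{-1/2}$ used in \eqref{sum-alpha-not=0}--\eqref{sum-alpha=0}.

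The free contribution needs nothing new: it is a subseries of the sum already controlled in Proposition \ref{prop-free estimates}, hence is $\lesssim|t|^{-(3+2\alpha)/2}$. For the first Born term I would insert the explicit kernel $R_0^\pm(\lambda^4)(x,y)=\tfrac{1}{8\pi\lambda}F^\pm(\lambda|x-y|)$ from \eqref{reso-R0-Fpm} to write
\begin{equation*}
[R_0^\pm(\lambda^4)VR_0^\pm(\lambda^4)](x,y)=\frac{1}{64\pi^2\lambda^2}\int_{\mathbb{R}^3}F^\pm(\lambda|x-z|)\,V(z)\,F^\pm(\lambda|z-y|)\,dz,
\end{equation*}
then substitute $\lambda=2^Ns$, factor out the phases $e^{\pm i2^Ns(|x-z|+|z-y|)}$, integrate by parts once in $s$, and apply Lemma \ref{lem-LWP} with $\Psi(z)=|x-z|+|z-y|$ and amplitude assembled from the bounded functions $F_1^\pm,F_2^\pm$. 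This gives a dyadic bound $\lesssim 2^{(2+2\alpha)N}\Theta_{N_0,N}(t)\int_{\mathbb{R}^3}|V(z)|\,dz$; since $\beta>4-$ forces $V\in L^1$, the $z$-integral is finite, and summing over $N>N'$ yields the stated decay (the exponent $2+2\alpha$ in place of $3+2\alpha$ is harmless precisely because the high-energy sum omits the low-$N$ tail).

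The essential difficulty is the remainder $R_0^\pm(\lambda^4)VR_V^\pm(\lambda^4)VR_0^\pm(\lambda^4)$, where the full perturbed resolvent appears and the hypothesis of no positive embedded eigenvalues is decisive. Writing $V=vUv$, I would keep the two outer free resolvents (which carry the oscillations $e^{\pm i\lambda|x-z_1|}$ and $e^{\pm i\lambda|z_2-y|}$) and treat the sandwiched factor $vU\,vR_V^\pm(\lambda^4)v\,Uv$ as a $\lambda$-dependent amplitude. The required input is a high-energy limiting absorption estimate: for $\lambda\ge\lambda_0$, $k=0,1$, and $\sigma$ large,
\begin{equation*}
\big\|\langle x\rangle^{-\sigma}\,\partial_\lambda^k\big(vR_V^\pm(\lambda^4)v\big)\,\langle x\rangle^{-\sigma}\big\|_{L^2\to L^2}\lesssim \lambda^{-1-k},
\end{equation*}
which holds once positive eigenvalues and resonances are excluded: for $\lambda\gg1$ it follows from a Neumann series for $M^\pm(\lambda)^{-1}$ since $\|vR_0^\pm(\lambda^4)v\|_{L^2\to L^2}\to0$, while on the compact range $[\lambda_0,\Lambda]$ it follows from the Fredholm alternative together with the factorization \eqref{R0lambda-4pm} of $R_0^\pm(\lambda^4)$ into Schr\"odinger resolvents and the uniform bounds of the second-order limiting absorption principle. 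Granting this, the remainder reduces to the same type of oscillatory integral in $s$ with phase $\Psi=|x-z_1|+|z_2-y|$, and Lemma \ref{lem-LWP} again yields a dyadic bound $\lesssim 2^{(2+2\alpha)N}\Theta_{N_0,N}(t)$ (the extra $\lambda$-decay of $vR_V^\pm v$ only improving convergence), summing to $|t|^{-(3+2\alpha)/2}$ as before. The main obstacle, and the step demanding the most care, is exactly establishing these weighted high-energy resolvent bounds with one $\lambda$-derivative uniformly on $[\lambda_0,\infty)$, since this is where the spectral hypothesis enters and where the fourth-order structure must be reduced to the second-order limiting absorption principle.
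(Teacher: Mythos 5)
Your proposal is correct and matches the paper's proof essentially step for step: the paper splits the high-energy part via the same resolvent identity \eqref{highenergy-reso}, handles the free term by Proposition \ref{prop-free estimates}, the Born term $R_0^\pm VR_0^\pm$ by exactly your oscillatory-integral argument (Proposition \ref{largeenergy-firstterm}, i.e.\ Lemma \ref{lem-LWP} with $\Psi(z)=|x-u_1|+|y-u_1|$ plus $V\in L^1$), and the remainder $R_0^\pm VR_V^\pm VR_0^\pm$ by treating the sandwiched perturbed resolvent as a bounded amplitude in weighted spaces (Proposition \ref{largeenergy-secondterm}). The only difference is that the paper imports the weighted high-energy limiting absorption bounds as a black box (Lemma \ref{lem-largeenergy}, quoted from \cite{FSY}, stated for $R_V^\pm$ in $\mathcal{B}(L^2_\sigma,L^2_{-\sigma})$ with decay $O(\lambda^{-3(k+1)/4})$) rather than re-deriving them for $vR_V^\pm(\lambda^4)v$ via Neumann series and Fredholm theory as you sketch; both supply the same input, and your weaker claimed rates $\lambda^{-1-k}$ suffice since only uniform boundedness of the amplitude and its first $s$-derivative is needed.
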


To complete the proof of Theorem \ref{thm-high} , we need to use the following Stone's formula,
\begin{equation}\label{highenergy-formula}
   \begin{split}
H^{\frac{\alpha}{2}}e^{-it\sqrt{H}}P_{ac}(H)\widetilde{\chi}(H)f=\sum_{N=N'+1}^{+\infty}\sum_{\pm}\frac{2}{\pi i}
\int_0^\infty e^{-it\lambda^2}\varphi_0(2^{-N}\lambda)\lambda^{3+2\alpha}
R^\pm_V(\lambda^4)fd\lambda,
  \end{split}
\end{equation}
and the resolvent identity,
\begin{equation}\label{highenergy-reso}
   \begin{split}
R^\pm_V(\lambda^4)=R^\pm_0(\lambda^4)-R^\pm_0(\lambda^4)VR^\pm_0(\lambda^4)
+R^\pm_0(\lambda^4)VR^\pm_V(\lambda^4)VR^\pm_0(\lambda^4).
  \end{split}
\end{equation}
Combining with Proposition \ref{prop-free estimates}, it is enough to prove
the following Proposition \ref{largeenergy-firstterm} and Proposition \ref{largeenergy-secondterm}.
\begin{proposition}\label{largeenergy-firstterm}
Let $|V(x)|\lesssim (1+|x|)^{-3-}$. Then for $\alpha \leq 0$,
\begin{equation*}
\begin{split}
\sup\limits_{x,y\in \mathbb{R}^3}\Big|\int_0^\infty \widetilde{\chi}(\lambda)e^{-it\lambda^2}\lambda^{3+2\alpha}
\big[ R^\pm_0(\lambda^4)VR^\pm_0(\lambda^4) \big](x,y)d\lambda \Big|
 \lesssim |t|^{-\frac{3+2\alpha}{2}}.
\end{split}
\end{equation*}
\end{proposition}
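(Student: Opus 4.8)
The plan is to estimate the kernel of the second Born term $R_0^\pm(\lambda^4)VR_0^\pm(\lambda^4)$ dyadically, following the same Littlewood--Paley scheme used in Proposition \ref{prop-free estimates} and the low-energy propositions. First I would write out the kernel explicitly using the representation \eqref{reso-R0-Fpm}, namely $R_0^\pm(\lambda^4)(x,y)=\frac{1}{8\pi\lambda}F^\pm(\lambda|x-y|)$ with $F^\pm(p)=\frac{e^{\pm ip}-e^{-p}}{p}$. Composing two free resolvents against the potential $V$ produces
\begin{equation*}
\big[R_0^\pm(\lambda^4)VR_0^\pm(\lambda^4)\big](x,y)=\frac{1}{64\pi^2\lambda^2}\int_{\mathbb{R}^3}F^\pm(\lambda|x-u|)V(u)F^\pm(\lambda|y-u|)\,du.
\end{equation*}
Inserting this into the dyadic piece $\int_0^\infty e^{-it\lambda^2}\lambda^{3+2\alpha}\varphi_0(2^{-N}\lambda)[\cdots]\,d\lambda$, rescaling $\lambda=2^N s$ so that $s\in[1/4,1]$, and pulling the $u$-integral outside, I reduce matters to an oscillatory integral in $s$ with phase $e^{-it2^{2N}s^2}$ multiplied by the product $F^\pm(2^Ns|x-u|)F^\pm(2^Ns|y-u|)$.

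The key step is to apply Lemma \ref{lem-LWP}. After factoring out the oscillating exponentials $e^{\pm i2^Ns|x-u|}$ and $e^{\pm i2^Ns|y-u|}$ from the two $F^\pm$ factors, the total spatial phase becomes $\Psi=|x-u|+|y-u|$, and the remaining amplitude is a product of the slowly varying pieces $F_0^\pm(p):=\frac{1-e^{-p}e^{\pm ip}}{p}$, which together with their first $s$-derivatives are uniformly bounded on the support of $\varphi_0$. One integration by parts in $s$ (exactly as in \eqref{esti-K0N}) gains a factor $(1+|t|2^{2N})^{-1}$, and Lemma \ref{lem-LWP} with $z=(x,y,u)$, $\Psi(z)=|x-u|+|y-u|$ then bounds the single dyadic kernel by $2^{(3+2\alpha)N}\,\Theta_{N_0,N}(t)$, uniformly in $u$. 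Integrating against $|V(u)|\,du$, which is finite since $\beta>3$ guarantees $V\in L^1(\mathbb{R}^3)$, preserves this bound because $\Theta_{N_0,N}(t)$ is independent of $u$ and the supremum over $x,y$ passes through the integral.

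Finally I would sum over the dyadic scales. Since the bound $2^{(3+2\alpha)N}\Theta_{N_0,N}(t)$ is structurally identical to the one appearing in Proposition \ref{prop-free estimates}, the summation \eqref{sum-appha=0-alphabig0} applies verbatim for $-\frac32<\alpha\le 0$, yielding the claimed $|t|^{-\frac{3+2\alpha}{2}}$ decay; for the high-energy regime only scales $N>N'$ occur, which does not affect the convergence. The main obstacle I anticipate is purely bookkeeping: one must verify that the $N_0$ in $\Theta_{N_0,N}$ is defined with the \emph{same} phase $\Psi=|x-u|+|y-u|$ for every fixed $u$, so that the uniform-in-$u$ estimate survives integration against $V$; this is where the $L^1$ integrability of $V$ (hence $\beta>3$) is essential. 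There is no genuine analytic difficulty beyond organizing the two-factor product and confirming the derivative bounds $|\partial_s^k(F_0^\pm F_0^\pm)|\lesssim 1$ for $k=0,1$ needed to invoke Lemma \ref{lem-LWP}.
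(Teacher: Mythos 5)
Your proposal is correct and follows essentially the same route as the paper's proof: rescale $\lambda=2^Ns$, pull the $u$-integral outside, integrate by parts once in $s$ to gain $(1+|t|2^{2N})^{-1}$, factor out $e^{\pm i2^Ns(|x-u|+|y-u|)}$ and apply Lemma \ref{lem-LWP} with $\Psi(z)=|x-u|+|y-u|$, then use $V\in L^1$ and the dyadic summation of \eqref{sum-appha=0-alphabig0}. The only bookkeeping difference is that after the integration by parts the amplitude involves the factor $F_1^\pm(p)=pe^{\mp ip}(F^\pm)'(p)$ alongside $F_0^\pm$, not just $F_0^\pm F_0^\pm$, but both satisfy the required derivative bounds, exactly as you anticipate.
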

\begin{proof}
We write
\begin{equation*}
   \begin{split}
L^{\pm}_{1,N}(t;x,y):=\int_0^\infty e^{-it\lambda^2}\lambda^{3+2\alpha}\varphi_0(2^{-N}\lambda)
\big[ R^\pm_0(\lambda^4)VR^\pm_0(\lambda^4) \big](x,y)d\lambda.
\end{split}
\end{equation*}
Then
$$\int_0^\infty \widetilde{\chi}(\lambda)e^{-it\lambda^2}\lambda^{3+2\alpha}
\big[ R^\pm_0(\lambda^4)VR^\pm_0(\lambda^4) \big](x,y)d\lambda = \sum_{N=N'+1}^\infty L^{\pm}_{1,N}(t;x,y).$$
Let $ F^\pm(p)=\frac{e^{\pm ip}-e^{-p}}{p}$, then $R_0^\pm(\lambda^4)(x,y)= \frac{1}{8\pi \lambda}F^\pm(\lambda|x-y|).$
Set $s=2^{-N}\lambda$, one has
\begin{equation*}
   \begin{split}
L^{\pm}_{1,N}&(t;x,y)
=\int_{\mathbb{R}^3} \int_0^\infty e^{-it\lambda^2}\lambda^{3+2\alpha}\varphi_0(2^{-N}\lambda)
 R^\pm_0(\lambda^4)(x,u_1)V(u_1)R^\pm_0(\lambda^4)(u_1,y)d\lambda du_1\\
=& \frac{2^{(2+2\alpha) N}}{64\pi^2}\int_{\mathbb{R}^3} \int_0^\infty e^{-it2^{2N}s^2} s^{1+2\alpha} \varphi_0(s)F^\pm(2^Ns|x-u_1|)F^\pm(2^Ns|y-u_1|)dsV(u_1)du_1\\
:=&\frac{1}{64\pi^2}\int_{\mathbb{R}^3} E^{\pm}_{1,N}(t;x,y,u_1) V(u_1)du_1.
\end{split}
\end{equation*}
By using integration by parts, we have
\begin{align}\label{high-Epm1N-12}
\big|E^{\pm}_{1,N}(t;x,y,u_1)\big|
\lesssim & \frac{2^{(2+2\alpha) N}}{1+|t|2^{2N}}\bigg(
\Big|\int_0^\infty e^{-it2^{2N}s^2} \partial_s\big(s^{2\alpha} \varphi_0(s)\big) F^\pm(2^Ns|x-u_1|)F^\pm(2^Ns|y-u_1|)ds\Big|\nonumber\\
&+\Big|\int_0^\infty e^{-it2^{2N}s^2} s^{2\alpha} \varphi_0(s)\partial_s\Big( F^\pm(2^Ns|x-u_1|)F^\pm(2^Ns|y-u_1|)\Big)ds\Big|\bigg)\nonumber\\
:=& \frac{2^{(2+2\alpha) N}}{1+|t|2^{2N}}\Big(|\mathcal{E}^{\pm}_{1,N}(t;x,y,u_1)|+|\mathcal{E}^{\pm}_{2,N}(t;x,y,u_1)|   \Big).
\end{align}
For $\mathcal{E}^{\pm}_{2,N}$. Since
\begin{equation*}
   \begin{split}
&\partial_s\Big( F^\pm(2^Ns|x-u_1|)F^\pm(2^Ns|y-u_1|)\Big)
:=s^{-1}e^{\pm i 2^Ns(|x-u_1|+|y-u_1|)}\\
&\ \ \ \times\Big(F^\pm_{1}(2^Ns|x-u_1|)F^\pm_{0}(2^Ns|y-u_1|)
+F^\pm_{0}(2^Ns|x-u_1|)F^\pm_{1}(2^Ns|y-u_1|)\Big),
\end{split}
\end{equation*}
where
$$F^\pm_{1}(p)=pe^{\mp ip}(F^\pm)'(p)=
\frac{(\pm ip -1)+(p+1)e^{-p\mp ip}}{p}, \ \,F^\pm_{0}(p)=\frac{1-e^{-p\mp ip}}{p}.$$
Then we have
\begin{equation*}
   \begin{split}
|\mathcal{E}^{\pm}_{2,N}(t;x,y,u_1)|
\lesssim &
\Big|\int_0^\infty e^{-it2^{2N}s^2}e^{\pm i 2^Ns(|x-u_1|+|y-u_1|)} s^{-1+2\alpha} \varphi_0(s)\Big(F^\pm_{1}(2^Ns|x-u_1|)\\
&\times F^\pm_{0}(2^Ns|y-u_1|)+F^\pm_{0}(2^Ns|x-u_1|)F^\pm_{1}(2^Ns|y-u_1|)\Big)ds \Big|.
\end{split}
\end{equation*}
Since $N>N_0'$, then
\begin{equation*}
   \begin{split}
|\mathcal{E}^{\pm}_{2,N}(t;x,y)|
\lesssim &
\Big|\int_0^\infty e^{-it2^{2N}s^2}e^{\pm i 2^Ns(|x-u_1|+|y-u_1|)} s^{-1+2\alpha} \varphi_0(s)
\Big(F^\pm_{1}(2^Ns|x-u_1|)\\
&\times F^\pm_{0}(2^Ns|y-u_1|)+F^\pm_{0}(2^Ns|x-u_1|)F^\pm_{1}(2^Ns|y-u_1|)\Big)ds \Big|.
\end{split}
\end{equation*}
Noting that for $k=0,1,$
\begin{equation*}
   \begin{split}
&\big|\partial_s^k\big(F^\pm_{1}(2^Ns|x-u_1|)F^\pm_{0}(2^Ns|y-u_1|)\big)\big|\lesssim 1,\\
&\big|\partial_s^k\big(F^\pm_{0}(2^Ns|x-u_1|)F^\pm_{1}(2^Ns|y-u_1|)\big)\big|\lesssim 1,
\end{split}
\end{equation*}
then by Lemma \ref{lem-LWP} with $z=(x,y,u_1)$, $\Psi(z)=|x-u_1|+|y-u_1|,$  and
$$ \Phi(2^Ns;z)=\Big(F^\pm_{1}(2^Ns|x-u_1|)F^\pm_{0}(2^Ns|y-u_1|)
+F^\pm_{0}(2^Ns|x-u_1|)F^\pm_{1}(2^Ns|y-u_1|)\Big),$$
we obtain that  $\mathcal{E}^{\pm}_{2,N}$ is bounded by $(1+|t|2^{2N})\Theta_{N_0, N}(t)$.
Similarly,  $\mathcal{E}^{\pm}_{1,N}$ is controlled by the same bound. By \eqref{high-Epm1N-12},
we obtain that $E^{\pm}_{1,N}$ is bounded by  $2^{(3+2\alpha)N}\Theta_{N_0, N}(t)$.  Hence we have
\begin{equation*}
   \begin{split}
|L^{\pm}_{1,N}(t;x,y)|
\lesssim2^{(3+2\alpha) N}\Theta_{N_0,N}(t)\int_{\mathbb{R}^3} | V(u_1)|du_1\lesssim2^{(3+2\alpha) N}\Theta_{N_0, N}(t).
\end{split}
\end{equation*}

Finally, by the same summing way  with the proof of \eqref{sum-appha=0-alphabig0}, we immediately obtain the desired conclusion.
\end{proof}

In order to  deal with the term $ R^\pm_0(\lambda^4)VR^\pm_V(\lambda^4)VR^\pm_0(\lambda^4)$, we need to give a lemma
as follows, see \cite{FSY}.
\begin{lemma}\label{lem-largeenergy}
Let $k\geq 0$ and $|V(x)| \lesssim (1+|x|)^{-k-1-}$ such that
$H=\Delta^2 +V$ has no embedded positive eigenvalues. Then for any $\sigma > k+\frac{1}{2}$,
 $R^\pm_V(\lambda)\in \mathcal{B}\big( L^2_\sigma(\mathbb{R}^d), L^2_{-\sigma}(\mathbb{R}^d)\big)$
are $C^k$-continuous for all $\lambda>0$. Furthermore,
$$ \big\|\partial_\lambda R^\pm_V(\lambda)\big\|_{L^2_\sigma(\mathbb{R}^d)\rightarrow  L^2_{-\sigma}(\mathbb{R}^d)} = O\big(|\lambda|^{\frac{-3(k+1)}{4}}\big), k=0,1,\  \hbox{as}\ \lambda \rightarrow +\infty. $$
\end{lemma}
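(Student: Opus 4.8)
The plan is to reduce everything to the high-energy behaviour of the free fourth-order resolvent and then treat $V$ perturbatively. First I would use the factorization \eqref{R0lambda-4pm}, writing $R_0^\pm(\lambda)=\frac{1}{2\lambda^{1/2}}\big(R^\pm(-\Delta;\lambda^{1/2})-R(-\Delta;-\lambda^{1/2})\big)$, so that $\lambda\to+\infty$ for $\Delta^2$ corresponds to the Schrödinger energy $E=\lambda^{1/2}\to+\infty$. The classical high-energy limiting absorption principle for $-\Delta$ (see \cite{Agmon}) supplies $\|\partial_E^{\,j}R^\pm(-\Delta;E)\|_{L^2_\sigma\to L^2_{-\sigma}}\lesssim E^{-(j+1)/2}$ for $\sigma>j+\frac12$; the gain of one power of $E^{-1/2}$ per derivative is precisely the oscillatory dividend of the kernel $e^{\pm iE^{1/2}|x-y|}$ once the singular factor $|x-y|^{-1}$ has been differentiated away. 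Feeding this into the chain rule $\partial_\lambda=\tfrac12\lambda^{-1/2}\partial_E$ and collecting the $\lambda^{-1/2}$ prefactor yields, for the free operator, $\|\partial_\lambda^{\,k}R_0^\pm(\lambda)\|_{L^2_\sigma\to L^2_{-\sigma}}\lesssim \lambda^{-3(k+1)/4}$ with $\sigma>k+\frac12$; the term $R(-\Delta;-\lambda^{1/2})=(-\Delta+\lambda^{1/2})^{-1}$ is harmless since its kernel carries the exponentially decaying factor $e^{-\lambda^{1/4}|x-y|}$.

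Second, I would pass to $H$ through the symmetric resolvent identity \eqref{id-RV}, $R_V^\pm(\lambda)=R_0^\pm(\lambda)-R_0^\pm(\lambda)v\,(M^\pm(\lambda))^{-1}v\,R_0^\pm(\lambda)$ with $M^\pm(\lambda)=U+vR_0^\pm(\lambda)v$. For $\lambda$ large the Step 1 bound with $k=0$ gives $\|vR_0^\pm(\lambda)v\|_{L^2\to L^2}\lesssim\lambda^{-3/4}\to0$, so $M^\pm(\lambda)=U\big(I+UvR_0^\pm(\lambda)v\big)$ is invertible by a Neumann series with $\|(M^\pm(\lambda))^{-1}\|_{L^2\to L^2}=O(1)$. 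Differentiating the identity and using $\partial_\lambda(M^\pm)^{-1}=-(M^\pm)^{-1}\big(v\,\partial_\lambda R_0^\pm\,v\big)(M^\pm)^{-1}$, every term of $\partial_\lambda^k R_V^\pm$ other than the leading $\partial_\lambda^k R_0^\pm$ contains at least one extra free-resolvent factor and so decays strictly faster than $\lambda^{-3(k+1)/4}$; the free rate therefore survives. The hypothesis $|V|\lesssim\langle x\rangle^{-k-1-}$ is exactly what is needed for the multiplication operators $v$ to move between the weighted spaces dictated by $\sigma>k+\frac12$ so that all the compositions are bounded.

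Third, to upgrade $C^k$-continuity from a neighbourhood of infinity to all of $(0,\infty)$ I would argue by Fredholm theory: $vR_0^\pm(\lambda)v$ is compact and $C^k$ in $\lambda\in(0,\infty)$, so by the analytic Fredholm alternative $(M^\pm(\lambda))^{-1}$ exists and is $C^k$ wherever $M^\pm(\lambda)$ is injective. A Birman--Schwinger argument identifies a nontrivial element of $\ker M^\pm(\lambda_0)$ at some $\lambda_0>0$ with $\psi=-R_0^\pm(\lambda_0)v\phi$ solving $H\psi=\lambda_0\psi$ in $L^2_{-\sigma}$, which at a positive energy is a genuine embedded eigenfunction (no positive resonances occur in the interior of the continuous spectrum). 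Since $H$ is assumed to have no positive embedded eigenvalues, $M^\pm(\lambda)$ remains invertible throughout $(0,\infty)$, and $R_V^\pm\in C^k\big((0,\infty);\mathcal B(L^2_\sigma,L^2_{-\sigma})\big)$ follows.

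The main obstacle I expect is Step 1: securing the sharp per-derivative gain $\lambda^{-3/4}$ while holding the weight cost down to the minimal $\sigma>k+\frac12$. A crude Hilbert--Schmidt estimate of the de-singularized kernel $e^{\pm i\lambda^{1/4}|x-y|}$ only produces an $O(1)$ bound and already forces $\sigma>\frac32$ at the first derivative; extracting the true $\lambda^{-1/4}$ decay requires genuine oscillatory-integral (non-stationary phase) bounds on the weighted operator, which is the quantitative heart of the high-energy LAP. Once that input is in hand (it is the content cited from \cite{FSY}), the perturbative Steps 2--3 and the Fredholm Step 4 are routine.
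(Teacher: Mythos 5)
Your three-step architecture --- reduction to the second-order resolvent through the factorization \eqref{R0lambda-4pm} plus Agmon's high-energy limiting absorption principle, a perturbative argument at large $\lambda$, and Fredholm theory combined with the absence of embedded eigenvalues (via Birman--Schwinger and an Agmon-type bootstrap) to cover all of $(0,\infty)$ --- is the standard route, and it is essentially how this result is proved in \cite{FSY}; note that the paper under review gives no proof at all, it simply quotes the lemma from that reference. Your Step 1 (including the rate $\lambda^{-3(k+1)/4}$), your Step 3, and the $k=0$ part of Step 2 are sound.

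However, Step 2 has a genuine gap at $k=1$, and it is not mere bookkeeping: under the hypothesis $|V(x)|\lesssim\langle x\rangle^{-2-}$ the operator $v\,(\partial_\lambda R_0^\pm)\,v$ in your formula $\partial_\lambda(M^\pm)^{-1}=-(M^\pm)^{-1}\big(v\,\partial_\lambda R_0^\pm\,v\big)(M^\pm)^{-1}$ is \emph{not} bounded on $L^2$, so $M^\pm(\lambda)$ is not $C^1$ in $\mathcal{B}(L^2)$ and the formula is vacuous. Indeed, differentiating the kernel (cf. \eqref{resolent-R0lambda4}) produces, besides harmless terms, the piece $\tfrac{\pm i}{32\pi}\lambda^{-5/4}e^{\pm i\mu|x-y|}$ with $\mu=\lambda^{1/4}$, and an operator with kernel $e^{\pm i\mu|x-y|}$ is bounded from $L^2_{s}$ to $L^2_{-s'}$ only if \emph{both} $s\geq 3/2$ and $s'\geq 3/2$: testing against $f_R(y)=e^{\mp i\mu|y|}\mathbf{1}_{\{R\leq|y|\leq 2R\}}$ gives $|T_\mu f_R(x)|\gtrsim R^3$ on the ball $\{|x|\lesssim\mu^{-1}\}$ while $\|f_R\|_{L^2_s}\approx R^{s+3/2}$, so letting $R\to\infty$ forces $s\geq 3/2$, and the adjoint forces $s'\geq 3/2$. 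Thus the derivative's extra weight cannot be moved to one side, whereas your $v$ supplies only weight $1+$ per side; the symmetric identity cannot close. The repair is to prove the $k=1$ bound from the iterated, non-symmetric identity \eqref{Rv-high} (the very identity the paper uses in its high-energy section): there each factor $\partial_\lambda R_0^\pm$ is flanked either by the ambient weight $\sigma>3/2$ or by the full potential $V$, whose decay $2+$ splits as $(3/2+)+(1/2+)$ between its two neighbours, e.g. $(\partial_\lambda R_0^\pm)VR_0^\pm:L^2_\sigma\to L^2_{-\sigma}$ with norm $O(\lambda^{-3/2})\cdot O(\lambda^{-3/4})$, so all correction terms decay strictly faster than $\lambda^{-3/2}$ as you intended. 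A smaller point: the parenthetical claim in Step 3 that no positive resonances occur is itself an Agmon bootstrap; it does reduce to the second-order case through the same factorization, since the $(-\Delta+\lambda^{1/2})^{-1}$ piece is regularizing and $\mathrm{Im}\,\langle\phi,M^\pm(\lambda_0)\phi\rangle=0$ makes $\widehat{v\phi}$ vanish on the sphere $\{|\xi|=\lambda_0^{1/4}\}$, but it deserves to be stated as a step rather than assumed.
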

\begin{proposition}\label{largeenergy-secondterm}
Let $|V(x)|\lesssim (1+|x|)^{-4-}$. Then for $\alpha \leq 0$,
\begin{equation}\label{weixiang22}
\begin{split}
\sup\limits_{x,y\in \mathbb{R}^3}\Big|\int_0^\infty \widetilde{\chi}(\lambda)e^{-it\lambda^2}\lambda^{3+2\alpha}
\big[R^\pm_0(\lambda^4)VR^\pm_V(\lambda^4)VR^\pm_0(\lambda^4) \big](x,y)d\lambda \Big|
 \lesssim |t|^{-\frac{3+2\alpha}{2}}.
\end{split}
\end{equation}
\end{proposition}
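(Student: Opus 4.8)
The plan is to mimic the structure used for the first-term estimate in Proposition \ref{largeenergy-firstterm}, dyadically decomposing the high-energy region and estimating each piece $L^\pm_{2,N}$ with the oscillatory-integral machinery, but the essential new ingredient is that the middle factor $R^\pm_V(\lambda^4)$ must be controlled through Lemma \ref{lem-largeenergy} rather than expanded explicitly. First I would write the kernel as
\begin{equation*}
\big[R^\pm_0(\lambda^4)VR^\pm_V(\lambda^4)VR^\pm_0(\lambda^4)\big](x,y)
=\Big\langle \big[VR^\pm_V(\lambda^4)V\big]\big(R^\pm_0(\lambda^4)(\cdot,y)\big)(\cdot),\,(R^\pm_0)^*(\lambda^4)(x,\cdot)\Big\rangle,
\end{equation*}
and, with $F^\pm(p)=\tfrac{e^{\pm ip}-e^{-p}}{p}$ so that $R^\pm_0(\lambda^4)(x,y)=\tfrac{1}{8\pi\lambda}F^\pm(\lambda|x-y|)$, substitute $\lambda=2^Ns$ to obtain a dyadic piece $L^\pm_{2,N}(t;x,y)$ whose $\lambda$-integral carries the two free oscillating factors $F^\pm(2^Ns|x-u_1|)$ and $F^\pm(2^Ns|y-u_2|)$ against the weighted middle operator. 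The phase is again $e^{-it2^{2N}s^2}e^{\pm i2^Ns(|x-u_1|+|y-u_2|)}$.

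Next I would carry out one integration by parts in $s$ (gaining the factor $(1+|t|2^{2N})^{-1}$ as in \eqref{high-Epm1N-12}) and then invoke Lemma \ref{lem-LWP} with $z=(x,y,u_1,u_2)$, $\Psi(z)=|x-u_1|+|y-u_2|$, and $\Phi(2^Ns,z)$ the product of the $F^\pm_0,F^\pm_1$-type amplitudes together with the middle factor $VR^\pm_V(\lambda^4)V$. The key point is that when the $s$-derivative lands on $R^\pm_V(\lambda^4)=R^\pm_V(2^{4N}s^4)$ one produces $\partial_\lambda R^\pm_V$, which by Lemma \ref{lem-largeenergy} (with $k=1$, requiring $\beta>4$ so that $\sigma>\tfrac32$ is admissible and the weights $v\langle\cdot\rangle$ are square-integrable against $V$) decays like $O(\lambda^{-3/2})=O(2^{-3N})$ as $\lambda\to\infty$, while $R^\pm_V$ itself obeys $O(\lambda^{-3/4})$. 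These negative powers of $2^N$ are exactly what render the $N$-sum over the high-energy range $N>N'$ convergent, since the free-resolvent amplitudes and the weights contribute only bounded factors after pairing $V(u_1),V(u_2)$ with the weighted $L^2_\sigma\to L^2_{-\sigma}$ bound on the middle operator via Hölder's inequality.

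The main obstacle is controlling the middle factor $R^\pm_V(\lambda^4)$ uniformly in the oscillatory estimate: unlike the explicit free resolvent, it is only known through the weighted mapping bounds of Lemma \ref{lem-largeenergy}, so I must arrange the integration by parts so that at most one derivative falls on it and then absorb the two outer free factors into the weights $\langle u_1\rangle^{-\sigma}$, $\langle u_2\rangle^{-\sigma}$ dictated by $V(u_1)=|V(u_1)|^{1/2}U|V(u_1)|^{1/2}$ and the decay $|V|\lesssim\langle x\rangle^{-4-}$. The verification that $\Phi$ and its $s$-derivative remain uniformly bounded in the sense required by Lemma \ref{lem-LWP} after incorporating the operator-valued middle factor is the delicate bookkeeping step; once that is in place, each $L^\pm_{2,N}$ is bounded by $2^{(3+2\alpha)N}\Theta_{N_0,N}(t)$, and summing over $N>N'$ exactly as in \eqref{sum-appha=0-alphabig0} yields the claimed bound $|t|^{-\frac{3+2\alpha}{2}}$, whence \eqref{weixiang22} and, together with Proposition \ref{largeenergy-firstterm} and Proposition \ref{prop-free estimates}, Theorem \ref{thm-high} for $\alpha=-1,0$.
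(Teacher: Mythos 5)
Your overall scaffolding (dyadic decomposition, one integration by parts to gain $(1+|t|2^{2N})^{-1}$, Lemma \ref{lem-largeenergy} for the middle resolvent, H\"older against the potential weights, then the summation of \eqref{sum-appha=0-alphabig0}) matches the paper's strategy, but the step where you invoke Lemma \ref{lem-LWP} is set up in a way that cannot be carried out. You propose $z=(x,y,u_1,u_2)$, $\Psi(z)=|x-u_1|+|y-u_2|$, with $\Phi$ containing ``the middle factor $VR^\pm_V(\lambda^4)V$.'' For that to make sense, the phase $e^{\pm i2^Ns(|x-u_1|+|y-u_2|)}$ must be an external scalar factor and $\Phi(2^Ns,z)$ must be a \emph{pointwise} function of $(u_1,u_2)$ satisfying $|\partial^k_s\Phi|\lesssim 1$, $k=0,1$. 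But $VR^\pm_V(\lambda^4)V$ is known only as an operator through the weighted bounds $\|R^\pm_V\|_{L^2_\sigma\to L^2_{-\sigma}}$ of Lemma \ref{lem-largeenergy}; its kernel has no available pointwise bounds (unlike the explicit free kernel used in Proposition \ref{largeenergy-firstterm}, where $\Psi(z)=|x-u_1|+|y-u_1|$ \emph{is} legitimate). If instead you keep the inner-product structure so as to use the operator norm, then the $u$-dependent phase sits \emph{inside} the pairing with $VR_VV$ and cannot serve as the phase $e^{\pm i2^Ns\Psi(z)}$ of Lemma \ref{lem-LWP}. So the proposed application of the lemma fails either way; this is not mere bookkeeping but the crux of the proposition.

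The paper's resolution is different at exactly this point: it pulls out only the $(x,y)$-dependent oscillation, writing the derivative terms as $e^{\pm i2^Ns(|x|+|y|)}s^{-1}\widetilde{E}^{L,\pm}_{22}(2^Ns;x,y)$ and applying Lemma \ref{lem-LWP} with $z=(x,y)$ and $\Psi(z)=|x|+|y|$, while the residual factors $e^{\pm i2^Ns(|\cdot-y|-|y|)}F^\pm_1(2^Ns|\cdot-y|)$ and $e^{\mp i2^Ns(|x-\cdot|-|x|)}F^\mp_0(2^Ns|x-\cdot|)$ stay inside the pairing as the functions to which $VR_VV$ is applied. Their $s$-derivatives cost a factor $2^{kN}\langle\cdot\rangle$, which is absorbed by requiring $\|V(\cdot)\langle\cdot\rangle^{\sigma+1}\|_{L^2}<\infty$ (this is where $|V|\lesssim(1+|x|)^{-4-}$ enters) and compensated by the high-energy decay $\|R^\pm_V(2^{4N}s^4)\|_{L^2_\sigma\to L^2_{-\sigma}}\lesssim 2^{-3N}$ and $\|\partial_s R^\pm_V(2^{4N}s^4)\|\lesssim 2^{-2N}$; the term in which the derivative lands on $R_V$ itself carries no useful oscillation and is handled by a second plain integration by parts, yielding $(1+|t|2^{2N})^{-1}$. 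Your exponent bookkeeping for $\partial_\lambda R^\pm_V$ ($O(\lambda^{-3/2})=O(2^{-3N})$) also conflates the spectral parameter $\lambda^4\sim 2^{4N}$ with $2^N$, though this is secondary: the missing idea is the splitting of the phase into an external $|x|+|y|$ part and internal, weight-absorbable parts.
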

\begin{proof}
In order to get (\ref{weixiang22}), it's equivalent to prove that the integral
\begin{equation*}
   \begin{split}
L^\pm_{2,N}(t;x,y)
:=\int_0^\infty e^{-it\lambda^2}\lambda^{3+2\alpha}\varphi_0(2^{-N}\lambda)
\Big\langle V R^\pm_V(\lambda^4)V\big(R_0^\pm(\lambda^4)(*,y)\big)(\cdot),
 \big(R_0^\pm(\lambda^4)\big)^*(x,\cdot) \Big\rangle d\lambda
  \end{split}
\end{equation*}
is bounded by  $2^{(3+2\alpha)N}\Theta_{N_0, N}(t)$ for $N>N'$.

In fact, let $F^\pm(p)=\frac{e^{\pm ip}-e^{-p}}{p} $, then $R_0^\pm(\lambda^4)(x,y)= \frac{ 1}{8\pi \lambda} F^\pm(\lambda|x-y|)$.
Hence
\begin{equation*}
   \begin{split}
\big\langle  V & R^\pm_V(\lambda^4)V(R_0^\pm(\lambda^4)(*,y))(\cdot),~ R_0^\mp(\lambda)(x,\cdot)   \big\rangle\\
=&\frac{1}{64\pi^2\lambda^2}\big\langle V R^\pm_V(\lambda^4)V \big(F^\pm(\lambda|*-y|)\big)(\cdot),~
F^\mp(\lambda|x-\cdot|)\big\rangle
:=\frac{1}{64\pi^2\lambda^2}E^{L,\pm}_2(\lambda;x,y).
  \end{split}
\end{equation*}
Let $s=2^{-N}\lambda$, then
\begin{equation*}
   \begin{split}
L^{\pm}_{2,N}(t;x,y):= \frac{2^{(2+2\alpha )N}}{{64\pi^2}}\int_0^\infty e^{-it2^{2N}s^2}s^{1+2\alpha}\varphi_0(s)E^{L,\pm}_2(2^Ns;x,y) ds.
  \end{split}
\end{equation*}
By using integration by parts, we have
\begin{equation}\label{high-esti-Lpm2N-12}
   \begin{split}
|L^{\pm}_{2,N}(t;x,y)|
\lesssim& \frac{2^{(2+2\alpha )N}}{1+|t|2^{2N}}
\bigg( \Big| \int_0^\infty e^{-it2^{2N}s^2} \partial_s\big(s^{2\alpha}\varphi_0(s)\big)
\partial_sE^{L,\pm}_2(2^Ns;x,y) ds\Big|\\
&+ \Big| \int_0^\infty e^{-it2^{2N}s^2}s^{2\alpha}\varphi_0(s)
\partial_s\big(E^{L,\pm}_2(2^Ns;x,y)\big) ds\Big|\bigg)\\
:=& \frac{2^{(2+2\alpha )N}}{1+|t|2^{2N}}\Big( | \mathcal{E}^{L,\pm}_{1,N}(t;x,y)|+|\mathcal{E}^{L,\pm}_{2,N}(t;x,y)| \Big).
  \end{split}
\end{equation}
For $\mathcal{E}^{L,\pm}_{2,N}$. Since
 \begin{equation*}
   \begin{split}
\partial_s\big(E^{L,\pm}_2(2^Ns;x,y)\big)
=& \Big\langle V\partial_s\big(R_V(2^Ns^4)\big) V\big(F^\pm(2^Ns|*-y|)\big)(\cdot),~
F^\mp(2^Ns|x-\cdot|)\Big\rangle\\
&+\Big\langle VR_V(2^Ns^4) V\big(\partial_sF^\pm(2^Ns|*-y|)\big)(\cdot),~
F^\mp(2^Ns|x-\cdot|)\Big\rangle\\
&+\Big\langle VR_V(2^Ns^4) V\big(F^\pm(2^Ns|*-y|)\big)(\cdot),~
\partial_sF^\mp(2^Ns|x-\cdot|)\Big\rangle\\
:=&E^{L,\pm}_{21}(2^Ns;x,y)+E^{L,\pm}_{22}(2^Ns;x,y)
+E^{L,\pm}_{23}(2^Ns;x,y),
  \end{split}
\end{equation*}
then we have
\begin{equation}\label{high-Elpm-123}
   \begin{split}
\mathcal{E}^{L,\pm}_{2,N}(t;x,y)=&
\int_0^\infty e^{-it2^{2N}s^2}s^{2\alpha}\varphi_0(s)
 \big(E^{L,\pm}_{21}+E^{L,\pm}_{22}
 +E^{L,\pm}_{23}\big)(2^Ns;x,y)ds\\
 :=& \mathcal{E}^{\pm,L}_{21,N}(t;x,y) +\mathcal{E}^{\pm,L}_{22,N}(t;x,y)+\mathcal{E}^{\pm,L}_{23,N}(t;x,y).
  \end{split}
\end{equation}

 We first deal with the first term $\mathcal{E}^{\pm,L}_{21,N}(t;x,y)$. Let $\sigma >k+1+\frac{1}{2}$, then
$$   \big|\partial_\lambda E^{L,\pm}_{21}(2^Ns;x,y)\big| \lesssim
\sum\limits_{k=0}^1\big\|V(\cdot)\langle \cdot\rangle^{\sigma}\big\|^2_{L^2}
\big\|\partial_s^{k+1} R^\pm_V(2^{4N}s^4)\big\|_{L^2_\sigma\rightarrow  L^2_{-\sigma}} \lesssim 2^{-N}\lesssim1, \, k=0,1.
$$
Note that $s\in \hbox{supp}\varphi_0 \subset [\frac{1}{4}, 1]$, by using  integration by parts again, we obtain that
\begin{equation*}\label{L1}
   \begin{split}
\big|\mathcal{E}^{\pm,L}_{21,N}(t;x,y)\big|\lesssim &
\frac{1}{1+|t|2^{2N}}\Big|\int_0^\infty e^{-it2^{2N}s^2} \partial_s \Big(s^{-1+2\alpha}\varphi_0(s)E^{L,\pm}_{21}(2^Ns;x,y)\Big)ds\Big|\\
\lesssim &\frac{1}{1+|t| 2^{2N}}.
  \end{split}
\end{equation*}

Next, we deal with the second term $\mathcal{E}^{\pm,L}_{22,N}$. Let
$$F^\mp(p):= e^{\mp ip}F^\mp_0(p), \,\, F^\mp_0(p)= \frac{1-e^{-p}e^{\pm i p}}{p},$$
then
$$ \partial_sF^\pm(2^Ns|*-y|)=2^N|*-y|(F^\pm)'(2^Ns|*-y|)
:= e^{\pm i 2^Ns|*-y|} s^{-1}F^\pm_1(2^Ns|*-y|),$$
where
$$F^\pm_1(p)=pe^{\mp ip}(F^\pm)'(p)=\frac{(\pm ip -1)+(p+1)e^{-p}e^{\mp ip}}{p}.$$
Thus
\begin{equation*}
   \begin{split}
E^{L,\pm}_{22}(2^Ns;x,y)
= &e^{\pm i2^Ns(|x|+|y|)}s^{-1}
\Big\langle VR_V(2^Ns)V\big(e^{\pm i2^Ns(|*-y|-|y|)}F^\pm_1(2^Ns|*-y|)\big)(\cdot),\\
& e^{\mp i2^Ns(|x-\cdot|-|x|)}F^\mp_0(2^Ns|x-\cdot|)\Big\rangle
:= e^{\pm i2^Ns(|x|+|y|)}s^{-1}\widetilde{E}^{L,\pm}_{22}(2^Ns;x,y).
  \end{split}
\end{equation*}
Furthermore,
\begin{equation*}
   \begin{split}
\mathcal{E}^{\pm,L}_{22,N}(t;x,y)
=& \frac{2^{(2+2\alpha) N}}{1+|t|2^{2N}}\int_0^\infty e^{-it2^{2N}s^2}
e^{\pm i 2^Ns(|x|+|y|)}s^{-1+2\alpha}\varphi_0(s)\widetilde{E}^{L,\pm}_{22}(2^Ns;x,y) ds.
  \end{split}
\end{equation*}
Note that
\begin{equation*}
   \begin{split}
&\big|\partial_s^k\big(e^{\pm i2^Ns(|*-y|-|y|)}F^\pm_1(2^Ns|*-y|)\big)\big|\lesssim2^{kN}\langle*\rangle, \, k=0,1,\\
&\big|\partial_s^k\big(e^{\mp i2^Ns(|x-\cdot|-|x|)}F^\mp_0(2^Ns|x-\cdot|)\big)\big|\lesssim2^{kN}\langle\cdot\rangle, \, k=0,1.
\end{split}
\end{equation*}
Since  $|V(x)|\lesssim(1+|x|)^{-4-}$, then by  H\"{o}lder's inequality, we have
 \begin{equation*}
   \begin{split}
 \big|\partial_s \widetilde{E}^{L,\pm}_{22}(2^Ns;x,y)\big|&\lesssim\sum\limits_{k=0}^12^{(1-k)N}\big\|V(\cdot)\langle \cdot\rangle^{\sigma+1-k}\big\|^2_{L^2}
\big\|\partial_s^k R^\pm_V(2^{4N}s^4)\big\|_{L^2_\sigma\rightarrow  L^2_{-\sigma}}
\lesssim 2^{-2N}\lesssim1.
   \end{split}
\end{equation*}
By Lemma \ref{lem-LWP} with $ z=(x,y)$,
$\Psi(z) =|x|+ |y|$ and $\Phi(2^Ns;z)=\widetilde{E}^{L,\pm}_{22,N}(2^Ns;x,y)$,
we get that $\mathcal{E}^{\pm,L}_{22,N}(t;x,y)$ is bounded by  $(1+|t|2^{2N})\Theta_{N_0, N}(t)$. By the same argument we obtain that
$\mathcal{E}^{\pm,L}_{23,N}(t;x,y)$ is bounded by  $(1+|t|2^{2N})\Theta_{N_0, N}(t)$. By \eqref{high-Elpm-123}, we have
$$ |\mathcal{E}^{\pm,L}_{2,N}(t;x,y)| \lesssim \frac{1}{1+|t| 2^{2N}}+ (1+|t|2^{2N})\Theta_{N_0, N}(t)\lesssim
 (1+|t|2^{2N})\Theta_{N_0, N}(t).  $$

Similarly, we obtain that $\mathcal{E}^{\pm,L}_{1,N} $ is bounded by $(1+|t|2^{2N})\Theta_{N_0, N}(t)$. By \eqref{high-esti-Lpm2N-12},
we obtain that $L^{\pm}_{2,N}$ is bounded by $2^{(2+2\alpha)N}\Theta_{N_0, N}(t)$. Thus $L^\pm_{2,N}$ is bounded by  $2^{(3+2\alpha) N}\Theta_{N_0, N}(t)$.

Finally, by the same summing way with the proof of \eqref{sum-appha=0-alphabig0}, we immediately obtain the desired conclusion.
\end{proof}

\section{Appendix  }
In this appendix, we first prove Theorem \ref{thm-main-inver-M}, then we give the characterization of zero resonance subspaces $S_iL^2(\mathbb{R}^3)(i=1,2,3)$ according to the distributional solutions to $H\phi=0$.

\subsection{ The resolvent expansions of $\big(M^\pm(\lambda)\big)^{-1}$ for $\lambda$ near zero}
In this  subsection, we are  devote to showing Theorem \ref{thm-main-inver-M}, i.e. computing the expansions
of $\big(M^\pm(\lambda)\big)^{-1}$  for $\lambda$ near zero case by case, see \cite{EGT19} and also  \cite{FSY} for regular case.

For the convenience, we shall use  notations $A^k_{i,j}$ denote general  $\lambda$-independent absolutely bounded operators on $L^2$, and $O(\lambda^k)$ denote  some $\lambda$-dependent operators $\Gamma(\lambda)$ which satisfy that
\begin{equation*}
\big\|\Gamma(\lambda)\big\|_{L^2\rightarrow L^2}+\lambda\big\|\partial_\lambda \Gamma(\lambda)\big\|_{L^2\rightarrow L^2}
+\lambda^2\big\|\partial^2_\lambda \Gamma(\lambda)\big\|_{L^2\rightarrow L^2}
\lesssim \lambda^k,\  \lambda>0.
\end{equation*}
We  emphasize that these notations of operators $A^k_{i,j}$ and $O(\lambda^k)$ may vary from line to line.

Before computing the expansions of $\big(M^\pm(\lambda)\big)^{-1}$ as $\lambda\rightarrow 0$,
we first state the following lemma used frequently, see  e.g. \cite{JN}.
\begin{lemma}\label{lemma-JN}
Let $A$ be a closed operator and $S$ be a projection. Suppose $A+S$ has a bounded inverse. Then $A$ has
a bounded inverse if and only if
\begin{equation*}		
a:= S-S(A+S)^{-1}S
\end{equation*}
has a bounded inverse in $SH$, and in this case
\begin{equation*}		
A^{-1}= (A+S)^{-1} + (A+S)^{-1}S a^{-1} S(A+S)^{-1}.
\end{equation*}	
\end{lemma}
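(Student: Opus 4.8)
The plan is to treat this as a Schur--complement (Feshbach) reduction with respect to the splitting $H = SH \oplus S^\perp H$, where I write $S^\perp := I - S$, reducing the invertibility of $A$ on the whole space to the invertibility of the ``effective operator'' $a$ on the range of $S$. Throughout I abbreviate $B := (A+S)^{-1}$, which is bounded by hypothesis.

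First I would record the two algebraic identities that drive everything. Since $A = (A+S) - S$ while $(A+S)B = B(A+S) = I$, multiplying $A = (A+S) - S$ by $B$ on either side gives
\begin{equation*}
AB = I - SB, \qquad BA = I - BS,
\end{equation*}
equivalently the factorizations $A = (A+S)(I - BS) = (I - SB)(A+S)$. Because $A+S$ is boundedly invertible, the factorization $A = (A+S)(I-BS)$ shows at once that $A$ has a bounded inverse if and only if $I - BS$ does (the ``only if'' direction follows since $I - BS = BA$ is then a product of invertibles), and in that case $A^{-1} = (I-BS)^{-1}B$.

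Next I would identify $I - BS$ with a block lower-triangular operator. Writing each operator in $2\times 2$ block form relative to $H = SH \oplus S^\perp H$ and using $SS^\perp = S^\perp S = 0$, one finds $S(BS)S^\perp = 0 = S^\perp(BS)S^\perp$, so that
\begin{equation*}
I - BS = \begin{pmatrix} S - SBS & 0 \\ -S^\perp B S & S^\perp \end{pmatrix} = \begin{pmatrix} a & 0 \\ -S^\perp BS & I \end{pmatrix},
\end{equation*}
a lower-triangular operator whose diagonal blocks are $a$ on $SH$ and the identity on $S^\perp H$. Such an operator is boundedly invertible precisely when both diagonal blocks are, that is, precisely when $a$ is invertible on $SH$; this yields the stated equivalence. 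For the closed form I would then either invert this triangular block matrix and multiply by $B$, or, more directly, verify the two-sided identities $AC = CA = I$ for $C := B + BSa^{-1}SB$. The verification uses only the identities above together with $Sa = aS = a$ and $a^{-1}a = aa^{-1} = S$ on $SH$: for instance $ABS = (I - SB)S = a$, so $ABS\,a^{-1}SB = a\,a^{-1}SB = SB$ and hence $AC = (I - SB) + SB = I$, and symmetrically $CA = I$.

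The argument is entirely algebraic, so the only points requiring care are bookkeeping rather than genuine obstacles. One must consistently track that $a$ and $a^{-1}$ act on the subspace $SH$ --- it is exactly the relations $Sa = aS = a$ that make the triangular reduction clean --- and, since $A$ is merely closed and possibly unbounded, one must check that $C$ maps into $D(A)$ and that the products defining the factorizations are formed on the correct domains, which is automatic here because $I - BS$ is bounded and $B$ maps into $D(A+S) = D(A)$. No estimate or limiting argument is needed: boundedness of $a^{-1}$ on $SH$ transfers to boundedness of $A^{-1}$ directly through the closed-form expression $A^{-1} = B + BSa^{-1}SB$.
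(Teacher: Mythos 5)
Your proof is correct, and it is worth noting that the paper itself offers no proof of this lemma at all: it is quoted verbatim from Jensen--Nenciu \cite{JN} (Lemma 2.1 there), so there is no in-paper argument to compare against. Your Schur-complement (Feshbach) reduction relative to $H=SH\oplus S^{\perp}H$ is essentially the standard proof of that result; in particular your closing direct verification that $C:=B+BSa^{-1}SB$ satisfies $AC=CA=I$, using $AB=I-SB$, $ABS=a$, and $aa^{-1}=a^{-1}a=S$, is exactly the computation that settles the ``if'' direction and the inversion formula, and the block lower-triangular form of $I-BS$ with diagonal blocks $a$ and $I_{S^{\perp}H}$ packages the equivalence cleanly.

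One point deserves tightening. In the ``only if'' direction you assert that $I-BS=BA$ ``is a product of invertibles.'' When $A$ is unbounded this is loose: $BA$ is defined only on $D(A)$, so it is a bijection of $D(A)$, and that does not by itself show that the bounded operator $I-BS$ is a bijection of all of $H$. Two clean repairs: (i) work instead with $I-SB=AB$, which genuinely is a bijection of $H$, with bounded inverse $(A+S)A^{-1}=I+SA^{-1}$, and then invoke the standard fact that $I-BS$ is invertible iff $I-SB$ is, via $(I-BS)^{-1}=I+B(I-SB)^{-1}S$; or (ii) verify directly that $I+A^{-1}S$ inverts $I-BS$, using the identities $A^{-1}=B+BSA^{-1}=B+A^{-1}SB$, which follow from $A=(A+S)-S$ exactly as in your opening computation. (Equivalently, once $A^{-1}$ is bounded one can check directly that $S+SA^{-1}S$ inverts $a$ on $SH$, bypassing the triangular form altogether.) In all of the paper's applications $A$ is a bounded operator on $L^2$, so the subtlety never bites there, but since the lemma is stated for closed operators the proof should close this gap.
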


Now we begin to compute the asymptotic expansions of $\big(M^\pm(\lambda)\big)^{-1}$ when $\lambda$ near zero.

Let $ \displaystyle M^\pm(\lambda)= \frac{\tilde{a}^\pm}{\lambda} \widetilde{M}^\pm(\lambda)$.
By (\ref{Mpm-4}) one has
\begin{equation}\label{id-M-tuta}
   \begin{split}
\widetilde{M}^\pm(\lambda)
 = &P + \frac{\lambda}{\tilde{a}^\pm}T+ \frac{a_1^\pm}{\tilde{a}^\pm}\lambda^2vG_1v
 + \frac{a_3^\pm}{\tilde{a}^\pm}\lambda^4vG_3v \\
 &+ \frac{1}{\tilde{a}^\pm}\lambda^5vG_4v
 +\sum_{k=5}^8 \frac{a_k^\pm}{\tilde{a}^\pm}\lambda^{k+1}vG_kv +O(\lambda^{10}).
   \end{split}
\end{equation}
Thus, it suffices to compute the asymptotic expansions of  $\big(\widetilde{M}^\pm(\lambda)\big)^{-1}$
when $\lambda$ near zero.
Let $Q=I-P$. Since $vG_iv(i=0,1,\cdots,8)$ are bounded operators on $L^2$, then by using (\ref{id-M-tuta}) and  Neumann series expansion one has
\begin{equation}\label{id-MQ}
   \begin{split}
\big(\widetilde{M}^\pm(\lambda)+Q\big)^{-1}
  = I-\sum_{k=1}^9 \lambda^kB_k^\pm+ O(\lambda^{10}),
   \end{split}
\end{equation}
where $B_k^\pm(1\leq k\leq 9)$  are bounded operators in $L^2$ as follows:\\
\begin{equation*}
   \begin{split}
 B_1^\pm=& \frac{1}{\tilde{a}^\pm} T, \,
  B_2^\pm = \frac{a_1^\pm}{\tilde{a}^\pm} vG_1v -\frac{1}{(\tilde{a}^\pm )^2}T^2, \,
  B_3^\pm= -\frac{a_1^\pm}{(\tilde{a}^\pm)^2} (TvG_1v +vG_1vT )
   + \frac{1}{(\tilde{a}^\pm)^3}T^3,\\
B_4^\pm=& \frac{a_3^\pm}{\tilde{a}^\pm}vG_3v
         - \frac{(a_1^\pm)^2}{(\tilde{a}^\pm )^2}(vG_1v)^2
           + \frac{a_1^\pm}{(\tilde{a}^\pm )^3} ( T^2vG_1v+TvG_1vT+vG_1vT^2)
             -\frac{1}{(\tilde{a}^\pm )^4}T^4,\\
 B_5^\pm=& \frac{1}{\tilde{a}^\pm}vG_4v -\frac{a_3^\pm}{(\tilde{a}^\pm)^2}(TvG_3v+vG_3vT )
        + \frac{(a_1^\pm)^2}{(\tilde{a}^\pm )^3}\big(T(vG_1v)^2+(vG_1v)^2T+vG_1vTvG_1v \big)\\
        &\ \ \ \ \  -\frac{a_1^\pm}{(\tilde{a}^\pm )^4}\big( T^3vG_1v +vG_1vT^3 +T^2vG_1vT +TvG_1vT^2 \big)
         +\frac{1}{(\tilde{a}^\pm )^5}T^5.
   \end{split}
\end{equation*}

For the convenience, we list the orthogonality relations of various operators and projections which are used frequently later.
\begin{equation}\label{orthog-relation-1}
   \begin{split}
QD_0&=D_0Q=D_0,\\
 S_iD_j& = D_jS_i =S_i, \, i>j,  \,  S_iD_j = D_jS_i =D_j, \, i \leq j, \,\\
   D_iD_j& = D_jD_i =D_i,\,  i >  j,
   \end{split}
\end{equation}
\begin{equation}\label{orthog-relation-2}
   \begin{split}
& S_2TP=PTS_2=S_2T=TS_2=QvG_1vS_2=S_2vG_1vQ=0,\\
&QTS_1=S_1TQ= 0,\,vG_1vS_3=S_3vG_1v=0, \, S_2vG_3vS_3=S_3vG_3vS_2=0,
\end{split}
\end{equation}
\begin{equation}\label{orthog-relation-3}
   \begin{split}
&B_1^\pm S_2=S_2B_1^\pm=0,\, QB_2^\pm S_2=S_2B_2^\pm Q=B_2^\pm S_3=S_3B_2^\pm =0,\\
&S_2B_3^\pm S_2=B_3^\pm S_3=S_3B_3^\pm=0, \, S_2B_4^\pm S_3=S_3B_4^\pm S_2=0.
\end{split}
\end{equation}
Now we turn to the proof of Theorem \ref{thm-main-inver-M} case by case.
Here we prove only the assertion  for the case $+$ sign, since
the case $-$ sign  proceeds identically. By Lemma \ref{lemma-JN}, we know that $\widetilde{M}^+(\lambda)$ is invertible on $L^2$ if and only if
\begin{equation}\label{id-M1M1tabu}
   \begin{split}
M_1^+(\lambda)
&:=Q-Q(\widetilde{M}^+(\lambda)+Q)^{-1}Q=\frac{\lambda}{\tilde{a}^+}QTQ+\sum_{i=2}^9\lambda^kQB_k^+Q+ O(\lambda^{10})
   \end{split}
\end{equation}
is invertible on $QL^2$.

\textbf{The proof of Theorem \ref{thm-main-inver-M}(i)-(iii).}

 (i) If zero is a regular point of the spectrum of $H$, then $QTQ$ is invertible on $QL^2$. Let
 $D_0=(QTQ)^{-1}$ be an operator on $QL^2$. If $|V(x)|\lesssim (1+|x|)^{-7-}$, by using (\ref{id-M1M1tabu}) one has
$$M_1^+(\lambda)= \frac{\lambda}{\tilde{a}^+}QTQ+O(\lambda^{2})
=\frac{\lambda}{\tilde{a}^+}\Big( QTQ+O(\lambda)\Big)
:= \frac{\lambda}{\tilde{a}^+}\widetilde{M}_1^+(\lambda).$$
By Neumann  series, one has
$$\big(\widetilde{M}_1^+(\lambda)\big)^{-1} = QA^0_{0,1}Q+O(\lambda),$$
thus,
$$\big(M_1^+(\lambda)\big)^{-1} = \lambda^{-1}QA^0_{-1,1}Q+O(1).$$
By using Lemma \ref{lemma-JN}, we have
\begin{equation*}
 \begin{split}\big(\widetilde{M}^+(\lambda)\big)^{-1}=&\big( \widetilde{M}^+(\lambda) +Q\big)^{-1}+ \big( \widetilde{M}^+(\lambda)+Q\big)^{-1}Q\big(M_1^+(\lambda)\big)^{-1}Q \big( \widetilde{M}^+(\lambda) +Q\big)^{-1}\\=&\lambda^{-1}QA^0_{-1,1}Q+ O(1).   \end{split}
\end{equation*}
Since $ \displaystyle \big( M^+(\lambda)\big)^{-1}
= \frac{\lambda}{\tilde{a}^+}\big( \widetilde{M}^+(\lambda)\big)^{-1} $, one has
$$ \big(M^+(\lambda)\big)^{-1}= QA^0_{0,1}Q +O(\lambda). $$

(ii) If zero is the first kind resonance, then by (\ref{id-M1M1tabu}) one has
\begin{equation*}
   \begin{split}
M_1^+(\lambda)=&\frac{\lambda}{\tilde{a}^+}QTQ + \lambda^2QB_2^+Q + \lambda^3 QB_3^+ Q+O(\lambda^{4})\\
=&\frac{\lambda}{\tilde{a}^+}\Big(   QTQ +\widetilde{a}^+\lambda QB_2^+Q + \widetilde{a}^+\lambda^2 QB_3^+Q
+ O(\lambda^{3}) \Big)
:=\frac{\lambda}{\tilde{a}^+} \widetilde{M}_1^+(\lambda).
   \end{split}
\end{equation*}
By the definition of the first kind of  resonance, then $QTQ$ is not invertible on $QL^2$.
Let $S_1$ be the Riesz projection onto the kernel of $QTQ$. Then it is easy to check that $QTQ+ S_1$ is invertible on $QL^2$. In this case, we define $D_0= (QTQ+S_1)^{-1}$ as a bounded operator on $QL^2$.
By Neumann series, one has
\begin{equation}\label{id-M1tuba+S1}
   \begin{split}
\big(\widetilde{M}_1^+(\lambda)+S_1\big)^{-1}
  = D_0- \lambda B_1^0 -\lambda^2B_2^0+ O(\lambda^{3}),
   \end{split}
\end{equation}
where
$B_1^0= \tilde{a}^+D_0B_2^+D_0$ and
$ B_2^0= \widetilde{a}^+D_0B_3^+D_0- (\tilde{a}^+)^2D_0(B_2^+D_0)^2 $.

According to Lemma \ref{lemma-JN}, $ \widetilde{M}^+_1(\lambda)$ has bounded inverse on $QL^2$
if and only if
\begin{equation}\label{H-lambda}
\begin{split}
M_2^+(\lambda):=& S_1- S_1\big(\widetilde{M}^+_1(\lambda) +S_1\big)^{-1}S_1
=\lambda S_1B_1^0S_1+\lambda^2S_1B_2^0S_1 +O(\lambda^{3})
\end{split}
 \end{equation}
has bounded inverse on $S_1L^2$. Note that $S_1T^2S_1=S_1T(P+Q)TS_1=S_1TPTS_1$, then
\begin{equation}\label{T-1}
   \begin{split}
S_1B_1^0S_1=-\frac{1}{\tilde{a}^+}\Big(S_1TPTS_1 -\frac{\|V\|_{L^1}}{3\cdot (8\pi)^2}S_1vG_1vS_1 \Big)
:=-\frac{1}{\tilde{a}^+}T_1.
\end{split}
\end{equation}
Then
\begin{equation}\label{M2-M2tuba}
   \begin{split}
M_2^+(\lambda)
=-\frac{\lambda}{\tilde{a}^+}\Big(  T_1- \tilde{a}^+\lambda S_1B_2^0S_1 + O(\lambda^{2})\Big)
:=-\frac{\lambda}{\tilde{a}^+}\widetilde{M}_2^+(\lambda).
   \end{split}
\end{equation}
By the definition of the first kind resonance,  then $T_1$ is invertible on $S_1L^2$.
Let $D_1=T_1^{-1}$ be an operator on $S_1L^2$, then $S_1D_1=D_1S_1=D_1$.
By using Neumann series, one has
\begin{equation*}
   \begin{split}
\big( \widetilde{M}_2^+(\lambda)\big)^{-1}
=S_1A^1_{0,1}S_1 +\lambda S_1A^1_{1,1}S_1+O(\lambda^{2}).
   \end{split}
\end{equation*}
By (\ref{M2-M2tuba}), then we have
\begin{equation*}
   \begin{split}
\big( M_2^+(\lambda)\big)^{-1}=\lambda^{-1}S_1A^1_{-1,1}S_1 + S_1A^1_{0,1}S_1+O(\lambda).
   \end{split}
\end{equation*}
By using Lemma \ref{lemma-JN}, one has
\begin{equation*}
   \begin{split}
\big(\widetilde{ M}_1^+(\lambda)\big)^{-1}
= \lambda^{-1}S_1A^1_{-1,1}S_1+QA^1_{0,1}Q +O(\lambda).
   \end{split}
\end{equation*}
By the same argument with the proof of the regular case, we obtain that
\begin{equation*}
   \begin{split}
\big(M^+(\lambda)\big)^{-1}
= \lambda^{-1}S_1A^1_{-1,1}S_1 + \big(S_1A^1_{0,1}+A^1_{0,2}S_1+QA^1_{0,3}Q \big)+O(\lambda).
   \end{split}
\end{equation*}


(iii)  If there is a resonance of second kind at zero, then  by (\ref{id-M1M1tabu})  one has
\begin{equation*}
   \begin{split}
M_1^+(\lambda)=&\frac{\lambda}{\tilde{a}^+}QTQ + \sum_{k=2}^7\lambda^k QB_k^+Q +O(\lambda^{8})\\
=&\frac{\lambda}{\tilde{a}^+}\Big(QTQ +\sum_{k=2}^7\widetilde{a}^+\lambda^{k-1}Q B_k^+Q
+O(\lambda^{7}) \Big)
:=\frac{\lambda}{\tilde{a}^+} \widetilde{M}_1^+(\lambda).
   \end{split}
\end{equation*}
By Neumann series, then
\begin{equation*}
   \begin{split}
\big(\widetilde{M}_1^+(\lambda)+S_1\big)^{-1}
  = D_0- \sum_{k=1}^6\lambda^kB_k^0 + O(\lambda^{7}),
   \end{split}
\end{equation*}
where  $B_k^0(k=1,\cdots,6)$ are bounded operators in $QL^2$ as follows:
\begin{equation*}
   \begin{split}
B_1^0=& \tilde{a}^+D_0B_2^+D_0, \,
 B_2^0= \widetilde{a}^+D_0B_3^+D_0- (\tilde{a}^+)^2D_0(B_2^+D_0)^2,\\
 B_3^0=& \widetilde{a}^+D_0B_4^+D_0 -(\widetilde{a}^+)^2( D_0B_2^+D_0B_3^+D_0 + D_0B_3^+D_0B_2^+D_0)
 +(\widetilde{a}^+)^3D_0(B_2^+D_0)^3,\\
 B_4^0=& \widetilde{a}^+D_0B_5^+D_0 -(\widetilde{a}^+)^2\big( D_0B_2^+D_0B_4^+D_0 + D_0B_4^+D_0B_2^+D_0
 +D_0(B_3^+D_0)^2 \big)\\
 &+ (\widetilde{a}^+)^3\big( D_0(B_2^+D_0)^2B_3^+D_0+ D_0B_2^+D_0B_3^+D_0B_2^+D_0
 + D_0B_3^+D_0(B_2^+D_0 )^2 \big)\\
 & -(\widetilde{a}^+)^4D_0(B_2^+D_0)^4.
   \end{split}
\end{equation*}
Furthermore, we obtain the more detail expansion of $M_2^+(\lambda)$ as follows:
\begin{equation*}
   \begin{split}
M_2^+(\lambda)=& -\frac{\lambda}{\tilde{a}^+}T_1+\sum_{k=2}^6\lambda^k S_1B_k^0S_1 +O(\lambda^{7})\\
=&-\frac{\lambda}{\tilde{a}^+}\Big(  T_1- \tilde{a}^+\sum_{k=2}^6\lambda^{k-1} S_1B_k^0S_1
 + O(\lambda^{6})\Big)
:=-\frac{\lambda}{\tilde{a}^+}\widetilde{M}_2^+(\lambda).
   \end{split}
\end{equation*}
By the definition of the second kind resonance of $H$,  then $T_1$ is not invertible on
on $S_1L^2$. Let $S_2$ is the Riesz projection onto the kernel of $T_1$, then
$T_1+S_2$ is invertible on $S_1L^2$. In this case, let $D_1= (T_1+S_2)^{-1}$ be an operator on $S_1L^2$.
By Neumann series, one has
\begin{equation}\label{id-M2tuba+S2}
   \begin{split}
\big(\widetilde{M}_2^+(\lambda)+S_2\big)^{-1}
  = D_1- \sum_{k=1}^5\lambda^kB_k^1 +O(\lambda^{6}),
   \end{split}
\end{equation}
where  $B_k^1(k=1,\cdots,5)$ are bounded operators in $S_1L^2$ as follows:
\begin{equation*}
   \begin{split}
B_1^1=& -\tilde{a}^+D_1B_2^0D_1, \,
B_2^1= -\widetilde{a}^+D_1B_3^0D_1- (\tilde{a}^+)^2D_1(B_2^0D_1)^2,\\
 B_3^1=& -\widetilde{a}^+D_1B_4^0D_1 -(\widetilde{a}^+)^2( D_1B_2^0D_1B_3^0D_1 + D_1B_3^0D_1B_2^0D_1)
 -(\widetilde{a}^+)^3D_1(B_2^0D_1)^3.\\
   \end{split}
\end{equation*}
According to Lemma \ref{lemma-JN}, $ \widetilde{M}^+_2(\lambda)$ has bounded inverse on $S_1L^2$
if and only if
\begin{equation}\label{M3-lambda}
 \begin{split}
M_3^+(\lambda)&:= S_2- S_2\big(\widetilde{M}^+_2(\lambda) +S_2\big)^{-1}S_2
=\sum_{k=1}^5 \lambda^kS_2B_k^1S_2+O(\lambda^{6})
\end{split}
 \end{equation}
has bounded inverse on $S_2L^2$.
Using the orthogonality (\ref{orthog-relation-1})-(\ref{orthog-relation-3}),
we obtain that $S_2B_1^1S_2=0$ and
\begin{equation}\label{T3}
   \begin{split}
 S_2B_2^1S_2
 =& -\tilde{a}^+a_3^+\Big( S_2vG_3vS_2 +\frac{10}{3\|V\|_{L^1}}S_2(vG_1v)^2S_2\\
 &-\frac{10}{3\|V\|_{L^1}} S_2vG_1vTD_1TvG_1vS_2\Big)
 :=-\tilde{a}^+a_3^+T_2.
   \end{split}
\end{equation}
Furthermore, one has
\begin{equation}\label{M3-M3tuba}
   \begin{split}
M_3^+(\lambda)=& -\tilde{a}^+a_3^+ \lambda^2 T_2 + \sum_{k=3}^5 \lambda^k S_2B_k^1S_2 +O(\lambda^{6})\\
=&-\tilde{a}^+a_3^+\lambda^2 \Big(  T_2-
\frac{1}{\tilde{a}^+a_3^+ }\sum_{k=3}^5 \lambda^{k-2} S_2B_k^1S_2+ O(\lambda^{4})\Big)
:=-\tilde{a}^+a_3^+ \lambda^2 \widetilde{M}_3^+(\lambda).
   \end{split}
\end{equation}
By the definition of the second kind resonance of the spectrum of $H$, then $T_2$ is invertible on $S_2L^2$. In this case, we define $D_2 = T_2^{-1}$ as an operator on $S_2L^2$, then $S_2D_2=D_2S_2=D_2$.
 Using Neumann series, one has
\begin{equation*}
   \begin{split}
\big(\widetilde{M}_3^+(\lambda)\big)^{-1}
=&S_2A^2_{0,1}S_2+\lambda S_2A^2_{1,1}S_2 +\lambda^2 S_2A^2_{2,1}S_2 +\lambda^3 S_2A^2_{3,1}S_2 +O(\lambda^{4}).
   \end{split}
\end{equation*}
Moreover,
\begin{equation*}
   \begin{split}
\big(M_3^+(\lambda)\big)^{-1}
=&\lambda^{-2}S_2A^2_{-2,1}S_2+\lambda^{-1}S_2A^2_{-1,1}S_2 + S_2A^2_{0,1}S_2
 +\lambda S_2A^2_{1,1}S_2 +O(\lambda^{2}).
   \end{split}
\end{equation*}
By using Lemma \ref{lemma-JN}, one has
\begin{equation*}
   \begin{split}
\big(\widetilde{ M}_2^+(\lambda)\big)^{-1}
= \lambda^{-2}S_2A^2_{-2,1}S_2+ \lambda^{-1}\big(S_2A^2_{-1,1}S_1 + S_1A^2_{-1,2}S_2\big)
+S_1A^2_{0,1}S_1+\lambda S_1A^2_{1,1}S_1 +O(\lambda^{2}).
   \end{split}
\end{equation*}
By the same argument with the proof of the first kind resonance, we obtain that
\begin{equation*}
   \begin{split}
\big(M^+(\lambda)\big)^{-1}
= &\lambda^{-3}S_2A^2_{-3,1}S_2+ \lambda^{-2}\big(S_2A^2_{-2,1}S_1 + S_1A^2_{-2,2}S_2\big)
+\lambda^{-1}\big(S_2A^2_{-1,1} +A^2_{-1,2}S_2 \\
&+S_1A^2_{-1,3}S_1\big)+\big( S_1A^2_{0,1}+A^2_{0,1}S_1 +QA^2_{0,3}Q\big) +O(\lambda).
   \end{split}
\end{equation*}
\cqd


Before proving  Theorem \ref{thm-main-inver-M}(iv), we give a lemma as follows, see \cite{EGT19}.
\begin{lemma}\label{T3-inve}
 If $V(x)\lesssim(1+|x|)^{-23-}$, then $\hbox{ker} (S_3vG_4vS_3)= \{ 0 \}$. As a result, $ T_3=S_3vG_4vS_3$ is invertible on $S_3L^2$.
\end{lemma}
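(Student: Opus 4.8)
The plan is to reduce the invertibility assertion to the triviality of the kernel of $T_3=S_3vG_4vS_3$, and then to exploit that $G_4$ is, up to the normalization in \eqref{def-Gk}, the kernel of $(\Delta^2)^{-2}$. Since $S_3$ is a finite-rank projection (recall $S_3\le S_2\le S_1$ and all $S_j$ are finite-rank) and the kernel $-|x-y|^5/(4\pi\cdot 6!)$ is real and symmetric, $T_3$ is a self-adjoint operator of finite rank acting on the finite-dimensional space $S_3L^2$. Hence $T_3$ is invertible on $S_3L^2$ if and only if $\ker(T_3)=\{0\}$, so it suffices to show that $T_3f=0$ with $f=S_3f$ forces $f=0$. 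First I would record the moment information carried by $S_3$: by Lemma \ref{projiction-spaces-SjL2} together with \eqref{orthog-relation-2} one has $S_3v=S_3x_iv=S_3x_ix_jv=0$ for $i,j=1,2,3$, so writing $g:=vf$ for $f\in S_3L^2$ the function $g$ satisfies $\int g\,x^\gamma\,dx=\langle f,\,S_3x^\gamma v\rangle=0$ for all $|\gamma|\le 2$. The decay hypothesis $|V|\lesssim\langle x\rangle^{-23-}$ guarantees that $g=vf$ decays fast enough for these moments and the integrals below to converge absolutely.

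The key computation is the identity $G_4=(\Delta^2)^{-2}$. In $\mathbb{R}^3$ the radial Laplacian gives $\Delta|x|^\alpha=\alpha(\alpha+1)|x|^{\alpha-2}$, so iterating yields $\Delta^3|x|^5=720\,|x|^{-1}$, and using $\Delta(|x|^{-1})=-4\pi\delta$ together with $4\pi\cdot 6!=2880\pi$ we get $\Delta^4\big(-|x|^5/(4\pi\cdot 6!)\big)=\delta$. Thus $G_4(x,y)$ is exactly the convolution kernel of $(\Delta^2)^{-2}$, in parallel with $G_0=(\Delta^2)^{-1}$. Consequently, for $f\in\ker(T_3)$ we have $0=\langle T_3f,f\rangle=\langle G_4g,g\rangle$, and on the Fourier side this equals, up to a positive constant, $\int_{\mathbb{R}^3}|\xi|^{-8}|\widehat{g}(\xi)|^2\,d\xi=\big\|\,|\xi|^{-4}\widehat{g}\,\big\|_{L^2}^2$. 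The vanishing of the moments of $g$ up to order $2$ makes $\widehat{g}$ vanish to order $3$ at the origin, which is precisely what controls the singularity $|\xi|^{-8}$ near $\xi=0$ and renders the integral finite. Hence $\langle G_4g,g\rangle$ is a well-defined nonnegative quantity, and its vanishing forces $|\xi|^{-4}\widehat{g}\equiv 0$, that is $g=vf=0$.

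It remains to deduce $f=0$ from $vf=0$. Since $f\in S_3L^2\subseteq S_2L^2$, the relation $TS_2=S_2T=0$ from \eqref{orthog-relation-2} gives $Tf=0$, i.e. $Uf+vG_0vf=0$. As $vf=0$ kills the second term, we get $Uf=0$; because $U=\mathrm{sign}(V)$ satisfies $U^2=I$ and is therefore invertible on $L^2$, we conclude $f=0$. This proves $\ker(S_3vG_4vS_3)=\{0\}$ and hence the invertibility of $T_3$ on $S_3L^2$.

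The step I expect to be the main obstacle is the rigorous justification of the quadratic-form computation: the kernel $|x-y|^5$ of $G_4$ grows polynomially, so $\langle G_4g,g\rangle$ is only meaningful after one uses the cancellation coming from the vanishing moments of $g$. The cleanest route is to pass to the Fourier transform, where the growth of the kernel turns into the singularity $|\xi|^{-8}$ and the moment conditions become the vanishing of $\widehat{g}$ to order $3$ at the origin; making both this pairing and the identity $G_4=(\Delta^2)^{-2}$ precise between the appropriate weighted spaces—which is where the strong decay $\beta>23$ on $V$ is used—is the delicate point.
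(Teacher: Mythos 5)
Your proof is correct and is essentially the argument behind the paper's citation: the paper itself gives no proof of Lemma \ref{T3-inve}, deferring to \cite{EGT19}, and the mechanism there is exactly the one you use — the moment conditions $S_3v=S_3x_iv=S_3x_ix_jv=0$, the fact that $G_4$ is the kernel of $(\Delta^2)^{-2}$ (your numerology is right: $\Delta^3|x|^5=720\,|x|^{-1}$ and $4\pi\cdot 6!=2880\pi$, so $\Delta^4\bigl(-|x|^5/(4\pi\cdot 6!)\bigr)=\delta$ in $\mathbb{R}^3$), positivity of the resulting quadratic form, and recovery of $f$ from $Tf=0$ with $U^2=I$. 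The step you flag as the obstacle is indeed the only thing left to write down, and it closes along the lines you sketch: the Fourier transform of $|x|^5$ on $\mathbb{R}^3$ is $-4\pi\cdot 6!$ times the finite-part homogeneous distribution of degree $-8$, with no $\partial^\gamma\delta$ corrections since such terms would have $|\gamma|=5$ and hence be odd and not rotation invariant, whereas $|x|^5$ is even and radial; and because $\beta>23$ gives $\langle x\rangle^{3+}vf\in L^1$, the function $\widehat{vf}$ is $C^3$ and vanishes to order $3$ at $\xi=0$, so $|\widehat{vf}|^2=O(|\xi|^6)$ and the finite-part pairing reduces to the absolutely convergent integral $\int|\xi|^{-8}|\widehat{vf}(\xi)|^2\,d\xi$. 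It is worth noting that the cited source phrases the same computation as $\langle G_4vf,vf\rangle=\|G_0vf\|_{L^2}^2=\|\phi\|_{L^2}^2$, where $\phi=-G_0vf$ is the zero-energy $L^2$ eigenfunction with $f=Uv\phi$ (cf. Theorem \ref{gongzhengkehua-1}(iii)); your endgame ($vf=0\Rightarrow Uf=-vG_0vf=0\Rightarrow f=0$) is an equivalent, equally valid way to finish, and the two formulations differ only in whether one kills $\widehat{vf}$ or the eigenfunction $\phi$ first.
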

\textbf{The proof of Theorem \ref{thm-main-inver-M}(iv)}
If there is a resonance of  the third kind at zero, then
by the same argument, we obtain the more detail expansion of
$M_3^+(\lambda)$ as follows:
\begin{equation*}
   \begin{split}
M_3^+(\lambda)=& -\tilde{a}^+a_3^+ \lambda^2 T_2 + \sum_{k=3}^7 \lambda^k S_2B_k^1S_2 +O(\lambda^{8})\\
=&-\tilde{a}^+a_3^+\lambda^2 \Big(  T_2-
\frac{1}{\tilde{a}^+a_3^+ }\sum_{k=3}^7 \lambda^{k-2} S_2B_k^1S_2+ O(\lambda^{6})\Big)
:=-\tilde{a}^+a_3^+ \lambda^2 \widetilde{M}_3^+(\lambda).
   \end{split}
\end{equation*}
By the definition of the third kind resonance of the spectrum of $H$, $T_2$ is not invertible on
on $S_2L^2$. Let $S_3$ is the Riesz projection onto the kernel of $T_2$, then
$T_2+S_3$ is invertible on $S_2L^2$. In this case, let $D_2= (T_2+S_3)^{-1}$ be an operator on $S_2L^2$.
By Neumann series, one has
\begin{equation}\label{id-M3tuba+S3}
   \begin{split}
\big(\widetilde{M}_3^+(\lambda)+S_3\big)^{-1}
  = D_2- \sum_{k=1}^5\lambda^kB_k^2 + O(\lambda^{6}),
   \end{split}
\end{equation}
where  $B_k^2(k=1,\cdots,5)$ are bounded operators in $S_2L^2$, and
$B_1^2= -\frac{1}{\tilde{a}^+a_3^+}D_2B_3^1D_2$.

According to Lemma \ref{lemma-JN}, $ \widetilde{M}^+_3(\lambda)$ has bounded inverse on $S_2L^2$
if and only if
\begin{equation}\label{M4-lambda}
\begin{split}
M_4^+(\lambda)&:= S_3- S_3\big(\widetilde{M}^+_3(\lambda) +S_3\big)^{-1}S_3
 =\sum_{k=1}^5 \lambda^kS_3B_k^2S_3+O(\lambda^{6})
  \end{split}
 \end{equation}
has bounded inverse on $S_3L^2$.
Using orthogonality  (\ref{orthog-relation-1})-(\ref{orthog-relation-3}), we have
\begin{equation}\label{T3}
S_3B_1^2S_3= \frac{1}{a_3^+}S_3vG_4vS_3:=\frac{1}{a_3^+}T_3 .
 \end{equation}
Moreover, we have
\begin{equation}\label{M4-M4tuba}
   \begin{split}
M_4^+(\lambda)=& \frac{\lambda}{a_3^+}T_3+ \sum_{k=2}^5\lambda^k S_3B_k^2S_3+ O(\lambda^{6})\\
   =& \frac{\lambda}{a_3^+}\Big(  T_3+a_3^+\sum_{k=2}^5\lambda^{k-1} S_3B_k^2S_3+ O(\lambda^{5})\Big)
   :=\frac{\lambda}{a_3^+}\widetilde{ M}_4^+(\lambda).
   \end{split}
\end{equation}
By Lemma \ref{T3-inve}, then  $T_3$ is always invertible on $S_3L^2$, let $D_3=T_3^{-1}$ be an operator on
$ S_3L^2$, then $S_3D_3=D_3S_3=D_3$.
By Neumann series, one has
\begin{equation*}
   \begin{split}
\big(\widetilde{M}_4^+(\lambda)\big)^{-1}= S_3D_3S_3+\sum_{k=1}^4\lambda^k S_3A^3_{k,1}S_3+O(\lambda^{5}).
   \end{split}
\end{equation*}
Moreover, we have
\begin{equation*}
   \begin{split}
\big(M_4^+(\lambda)\big)^{-1}= \lambda^{-1}S_3A^3_{-1,1}S_3+ S_3A^3_{0,1}S_3
+\lambda S_3A^3_{1,1}S_3+\lambda^2S_3A^3_{2,1}S_3+\lambda^3S_3A^3_{3,1}S_3+O(\lambda^{4}).
   \end{split}
\end{equation*}
By using Lemma \ref{lemma-JN}, one has
\begin{equation*}
   \begin{split}
\big(\widetilde{ M}_3^+(\lambda)\big)^{-1}
= \lambda^{-1}S_3A^3_{-1,1}S_3+ S_2A^3_{0,1}S_2
+\lambda S_2A^3_{1,1}S_2+\lambda^2S_2A^3_{2,1}S_2+\lambda^3S_2A^3_{3,1}S_2+O(\lambda^{4})
   \end{split}
\end{equation*}
By the same argument with the proof of the second kind resonance, we obtain that
\begin{equation*}
   \begin{split}
\big(M^+(\lambda)\big)^{-1}
= &\lambda^{-4}S_3A^3_{-4,1}S_3+ \lambda^{-3}S_2A^3_{-3,1}S_2
+\lambda^{-2}\big(S_2A^3_{-2,1}S_1+ S_1A^3_{-2,2}S_2\big)\\
&+\lambda^{-1}\big(S_2A^3_{-1,1}+ A^3_{-1,2}S_2 +S_1A^3_{-1,3}S_1\big)
+\big( S_1A^3_{0,1}+A^3_{0,1}S_1 +QA^3_{0,3}Q\big) +O(\lambda).
   \end{split}
\end{equation*}
Here, it is easy check that $A^3_{-4,1}=(S_3vG_4vS_3)^{-1}=D_3$ which doesn't depend on $\pm \hbox{sign}$.

Thus, the proof of Theorem \ref{thm-main-inver-M} is completed.
\cqd

\subsection{The classification of threshold spectral subspaces }
In this subsection, we give the characterizations of the zero resonance subspaces $S_iL^2(\mathbb{R}^3)(i=1,2,3)$
according to the distributional solutions to $H\phi=0$ in the weighted  $L^2 $ space. Such characterizations of resonances  first were obtained in terms of the $L^p$ spaces in  \cite{EGT19}.  Since its proof is similar, we  just list these results.

For $s\in\mathbb{R}$,  we define
 $W_\sigma(\mathbb{R}^3):=\bigcap_{s>\sigma}L^2_{-s}(\mathbb{R}^3),$
which is increasing in $\sigma$ and satisfies $L^2_{-\sigma}(\mathbb{R}^3)\subset W_\sigma(\mathbb{R}^3)$.  In particular, $f\in W_{\frac{3}{2}}(\mathbb{R}^3)$ if $f\in L^\infty(\mathbb{R}^3)$.

\begin{theorem}\label{gongzhengkehua-1}
Assume that $|V(x)|\lesssim(1+|x|)^{-\beta}$ with $\beta>0$.
\begin{itemize}
\item[(i)] If $\beta> 11$, then $f\in S_1L^2\setminus \{0\}$ if and only if $f=Uv\phi$ with $0\neq \phi\in W_\frac{3}{2}$ ( or $\phi\in L^\infty$) such that $H\phi=0$ in the distributional sense, and $\phi=-G_0v f+c_0, $
 where $c_0=\|V\|_{L^1}^{-1}\langle v, Tf\rangle$.

\item[(ii)] If $\beta> 19$, then $f\in S_2L^2\setminus \{0\}$ if and only if $f=Uv\phi$ with $0\neq\phi\in W_\frac{1}{2}$ (or $\phi\in L^p$ for any  $6\le p<\infty$) such that $H\phi=0$ in the distributional sense, and $\phi=-G_0v f. $

\item[(iii)] If $\beta> 23$, then $f\in S_3L^2\setminus \{0\}$ if and only if $f=Uv\phi$ with $0\neq\phi\in L^2$ such that $H\phi=0$ in the distributional sense, and $\phi=-G_0v f $.
\end{itemize}
\end{theorem}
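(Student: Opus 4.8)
The plan is to prove each of the three equivalences by combining the purely operator-theoretic description of the threshold subspaces $S_jL^2$ recorded in Lemma \ref{projiction-spaces-SjL2} with the resolvent form of a distributional null solution of $H$. The bridge between the two pictures comes from the equation itself: if one sets $f=Uv\phi$, then $V\phi=Uv^2\phi=v(Uv\phi)=vf$, so $H\phi=0$ reads $\Delta^2\phi=-vf$, and applying $G_0=(\Delta^2)^{-1}$ gives the key identity $\phi=-G_0vf+h$ with $\Delta^2 h=0$. Thus the whole theorem reduces to two tasks: (a) controlling the polyharmonic ambiguity $h$ by the weight class of $\phi$, and (b) reading off the successive vanishing-moment conditions that cut out $S_1\supseteq S_2\supseteq S_3$ from the large-$|x|$ asymptotics of $G_0vf$. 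The single analytic input needed for (b) is the multipole expansion
\begin{equation*}
-8\pi\,G_0vf(x)=|x|\,\langle v,f\rangle-\big\langle (\hat x\cdot y)\,v,\,f\big\rangle+\frac{1}{2|x|}\big\langle(|y|^2-(\hat x\cdot y)^2)\,v,\,f\big\rangle+O(|x|^{-2}),\qquad \hat x=\frac{x}{|x|},
\end{equation*}
whose successive moments $\langle |y|^k v,f\rangle$ are finite precisely because of the decay thresholds $\beta>11,19,23$ imposed on $V$.

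For the forward implications I would start from $f\in S_1L^2$. By Lemma \ref{projiction-spaces-SjL2}(i) this means $f\in QL^2$ and $QTf=0$, so $Tf=PTf=c_0v$ with $c_0=\|V\|_{L^1}^{-1}\langle v,Tf\rangle$. Setting $\phi=-G_0vf+c_0$ and using $Tf=Uf+vG_0vf=c_0v$, one verifies directly that $Uv\phi=U(-vG_0vf+c_0v)=U\cdot Uf=f$ and that $\Delta^2\phi=-vf=-V\phi$, i.e.\ $H\phi=0$. Since $f\in QL^2$ forces $\langle v,f\rangle=0$, the leading term of the expansion above drops out and $\phi$ is bounded, hence in $W_{3/2}$. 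For $S_2$ I append the conditions of Lemma \ref{projiction-spaces-SjL2}(ii): $PTf=0$ gives $c_0=0$, so $\phi=-G_0vf$, while $\langle x_iv,f\rangle=0$ annihilates the $O(1)$ term, yielding $\phi=O(|x|^{-1})\in W_{1/2}$. For $S_3$ the further conditions $\langle x_ix_jv,f\rangle=0$ from Lemma \ref{projiction-spaces-SjL2}(iii) remove the $O(|x|^{-1})$ term, so $\phi=O(|x|^{-2})\in L^2$ is an honest eigenfunction.

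The converse directions run the same computation backwards. Given a distributional solution $\phi$ in the prescribed weight class, put $f=Uv\phi$; the representation $\phi=-G_0vf+h$ holds with $h$ polyharmonic, and a Liouville-type theorem for polyharmonic functions (those of subpolynomial growth are polynomials, hence bounded ones are constant and decaying ones vanish) shows that $h\in W_{3/2}$ must be a constant, while $h\in W_{1/2}$ or $h\in L^2$ must be $0$. Feeding this back into the expansion forces exactly the moment relations $\langle v,f\rangle=0$ in the first case, additionally $\langle x_iv,f\rangle=0$ in the second, and additionally $\langle x_ix_jv,f\rangle=0$ in the third. Finally, from $\phi=-G_0vf+h$ and $f=Uv\phi$ one recovers $Tf=c_0v$, so $QTf=c_0Qv=0$, and Lemma \ref{projiction-spaces-SjL2} identifies $f$ as an element of $S_1L^2$, $S_2L^2$, or $S_3L^2$, respectively, consistently with Definition \ref{definition of resonance}.

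The hard part will be the converse, namely making the Liouville step rigorous for the biharmonic operator in the borderline weighted spaces $W_\sigma$ and checking that the analytic decay of $\phi$ matches the algebraic conditions of Lemma \ref{projiction-spaces-SjL2} term by term. Concretely, I expect the delicate point to be verifying that the kernel conditions defining $\ker(QTQ)$, $\ker(T_1)$, and $\ker(T_2)$ are equivalent to the vanishing of precisely the moments $\langle x^\gamma v,f\rangle$ that govern the asymptotics of $G_0vf$ (using the structure of $T_1$ through $S_1vG_1vS_1$ with $G_1(x,y)=|x-y|^2$), together with showing that all these moments are finite and the remainder in the multipole expansion is genuinely integrable under the stated decay of $V$.
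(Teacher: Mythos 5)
The paper offers no proof of this theorem to compare against: it explicitly defers to \cite{EGT19}, remarking that ``since its proof is similar, we just list these results.'' Your outline reconstructs precisely that standard argument---the forward direction via $\phi:=-G_0vf+c_0$ together with the moment identities of Lemma \ref{projiction-spaces-SjL2}, the converse via the Liouville theorem for biharmonic tempered distributions (so that $h$ is a polynomial, hence a constant in $W_{3/2}$ and zero in $W_{1/2}$ or $L^2$) plus the multipole expansion of $|x-y|$---and it is sound; in particular the delicate points you flag (finiteness of the moments $\langle |y|^k v,|f|\rangle$ under $\beta>11,19,23$, and the fact that the direction-dependent terms $\langle(\hat x\cdot y)v,f\rangle$ and $\langle(|y|^2-(\hat x\cdot y)^2)v,f\rangle$ must vanish identically in $\hat x$, which forces all first, respectively all second, moments of $vf$ to vanish by the symmetric-matrix trace argument) all go through exactly as you anticipate.
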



{\bf Acknowledgments:}
The authors would like to thank the anonymous referees for careful reading
 the manuscript and providing valuable suggestions, which substantially improve the quality
 of this paper. Ping Li is partially supported by NSFC (No.12371136) and the Open Research Fund of Key Laboratory of Nonlinear Analysis \& Applications (Central China Normal University), Ministry of Education, P.R. China. Avy Soffer is partially supported by NSF-DMS (No. 2205931) and the Simon's Foundation (No. 395767). Xiaohua Yao are partially supported by NSFC (No.  12171182). We would like to thank Dr. Zijun Wan for her useful discussions.


\begin{thebibliography}{99}
\bibitem{Agmon}
S. Agmon, \emph{Spectral properties of Schr\"{o}dinger operators and
  scattering theory}, Ann. Scuola Norm. Sup. Pisa Cl. Sci., (4)\textbf{2} (1975), no. 2, 151-218.

\bibitem{Beals}
M. Beals, \emph{Optimal $L^\infty$ deacy for solutions to the wave equation with a potential},
 Comm. Partial Differential Equations, \textbf{19} (1994), no. 7-8, 1319-1369.

\bibitem{B-S}
M. Beals, W. Strauss, \emph{$L^p$ estimates for the wave equation with a potential},
 Comm. Partial Differential Equations, \textbf{18} (1993), no. 7-8, 1365-1397.

\bibitem{Bec-Gold}
M. Beceanu, M. Goldberg, \emph{Strichartz estimates and maximal operators  for the wave eauation in $\mathbb{R}^3$},
J. Funct. Anal., \textbf{266} (2014), no.~3, 1476-1510.

\bibitem{BPSZ03}
N. Burq, F. Planchon, J. Stalker, A. Tahvildar-Zadeh,
\emph{Strichartz estimates for the wave and Schr\"{o}dinger equations with the inverse-square potential},
J. Funct. Anal., \textbf{203} (2003), no.~2, 519-549.

\bibitem{BPSZ04}
N. Burq, F. Planchon, J. Stalker, A. Tahvildar-Zadeh,
\emph{Strichartz estimates for the wave and Schr\"{o}dinger equations with potentials of critical decay},
Indiana Univ. Math. J., \textbf{53} (2004), no.~6, 1665-1680.


\bibitem{CMY}
W. Chen, C. Miao, X. Yao, \emph{Dispersive estimates with geometry of finite type}, Comm. Partial
Differential Equations, \textbf{37} (2012), 479-510.

\bibitem{CS01}
O. Costin,  A. Soffer, \emph{Resonance theory for Schr\"{o}dinger
  operators}, Comm. Math. Phys., \textbf{224} (2001), no.~1, 133-152, Dedicated to Joel L. Lebowitz.


\bibitem{CV}
F. Cardoso, G. Vodev,
\emph{Optimal dispersive estimates for the wave equation with $C^{\frac{n-3}{2}}$ potentials in dimensions $4\leq n \leq 7$},
 Comm. Partial Differential Equations, \textbf{37} (2012), no. 1, 88-124.

\bibitem{DP}
P. D'Ancona, V. Pierfelice,
\emph{On the wave equation with a large rough potential}, J. Funct. Anal., \textbf{227} (2005), no. 1, 30-77.

\bibitem{Erdogan-Green}
M. B. Erdo\u{g}an, W. R. Green, \emph{Dispersive estimates for Schr\"{o}dinger operators
 in dimension two with obstructions at zero energy}, Trans. Amer. Math. Soc., \textbf{365} (2013), no.~12, 6403-6440.

\bibitem{EG21}
M.~B. Erdo\u{g}an,  W.~R. Green, \emph{The $L^p$-continuity of wave operator for higher order
 Schr\"{o}dinger operators}, Adv. Math., \textbf{404} (2022), 108450.

\bibitem{Erdogan-Green23} M. B. Erdo\u{g}an, W. R. Green, \emph{A note on endpoint $L^p$-continuity of wave operators for classical and higher order Schr\"odinger operators}, J. Differential Equations \textbf{355} (2023), 144--161.


\bibitem{EGG14}
M. B. Erdo\v{g}an, M. Goldberg, W. R. Green,
\emph{Dispersive estimates for four dimensional Schr\"{o}dinger and wave equations with obstructions at zero energy},
 Comm. Partial Differential Equations, \textbf{39} (2014), no. 10, 1936-1964.

 \bibitem{EGG23} M. B. Erdo\u{g}an, M. Goldberg, W. R. Green, {\it Counterexamples to Lp boundedness of wave operators for classical and higher order Schr\"odinger operators}. J. Funct. Anal. \textbf{285} (2023), 108789.

\bibitem{EGT19}
M.~Burak Erdo\u{g}an, W. R. Green, E. Toprak, \emph{On the fourth
  order schr\"{o}dinger equation in three dimensions: dispersive estimates and
  zero energy resonance}, J. Differential Equations, \textbf{217} (2021), 152-185.


\bibitem{ES1}
M. B. Erdo\u{g}an, W. Schlag, \emph{Dispersive estimates for Schr\"{o}dinger operators in
the presence of a resonance and/or an eigenvalue at zero energy in dimension three. I},
Dyn. Partial Differ. Equ., \textbf{1} (2004), no.~4, 359-379.

\bibitem{FHHH82}
R. Froese, I. Herbst, M. H. Ostenhof, T. Hoffmann-Ostenhof, \emph{On the absence of positive eigenvalues for one-body
  Schr\"{o}dinger operators}, J. Analyse Math., \textbf{41} (1982), 272-284.


\bibitem{FSWY20}
H. Feng, A. Soffer, Z. Wu,   X. Yao, \emph{Decay estimates for higher-order
elliptic operators}, Trans. Amer. Math. Soc., \textbf{373} (2020), no.~4, 2805-2859.


\bibitem{FSY}
H. Feng, A. Soffer,  X. Yao, \emph{Decay estimates and
 {S}trichartz estimates of fourth-order {S}chr\"{o}dinger operator}, J. Funct. Anal., \textbf{274} (2018), no.~2, 605--658.

\bibitem{FWY18}
H. Feng, Z. Wu, X. Yao, \emph{Time asymptotic expansions of
  solution for fourth-order schr\"{o}dinger equation with zero resonance or
  eigenvalue}, https://arxiv.org/abs/1812.00223 (2018).

\bibitem{Galtbayar_Yajima_ArXiv23} A. Galtbayar, K. Yajima, {\it The $L^p$-boundedness of wave operators for fourth order Schr\"odinger operators on $\R^4$}, J. Spectr. Theory \textbf{14} (2024), 271--354.





 \bibitem{GG21b}
M. Goldberg,  W. R. Green, \emph{On the $L^p$ boundedness of the wave operators for fourth
order Schr\"{o}dinger operator}, Trans. Amer. Math. Soc., \textbf{374} (2021), no.~6, 4075-4092.

\bibitem{Goldberg-Schlag04}
M. Goldberg, W. Schlag, \emph{Dispersive estimates for  {S}chr\"{o}dinger operators  in dimensions one
and three}, Comm. Math. Phys., \textbf{251} (2004), no. 1, 157-178.

\bibitem{Green14}
W. R. Green, \emph{Time decay estimates for the wave equation with potential in dimension two},
 J. Differential Equations, \textbf{257} (2014), no. 3, 868-919.


\bibitem{GT19}
W. R. Green, E. Topark \emph{On the fourth Schr\"{o}dinger equation on four dimensions: Dispersive estimates and zero energy resonances},
J. Differential. Equations, \textbf{267} (2019), no. 3, 1899-1954.


\bibitem{GV03}
V. Georgiev, N. Visciglia, \emph{Decay estimates for wave equation with potential},
 Comm. Partial Differential Equations, \textbf{28} (2003), no. 7-8, 1325-1369.


\bibitem{GV95}
J. Ginibre, G. Velo, \emph{Generalized Strichartz inequalities for the wave equation},
J. Funct. Anal., \textbf{133} (1995), no.~3, 50-68.

\bibitem{GV12}
F. Cardosa, G. Vodev, \emph{Optimal dispersive estimates for the wave
equation with potentials in dimensions $4\leq n\leq 7$}, Comm. Part. Diff. Equations, \textbf{37} (2012), no. 1, 88-124.


\bibitem{IJ03}
A.~D. Ionescu, D.~Jerison, \emph{On the absence of positive eigenvalues of
  {S}chr\"{o}dinger operators with rough potentials}, Geom. Funct. Anal.,
  \textbf{13} (2003), no. 5, 1029-1081.



\bibitem{JK}
A. Jensen, T. Kato, \emph{Spectral properties of Schr\"{o}dinger
  operators and time-decay of the wave functions}, Duke Math. J., \textbf{46}(1979), no.~3, 583-611.

\bibitem{JN}
A. Jensen, G. Nenciu, \emph{A unified approach to resolvent expansions at thresholds}, Rev. Math. Phys.,
\textbf{13} (2001), no.~6, 717-754.


\bibitem{Kat59}
T. Kato, \emph{Growth properties of solutions of the reduced wave equation
 with a variable coefficient}, Comm. Pure Appl. Math, \textbf{12} (1959), 403-425.



\bibitem{Kop}
E. Kopylova, \emph{Dispersion estimates for the 2D wave equation}, Russ. J. Math. Phys.,
\textbf{17} (2010), no.~2, 226-239.


\bibitem{KT06}
H. Koch,  D. Tataru, \emph{Carleman estimates and absence of
  embedded eigenvalues}, Comm. Math. Phys., \textbf{267} (2006), no.~2, 419-449.


\bibitem{LSY21}
P. Li, A. Soffer,  X. Yao,
\emph{Decay estimates for fourth-order Schr\"odinger operators in dimension two},
J. Funct. Anal., \textbf{284} (2023), no. 6, 109816.


\bibitem{MSW}
B. Marshall, W. Strauss, S. Wainger,
 \emph{$L^p-L^q$ estimates for the Klein-Gordon equation}, J. Math. Pures Appl.,  \textbf{59} (1980), no. 4, 417-440.

\bibitem{MY2021}
H. Mizutani, X. Yao,
 \emph{Kato smoothing, Strichartz and uniform Sobolev estimates for fractional operators with
  sharp Hardy potentials}, Comm. Math. Phys., \textbf{388} (2021), no. 1, 581-623.



\bibitem{Mou}
S. Moulin,
 \emph{High frequency dispersive estimates in dimension two }, Ann. Henri Poincar\'{e},
 \textbf{10} (2009), no. 2, 415-428.


\bibitem{MWY22}
H. Mizutani, Z. Wan, X. Yao,
 \emph{$L^p$-boundedness of wave operators for bi-Schr\"{o}dinger operators on the line},
https://arXiv:2201.04758v1, to appear in  Adv. Math., 2024.

\bibitem{MWY_ArXiv23_2} H. Mizutani, Z. Wan, X. Yao, {\it $L^p$-boundedness of wave operators for fourth order Schr\"odinger operators with zero resonances on $\R^3$}. http://arXiv:2311.06763.

\bibitem{MWY23}
H. Mizutani, Z. Wan, X. Yao,
 \emph{Counterexamples and weak $(1,1)$ estimates of wave operators for fourth-order Schr\"odinger operators in diemnsion three}, http://arXiv:2311.06768v2.


\bibitem{RS}
M. Reed, B. Simon,
 \emph{Methods of Modern Mathematical Physics I: Functional Analysis},
Acdemic Press, New York, NY, 1972.

\bibitem{Schlag-CMP}
W. Schlag, \emph{Dispersive estimates for Schr\"{o}dinger operators in
  dimension two}, Comm. Math. Phys., \textbf{257} (2005), no.~1, 87-117.

\bibitem{Sim69}
B. Simon, \emph{On positive eigenvalues of one-body Schr\"{o}dinger
 operators}, Comm. Pure Appl. Math., \textbf{22} (1969), 531-538.



\bibitem{SWY21}
 A. Soffer, Z. Wu, X. Yao,
\emph{Decay estimates for bi-Schr\"{o}dinger operators in dimension one},
 Annales Henri Poincare,  \textbf{23} (2022), no. 8, 2683-2744.

\bibitem{Stein}
E.~M. Stein, \emph{Harmonic analysis: real-variable methods, orthogonality, and
  oscillatory integrals}, Princeton Mathematical Series, vol.~43, Princeton
  University Press, Princeton, NJ, 1993, With the assistance of Timothy S.
  Murphy, Monographs in Harmonic Analysis, III.

\bibitem{Str05}
W. Strauss, \emph{Nonlinear Wave Equations}, CBMS Reg. Conf. Ser. Math. Vol. 73, AMS, Providence, RI, 1989.

\bibitem{Vod}
G. Vodev, \emph{Dispersive estimates of solutions to the wave equation with a potential in dimensions $n\geq 4$},
 Comm. Partial Differential Equations, \textbf{31} (2006), no. 10-12, 1709-1733.

\end{thebibliography}
\end{document}